\documentclass[11pt,preprint]{imsart}

%% The amssymb package provides various useful mathematical symbols
\RequirePackage[OT1]{fontenc}
\usepackage{a4wide}
\usepackage{amsmath,amsthm,amssymb}
\usepackage[isolatin]{inputenc}
\usepackage{amsfonts,dsfont}
\usepackage{pstricks}
\usepackage{comment}
\usepackage[numbers]{natbib}
\usepackage{ulem	}

\usepackage{colordvi}
\usepackage{graphicx}

\usepackage{xr-hyper}

\definecolor{vert}{rgb}{0.09,0.7,0.17}
\definecolor{violet}{rgb}{0.69,0.13,0.69}
\definecolor{greens}{rgb}{0.09,0.7,0.17}

\usepackage[pdftex,                %
bookmarks         = true,%     % Signets
bookmarksnumbered = true,%     % Signets numÃ©rotÃ©s
pdfpagemode       = None,%     % Signets/vignettes fermÃ© Ã  l'ouverture
pdfstartview      = FitH,%     % La page prend toute la largeur
pdfpagelayout     = SinglePage,% Vue par page
colorlinks        = true,%     % Liens en couleur
%linkcolor		= violet, %  % Liens equations
citecolor		= vert,% %Liens biblio
urlcolor          = violet,%  % Couleur des liens externes
pdfborder         = {0 0 0}%   % Style de bordure : ici, pas de bordure
]{hyperref}%                   % Utilisation de HyperTeX

\newtheorem{thm}{Theorem}[section]
\newtheorem{prp}[thm]{Proposition}
\newtheorem{lem}[thm]{Lemma}
\newtheorem{cor}[thm]{Corollary}

\newtheorem{remark}[thm]{Remark}

%{\bfseries}{\itshape}
%{\itshape}{\itshape}
\newcommand{\R}{\mathbb{R}} % set of real numbers
 %joli abs
 %indicatrice d'intervalles

\renewcommand{\P}{\mathbb{P}}
\newcommand{\E}{\mathbb{E}} % esperance
\DeclareMathOperator{\var}{Var} %variance
 %variance

\renewcommand{\l}{\ell}
\newcommand{\FDR}{\mbox{FDR}}
\newcommand{\BH}{\mbox{BH}}

% raccourcis lettres \mathbb

% abbrev fontes math

% modificateurs (tilde chapeau barre...)
\newcommand{\wt}[1]{{\widetilde{#1}}}
\newcommand{\wh}[1]{{\widehat{#1}}}
\newcommand{\ol}[1]{\overline{#1}}

% delimiteurs et operateurs divers
\newcommand{\ind}{{\mathds{1}}}

\newcommand{\arccosh}{\mathrm{arccosh}}

%\newcommand{\var}[1]{{\rm Var}\brac{#1}}

%divers

 % this is for
                                % definition-equal, not for number 2
                                % [This comment is (c) GabÃ¯Â¿År L. ]
       % an alternative to defeq
% Raccourcis lettres grecques

%macro
\newcommand{\const}{a}
\newcommand{\thetachapup}{\wh{\theta}_{\mathrm{up}}}
\newcommand{\thetachaplow}[1]{\wh{\theta}_{\mathrm{low},#1}}
\newcommand{\thetachapmin}{\wh{\theta}_{\mathrm{min}}}
\newcommand{\thetachapmed}{\wh{\theta}_{\mathrm{med}}}

\newcommand{\eps}{\varepsilon}
% Raccourcis lettres en gras
%\newcommand{\bY}{{\mbf{Y}}}
%\newcommand{\bp}{{\mbf{p}}}

% Raccourcis pour les lettres mathcal
\newcommand{\cB}{{\mtc{B}}}
\newcommand{\cH}{{\mtc{H}}}

\newcommand{\cN}{{\mtc{N}}}

 % moyenne
 % moyenne ponderee par le 2nd argument
% abreviations latines

 % egalite en loi
%\DeclareMathOperator{\supp}{supp} % support d'une mesure
%Petite flÃ¯Â¿Åche

%Pour procedure step-up step-down :

\usepackage{graphics}
\newcommand{\FDP}{\mbox{FDP}} 
\newcommand{\TDP}{\mbox{TDP}}

\newcommand{\ad}{\mathrm{ad}}

% cal letters
\def\cA{\mathcal{A}}
\def\cB{\mathcal{B}}
\def\cC{\mathcal{C}}

\def\cH{\mathcal{H}}

\def\cM{\mathcal{M}}
\def\cN{\mathcal{N}}

\def\cP{\mathcal{P}}
\def\cQ{\mathcal{Q}}
\def\cR{\mathcal{R}}

\def\c\Gamma(Y){\mathcal{\Gamma(Y)}}
\def\med{\mathrm{med}}

% bold letters (upper case)

\def\b\Gamma(Y){\mathbf{\Gamma(Y)}}

 %bold letters (lower case)

\def\beq{\begin{equation}}
\def\eeq{\end{equation}}
\def\beqn{\begin{eqnarray*}}
\def\eeqn{\end{eqnarray*}}

\begin{document}

\begin{frontmatter}

\title{Estimating minimum effect with outlier selection}
\runtitle{Estimating minimum effect with outlier selection}
\begin{aug}
\author{Alexandra Carpentier, Sylvain Delattre, Etienne Roquain and Nicolas Verzelen
}
\runauthor{Carpentier et al.}
%\date{06/11/2015}
\end{aug}

\begin{abstract}
We introduce one-sided versions of Huber's contamination model, in which corrupted samples tend to take larger values than uncorrupted ones.
Two intertwined problems are addressed: 
estimation of the mean of uncorrupted samples (minimum effect) and selection of corrupted samples (outliers). Regarding the minimum effect estimation, we derive the minimax risks and introduce adaptive estimators to the unknown number of contaminations. Interestingly, the optimal convergence rate highly differs from that in classical Huber's contamination model. Also, our analysis uncovers the effect of particular structural assumptions on the distribution of the contaminated samples. 
As for the problem of selecting the outliers, we formulate the problem in a multiple testing framework for which the location/scaling of the null hypotheses are unknown. We rigorously prove how estimating the null hypothesis is possible while maintaining a theoretical guarantee on the amount of the falsely selected outliers, both through false discovery rate (FDR) or post hoc bounds. 
As a by-product, we address a long-standing open issue on FDR control under equi-correlation, which reinforces the interest of removing dependency when making multiple testing.

\end{abstract}

\begin{keyword}[class=AMS]
\kwd[Primary ]{62G10}
\kwd[; secondary ]{62C20}
\end{keyword}

\begin{keyword}
 \kwd{minimax rate} \kwd{contamination} \kwd{Hermite polynomials}\kwd{moment matching} \kwd{sparsity} \kwd{multiple testing}\kwd{false discovery rate}\kwd{post hoc}\kwd{selective inference} \kwd{equi-correlation} 
\end{keyword}

\end{frontmatter}

\section{Introduction}
\label{sec:intro}

We are interested in a statistical framework where some data have been corrupted. Depending on how one defines and considers the corruption, such problems have been addressed by different fields in statistics such as robust estimation or sparse modeling. In the former, Huber's contamination model~\cite{huber1964robust, huber2011robust} is the prototypical setting for handling this problem. It assumes that among $n$ observations $Y_1,\ldots, Y_n$, most of them follow some normal distribution $\cN(\theta,\sigma^2)$ whereas the corrupted ones are arbitrarily distributed. In sparse modeling, one typically assumes that the data $Y_1, \ldots, Y_n$ are normally distributed with mean $\gamma_i$ where $\gamma_i=\theta$ for uncorrupted samples and arbitrary $\gamma_i \neq \theta$ for corrupted samples (see~\cite{CJ2010} for a related model).

However, in some practical problems, corrupted samples do not take arbitrary values and satisfy a structural assumption. Consider for instance the following situation where $Y_i$'s are measurements of a pollutant, coming from $n$ sensors spread out at $n$ locations of a city. The background value for this  pollutant in the city is $\theta$, but, due to local pollution effects, some sensors may record larger values at some locations. Health authorities are then interested in evaluating the degree of background pollution and in finding where the most affected regions in the city are.

\medskip 

In this work, we introduce one-sided contamination models taking into account the structural assumption that 
corrupted samples tend to take larger values than uncorrupted ones. Then, we consider the twin problems of estimating the distribution of the uncorrupted samples and identifying the corrupted samples.

\subsection{Models and objectives}

\subsubsection{One-sided Contamination Model (OSC)} 

%\sout{We first introduce a counterpart of Huber contamination model for which the  contaminated samples are assumed to be 'larger' than non-contaminated samples. More precisely, we shall assume that some samples $Y_i$'s are either non-contaminated in which case they follow some $\mathcal{N}(\theta,\sigma^2)$ distribution, whereas the remaining samples are positively contaminated, in which case, their distribution stochastically dominates $\mathcal{N}(\theta,\sigma^2)$, but is otherwise arbitrary. }

We first introduce a one-sided counterpart of Huber contamination model for which some samples $Y_i$'s follow a $\mathcal{N}(\theta,\sigma^2)$ distribution, whereas the remaining samples are positively contaminated, that is, have a  distribution that stochastically dominates $\mathcal{N}(\theta,\sigma^2)$, but is otherwise arbitrary.

More formally, we assume that 
\begin{equation}\label{classicmodel_2}
 Y_i = \theta+ \sigma \eps_i, \:\:\: 1\leq i \leq n\ ,
 \end{equation}
where $\sigma>0$ is some standard deviation parameter (either equal to $1$ or unknown), $\theta\in \R$ is a fixed {\it minimum effect} and  the $\eps_i$ are independent noise random variables. 
Denoting $\pi_i$ the unknown distribution of the noise, we assume that, for some $k$, the distribution $\pi=\otimes_{i=1}^n \pi_i$ of $\eps$ belongs to the set
\begin{equation}\label{equbarcmk} 
  \overline{\cM}_k=\left\{ \pi=\otimes_{i=1}^n \pi_i \::\: \mbox{ $\pi_i\succeq \mathcal{N}(0,1)$},\:  \sum_{i=1}^n \mathds{1}_{\{\pi_i \succ \mathcal{N}(0,1)\}}\leq k\right\}\ ,
 \end{equation}
 where $\succeq$ (resp. $\succ$) denotes the stochastic domination (resp. strict stochastic domination).  In $\overline{\cM}_k$, at most $k$ distributions $\pi_i$'s are allowed to strictly dominate the Gaussian measure. The model~\eqref{classicmodel_2} satisfies the heuristic explanation described above. 
 If $\pi\in  \overline{\cM}_k$, then at least $n-k$ samples are non-contaminated and are distributed as $\mathcal{N}(\theta,\sigma)$ whereas the remaining contaminated samples stochastically dominate this distribution. 
 
 In this model, henceforth referred as the One-Sided Contamination (OSC) model, the parameter $\theta$ corresponds to the expectation of the non-contaminated samples. If $k\leq  n-1$, it also satisfies
\begin{equation}\label{thetaidentif}
\theta=\min_{1\leq i\leq n} \E (Y_i)\ ,
\end{equation}
 and interprets therefore as a minimum  theoretical effect. In particular, this model is identifiable for $k\in [n/2, n-1]$, whereas it is not in the classical Huber's model.

Throughout the paper,  the probability (resp. expectation) in model \eqref{classicmodel_2} is denoted by $\P_{\theta,\pi,\sigma}$ (resp. $\E_{\theta,\pi,\sigma}$). The parameter $\sigma$ is dropped in the notation whenever $\sigma=1$.

\subsubsection{One-sided Gaussian Contamination Model (gOSC)}

In analogy with the sparse Gaussian vector model, we also consider a specific case of OSC model where the contaminated samples are still assumed to be normally distributed, that is, the  $\pi_i$'s are Gaussian distribution with unit variance and positive mean $\mu_i/\sigma$ where $\mu \in \R_+^n$ is a %(nonnegative)  
{\it contamination effect}. In that case, the model can be rewritten as
\begin{equation}
Y_i=  \theta + \mu_i+ \sigma\xi_i,\:\:\:1\leq i \leq n\ ,\label{model}
\end{equation}
where $\xi_i$'s  are i.i.d.~$\mathcal{N}(0,1)$ distributed and $\mu\in \mathbb{R}_+^n$ is unknown. Defining the mean vector 
\begin{equation}
\gamma = \theta + \mu\ ,\label{decompgamma}
\end{equation}
we deduce that $Y$ follows a normal distribution with unknown mean $\gamma$ and variance $\sigma^2 I_n$
whereas $\theta$ corresponds to $\min_i \gamma_i$, that is, the minimum component of the mean vector.

%
%and $\mu\in \mathbb{R}_+^n$. In that model, . If at least one component of $\mu$ %is zero, then  

To formalize the connection with the OSC model, we let $\eps_i = \mu_i/\sigma + \xi_i$  and $\pi_i = \mathcal N(\mu_i/\sigma, 1)$ for all $i$. Then,  \eqref{model} is a particular case of \eqref{classicmodel_2} since  $\mathcal N(\mu_i/\sigma, 1) \succeq \mathcal{N}(0,1)$.
In analogy with the OSC model where we define a collection $\overline{\mathcal{M}}_k$ prescribing the number of contaminated samples to be less or equal to $k$, we introduce
 \begin{equation}\label{equcmk} 
 \cM_k=\left\{\mu \in \mathbb{R}_+^n \::\: \sum_{i=1}^n \mathds{1}_{\{\mu_i\neq 0\}} \leq k\right\}\ .
 \end{equation}

In what follows, we refer to the model~\eqref{model} as One-Sided Gaussian Contamination (gOSC) model. The probability (resp. expectation) in that model \eqref{model} is denoted by $\P_{\theta,\mu,\sigma}$ (resp. $\E_{\theta,\mu,\sigma}$). Whenever we assume that the variance parameter $\sigma$ is known and is equal to one,  the subscript $\sigma$ is dropped in the above notation.

\subsubsection{Objectives}

We are interested in the two following intertwined problems:
\begin{itemize}
\item[-] {\it Objective 1: Optimal estimation of the minimum effect.} We aim at  establishing the minimax estimation rates of $\theta$ both  in OSC~\eqref{classicmodel_2} and in gOSC~\eqref{model} models. In particular, we explore the role of the one-sided assumption for the computation on such estimation rates. As explained below, this problem is at the crossroads of several lines of research such as robust estimation and non-smooth linear functional estimation.

\item[-] {\it Objective 2: controlled selection of the outliers.} Here, we are interested in finding the contaminated samples. In the Gaussian case (gOSC), this is equivalent to selecting the positive entries of $\mu$ in \eqref{decompgamma}. Adopting a multiple testing framework, we aim at designing a selection procedure with suitable false discovery rate (FDR) control \cite{BH1995} and providing a uniformly valid post hoc bound \cite{GW2006,GS2011}. The difficulty stems from the fact the minimum effect $\theta$ is unknown. In contrast to objective 1 where the contaminated samples were considered as nuisance quantities, in this second objective the contaminated samples are now interpreted as the signal whereas $\theta$ is a nuisance parameter. 

\end{itemize}
Furthermore, Objective 2 is intrinsically connected to the problem of removing the correlation when making (one-sided) multiple testing from Gaussian equi-correlated test statistics: when the equi-correlation is carried by  the latent factor $\theta$, we can remove this correlation by subtracting an estimator of $\theta$ to the test statistics. Although this simple strategy is quite common (see, e.g., \cite{FKC2009} and references therein), assessing the theoretical performances of such a procedure is a longstanding question in the multiple testing literature.  In this work, we establish a positive answer to this question, by showing that it is possible to (asymptotically) control the FDR while having (at least) the same power as if the test statistics had been independent. 

In the remainder of the introduction, we first describe our contribution for minimum effect estimation and then turn to outlier selection.

\subsection{Optimal estimation of the minimum effect}\label{sec:intro:estimation}

Given the sparsity $k\in\{1,\dots,n-1\}$ and $\sigma^2=1$, we define the $L_1$ minimax estimation  risk of $\theta$   for both  gOSC~\eqref{model} and OSC~\eqref{classicmodel_2} models: 
\begin{align}
\cR[k,n]&= \inf_{\wh{\theta}}\sup_{(\theta,\mu)\in \mathbb{R}\times \cM_k}\E_{\theta,\mu}\big[|\wh{\theta}-\theta|\big]; \, \quad 
\overline{\cR}[k,n]= \inf_{\wh{\theta}}\sup_{\theta\in \mathbb{R}, \pi \in \overline{\cM}_k }\E_{\theta,\pi}[|\wh{\theta}-\theta|]\label{eq:definition_minimax_robust_one_sided}\ . 
\end{align}

First, we characterize these minimax risks by deriving matching (up to numerical constants) lower and upper bounds, this uniformly over all numbers $k$ of contaminated data, see Sections~\ref{sec:fixedcont} and~\ref{sec:robust}. The results are summarized in Table~\ref{Tableminimax} below. 
It is mostly interesting to compare these orders of magnitude with those derived for the Huber contamination model with $k$ contamination. From e.g.~\cite[Sec.2]{chen2018robust}, we derive\footnote{Actually, the results in~\cite{chen2018robust} are proved for a model where the number of contaminated sample follows a Binomial distribution with parameters $(k,k/n)$, but the proofs straightforwardly extend to our setting}, that for $k< n/2$, the minimax risk is of order $\min(n^{-1/2},\tfrac{k}{n})$. For $k\leq \sqrt{n}$, the rate is parametric in all three models. For $k\in (\sqrt{n}, n/2)$, one-sided contamination lead to some $\sqrt{\log(k^2/n)}$ gain over the Huber's model, whereas assuming that the contaminations are Gaussian lead to an additional logarithmic gain. For $k\in [n/2, n-1]$, recall that Huber's model is not identifiable whereas the one-sided contamination model is, and we identify various minimax rates.  For a fixed proportion ($k/n$) of contaminated samples, the optimal rate still converges to $0$ at a  polylogarithmic rate. For slowly decaying (with $n$) proportion $\frac{n-k}{n}$ of non-contaminated samples, the estimation rate still goes to $0$. 

\begin{table}[h!]
\begin{center}
\begin{tabular}{|c||c||c|c|c|}
\hline
&&&&\\
&General bound  %Range 
& $1\leq k\leq 2\sqrt{n}$ & $2\sqrt{n}\leq k\leq n/2$ & $n/2\leq k \leq n-1$\\
\hline
&&&&\\
 $\overline{\cR}[k,n]$ & $\frac{\log\left(\frac{n}{n-k}\right)}{\log^{1/2}(1+ \frac{k^2}{n})}$ & $n^{-1/2}$ & $\frac{k/n}{\log^{1/2}(k^2/n)}$ & $\frac{\log\left(\frac{n}{n-k}\right)}{\log^{1/2}n}$\\
\hline
&&&&\\
 $\cR[k,n]$ & $\frac{\log^{2}\big(1+ \sqrt{\frac{k}{n-k}}\big)}{\log^{3/2}\big(1+ (\frac{k}{\sqrt{n}})^{2/3}\big)}$ & $n^{-1/2}$ & $\frac{k/n}{\log^{3/2}(k^2/n)}$ & $\frac{\log^2\left(\frac{n}{n-k}\right)}{\log^{3/2} n}$\\
\hline
\end{tabular}\smallskip\\
\end{center}
\caption{Minimax estimation risks of $\theta$ (up to numerical constants).  \label{Tableminimax}}
\end{table}

For both models (OSC and gOSC), we also devise estimation procedures that are adaptive to the unknown number $k$ of contaminated samples.  Finally, we consider the case where the   noise level $\sigma$ in \eqref{model} unknown, see Section~\ref{sec:randcontsigma}. We prove that, in OSC model, adaptation to unknown $\sigma$ is possible and characterize the optimal estimation risk for $\sigma$.

\medskip

\paragraph{OSC: Technical aspects and connection to robust estimation.}
As explained earlier, OSC~\eqref{classicmodel_2} model is a one-sided counterpart of Huber's contamination model 
~\cite{huber1964robust, huber2011robust} - see also~\cite{neyman1948consistent} for the historical reference on the concept of contamination and~\cite{lancaster2000incidental,jurevckova2012methodology} for more recent reviews.
From a technical perspective, minimax bounds for OSC proceed from the same general ideas as for Huber's contamination model with a twist. For the latter, the empirical median turns out to be optimal~\cite{huber2011robust}. In OSC model, there is a benefit of using other empirical quantiles. Since the contaminations are one-sided, the left tail is indeed less perturbed than the right tail. Correcting for the bias and choosing suitably a quantile, we prove that the resulting estimator achieves (up to constants) the optimal rate $\overline{\cR}[k,n]$. Adaptation to unknown $k$ is performed via Lepski's method whereas adaptation to unknown $\sigma$ is based on a difference of empirical quantiles.

\paragraph{gOSC: Technical aspects and connection to non-smooth functional estimation.}
Pinpointing the minimax risk in the Gaussian contamination model (gOSC) is much more technical. Indeed,
 standard estimators, as those based on quantiles for instance,  are not optimal in that setting. The key idea of our upper bound is to invert  a collection of local tests of the form ``$\theta\geq u$'' vs ``$\theta < u$'' for $u\in \mathbb{R}$, by following an approach from \cite{carpentier2017adaptive} developed for sparsity testing. Recall that $\gamma_i$ in~\eqref{decompgamma} stands for the expectation of $Y_i$. If ``$\theta\geq u$'', then $\sum_{i}\mathds{1}_{\gamma_i< u}=0$ whereas under the alternative, one has $\sum_{i}\mathds{1}_{\gamma_i< u}\geq n-k$. Thus, this boils down to estimating the non smooth-functional, $\sum_{i}\mathds{1}_{\gamma_i< u}$. 

Since the seminal work~\cite{ibragimov1985nonparametric,donoho1990minimax} (for respectively the linear and quadratic functional), there is an extensive literature on estimating smooth functionals of the mean of a Gaussian vector. Under a sparsity assumption, the problem has been investigated in~\cite{MR2253108, MR2879672, collier2015minimax, collier2016optimal}, and has some deep connections with problem of signal detection~\cite{ingster2012nonparametric, baraud02}.

However, estimation of non-smooth functionals (such as $\sum_{i=1}^n |\gamma_i|^q$ for $q\in (0,1])$) is significantly more involved even without sparsity assumptions. For related papers, see e.g.~\cite{JC2007, CJ2010, cailow2011, MR2420411, lepski1999estimation, han2016minimax, wu2015chebyshev, jiao2016minimax,carpentier2017adaptive, Juditsky_convexity,collier2018estimation}.  For that class  of problem, one powerful approach, coined as polynomial approximation~\cite{lepski1999estimation, han2016minimax}, amounts to build a best polynomial approximation of the non-smooth function and plug them with unbiased estimators of the moment $\sum_{i=1}^n \gamma_i^s$ for some integers $s=1,\ldots, s_{\max}$. 
Unfortunately, we cannot rely on this strategy for estimating $\sum_{i}\mathds{1}_{\gamma_i< u}$, mainly  because the contaminated $\gamma_i$'s may be arbitrarily large. In a related setting, where the contaminated means $\gamma_i\neq \theta$ are distributed according to some smooth prior distributions supported on $\R$, \cite{CJ2010} have pinpointed the optimal rate by relying on empirical Fourier transform (see also~\cite{jin2004}). However, this approach falls down in our framework because the contaminated $\gamma_i$'s are arbitrary.
In this work, we introduce a new strategy that combines polynomial approximation methods with the empirical Laplace transform. 

As for the minimax lower bound, we rely on moment matching techniques following the approach of~\cite{lepski1999estimation} and recently applied to other non-smooth functional models \cite{cailow2011,han2016minimax,wu2015chebyshev, carpentier2017adaptive}.

 \subsection{Controlled selection of the outliers}\label{descriptionMT}

 This section presents the state of the art and our contributions for the second objective, that is, controlled selection of the outliers.   Our approach relies on multiple testing paradigm and builds upon some of our results on the estimation of $\theta$.

 \subsubsection{Multiple testing formulation}\label{settingMT}

 Our second objective is to identify the active set of outliers in the general model~\eqref{classicmodel_2}.
Again, we emphasize that what we call outliers becomes in this part the quantities of interest (e.g., the city locations with abnormal pollutant concentration in our motivating example).
In OSC model, we formulate this selection problem under the form of $n$ simultaneous tests of
\begin{center}
$H_{0,i}: ``\pi_{i}= \mathcal{N}(0,1)"$ against $H_{1,i}: ``\pi_{i} \succ \mathcal{N}(0,1)"$, for all $1\leq i \leq n$.
\end{center}
(Remember that ``$\succ $" stands for strict stochastic domination). In the specific case of gOSC model~\eqref{model}, this problem reduces to simultaneously test 
\begin{equation}\label{hypomu}
\mbox{$H_{0,i}: ``\mu_{i}= 0"$ against $H_{1,i}: ``\mu_{i} >0"$.}
\end{equation}
We denote the set of non-outlier coordinates by 
$\cH_0(\pi)=\{1\leq i \leq n\::\: \pi_{i}=\mathcal{N}(0,1)\}$, 
and the set of outlier coordinates by 
 $\cH_1(\pi)=\{1\leq i \leq n\::\: \pi_{i}\succ\mathcal{N}(0,1)\}$.
 
The cardinal of $\cH_0(\pi)$ (resp. $\cH_1(\pi)$) is denoted by $n_0(\pi)$ (resp. $n_1(\pi)$).
 Hence, $\pi\in \overline{\cM}_k$, means that the number of outliers is $n_1(\pi)\leq k$. Thus, our selection problem amounts to estimate $\cH_1(\pi)$ (or equivalently $\cH_0(\pi)$). The dependence in $\pi$ of $\cH_0(\pi)$, $\cH_1(\pi)$, $n_0(\pi)$, $n_1(\pi)$ is sometimes removed for simplicity.

For any procedure declaring as outliers the elements of $R\subset \{1,\dots,n\}$, we quantify the amount of false positives in $R$ by a classical metric, introduced in \cite{BH1995}, which is called  the false discovery proportion of $R$:
\begin{equation}\label{equ-FDP}
\FDP(\pi, R) = \frac{|R \cap \cH_0(\pi) |}{|R|\vee 1}\ ,
\end{equation}
which corresponds to the proportion of errors among the set $R$ of selected outliers.
The expectation of this quantity is the  false discovery rate $\FDR(\pi,R)=\E_{\theta,\pi,\sigma} [\FDP(\pi, R)]$, which can be considered as the standard generalization of the single testing type I error rate  in large scale multiple testing.
The true discovery proportion is then  defined by
\begin{equation}\label{equ-TDP}
\TDP(\pi, R) = \frac{|R \cap \cH_1(\pi)|}{n_1(\pi)\vee 1}\ ,
\end{equation}
 and  corresponds to the proportion of (correctly) selected outliers  among the set of false null hypotheses.
The expectation of this quantity $\E_{\theta,\pi,\sigma} [\TDP(\pi, R)]$ is a widely used analogue of the power in single testing, see, e.g., \cite{RW2009,AC2017,RRJW2017}. 
% Identifying outliers
 Our contribution falls into two frameworks:
\begin{itemize}
\item  {\it Multiple testing}: find a procedure selecting a subset $R\subset \{1,\dots ,n\}$ as close as possible to $\cH_1(\pi)$, i.e.~that has a TDP as high as possible while maintaining a controlled FDR;
\item {\it Post hoc bound}: provide a confidence bound on $\FDP(\pi,S)$, uniformly valid over all possible $S\subset \{1,\dots ,n\}$. 
\end{itemize}
 While the first objective is a classical multiple testing aim, see, e.g., \cite{BH1995,BY2001,FDR2007,GBS2009}, the second objective, relatively new, has been proposed in \cite{GW2004,GW2006,GS2011}. It is connected to the burgeoning research field of selective inference, see, e.g., \cite{BNR2017} and references therein. 
 The rationale behind developing such a bound is that, since the control is uniform, the probability coverage is guaranteed even if the user chooses an arbitrary $S$, possibly using the same data $Y$ and possibly several times. In other words, the commonly used ``data-snooping" is allowed with such bound. We denote the outlier selected set either by $R$ or $S$ depending on the considered issue: $R$ is typically a procedure designed by the statistician, whereas $S$ is chosen by the user.

 \subsubsection{Relation to the first objective and to previous literature}
 
In OSC model~\eqref{classicmodel_2}, solving the above multiple testing issues  is challenging
primarily  because of the unknown 
 parameters $\theta$ and $\sigma$. Indeed, this entails that the scaling of the null distribution (i.e.~the distribution under the null hypothesis) is unknown. A natural idea is to design a two-stage procedure: first,  we estimate $\theta$ and $\sigma$ 
 by some estimators $\wh{\theta}$ and  $\wh{\sigma}$ (actually this is precisely what we do in the first part of this paper). 
Then, in a testing stage, we apply a standard multiple testing procedure to the rescaled observation $Y'_i=(Y_i-\wh{\theta})/\wh{\sigma}$.

Estimating the null distribution in a multiple testing context has been popularized in a series of work of Efron, see \cite{Efron2004, Efron2007b,  Efron2009b}. Through careful data analyses, Efron noticed that the theoretical null distribution  often turns out to be  wrong in practical situations, which can lead to an uncontrolled increase of false positives. To address this issue, Efron recommends to
estimate the scaling parameters of the null distribution ($\theta,\sigma$ here) by ``central matching", that is, by fitting a parametric curve to the trimmed data. 
In his work, Efron provides compelling empirical evidence on his approach. However, up to our knowledge, the FDP and TDP of  such two-stage testing procedures has never been theoretically controlled.
Note that estimating the null in a multiple testing context was also the motivation of the minimax results of \cite{JC2007,CJ2010}, although the corresponding multiple testing procedure was not studied.
We recall that these previous studies are all developed in the two-sided  context, whereas our focus is on the one-sided shape constraint.

 \subsubsection{Summary of our results} 
  
In Section~\ref{sec:outliers}, we show that some minor modification of the quantile-based estimators  $\wh{\theta}$, $\wh{\sigma}$ introduced for OSC model,  can be used to estimate the null distribution to rescale the $p$-value process, and can then be suitably combined with classical multiples testing procedures: 
 
\begin{enumerate}
\item  A new ($\wh{\theta}, \wh{\sigma}$)-rescaled Benjamini-Hochberg procedure $R$ is defined and proved to enjoy the following FDR controlling property: in general model~\eqref{classicmodel_2}, for any $\pi\in \overline{\cM}_k$, with $k= \lfloor 0.9 n\rfloor$ (not anti-sparse signal),
$$
\left( \E_{\theta,\pi,\sigma}\left( \FDP(\pi, R) \right) - \frac{n_0}{n} \alpha \right)_+ \lesssim  \log(n)/n^{1/16}\ .
$$
In addition, we derive a power result showing that the power (TDP) of this procedure is close to the one of the (${\theta}, {\sigma}$)-rescaled Benjamini-Hochberg procedure (under mild conditions). The latter is an oracle benchmark that would require the exact knowledge of ${\theta}$ and ${\sigma}$. 

\item A new ($\wh{\theta}, \wh{\sigma}$)-rescaled post hoc bound $\ol{\FDP}(\cdot)$ is proposed, satisfying, for any $\pi\in \overline{\cM}_k$, with $k = 0.9 n$,
$$
\left(1-\alpha- \P_{\theta,\pi,\sigma}\left(\forall S\subset \{1,\dots,n\},\:\: \FDP(\pi,S)\leq \ol{\FDP}(S)\right)\right)_+  \lesssim 
 \log (n)/n^{1/16}\  .
$$
\end{enumerate}
  To our knowledge, these are the first theoretical results that fully validate Efron's principle of empirical null correction in a specific multiple testing problem.

For bounding the type I error rates, the technical argument used in our proof is close in spirit to recent studies \cite{LB2016,IH2017} (among others): the idea is to divide the data into two ``orthogonal" parts (small or large $Y_i$'s), the first part being used for the rescaling and the second one for testing.
For the power result, our formal argument is entirely new to our knowledge, as this kind of results is rarely met in the literature.

  \subsubsection{Application to decorrelation in multiple testing}\label{sec:deccor}

It is well known that Efron's methodology on empirical null correction can be applied to reduce the effect of correlations between the tests, as noted by Efron himself \cite{Efron2007,Efron2009} where he mentioned that ``there is a lot at stake here". Several following work supported this assertion, especially by decomposing the covariance matrix of the data into factors, see \cite{FKC2009,LS2008,Fan2012,Fan2017}. However, strong theoretical results on the corrected multiple testing procedure are still not available. 

Meanwhile, 
another branch of the literature aimed at incorporating known and unknown dependence into multiple testing procedures, for instance, by resampling-based approach \cite{WY1993,RW2005,RW2007,RSW2008,DL2008,BC2015} or by directly incorporating the known dependence structure \cite{GHS2013,DR2015b,Slope2015}.
However, as noted for instance in the discussion of \cite{Sar2008rej}, even for very simple correlation structures, no multiple testing procedure has yet been proved to control the FDR while having an optimal TDP. 

In Section~\ref{sec:equicor}, we apply our two-step procedure 
to address the multiple testing problem in the one-sided Gaussian equi-correlation case (with nonnegative equi-correlation $\rho$). This model (or its block diagonal variant) is often used as a concrete test bed in  multiple testing literature, see, e.g.,  \cite{Korn2004,DR2011,DR2016} among others. It turns out that this model can be written under the form of gOSC model~\eqref{model} with a random value of $\theta$ (the variable carrying the equi-correlation) and an unknown variance $\sigma=(1-\rho)^{1/2}$. Hence, we can directly apply our ($\wh{\theta}, \wh{\sigma}$)-rescaled Benjamini-Hochberg procedure introduced above to solve the problem: we show that the new procedure has performances close to the BH procedure under independence (and even with a slight increase of the signal to noise ratio). Even if the model is somewhat specific, this shows that correcting the dependence can be fully theoretically justified. To illustrate numerically the benefit of such an approach, Figure~\ref{fig:ROC} displays a ROC-type curve for four different versions of corrected BH procedure. A full description of the simulation setting and additional experiments are provided in Section~\ref{sec:equicor}.

\begin{figure}[h!]
\includegraphics[scale=0.5]{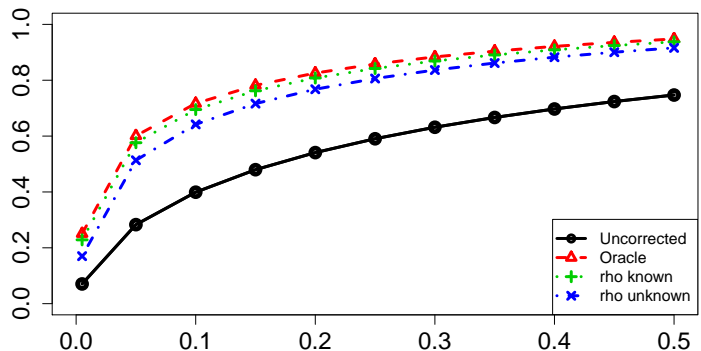}
\vspace{-1cm}
\caption{$X$-axis: targeted FDR level $\alpha\in\{0.005, 0.05, 0.1, 0.15, 0.2, 0.25, 0.3, 0.35, 0.4, 0.45, 0.5\}$, $Y$-axis: TDP (power) averaged over $100$ replications for four different procedures (see text in Section~\ref{sec:equicor}).  
The model is the one-sided Gaussian  with  equi-correlation $\rho=0.3$. The parameters used are $n=10^6$, $\Delta=2.5$, $k/n=0.1$.   
} \label{fig:ROC}
\end{figure}

\subsection{Notation}\label{sec:notation}

For $x>0$, we write $\lfloor x\rfloor^{(\log_2)}$ (resp. $\lceil x\rceil^{(\log_2)}$) for $2^{\lfloor \log_2(x)\rfloor}$ (resp.  $2^{\lceil \log_2(x)\rceil}$ ) the largest (resp. smallest) dyadic number smaller (resp. higher) than $x$. 
Similarly, $\lfloor x\rfloor_{\mathrm{even}}$ is  the largest even integer which is not higher than $x$.

For $x\in \R^n$, $x_{(k)}$ is the $k$-th smallest element of $\{x_i,1\leq i \leq n\}$. We also write $x_{(\ell:m)}$ for the $\ell$-smallest  element among $\{x_i, 1\leq i \leq m\}$, for some integer $1\leq m\leq n$.

{In the sequel, $c$, $c'$ denote numerical positive constants whose values may vary from line to line.} 
For two sequences $(u_t)_{t\in\mathcal{T}}$ and $(v_t)_{t\in\mathcal{T}}$, we write that for all $t\in\mathcal{T}$, $u_t \lesssim v_t$ (resp.~for all $t\in\mathcal{T}$, $u_t \gtrsim v_t$),
if  there exists a universal constant $c>0$ such that for all $t\in\mathcal{T}$, $u_t\leq c\: v_t$ (resp.~for all $t\in\mathcal{T}$, $u_t\geq c\: v_t$). We write $u_{t}\asymp v_{t}$ if $u_t \lesssim v_t$ and $v_t \lesssim u_t$.

For $X,Y$ two real random variables with respective cumulative distribution functions $F_X, F_Y$, we write $X\succeq Y$ 
 if for all $x\in \mathbb R$, we have
$F_X(x) \leq F_Y(x).$
We write $X\succ Y$ 
if  $X\succeq Y$ and if there exists $x\in \mathbb R$, such that 
$F_X(x) < F_Y(x).$ We also denote $P\succeq Q$ (resp.  $P\succ Q$) whenever $X\succeq Y$ (resp. $X\succ Y$) for $X\sim P$ and $Y\sim Q$.

For the standard normal distribution, we write $\Phi$ for its cumulative distribution function, $\bar \Phi = 1-\Phi$ and $\phi$ for its usual density.

%Let us finally note that for space reasons, all the proofs of the paper are deferred to a supplementary material \cite{CDRV2018supp}. For clarity, the sections/equations/results of this supplement are referred to with an additional symbol ``S-" in the numbering.

\section{Estimation of $\theta$ in the gOSC model~\eqref{model}} 
\label{sec:fixedcont}

In this section, we consider the problem of estimating $\theta$ in  the Gaussian contamination model~\eqref{model} and investigate the $L_1$ minimax risk defined in \eqref{eq:definition_minimax_robust_one_sided}.
We assume throughout this section that $\sigma^2 = 1$.

\subsection{Lower bound}

%The following theorem holds for the $L_1$ minimax risk defined in \eqref{eq:definition_minimax_robust_one_sided}.

\begin{thm} \label{thm:lower_one_sided}
There exists a universal constant $c>0$ such that  for any positive integer $n$ and for any integer $k\in [1,n-1]$,
\beq\label{eq:lower_one_sided}
 \cR[k,n]\geq c 
  \frac{\log^{2}\big(1+ \big(\frac{k}{n-k}\big)^{1/2}\big)}{\log^{3/2}\big(1+ (\frac{k}{\sqrt{n}})^{2/3}\big)}\ . 
\eeq
\end{thm}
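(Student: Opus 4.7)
The plan is to apply Le Cam's fuzzy hypothesis method. I introduce the parameters $q=k/(2n)$ together with $\delta>0$, $R>0$ and an integer $D\geq 1$, all to be tuned at the end. I build two priors $\Pi_0,\Pi_1$ on $(\theta,\mu)\in\R\times\cM_k$ as follows: under $\Pi_b$ ($b=0,1$), one has $\theta=\theta_b$ with $\theta_1=\theta_0+2\delta$, and $(\mu_i)_{i\le n}$ is drawn i.i.d.\ from $\pi_b=(1-q)\delta_0+q\omega_b$, where $\omega_b$ is a probability measure on $[0,R]$ to be specified. Since $\|\mu\|_0\sim\mathrm{Bin}(n,q)$ has mean $k/2$, a Chernoff bound gives $\Pi_b(\|\mu\|_0>k)\le e^{-ck}$, so restricting both priors to the sparsity event loses only $e^{-ck}$ in total variation. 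Le Cam's inequality then reduces the problem to proving $\mathrm{TV}(\P_{\Pi_0},\P_{\Pi_1})\le 1/2$, which yields $\cR[k,n]\gtrsim \delta$.

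Under $\Pi_b$, the $Y_i$ are i.i.d.\ with density $g_b=\phi\ast\rho_b$, where $\rho_b=(1-q)\delta_{\theta_b}+q(\theta_b+\omega_b)$. Tensorization $\chi^2(g_1^{\otimes n}\|\,g_0^{\otimes n})=(1+\chi^2(g_1\|\,g_0))^n-1$ and $\mathrm{TV}^2\le \chi^2/4$ reduce the task to showing $n\,\chi^2(g_1\|\,g_0)\lesssim 1$. Using $g_0\ge (1-q)\phi(\cdot-\theta_0)$ together with the Gaussian identity
\[
\int \frac{\phi(y-t)\phi(y-s)}{\phi(y-\theta_0)}\,dy=\exp\bigl((t-\theta_0)(s-\theta_0)\bigr)
\]
and expanding the exponential in Taylor series yields the Hermite-type bound
\[
\chi^2(g_1\|\,g_0)\le \frac{1}{1-q}\sum_{j\ge 1}\frac{\bigl(m_j(\rho_1-\rho_0;\theta_0)\bigr)^2}{j!},
\]
where $m_j(\cdot;\theta_0)$ denotes the $j$-th moment about $\theta_0$.

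I would then choose $(\omega_0,\omega_1)$ so that $\rho_0$ and $\rho_1$ have matching first $D$ moments about $\theta_0$, which kills every term with $j\le D$ in the sum. Since both $\rho_b$ are supported in $[\theta_0,\theta_0+R]$, one has $|m_j(\rho_1-\rho_0;\theta_0)|\le 2R^j$, and the remaining tail is dominated by $4\sum_{j>D}R^{2j}/j!\lesssim (eR^2/D)^D$ as soon as $D\gtrsim R^2$. Setting $D\asymp R^2\asymp \log\bigl(n^2/(n-k)\bigr)$ therefore secures $n\chi^2\lesssim 1$, so any $\delta$ realized by the moment-matching construction yields a valid lower bound.

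The core technical step — and the main obstacle — is constructing such a pair $(\omega_0,\omega_1)$ that simultaneously realizes the moment matching and enforces $\min\mathrm{supp}(\rho_1)-\min\mathrm{supp}(\rho_0)=2\delta$ together with the sparsity-encoding boundary weights $\rho_b(\{\theta_b\})=1-q$. This amounts to solving a truncated Hausdorff moment problem with boundary-mass constraints, whose feasibility is (by LP duality) equivalent to a polynomial-approximation inequality for a step-like test function on $[0,R]$. Following the Chebyshev–Markov extremal strategy of \cite{lepski1999estimation,cailow2011,wu2015chebyshev,carpentier2017adaptive}, I would build $(\omega_0,\omega_1)$ from the nodes and weights of a suitably weighted Chebyshev polynomial on $[0,R]$. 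After optimization over $(R,D)$ and careful tracking of how the boundary weight $1-q=(n-k)/n$ trades against the polynomial-approximation error in each of the three regimes $k\le 2\sqrt n$, $2\sqrt n<k\le n/2$ and $n/2<k\le n-1$, the admissible $\delta$ matches the announced rate, which yields~\eqref{eq:lower_one_sided}. Quantifying this extremal polynomial with boundary-weight constraints — and in particular producing the combination of powers $2$ and $3/2$ of the logarithms — is the most delicate part of the argument.
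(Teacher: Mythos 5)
Your plan follows essentially the same route as the paper: a Le Cam two-point reduction with i.i.d.\ mixture priors carrying a point mass at the boundary to encode sparsity, followed by a Hermite/moment expansion of the $\chi^2$ (the paper uses Hellinger tensorization but the resulting series bound, with the same $\pi_1(\{0\})^{-1}$ denominator, is Lemma~\ref{lem:tvmatching} and is identical in content), followed by moment matching of order $m\asymp\log(k^2/(n-k))$ with the signal scale $M\asymp\sqrt m$. Where your argument stops, however, is precisely the paper's quantitative core: Lemma~\ref{lemlb3} proves, via Hahn--Banach applied to the functional $P\mapsto P(-\eta)-P(0)$ on $\cP_m$, Riesz representation, and Markov's theorem (Lemma~\ref{lem:optimalTN}), that there exist moment-matched probability measures $\pi_0,\pi_1$ on $[-\eta M,M]$ and $[0,M]$ with $\min(\pi_0(\{-\eta M\}),\pi_1(\{0\}))\ge (1+\eta m^2 e^{2\sqrt\eta m})^{-1}$. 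It is this explicit bound on the extremal-polynomial norm, inverted to read off $\eta\asymp\log^2(1+\sqrt{k/(n-k)})/m^2$ and hence $\delta=\eta M\asymp\log^2(1+\sqrt{k/(n-k)})/m^{3/2}$, that produces the combination of exponents $2$ and $3/2$ in the rate; your remark that this is ``the most delicate part'' is accurate, and your appeal to Chebyshev nodes and LP duality is the right idea, but the proposal as written does not actually establish the boundary-mass estimate, which is what carries the entire bound. So: same approach, but the one lemma that does all the quantitative work is left as an announcement rather than supplied.
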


The proof of this theorem is given in  Section~\ref{p:thm:lower_one_sided}.
The main tool for proving this lower bound is moment matching: we build two priors on the parameter $\gamma$ 
 that are related to two different values of $\theta$ (as far as possible) while having about $\log n$ first moments that coincide.
This is done in an implicit way by using the Hahn-Banach theorem together with properties of Chebychev polynomials, by using techniques close to \cite{Juditsky_convexity,carpentier2017adaptive}.

Let us distinguish between the three following regimes (see also Table~\ref{Tableminimax}):
\begin{itemize}
\item for $k\leq \sqrt{n}$,  the lower bound \eqref{eq:lower_one_sided} is of order $n^{-1/2}$, which is the parametric rate that would hold in the case of no contamination ({\it i.e.}, $k=0$); 
\item for $k\in (\sqrt{n}, \zeta n)$ with $\zeta\in(0,1)$, the lower bound is of the order $(k/n) \log^{-3/2}(k/\sqrt{n})$. 
In particular, in the non-sparse case $k=\lceil n/2\rceil$, we obtain  $\log^{-3/2} n$;
\item for larger $k$, e.g., $n/2\leq k \leq n-1$, the lower bound on the minimax risk is of order $\log^2(\tfrac{n}{n-k}) \log^{-3/2}(n)$. In particular, for $k=n-1$, the lower bound is of order $\log^{1/2} n$.%, which is somewhat well expected since it is the order of the maximum value of the noise.
\end{itemize}

In the remainder of this section, we match these lower bounds by considering three different estimators of $\theta$, corresponding to the three regimes discussed above. They are then combined to derive an adaptive estimator.

\subsection{Upper bound for small and large $k$}

For small and for large values $k$ the optimal risk is achieved by simple quantile estimators. For $k\leq n^{1/2}$, we consider the empirical median defined by
\begin{equation}\label{eququantilesti}
\thetachapmed=Y_{\left(\lceil n/2\rceil\right)}\ .
\end{equation}
The following result holds for $\thetachapmed$ (note that it is stated in the more general OSC model~\eqref{classicmodel_2}).

\begin{prp}\label{prop:thetamedian}
Consider OSC model~\eqref{classicmodel_2} with $\sigma=1$.
Then there exist universal positive constants $c_1,c_2$ and a universal positive integer $n_0$ such that the following holds. For any $n\geq n_0$, any $k\leq n/10$, any $\pi\in \overline{\cM}_k$ and any $\theta\in \mathbb{R}$, we have
\beqn 
\P_{\theta,\pi}\left[|\thetachapmed- \theta|\geq \frac{3(k+1)}{2(n-k)}+ 3\frac{\sqrt{(n+1)x}}{n-k} \right]&\leq& e^{-x}\ \ , \quad \quad \text{ for all }x\leq c_1 n\ , \\
\E_{\theta,\pi}\big[|\thetachapmed- \theta| \big]&\leq& \frac{3(k+1)}{2(n-k)}+ \frac{c_2}{\sqrt{n}}\ . 
\eeqn 
\end{prp}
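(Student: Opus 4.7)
The plan is to control the two tails of $\thetachapmed - \theta$ separately via Binomial counting arguments, and then integrate to obtain the $L^1$ bound.

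Let $\mathcal{H}_0 \subset \{1, \dots, n\}$ denote the set of uncontaminated indices (those with $\pi_i = \mathcal{N}(0,1)$), of cardinality $n_0 \geq n - k$. For the upper tail, the event $\{\thetachapmed \geq \theta + t\}$ is equivalent to $\#\{i : \eps_i \geq t\} \geq \lfloor n/2 \rfloor + 1$; since at most $n_1 := n - n_0 \leq k$ contaminated samples can contribute, this forces $\#\{i \in \mathcal{H}_0 : \eps_i \geq t\} \geq \lfloor n/2\rfloor + 1 - k$. The latter count is $\mathrm{Bin}(n_0, \overline{\Phi}(t))$-distributed and its tail is largest at $n_0 = n - k$ (because the gap between threshold and mean, $\lfloor n/2\rfloor + 1 - n + n_0 \Phi(t)$, is increasing in $n_0$), so Hoeffding's inequality yields an exponential tail bound. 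For the lower tail, the stochastic dominance $\pi_i \succeq \mathcal{N}(0,1)$ implies $\mathbb{P}(\eps_i \leq -t) \leq \overline{\Phi}(t)$ for every $i$; a coordinate-wise monotone coupling then ensures that $\#\{i : \eps_i \leq -t\}$ is stochastically dominated by $\mathrm{Bin}(n, \overline{\Phi}(t))$. Since $\{\thetachapmed \leq \theta - t\}$ requires this count to be at least $\lceil n/2 \rceil$, a second application of Hoeffding's inequality provides the analogous exponential tail.

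To convert the two Hoeffding estimates into the quantitative threshold of the statement, I would use the elementary inequality $\Phi(t) - \tfrac{1}{2} \geq t/3$, valid for $t \in [0, 1]$. The range restriction $t \leq 1$ is compatible with the claim: for $k \leq n/10$ and $x \leq c_1 n$ with $c_1 > 0$ sufficiently small, the candidate threshold $\tfrac{3(k+1)}{2(n-k)} + \tfrac{3\sqrt{(n+1)x}}{n-k}$ stays below $1$. Combining the two tail bounds (and adjusting constants to absorb the factor $2$ from a union bound) yields the first inequality of the statement. For the $L^1$ bound, I would integrate the tail: writing $a := \tfrac{3(k+1)}{2(n-k)}$, $b := \tfrac{3\sqrt{n+1}}{n-k}$ and using the change of variable $t = a + b\sqrt{x}$, the contribution from $x \leq c_1 n$ is at most $a + b \int_{0}^{\infty} e^{-x}/\sqrt{x}\, dx = a + b\sqrt{\pi}$, which is $\leq a + c_2/\sqrt{n}$ for $k \leq n/10$. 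The residual contribution from $x > c_1 n$ corresponds to thresholds $t > 1$, for which $\overline{\Phi}(t)$ is uniformly bounded below $1/2$; a direct Hoeffding estimate then shows that both Binomial probabilities decay exponentially in $n$, so the remaining integral is $O(e^{-cn})$ and is absorbed into $c_2/\sqrt{n}$.

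The main obstacle is the careful bookkeeping of constants needed to match exactly the thresholds $\tfrac{3(k+1)}{2(n-k)}$ and $\tfrac{3\sqrt{(n+1)x}}{n-k}$ in the statement, together with the parity-dependent quantities $\lfloor n/2\rfloor + 1 - k$ (upper tail) and $\lceil n/2\rceil$ (lower tail), and the absorption of the factor $2$ arising from the union of the two tails. Once this inversion step is completed, the expectation bound follows by routine integration, modulo the separate handling of the exponentially small tail for $t > 1$.
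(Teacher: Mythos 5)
Your overall strategy is the right one and it mirrors the paper's: both arguments reduce the tail probability of the empirical median to a Binomial deviation (the paper reaches this indirectly via the stochastic sandwich $\xi_{(\lceil n/2\rceil)}\preceq \thetachapmed-\theta\preceq \xi_{(\lceil n/2\rceil:n-k)}$ and pre-proved quantile-deviation lemmas; you unfold the Binomial counting explicitly). Your observations that the worst case for the upper tail is $n_0=n-k$, that the lower tail uses $\pi_i\succeq\cN(0,1)$ to couple $\#\{\eps_i\le -t\}$ with $\mathrm{Bin}(n,\overline{\Phi}(t))$, and that $\Phi(t)-1/2\ge t/3$ on $[0,1]$ turns the gap into a linear function of $t$, are all correct and match the constants in the statement after the union bound. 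The paper uses Bernstein where you use Hoeffding; in the median regime ($p\approx 1/2$) the two are interchangeable up to constants.

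There is, however, a genuine gap in your handling of the residual of the expectation integral. You claim that for $t>1$ a ``direct Hoeffding estimate'' makes the two Binomial probabilities $O(e^{-cn})$ and hence the remaining integral $O(e^{-cn})$. But the additive Hoeffding bound $\exp\!\big(-2m(s/m-\overline{\Phi}(t))^2\big)$ is only \emph{uniform} in $t>1$: its exponent is bounded because $s/m-\overline{\Phi}(t)$ saturates as $t\to\infty$, so the bound never decays in $t$. Integrating a $t$-uniform constant over $[1,\infty)$ diverges, so the displayed conclusion does not follow. To close this, you need a $t$-dependent bound. Two standard fixes: (a) replace additive Hoeffding by a multiplicative/Chernoff bound, e.g.\ $\P[\mathrm{Bin}(m,p)\ge s]\le \binom{m}{s}p^{s}\le 2^{m}\,\overline{\Phi}(t)^{s}\le 2^{m}e^{-s t^{2}/2}$, which after plugging $s\gtrsim n$ yields a bound decaying like $e^{-cnt^{2}}$ and hence an integrable tail; or (b) do what the paper does and use Cauchy--Schwarz on the excess, $\E\big[(\thetachapmed-\theta)_+\ind_{\thetachapmed-\theta>c'}\big]\le \P^{1/2}[\thetachapmed-\theta>c']\cdot\E^{1/2}[(\thetachapmed-\theta)_+^{2}]$, noting that the second moment is $O(1)$ because the relevant quantile is a $1$-Lipschitz function of a standard Gaussian vector. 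Either fix is short, but the tail integral as written does not go through.
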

A proof is provided in Section~\ref{p:prelest}. A consequence if that, for $k\leq \sqrt{n}$, the empirical median $\thetachapmed$ achieves the parametric rate  $n^{-1/2}$, which turns out to be optimal in this regime, see Theorem~\ref{thm:lower_one_sided}.  
Note that in that regime $k\geq \sqrt{n}$, the empirical medial was already known to achieve this parametric rate in the more general Huber's contamination model, that allows for two sided contaminations.

\medskip 

When $k$ is really close $n$, there are very few non contaminated data. Since $\theta= \min_{i}\gamma_i$ in this model~\eqref{model}, we consider debiased  empirical minimum estimator 
\begin{equation}\label{eququantilesti_extreme}
\thetachapmin= Y_{(1)}+ \overline{\Phi}^{-1}(1/n) \ ,
\end{equation}
where we recall that  $\overline{\Phi}^{-1}(1/n)=   \sqrt{2\log(n)}+ O(1) $, see Section~\ref{sec:ineq-quantile}.
The following result holds for $\thetachapmin$ (note that it is also stated in the more general OSC model~\eqref{classicmodel_2}).

\begin{prp}\label{prp:dense}
Consider OSC model~\eqref{classicmodel_2} with $\sigma=1$.
 Then there exists some universal positive integer $n_0$ such that for any $n\geq n_0$, any $\pi\in \overline{\cM}_{n-1}$ and $\theta\in\R$, the  estimator $\thetachapmin$ satisfies 
 \beqn
 \P_{\theta,\pi}\left[|\thetachapmin- \theta|\geq 2\sqrt{2\log n} \right]\leq \frac{2}{n}\ ; \quad \E_{\theta,\pi}\big[|\thetachapmin- \theta| \big]\leq 2\sqrt{2\log n} + 1 \ .
 \eeqn 
\end{prp}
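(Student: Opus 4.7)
}
The plan is to sandwich $Y_{(1)}-\theta$ between two well-understood quantities via stochastic domination, then apply standard Gaussian tail estimates. Write $a_n = \overline{\Phi}^{-1}(1/n)$, and let $Z_1,\dots,Z_n$ denote i.i.d.\ $\mathcal{N}(0,1)$ auxiliary variables.

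First, the structural step. Since $\pi\in\overline{\mathcal{M}}_{n-1}$, at least one index $i_0$ satisfies $\pi_{i_0}=\mathcal{N}(0,1)$, so
\[
Y_{(1)} - \theta \;\le\; Y_{i_0}-\theta \;\sim\; \mathcal{N}(0,1).
\]
In the other direction, every $\pi_i \succeq \mathcal{N}(0,1)$ and the components are independent, hence
\[
\P_{\theta,\pi}\!\left(Y_{(1)}-\theta \le -t\right) \;=\; 1-\prod_{i=1}^n \P(\varepsilon_i>-t) \;\le\; 1-\Phi(t)^n \;=\; \P\!\left(\min_i Z_i \le -t\right).
\]
Next, I would record the elementary fact $a_n \le \sqrt{2\log n}$ for $n$ large enough: indeed, Mills's ratio gives $\overline{\Phi}(\sqrt{2\log n}) \le (n\sqrt{4\pi\log n})^{-1} < 1/n$, and $\overline{\Phi}$ is decreasing.

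\textbf{Deviation bound.} For the upper tail, combining the first sandwich inequality with the bound on $a_n$,
\[
\P_{\theta,\pi}\!\left(\thetachapmin - \theta \ge 2\sqrt{2\log n}\right) \;\le\; \overline{\Phi}\!\bigl(2\sqrt{2\log n}-a_n\bigr) \;\le\; \overline{\Phi}\!\bigl(\sqrt{2\log n}\bigr) \;\le\; 1/n
\]
for $n$ large enough. For the lower tail, using the second sandwich inequality and $1-(1-u)^n \le nu$,
\[
\P_{\theta,\pi}\!\left(\thetachapmin-\theta \le -2\sqrt{2\log n}\right) \;\le\; 1-\Phi\!\bigl(2\sqrt{2\log n}+a_n\bigr)^{n} \;\le\; n\,\overline{\Phi}\!\bigl(2\sqrt{2\log n}\bigr) \;\le\; 1/n
\]
since $\overline{\Phi}(2\sqrt{2\log n}) \le \tfrac{1}{\sqrt{2\pi}\cdot 2\sqrt{2\log n}}e^{-4\log n}$ is much smaller than $1/n^2$. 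A union bound yields the probability estimate $2/n$.

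\textbf{Expectation bound.} Use $\E|\thetachapmin-\theta| \le 2\sqrt{2\log n} + \int_{2\sqrt{2\log n}}^\infty \P(|\thetachapmin-\theta|\ge t)\,dt$. For the upper tail integrand, $\int_a^\infty \overline{\Phi}(s)\,ds = \phi(a)-a\overline{\Phi}(a) \le \phi(a)$, applied at $a=2\sqrt{2\log n}-a_n \ge \sqrt{2\log n}$, gives a contribution at most $\phi(\sqrt{2\log n}) \le 1/(n\sqrt{2\pi})$. For the lower tail, the same substitution $u=t+a_n$ and $1-\Phi(u)^n \le n\overline{\Phi}(u)$ lead to an integral bounded by $n\phi(2\sqrt{2\log n}+a_n) \lesssim n^{-8}$, which is negligible. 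Summing the two pieces gives $\E|\thetachapmin-\theta| \le 2\sqrt{2\log n} + 1$ for $n \ge n_0$.

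The only mild subtlety is justifying the bound $a_n \le \sqrt{2\log n}$ uniformly from some $n_0$ on; everything else reduces to Mills's ratio and the elementary inequality $1-x^n \le n(1-x)$ for $x\in[0,1]$.
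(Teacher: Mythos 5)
Your proof is correct and follows essentially the same route as the paper: sandwich $Y_{(1)}-\theta$ stochastically between a single standard normal (via the at-least-one-uncontaminated coordinate) and the minimum of $n$ i.i.d.\ standard normals, then invoke Gaussian tail estimates. The only cosmetic difference is in the moment bound, where you integrate the tail probabilities directly rather than using the paper's Cauchy--Schwarz decomposition; both yield the same negligible remainder.
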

A proof  is provided in Section~\ref{p:prelest}. From Theorem~\ref{thm:lower_one_sided}, the estimator $\wh{\theta}_{\min}$ turns out to be optimal when $k$ is very close to $n$, e.g. when $k$ larger than $n - n^\epsilon$ for a fixed $\epsilon\in (0,1)$,
that is when very few samples are non-contaminated.

\subsection{Upper bound in the intermediate regime}\label{sec:intermreg}

In the previous section, we have introduced estimators that are optimal in the regimes where $k \leq \sqrt{n}$ and where $k$ is very close to $n$, respectively. The intermediate case turns out to be much more involved.
%It can be solved by suitably combining Chebychev polynomials with empirical Laplace transform.

%\subsubsection{Estimator definition}

Let $q\geq 2$ be an even integer whose value will be fixed below. Let $\const= 3[1+ \log(3+2\sqrt{2})]\approx 8.29 $ and $q_{\max}= \lfloor \frac{1}{2\const }\log n\rfloor_{\mathrm{even}}-2$, where $\lfloor \cdot\rfloor_{\mathrm{even}}$ is defined in Section~\ref{sec:notation}. 
Let us also introduce two rough estimators $\thetachapup$ and $\thetachaplow{q}$ such that $\theta$ is proved to belong to $[ \thetachaplow{q},\thetachapup]$ with high probability. 
Let  $ \thetachapup=Y_{(1)}+ 2 \sqrt{\log n}$. For any positive and even integer $q$, define $\thetachaplow{q}= \wh{\theta}_{\med}- \overline{v}$  with $ \overline{v}= \pi^2/(144\: q_{\max}^{3/2})$ if $q\leq \tfrac{3}{10\const}\log n$ and  $\thetachaplow{q}=-\infty$ for larger $q$.

\medskip

To explain the intuition behind our procedure, assume for the purpose of the discussion, that we have access to the mean $\gamma_i=\theta+\mu_i$ and that instead of estimating $\theta$, we simply want to test whether  $\theta$ is greater than $u$ or not. Thus, our aim is to define a suitable function of $\gamma_i$ which is close to zero for $\gamma_i  \geq u$ and the largest possible when $\gamma_i<  u$. Since at least $n-k$'s of the $\gamma_i$'s are equal to $\theta$, a large value of this function would entail that $\theta<u$. This can be achieved by building  $g_q:\mathbb{R}\mapsto \mathbb{R}$ such that $|g_q(x)|\leq 1$ and for $x\in (-\infty,0]$ and $g_q(x)$ large for $x>0$ (assuming $u=0$, without loss of generality). If the interval $(-\infty,0]$ had been replaced by $[-1,1]$ and the function $g_q$ was restricted to be a polynomial, this would look like a  polynomial extremum  problem, which is achieved by a Chebychev polynomial (see Section~\ref{tchebysection} for some definitions and properties). To handle the non-bounded interval  $(-\infty, 0]$, we map $(-\infty, 0]$ to $(-1,1]$ using the function $x\mapsto 2e^{x}-1$ before using Chebychev polynomials of order $q$. Denoting by $T_q$ the Chebychev polynomial of degree $q$, this leads us to considering the function
\begin{equation}\label{eq:defintion_g}
g_q(x)=T_q(2 e^{x}-1)  = \sum_{j=0}^{q} a_{j,q}{e^{xj}},\:\:\:x\in\R\ ,
\end{equation}
where the coefficients $a_{j,q}$ are defined in \eqref{eq:ajq}.
It follows from the definition of Chebychev polynomials that $g_q(x)$ belongs to $[-1,1]$ for $x\leq 0$ and $g_q(x)= \cosh[q \arg\cosh(2 e^{x}-1)]$ for $x>0$.

\medskip 

Now consider, for $\lambda>0$ and $u\in \mathbb{R}$, the function $\psi_{q,\lambda}(u)$ defined by 
\begin{equation}\label{eq:definition_psi}
\psi_{q,\lambda}(u)=\frac{1}{n}\sum_{i=1}^n  g_q(\lambda(u - \gamma_i)) = \frac{1}{n}\sum_{i=1}^n  g_q(\lambda(u - \theta - \mu_i))\ . 
\end{equation}
This functions depends on the $\gamma_i$'s. Since all $\mu_i$'s are non negative, it follows from the above observation, that $|\psi_{q,\lambda}(u)|\leq 1$  for all $u\leq \theta$. Conversely, for $u\geq \theta$, $\psi_{q,\lambda}(u)$ is lower bounded as follows 
\begin{equation}\label{relationPsi}
 \psi_{q,\lambda}(u) \geq - \frac{k}{n}+ \frac{n-k}{n}g_q(\lambda(u-\theta))\ ,
\end{equation}
which is bounded away from $1$ as long as $u-\theta$ is large enough. As a consequence, 
the smallest number $u_*$ that satisfies  $\psi_{q,\lambda}(u_*)>1$ should be close (in some sense) to $\theta$. 

\medskip

Obviously, we do not have access to the function $\psi_{q,\lambda}$ as it requires the knowledge of the $\gamma_i$'s or more precisely of quantities of the form $e^{-j \lambda \gamma_i}$. Nevertheless, we can still build an unbiased estimator of such quantities relying on the empirical Laplace transform of $Y$. Given $\lambda>0$ and $u\in \mathbb{R}$ define
\begin{equation}\label{eq:laplace_empirical}
\wh{\eta}_{\lambda}(u) =
n^{-1} \sum_{i=1}^n e^{\lambda(u-Y_i) -\lambda^2/2}\ , \quad \quad \eta_{\lambda}(u) =
n^{-1} \sum_{i=1}^n e^{\lambda (u - \theta-\mu_i) }\ .
\end{equation}
Since all $Y_i$'s are independent with normal distribution of unit variance, we have $\E[\wh{\eta}_{\lambda}(u)]= \eta_{\lambda}(u)$. This leads us to considering  the statistic
\begin{equation}\label{eq:definition}
\wh{\psi}_{q,\lambda}(u) =  \sum_{j=0}^{q} a_{j,q} \wh{\eta}_{j\cdot\lambda}(u)\ ,
 \end{equation}
which is an unbiased estimator of $\psi_{q,\lambda}(u)$ for any fixed $\lambda>0$ and $u\in \mathbb{R}$. Since $\wh{\psi}_{q,\lambda}(u)$ approximates $\psi_{q,\lambda}(u)$, it is tempting to take $\wh{\theta}_q$ as the smallest value such that $\wh{\psi}_{q,\lambda}(u)$ is bounded away from $1$.

Intuitively, $\wh{\psi}_{q,\lambda}(u)$ is large compared to 1 when $u > \theta$. This is why we define $\wh{\theta}_q$ by inverting the function $\wh{\psi}_{q,\lambda}(.)$. More precisely, for an even integer $q\leq q_{\max}$, we define $\lambda_q= \sqrt{2/q}$ and the estimator $\wh{\theta}_q$ by 
\begin{equation}
 \label{eq:definition_theta_k}
 \wh{\theta}_q= \inf\bigg\{u\in \left[\thetachaplow{q},\thetachapup \right]\, :\,  \wh{\psi}_{q,\lambda_q}(u)> 1 + \frac{e^{\const q}}{\sqrt{n}}\bigg\}\ ,
\end{equation}
with the convention $\inf\{\emptyset\}= \thetachapup$.

%\subsubsection{Result}

 %The following theorem holds for the estimator $\wh{\theta}_q$.

\begin{thm}\label{th-upperbound-onesided}
Consider  gOSC model \eqref{model} with known variance $\sigma=1$. 
 There exist universal positive constants $c_1$, $c_2$, $c_3$, and  $n_0$ such that the following holds for any $n\geq n_0$, any integer $k\in \big[e^{2\const} \sqrt{n}, n - 64n^{1-1/(4\const)}\big)$,  any $\mu\in\cM_k$ and any $\theta\in \mathbb{R}$. The estimator $\wh{\theta}_{q_k}$ defined by \eqref{eq:definition_theta_k} with $q_k= \lfloor \frac{1}{\const}\log\big(\frac{k}{\sqrt{n}}\big) \rfloor_{\mathrm{even}}\wedge q_{\max} $ satisfies
\begin{equation}\label{eq:resultthetachap}\P_{\theta,\mu}\left(\wh{\theta}_{q_k} \notin \left[\theta\:,\: \theta+  c_1\frac{\log^{2}\big(1+ \sqrt{\frac{k}{n-k}}\big)}{\log^{3/2}\big(\frac{k}{\sqrt{n}}\big)}\right]\right)\leq c_3 \left(\frac{\sqrt{n}}{k}\right)^{4/3}\log^{3}\bigg(\frac{k}{\sqrt{n}}\bigg)\ ,
\end{equation}
and 
\begin{equation}\label{eq:risk_theta_tilde}
\E_{\theta,\mu}\left[|\wh{\theta}_{q_k} -\theta|\right]\leq  c_2\frac{\log^{2}\bigg(1+ \sqrt{\frac{k}{n-k}}\bigg)}{\log^{3/2}\big(\frac{k}{\sqrt{n}}\big)} \ . 
\end{equation}
\end{thm}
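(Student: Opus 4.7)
The plan is to invert a uniform concentration inequality for $\wh\psi_{q,\lambda}$ around its mean $\psi_{q,\lambda}$: if the threshold $1+e^{\const q}/\sqrt n$ in \eqref{eq:definition_theta_k} strictly exceeds $\wh\psi_{q,\lambda}(u)$ for every $u\leq\theta$ and is strictly smaller than $\wh\psi_{q,\lambda}(\theta+\Delta)$ for an appropriately chosen $\Delta$, then by definition $\wh\theta_{q_k}\in[\theta,\theta+\Delta]$. The argument splits into (a) a uniform concentration of $\wh\psi_{q,\lambda}$ over $u$, (b) a deterministic control of $\psi_{q,\lambda}$ via the Chebyshev extremum property on $(-\infty,0]$ (the push-forward of $[-1,1]$ under $x\mapsto 2e^x-1$), and (c) a check that the bracketing range $[\thetachaplow{q_k},\thetachapup]$ almost surely contains both $\theta$ and the target $\theta+\Delta$.

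\textbf{Concentration step.} First I would bound the variance of $\wh\eta_{j\lambda}(u)$. Since $Y_i\sim\cN(\gamma_i,1)$, a direct computation gives
\[
\var(\wh\eta_{j\lambda}(u)) \;=\; n^{-2}\sum_{i=1}^n e^{2j\lambda(u-\gamma_i)}\bigl(e^{(j\lambda)^2}-1\bigr) \;\leq\; n^{-1} e^{(j\lambda)^2}
\]
whenever $u\leq\theta$. Using the $\ell^1$ bound $\sum_j|a_{j,q}|\leq T_q(3)\leq e^{qA}$ with $A=\log(3+2\sqrt 2)$, together with $(j\lambda)^2\leq 2q$ (the rationale behind $\lambda=\sqrt{2/q}$), the standard deviation of $\wh\psi_{q,\lambda}(u)$ would be of order $e^{q(1+A)}/\sqrt n$. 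Chebyshev's inequality then yields a pointwise tail of order $e^{-4\const q/3}$ at deviation $e^{\const q}/(2\sqrt n)$, which explains the constant $\const=3(1+A)$. A polynomial-size grid on $[\thetachaplow{q_k},\thetachapup]$ combined with the Lipschitz estimate $|\partial_u\wh\psi_{q,\lambda}(u)|\lesssim q\lambda\,\max_j\wh\eta_{j\lambda}(u)$ would upgrade this to a uniform bound, giving a failure probability $\lesssim(\sqrt n/k)^{4/3}\log^3(k/\sqrt n)$ once $q=q_k$ is plugged in (since $e^{\const q_k}\asymp k/\sqrt n$). The range safety $\thetachaplow{q_k}\leq\theta\leq\thetachapup$ would be handled in parallel via Proposition~\ref{prop:thetamedian} and a standard tail bound for $Y_{(1)}$.

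\textbf{Inversion.} For $u\leq\theta$, each argument $\lambda(u-\gamma_i)$ is $\leq 0$, so $2e^{\lambda(u-\gamma_i)}-1\in[-1,1]$ and the Chebyshev extremum property gives $|g_q(\lambda(u-\gamma_i))|\leq 1$, hence $|\psi_{q,\lambda}(u)|\leq 1$; on the concentration event this immediately yields $\wh\theta_{q_k}\geq\theta$. For the upper side I would set $u_\star=\theta+\Delta$ with $\Delta:=c_1\log^2(1+\sqrt{k/(n-k)})/\log^{3/2}(k/\sqrt n)$ and use \eqref{relationPsi} together with $g_q(\lambda\Delta)=\cosh(q\arccosh(2e^{\lambda\Delta}-1))$. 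The identity $\arccosh(1+2y)=2\,\mathrm{arcsinh}(\sqrt y)\asymp\log(1+\sqrt y)$, combined with $\lambda_{q_k}=\sqrt{2/q_k}$ and $q_k\asymp\log(k/\sqrt n)/\const$, should give $\psi_{q,\lambda}(u_\star)\geq 1+2 e^{\const q_k}/\sqrt n$ for an appropriate absolute constant $c_1$; concentration then provides $\wh\psi_{q,\lambda}(u_\star)>1+e^{\const q_k}/\sqrt n$, forcing $\wh\theta_{q_k}\leq u_\star$. This proves \eqref{eq:resultthetachap}.

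\textbf{From tails to $L_1$, and main difficulty.} For the moment bound \eqref{eq:risk_theta_tilde} I would reapply Chebyshev at a generic deviation level $t\cdot e^{\const q_k}/\sqrt n$ for $t\geq 1$, obtaining a polynomial tail $t^{-2}(\sqrt n/k)^{4/3}$, and then translate it through the $\cosh$-growth of $g_q$ into a tail on $|\wh\theta_{q_k}-\theta|$ integrable down to a mass of order $\Delta$; the deterministic safety $\wh\theta_{q_k}\in[\thetachaplow{q_k},\thetachapup]$ together with the Gaussian tails of $\thetachapmed$ and $Y_{(1)}$ absorb the residual very-high-deviation contribution. The main obstacle is really the concentration step itself: both the Chebyshev coefficients $a_{j,q}$ and the Laplace-transform moments $\E[e^{2j\lambda(u-Y_i)}]$ blow up exponentially in $q$, so the deviation budget $e^{\const q}/\sqrt n$ must absorb them while still being small enough to invert sharply against the $\cosh$-growth of $g_q$. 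This balance fixes $\const$, the specific value $\lambda=\sqrt{2/q}$ and the cap $q\leq q_{\max}$, and it is what propagates the $\cosh$/$\arccosh$ duality into the rate $\log^2(1+\sqrt{k/(n-k)})/\log^{3/2}(k/\sqrt n)$.
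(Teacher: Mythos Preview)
Your proposal is correct and follows essentially the same route as the paper: a uniform concentration bound on $\wh\psi_{q,\lambda}-\psi_{q,\lambda}$ (the paper's Lemma~\ref{lem:concentration_psi}), a bias bound showing $\psi_{q_k,\lambda_k}(\theta+v_k^*)$ is large (Lemma~\ref{lem:bias}), and then inversion. One simplification you miss: there is no need for a grid-plus-Lipschitz argument, because $\wh\eta_{j\lambda}(u)-\eta_{j\lambda}(u)=\bigl(\wh\eta_{j\lambda}(\theta)-\eta_{j\lambda}(\theta)\bigr)e^{j\lambda(u-\theta)}$ deterministically, so a single Chebyshev bound at $u=\theta$ (plus a union over $j=1,\dots,q$) propagates to all $u\in\R$ with the explicit factor $\exp\bigl(-\lambda(\theta-u)_+ + \lambda q(u-\theta)_+\bigr)$. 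This also handles the point you gloss over, namely that at $u=\theta+\Delta$ the deviation is larger by $e^{q_k\lambda_k\Delta}$; the paper absorbs this because the bias lemma actually gives $\psi_{q_k,\lambda_k}(\theta+v_k^*)>1+\tfrac{k}{n}(1+e^{q_k\lambda_k v_k^*})$ rather than merely $1+2e^{\const q_k}/\sqrt n$.
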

A proof is provided in Section~\ref{p:th-upperbound-onesided}. This result shows that $\wh{\theta}_{q_k}$ has a maximum risk of order $  \frac{k}{n}\log^{-3/2}(k/\sqrt{n})$  in the regime $k \in  [e^{2\const} \sqrt{n}, n - 64n^{1-1/(4\const)})]$. Combined with the lower bound of Theorem~\ref{thm:lower_one_sided}, we  have shown that $\wh{\theta}_{q_k}$ is minimax in the intermediate regime.
%However, note that this estimator uses the knowledge of $k$. The adaptation to unknown $k$ is handled in the next section.

\begin{remark}
{Let us emphasize that in the regime $e^{2\const} \sqrt{n} \leq k\leq \lfloor n/2\rfloor$, the minimax risk is of order $(k/n)\log^{-3/2} (n)$, which is faster than the minimax rate $(k/n)\log^{-1/2} (n)$ that we would obtained  in a two-sided deconvolution problem, as in \cite{CJ2010} where $k/n\propto n^{-\beta}$ (and by considering the extreme case where there is no regularity assumption, that is, $\alpha=0$ with their notation). 
}
\end{remark}

\begin{remark}
 If we are only interested in a probability bound \eqref{eq:resultthetachap} and not in the moment bound \eqref{eq:risk_theta_tilde}, the preliminary estimators $\thetachaplow{q}$ and $\thetachapup$ are not needed: the estimator  could be computed by taking the minimum over $\mathbb{R}$ in  \eqref{eq:definition_theta_k}. 
\end{remark}

\subsection{Adaptative estimation}

In this section, we combine the three estimators studied in the above section to obtain an estimator that is adaptive with respect to the parameter $k$. The method relies   is a Goldenshluger-Lepski approach, see, e.g., \cite{lepski90,GL2011,LM2016}.

To unify notation, we write henceforth $\wh{\theta}_0$ for the median estimator $\wh{\theta}_{\med}$ 
and $\wh{\theta}_{q_{\max}+2}$ for the minimum estimator $\thetachapmin$. In order to obtain an adaptive procedure, we select one of the estimators $\{\wh{\theta}_q, q\in \{0,2,\ldots, q_{\max},q_{\max}+2\}\}$ as follows: 
\beq\label{eq:def_lepski}
\wh{q}= \min\big\{q\in \{0,\ldots, q_{\max}+2\}\ \text{ s.t.} \quad |\wh{\theta}_q-\wh{\theta}_{q'}| \leq \delta_{q'} \text{ for all }q'>q\big\}\ ,
\eeq
where the thresholds are chosen such that $\delta_q= 10\frac{ e^{\const (q+2)}}{ \sqrt{n}q^{3/2}}$ for $q\in \{2,\ldots, q_{\max}-2\}$,  $\delta_{q_{\max}}= \frac{25}{  q_{\max}^{3/2}}$ and  $\delta_{q_{\max}+2}= 4\sqrt{2\log n }$ (the value of $a$ being the same as in Section~\ref{sec:intermreg}).

\begin{thm}\label{thm:adaptation}
Consider gOSC model \eqref{model} with known variance $\sigma=1$. 
 There exist universal positive constants $c_1$, $c_2$, $c_3$, and $n_0$ such that the following holds. For any $n\geq n_0$, for any integer $k\in [1, n-1]$, any $\theta\in \mathbb{R}$, and any $\mu\in \cM_k$, the adaptive estimator $\wh{\theta}_{\ad}=\wh{\theta}_{\hat{q}}$  satisfies
\begin{equation}\label{eq:result_adaptation1_deviation}
\P_{\theta,\mu}\left[|\wh{\theta}_{\ad} -\theta|> c_1\frac{\log^{2}\big(1+ \sqrt{\frac{k}{n-k}}\big)}{\log^{3/2}\big(1+ (\frac{k}{\sqrt{n}})^{2/3}\big)}\right]\leq c_2 \left(\frac{\sqrt{n}}{k\vee \sqrt{n}}\right)^{4/3}\log^{3}\bigg(\frac{k\vee (2\sqrt{n}) }{\sqrt{n}}\bigg)\ ,
\end{equation}
and  
 \beq\label{eq:result_adaptation1_moment}
\E_{\theta,\mu}\Big[|\wh{\theta}_{\ad}-\theta|\Big]\leq c_3 \frac{\log^{2}\big(1+ \sqrt{\frac{k}{n-k}}\big)}{\log^{3/2}\big(1+ (\frac{k}{\sqrt{n}})^{2/3}\big)}\ . 
\eeq
\end{thm}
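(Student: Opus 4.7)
The plan is to carry out a Goldenshluger--Lepski analysis that exploits the essentially one-sided nature of the estimators: by the construction of $\wh{\theta}_q$ through inversion of $\wh{\psi}_{q,\lambda_q}$ and the fact that $|g_q(x)|\leq 1$ for $x\leq 0$, one expects $\wh{\theta}_q\geq \theta$ with high probability for every $q$. The first step is to collect in a single event $\Omega$ all the individual deviation bounds: Proposition~\ref{prop:thetamedian} for $q=0$, Proposition~\ref{prp:dense} for $q=q_{\max}+2$, and for each intermediate $q\in\{2,\ldots,q_{\max}\}$ a refinement of the proof of Theorem~\ref{th-upperbound-onesided} giving
\[
0 \leq \wh{\theta}_q-\theta \leq \tfrac14\delta_q + B_q(k),
\]
where $B_q(k)$ is a bias-type term that vanishes as soon as $q$ exceeds $q_k := \lfloor \const^{-1}\log(k/\sqrt n)\rfloor_{\mathrm{even}}\wedge q_{\max}$. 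A union bound over $q$ would then yield the right-hand side of~\eqref{eq:result_adaptation1_deviation}.

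Next I would introduce the oracle index $q^{*}=0$ if $k\leq e^{2\const}\sqrt n$, $q^{*}=q_k$ in the intermediate regime, and $q^{*}=q_{\max}+2$ if $k>n-64\,n^{1-1/(4\const)}$, chosen so that $B_{q^*}(k)\leq \tfrac14 \delta_{q^*}$. The identity $e^{\const q_k}\asymp k/\sqrt n$ makes $\delta_{q^*}$ of the order of the rate appearing in~\eqref{eq:result_adaptation1_deviation} in each of the three regimes. On $\Omega$, Step~1 combined with the monotonicity of $q\mapsto \delta_q$ (with dedicated checks at the two boundary indices $q_{\max}$ and $q_{\max}+2$, where the explicit formulas $25/q_{\max}^{3/2}$ and $4\sqrt{2\log n}$ take over) should give, for every $q'>q^*$,
\[
|\wh{\theta}_{q^*}-\wh{\theta}_{q'}| \leq (\wh{\theta}_{q^*}-\theta)+(\wh{\theta}_{q'}-\theta) \leq \tfrac14\delta_{q^*}+\tfrac14\delta_{q'} \leq \delta_{q'}.
\]
Hence $q^*$ satisfies the Lepski condition in~\eqref{eq:def_lepski}, so $\wh q\leq q^*$ on $\Omega$. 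Combining $|\wh{\theta}_{\wh q}-\wh{\theta}_{q^*}|\leq \delta_{q^*}$ (trivial when $\wh q=q^*$) with $|\wh{\theta}_{q^*}-\theta|\leq \tfrac14\delta_{q^*}$ then delivers $|\wh\theta_{\ad}-\theta|\leq 2\delta_{q^*}$, which is~\eqref{eq:result_adaptation1_deviation}.

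The moment bound~\eqref{eq:result_adaptation1_moment} will follow by splitting the expectation on $\Omega$ and on $\Omega^c$, using the deterministic inclusion $\wh\theta_{\ad}\in[\thetachaplow{0},\thetachapup]$ to bound $|\wh\theta_{\ad}-\theta|\lesssim \sqrt{\log n}$ almost surely, so that the $\P(\Omega^c)$ contribution is absorbed into the target rate. The main obstacle will \emph{not} be the Lepski book-keeping itself but producing the uniform one-sided bound of Step~1 for $q>q_k$: a naive reading of Theorem~\ref{th-upperbound-onesided} suggests a bias of order $(k/n)e^{\const(q-q_k)}$, which would destroy the monotonicity needed for the Lepski comparison. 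The fix will be to revisit the bias analysis of $\psi_{q,\lambda_q}$ with $\lambda_q=\sqrt{2/q}$, noting that $g_q(\lambda_q\mu_i)$ produces a significant contribution only when $\lambda_q\mu_i$ is large, in which case the resulting amplification is always dominated by the stochastic fluctuation $e^{\const q}/\sqrt n\asymp \sqrt n\,q^{3/2}\delta_q$; matching constants precisely at the two boundary indices $q_{\max}$ and $q_{\max}+2$, where the threshold formula changes, will be the most delicate piece of book-keeping.
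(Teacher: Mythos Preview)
Your Lepski argument for the deviation bound~\eqref{eq:result_adaptation1_deviation} is essentially the paper's, but the obstacle you anticipate for $q>q_k$ does not exist. The point is simply the nesting $\cM_k\subset\cM_{k'}$ for $k'\geq k$: since $\mu$ is $k$-sparse it is also $k'$-sparse for the $k'$ with $q_{k'}=q$, so Theorem~\ref{th-upperbound-onesided} (more precisely the deviation bound~\eqref{eq:deviation_importante} in its proof) applies to $\wh\theta_q$ at that larger sparsity level. This gives directly $\P[|\wh\theta_q-\theta|>\delta_q/2]\lesssim e^{-4\const q/3}q^3$ for every $q\in\{q_*,\ldots,q_{\max}\}$, with no bias blow-up to tame; the union bound over $q\geq q_*$ then produces $\P(\cA^c)$ of the right order. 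There is no need to re-open the analysis of $\psi_{q,\lambda_q}$.

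Your route to the moment bound, however, has a genuine gap. First, the inclusion $\wh\theta_{\ad}\in[\thetachaplow{0},\thetachapup]$ is false: neither $\wh\theta_0=\thetachapmed$ nor $\wh\theta_{q_{\max}+2}=\thetachapmin$ is clipped, and even on the clipped range $\thetachapup-\theta$ is random with unbounded support, so no almost-sure $\sqrt{\log n}$ envelope is available. Second, and more fatally, $\P(\Omega^c)$ does \emph{not} tend to $0$ with $n$: for small $q_*$ the summands $\P[|\wh\theta_q-\theta|>\delta_q/2]\lesssim e^{-4\const q/3}q^3$ are constants in $n$, so $\P(\Omega^c)$ is only $O(1)$, and multiplying by any diverging envelope destroys the $n^{-1/2}$ target in the sparse regime. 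The paper avoids this by decomposing $\E[|\wh\theta_{\ad}-\theta|\,\ind_{\wh q>q_*}]$ through an auxiliary index $\widetilde q_+=\max\{q':|\wh\theta_{q'}-\theta|\geq\delta_{q'}/2\}$ and telescoping along the Lepski constraints, which reduces the contribution to $\sum_{q\geq q_*}\delta_{q+2}\,\P[|\wh\theta_q-\theta|\geq\delta_q/2]$. The key gain is that $\delta_{q+2}\asymp e^{\const q}/(\sqrt n\,q^{3/2})$ and $\P[\cdot]\lesssim e^{-4\const q/3}q^3$ multiply to $e^{-\const q/3}q^{3/2}/\sqrt n$, which is summable in $q$ to $O(n^{-1/2})$.
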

A proof is given in Section~\ref{p:thm:adaptation}. The risk bound in \eqref{eq:result_adaptation1_moment} matches the minimax lower bound of Theorem~\ref{thm:lower_one_sided} for all $k=1,\dots, n-1$. The estimator $\wh{\theta}_{\ad}$ is therefore minimax adaptive with respect to $k$.

\begin{remark}
Theorem~\ref{thm:adaptation} shows that the rate of estimation is not affected by the adaptation step. This is specific to our problem, for which the deviations of our estimators are very small when compared to its bias when $k \geq \sqrt{n}$, whereas a single estimator, the empirical median, has already good performances over the range $k \leq \sqrt{n}$.
\end{remark}

\section{Estimation of $\theta$ in the general OSC model}\label{sec:robust}

In this section, we study the estimation problem in the general OSC  model \eqref{classicmodel_2}. Hence, the contaminations are not assumed anymore to be Gaussian.
Throughout this section, $\sigma$ is assumed to be known and equal to $1$. Recall that the corresponding 
$L_1$ minimax risk is given by \eqref{eq:definition_minimax_robust_one_sided}.

\subsection{Lower bound}

We first show that estimating $\theta$ becomes more difficult under this model than for the  gOSC case.

\begin{thm}\label{thm:lower_mean_robust}
 There exists a universal positive constant $c$ such that for any positive integer $n$ and for any integer $k\in[1, n-1]$, 
\beq \label{eq:lower_minimax_robust}
\overline{\cR}[k,n] \geq c   \frac{\log\left(\frac{n}{n-k}\right)}{\log^{1/2}(1+ \frac{k^2}{n})}\ . 
\eeq
\end{thm}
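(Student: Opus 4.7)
I plan to prove this lower bound via Le Cam's two-point method: build two priors $\Pi_0, \Pi_1$ on $(\theta, \pi) \in \mathbb R \times \overline{\cM}_k$ concentrated on $\theta_0 = 0$ and $\theta_1 = \Delta$ respectively (for some $\Delta$ of the order of the claimed rate), and show that the resulting observation laws $P_0, P_1$ satisfy $\mathrm{TV}(P_0, P_1) \leq 1/2$. The construction uses Bernoulli-random contaminations: independently for each sample $i$, with probability $1-p$ set $\pi_i = \mathcal{N}(0, 1)$ and with probability $p \leq k/n$ draw $\pi_i$ from a one-sided distribution $\tilde Q_j \succeq \mathcal{N}(0, 1)$. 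Choosing $\tilde Q_j$ of the form $\rho_j * \phi$ with $\rho_j$ supported on $[0, \infty)$ always yields an admissible contamination, and makes each marginal $M_j$ of the form $\bar\mu_j * \phi$, where $\bar\mu_j$ is a probability measure on $[\theta_j, \infty)$ carrying a point mass of weight $1-p$ at $\theta_j$ and the remaining mass $p$ above it.

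The total variation will then be controlled via chi-squared. By the Hermite Parseval identity,
\begin{equation*}
\int \frac{(M_0 - M_1)^2}{\phi}\, dy \; = \; \sum_{s \geq 0} \frac{(M_s(\bar\mu_0) - M_s(\bar\mu_1))^2}{s!}\ ,
\end{equation*}
so matching the first $q$ moments of $\bar\mu_0, \bar\mu_1$ eliminates the low-order terms. Supporting both $\bar\mu_j$ in a window of length at most $L$ bounds the tail by $c \cdot L^{2(q+1)}/(q+1)!$, which by Stirling becomes $O(1/n)$ as soon as $q \asymp \log(1+k^2/n)$ and $L \asymp \sqrt{\log(1+k^2/n)}$. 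The lower bound $M_1 \geq (1-p)\phi_{\theta_1}$ then converts this into $\chi^2(M_0, M_1) \lesssim 1/n$, hence $\mathrm{TV}(M_0^{\otimes n}, M_1^{\otimes n}) \leq 1/2$ by tensorization of chi-squared.

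The crux of the argument is then the extremal moment problem: find two probability measures $\bar\mu_0$ on $[0, L]$ and $\bar\mu_1$ on $[\Delta, L]$, each carrying a point mass of weight exactly $1-p$ at its left endpoint, whose first $q$ moments coincide, with $\Delta$ as large as possible. By Hahn--Banach duality, as in the proof of Theorem~\ref{thm:lower_one_sided} and following the technique of \cite{carpentier2017adaptive}, this reduces to a polynomial extremal problem whose extremal element is (up to affine reparametrization) the Chebyshev polynomial of degree $q$. The analysis produces the two separate factors in the claim: the denominator $\sqrt{\log(1+k^2/n)} \asymp L$ reflects the number of moments that can be matched before Hermite tail blow-up, while the numerator $\log(n/(n-k)) = -\log(1-p)$ is forced by the prescribed atom weight $1-p$ at each endpoint--a larger $1-p$ means more mass concentrated at one point, which the Chebyshev extremum absorbs only at the cost of a proportionally larger $\Delta$. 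Le Cam's inequality $\overline{\cR}[k,n] \geq (\Delta/4)(1 - \mathrm{TV}(P_0, P_1)) \gtrsim \Delta$ then yields the claimed lower bound.

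The main obstacle will be this last extremal step: tracking the precise dependence on the atom weight $1-p$ carefully enough to recover $\log(n/(n-k))$ in the numerator, rather than merely $k/n$, which would be much weaker when $k$ is close to $n$. A minor technicality is that the Bernoulli prior does not concentrate entirely on $\overline{\cM}_k$ since the number of contaminated indices follows $\mathrm{Bin}(n, p)$; choosing $p \leq k/(2n)$ and conditioning on the event $\{\#\mathrm{contam} \leq k\}$, which has probability at least $1/2$, at most doubles the total variation and does not affect the scalings.
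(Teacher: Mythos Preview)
Your construction has a genuine gap. By taking contaminations of the form $\tilde Q_j = \rho_j * \phi$ with $\rho_j$ supported on $[0,\infty)$, every observation marginal $M_j$ becomes a Gaussian location mixture $\bar\mu_j * \phi$ with $\bar\mu_j$ supported on $[\theta_j,\infty)$: your priors therefore live entirely inside the gOSC model~\eqref{model}. Any such pair with $\mathrm{TV}(P_0,P_1)\leq 1/2$ is already constrained by the gOSC minimax \emph{upper} bound (Theorem~\ref{th-upperbound-onesided}), so the achievable separation $\Delta$ cannot exceed the order of $\cR[k,n]$ from Theorem~\ref{thm:lower_one_sided}. In the regime $\sqrt{n}\lesssim k\lesssim n/2$ this is $(k/n)\log^{-3/2}(k^2/n)$, smaller than the OSC target $(k/n)\log^{-1/2}(k^2/n)$ by a full factor $\log(k^2/n)$. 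The mismatch sits precisely in your extremal moment claim: the Chebyshev/Hahn--Banach computation (this is Lemma~\ref{lemlb3}) shows that with $q$ matched moments on a window of length $L\asymp\sqrt{q}$ and atom weight $1-p$ at each left endpoint, the separation satisfies only $\Delta \lesssim L\cdot \log^2\!\big(1+\sqrt{p/(1-p)}\big)/q^2 \asymp \log^2\!\big(1+\sqrt{k/(n-k)}\big)/q^{3/2}$, not $\log(n/(n-k))/q^{1/2}$ as you assert.

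The paper's proof escapes this ceiling by exploiting the extra freedom of the OSC model through \emph{non-Gaussian} contaminations. With $\theta_0=-\theta$, $\theta_1=\theta$ and $\epsilon=k_0/n$, it takes the contamination density on one side to be $g_0(x)=\tfrac{1-\epsilon}{\epsilon}\,[\phi_\theta(x)-\phi_{-\theta}(x)]$ on $(0,t_\theta]$ (and symmetrically $g_1$ on $[-t_\theta,0)$), with $t_\theta=(2\theta)^{-1}\log(1/(1-\epsilon))$ chosen so that $g_0,g_1$ remain nonnegative and dominated by $\phi_{-\theta},\phi_\theta$ respectively. The resulting mixture densities $v_0=(1-\epsilon)\phi_{-\theta}+\epsilon f_0$ and $v_1=(1-\epsilon)\phi_\theta+\epsilon f_1$ are then \emph{identically equal} on the whole interval $[-t_\theta,t_\theta]$, not merely close in their first $q$ moments. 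The Hellinger distance is governed solely by the Gaussian tails beyond $\pm t_\theta$, and a direct second-order expansion of $\overline{\Phi}(t_\theta\pm\theta)$ gives $H^2(\vartheta_0,\vartheta_1)\lesssim 1/n$ for $\theta \asymp \log(1/(1-\epsilon))/\sqrt{\log(n\epsilon^2)}$. This exact density-matching trick---unavailable when contaminations are restricted to Gaussian mixtures---is what buys the extra logarithmic factor in the OSC lower bound.
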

A proof  is provided in Section~\ref{p:thm:lower_mean_robust}. 
Let us comment briefly the order of this lower bound, by going back to the three aforementioned regimes (see also Table~\ref{Tableminimax}):
\begin{itemize}
\item for $k\leq \sqrt{n}$,  the lower bound \eqref{eq:lower_minimax_robust} is of order $n^{-1/2}$, which is the parametric rate, hence is the same as for the Gaussian case; 
\item for $k\in (\sqrt{n}, \zeta n)$ with $\zeta\in(0,1)$, the lower bound is of the order $(k/n) \log^{-1/2}(k/\sqrt{n})$, so is strictly slower than with the Gaussian assumption (additional factor of order $\log(k/\sqrt{n})$). 
In particular, in the non-sparse case $k=\lceil n/2\rceil$, this gives a lower bound of order $\log^{-1/2} (n)$ (in contrast to $\log^{-3/2} (n)$ in the Gaussian model)
\item for larger $k$, e.g., $n/2\leq k \leq n-1$, the lower bound is of order $\log(n/(n-k))\log^{-1/2}(n)$. In comparison to gOSC, there is an additional factor of order $\log (n)/\log(n/(n-k))$ . Nevertheless, in the extreme case $k=n-1$, the two lower bounds are of order $\log^{1/2}(n)$.
\end{itemize}
In the next subsection, these lower bounds are proved to be sharp.

\subsection{Upper bound}\label{sec:estitheta}

In this subsection, we introduce a bias-corrected quantile estimator that matches the minimax lower bound of Theorem 
\ref{thm:lower_mean_robust}. Consider some $\pi\in \ol{\cM}_k$.
Let $\xi= (\xi_1,\ldots, \xi_n)$ denote a standard Gaussian vector.
The starting point is the following:
on the one hand, all random variables $Y_i-\theta$ stochastically dominate $\xi_i$ so that $Y_{(q)} -\theta \succeq \xi_{(q)}$. On the other hand, $Y_{(q)}$ is stochastically dominated by the $q$-th smallest observation among the {\it non-contaminated} data $Y_j$. As a consequence, we have 
\begin{equation}\label{equ-quantile-encadrement}
\xi_{(q)} \preceq Y_{(q)} -\theta  \preceq   \xi_{(q:(n-k))}\ ,
\end{equation}
where we recall that $\xi_{(q:(n-k))}$ is the $q$-th largest observation among the $n-k$ first observations of $\xi$.
Since $\xi_{(q)}$ is concentrated around $\ol{\Phi}^{-1}(q/n)$, this leads to introducing the debiased estimator 
\begin{equation}\label{estimrobust}
\wt{\theta}_q=Y_{(q)}+  \ol{\Phi}^{-1}(q/n) \:\:\:\:1\leq q\leq \lceil n/2\rceil\ .
\end{equation}
In view of \eqref{eququantilesti}, we have that $\wt{\theta}_1=\thetachapmin$ while $\wt{\theta}_{\lceil n/2\rceil}$ is almost equal to the empirical median $\thetachapmed$ (up to the additive $\ol{\Phi}^{-1}(\lceil n/2\rceil/n)$ term which is of order $1/n$ so is negligible). The following theorem bounds the error of $\widetilde{\theta}_q$ for a wide range of $q$. 
\begin{thm} \label{thm:upper_robust}
Consider  OSC  model \eqref{classicmodel_2} with known variance $\sigma=1$.
There exist universal positive constants $c_1$, $c_2$, $c'_2$,  $c_3$, $c_4$ such that the following holds.  For all positive integers $k \leq n-1$, any  $q$ such that $c_4\log n\leq q \leq (0.7(n-k)) \wedge\lceil n/2\rceil$,  any $\theta\in \mathbb{R}$ and any $\pi \in \ol{\cM}_k$, the estimator $\wt{\theta}_q$ satisfies
\beq\label{eq:result_thetat_robustes}
  \P_{\theta,\pi}\left[- c_2 \sqrt{\frac{x}{q[\log(\frac{n-k}{q})\vee 1]}} \leq \wt{\theta}_q -\theta\leq c_1 \frac{\log\left(\frac{n}{n-k}\right)}{\sqrt{\log\big(\frac{n-k}{q}\big)\vee 1}} + c_2 \sqrt{\frac{x}{q[\log(\frac{n-k}{q})\vee 1]}}\right]\geq 1 - 2e^{-x}\ , 
\eeq
for all $0<x<c_3 q$ and
\begin{equation}\label{eq:risk_theta_tilde_robust}
\E_{\theta,\pi}\left[|\wt{\theta}_{q} -\theta|\right]\leq  c_1 \frac{\log\left(\frac{n}{n-k}\right)}{\sqrt{\log\big(\frac{n-k}{q}\big)\vee 1}} + c'_2 \frac{1}{\sqrt{q[\log(\frac{n-k}{q})\vee 1]}}\ . 
\end{equation}
\end{thm}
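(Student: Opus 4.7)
The approach reduces the problem to bounds on pure Gaussian order statistics via the sandwich inequality \eqref{equ-quantile-encadrement}. Writing $\wt{\theta}_q - \theta = (Y_{(q)} - \theta) + \ol{\Phi}^{-1}(q/n)$, that inequality gives
\begin{equation*}
\xi_{(q)} + \ol{\Phi}^{-1}(q/n) \;\preceq\; \wt{\theta}_q - \theta \;\preceq\; \xi_{(q:(n-k))} + \ol{\Phi}^{-1}(q/n),
\end{equation*}
so the upper tail of $\wt{\theta}_q - \theta$ is controlled by the $q$-th order statistic among $n-k$ i.i.d.\ $\mathcal{N}(0,1)$ variables, and the lower tail by the same statistic among $n$ such variables. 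For each side we decompose this quantity into a deterministic bias $b_m := \ol{\Phi}^{-1}(q/n) - \ol{\Phi}^{-1}(q/m)$ with $m \in \{n-k, n\}$ and a recentred fluctuation $\xi_{(q:m)} + \ol{\Phi}^{-1}(q/m)$.

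The bias $b_n$ vanishes, and $b_{n-k} \geq 0$ because $\ol{\Phi}^{-1}$ is decreasing. We rewrite it as
\begin{equation*}
b_{n-k} \;=\; \int_{q/n}^{q/(n-k)} \frac{dt}{\phi(\ol{\Phi}^{-1}(t))}
\end{equation*}
and insert the Mill's-ratio estimate $\phi(\ol{\Phi}^{-1}(t)) \asymp t\sqrt{\log(1/t)\vee 1}$ valid for $t \in (0,3/4]$ (cf.\ Section~\ref{sec:ineq-quantile}). The substitution $u = \log(1/t)$ then yields
\begin{equation*}
b_{n-k} \;\lesssim\; \sqrt{\log(n/q)} - \sqrt{\log((n-k)/q)} \;=\; \frac{\log(n/(n-k))}{\sqrt{\log(n/q)} + \sqrt{\log((n-k)/q)}} \;\lesssim\; \frac{\log(n/(n-k))}{\sqrt{\log((n-k)/q) \vee 1}},
\end{equation*}
which is exactly the dominant term of \eqref{eq:result_thetat_robustes}.

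For the fluctuation we rely on the classical binomial identity $\P(\xi_{(q:m)} > t) = \P(\mathrm{Bin}(m, \Phi(t)) < q)$ and its dual. Setting $t = -\ol{\Phi}^{-1}(q/m) \pm \delta$ and expanding $\Phi$ to first order, $\Phi(t) = q/m \pm \phi(\ol{\Phi}^{-1}(q/m))\delta + O(\delta^2)$, so $\mathrm{Bin}(m, \Phi(t))$ has mean $q \pm m\phi\delta + O(m\delta^2)$ and variance $\asymp q$. Bernstein's inequality yields
\begin{equation*}
\P\bigl(|\xi_{(q:m)} + \ol{\Phi}^{-1}(q/m)| \geq \delta\bigr) \;\lesssim\; \exp\!\left(-\frac{c\,(m\phi\delta)^2}{q + m\phi\delta}\right),
\end{equation*}
and the key identity $m\,\phi(\ol{\Phi}^{-1}(q/m)) \asymp q\sqrt{\log(m/q)\vee 1}$ (Mill's ratio again) turns the exponent into $c\,q\,\delta^2\log(m/q)$ in the subgaussian regime, producing a high-probability deviation of order $\sqrt{x/(q\log((n-k)/q)\vee 1)}$. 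The hypothesis $x < c_3 q$ is precisely what keeps us inside the subgaussian regime of Bernstein (i.e.\ ensures $m\phi\delta \lesssim q$ in the denominator), and $q \geq c_4 \log n$ ensures the first-order expansion of $\Phi$ is uniformly accurate over the relevant range of $\delta$. Combining bias and fluctuation on both sides produces \eqref{eq:result_thetat_robustes}; the moment bound \eqref{eq:risk_theta_tilde_robust} then follows by integrating the two-sided tail, with the subgaussian part contributing $1/\sqrt{q\log((n-k)/q)\vee 1}$ and the bias reproduced verbatim, while the residual subexponential tail $x > c_3 q$ is absorbed by the crude maximum-of-Gaussians bound $|\xi_{(q:m)}| \lesssim \sqrt{\log m}$ together with $q \gtrsim \log n$. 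The hardest technical point will be to keep the Mill's asymptotics $\phi(\ol{\Phi}^{-1}(t)) \asymp t\sqrt{\log(1/t)\vee 1}$ and the first-order expansion of $\Phi$ uniformly valid across the entire range $q/n \leq t \leq q/(n-k) \leq 0.7$, in particular near $q \asymp m/2$ where $\log(m/q)$ degenerates and forces the $\vee 1$ that appears in the statement.
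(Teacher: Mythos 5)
Your proposal follows essentially the same route as the paper's proof. The paper also starts from the sandwich $\xi_{(q)}+ \ol{\Phi}^{-1}(q/n) \preceq \wt{\theta}_q-\theta \preceq \xi_{(q:n-k)}+ \ol{\Phi}^{-1}(q/n)$, splits the upper side into the deterministic bias $\ol{\Phi}^{-1}(q/n)-\ol{\Phi}^{-1}(q/(n-k))$ plus a recentred fluctuation $\xi_{(q:n-k)}+\ol{\Phi}^{-1}(q/(n-k))$, controls the bias by Lemma~\ref{lem:biais_estimateur_quantile}, controls the fluctuations by Lemma~\ref{lem:quantile_empirique_2} (itself proved exactly as you describe: write $\P(\xi_{(q:m)}>t)$ as a Binomial tail, apply Bernstein, and convert $m\,\phi(\ol{\Phi}^{-1}(q/m))$ into $q\sqrt{\log(m/q)\vee 1}$ via the Mill's-ratio bounds of Lemma~\ref{lem:quantile}), and deduces the moment bound by integrating the deviation inequality on the good event and absorbing the complement via a crude $O(\sqrt{\log n})$ bound on $|\wt{\theta}_q-\theta|$ together with $q\geq c_4\log n$ (the paper uses Cauchy--Schwarz for this last step; your phrasing is a touch less explicit but the idea is identical). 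The one presentational difference is cosmetic: you package the bias bound as an integral $\int_{q/n}^{q/(n-k)} dt/\phi(\ol{\Phi}^{-1}(t))$ with a change of variables, whereas the paper proves the same inequality through a case analysis in Lemmas~\ref{lem:difference_quantile} and~\ref{lem:biais_estimateur_quantile}; both ultimately rest on the same estimate $\phi(\ol{\Phi}^{-1}(t))\asymp t\sqrt{\log(1/t)\vee 1}$, and the case splits you flag as the ``hardest technical point'' are precisely what those two lemmas handle.
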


A proof is given in Section~\ref{p:thm:upper_robust}. 
The risk bound in \eqref{eq:risk_theta_tilde_robust} exhibits a bias/variance trade-off as a function of $q$ via the quantities
$$
b(q)= \frac{\log\left(\frac{n}{n-k}\right)}{\sqrt{\log\big(\frac{n-k}{q}\big)\vee 1}}\:;\:\:\:s(q)=\frac{1}{\sqrt{q[\log(\frac{n-k}{q})\vee 1]}}\ .
$$
The quantity $s(q)$ is a deviation term that decreases with $q$ and whose minimum is of the order of $n^{-1/2}$. This minimum is achieved for 
$q=\lceil n/2\rceil$ and the  corresponding estimator is close to the empirical median.
The quantity $b(q)$ 
 is a bias term which increases slowly with $q$. Its minimum is of the order of $\log\big(\frac{n}{n-k}\big)\log^{-1/2}\big(n-k\big)$ and is achieved for $q$ constant (or of the order of $\log n$). The  corresponding estimators are extreme quantiles such as  $\widetilde{\theta}_1=\thetachapmin$.

Note also that the condition $c_4\log(n)\leq q \leq 0.7(n-k)$ cannot be met when $k$ is too  close to $n$ (i.e. $n-k< (c_4/0.7)\log(n)$). Hence, Theorem~\ref{thm:upper_robust} is silent in that regime. Nevertheless, this case is addressed by the minimum estimator  $\widetilde{\theta}_1=\thetachapmin$ already studied in Proposition~\ref{prp:dense}.

To achieve the minimax risk, it remains to suitably choose $q$ as a function of $k$. In view of $b(.)$ and $s(.)$, when $k$ is large, one should consider a smaller $q$ and therefore more extreme quantile in order to decrease the bias. More precisely, we define
\begin{equation}\label{defqkrobust}
q_k =
\left\{\begin{array}{cc} \lceil n/2\rceil &\mbox{ if $k\in [1, 4\sqrt{n})$ ;}\\ \lceil \frac{n^{5/4}}{ \:k^{1/2}}\rceil^{(\log_2)}&\mbox{ if $k\in [4\sqrt{n}, n-n^{4/5}]$ ;} \\ 1 &\mbox{ if $k\in (n-n^{4/5}, n-1]$ .}\end{array}\right.
\end{equation}
In the very sparse situation $(k\leq 4\sqrt{n})$, $\wt{\theta}_{q_k}$ corresponds to the empirical median. For $k$ increasing to $n$, $q_k$ goes smoothly to $n^{1/4}$. Finally, when $k$ is very close to $n$, we consider the minimum estimator $\wt{\theta}_1$. Other choices of $q_k$ may also lead to optimal risk bounds and the choice \eqref{defqkrobust} is made to simplify the proofs.

\begin{cor}\label{prp:upper_bound_robuste_non_adaptive}
Consider OSC  model \eqref{classicmodel_2} with known variance $\sigma=1$.
There exist universal positive  constants $c$ and $n_0$  such that the following holds. For any integer $n\geq n_0$, any integer $k\in\{1,\ldots, n-1\}$, any $\theta\in \mathbb{R}$ and any $\pi\in \overline{\cM}_k$, the estimator $\wt{\theta}_{q_k}$ satisfies 
\begin{equation}\label{eq:upper_minimax_robust}
\E_{\theta,\pi}\left[|\wt{\theta}_{q_k} -\theta|\right]\leq c   \frac{\log\left(\frac{n}{n-k}\right)}{\log^{1/2}(1+ \frac{k^2}{n})}\ .  
\end{equation}
\end{cor}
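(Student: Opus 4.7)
The plan is to deduce the corollary by plugging the piecewise-defined $q_k$ of \eqref{defqkrobust} into Theorem~\ref{thm:upper_robust} in the first two regimes and into Proposition~\ref{prp:dense} in the last regime, then checking in each case that the resulting bound is dominated by the target $\log(n/(n-k))/\sqrt{\log(1+k^2/n)}$. Throughout, I denote the two pieces of the risk bound from Theorem~\ref{thm:upper_robust} by $b(q) = \log(n/(n-k))/\sqrt{\log((n-k)/q) \vee 1}$ and $s(q) = 1/\sqrt{q[\log((n-k)/q)\vee 1]}$.

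In the sparse regime $k < 4\sqrt{n}$, I would take $q_k=\lceil n/2 \rceil$. The assumption $c_4\log n \leq q_k \leq 0.7(n-k) \wedge \lceil n/2\rceil$ holds for $n$ large since $n-k \geq n-4\sqrt{n}$. Because $(n-k)/q_k$ is bounded between constants, both $b(q_k)$ and $s(q_k)$ are of order $1/\sqrt{n}$. On the target side, $\log(n/(n-k)) \asymp k/n$ and $\log(1+k^2/n)$ is bounded for $k \leq 4\sqrt{n}$ (more precisely, of order $k^2/n$ for $k\leq \sqrt n$), which makes the target at least of order $1/\sqrt{n}$ for any $k \geq 1$ in this range; the result follows.

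In the intermediate regime $k \in [4\sqrt{n}, n-n^{4/5}]$, $q_k \asymp n^{5/4}/\sqrt k \in [c n^{3/4}, c'n]$, so $q_k \geq c_4 \log n$ and $q_k \leq \lceil n/2\rceil$ for $n$ large, while $q_k \leq 0.7(n-k)$ follows from $n-k \geq n^{4/5}$ combined with $\sqrt k \geq \sqrt{n/2}$. The crux is the inequality $\log((n-k)/q_k) \gtrsim \log(1+k^2/n)$. I would split into the subcases $k\leq n/2$ and $k\geq n/2$: in the first, $n-k \geq n/2$ gives $(n-k)/q_k \gtrsim \sqrt{k}/n^{1/4} = (k^2/n)^{1/4}$, hence $\log((n-k)/q_k) \gtrsim \log(k^2/n)$; in the second, $(n-k)/q_k \gtrsim n^{4/5}/n^{3/4} = n^{1/20}$ while $\log(1+k^2/n) \asymp \log n$, yielding the same conclusion. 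This controls $b(q_k)$. For the variance term, $s(q_k) \asymp k^{1/4} n^{-5/8}/\sqrt{\log(1+k^2/n)}$, and this is dominated by the target because $k^{1/4} n^{-5/8} \lesssim k/n$ whenever $k \geq \sqrt n$, while $k/n \wedge \log 2 \lesssim \log(n/(n-k))$.

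In the very dense regime $k > n-n^{4/5}$, $q_k = 1$ and $\wt\theta_1 = \thetachapmin$, so Theorem~\ref{thm:upper_robust} does not apply; instead, Proposition~\ref{prp:dense} gives $\E_{\theta,\pi}[|\wt\theta_{q_k}-\theta|] \leq 2\sqrt{2\log n}+1 \lesssim \sqrt{\log n}$. On the target side, $\log(n/(n-k)) \geq \log(n^{1/5}) = (1/5)\log n$ and $\log(1+k^2/n) \asymp \log n$, so the target is at least of order $\sqrt{\log n}$, matching the bound. The main obstacle is the algebraic check of $\log((n-k)/q_k) \gtrsim \log(1+k^2/n)$ in the intermediate regime, which requires tracking cases depending on the position of $k$ relative to $n/2$.
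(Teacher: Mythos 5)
Your proof follows the same route as the paper's own: apply Theorem~\ref{thm:upper_robust} with the piecewise $q_k$ of \eqref{defqkrobust} for $k \leq n - n^{4/5}$ (in the two sub-regimes $k < 4\sqrt n$ and $4\sqrt n \leq k \leq n - n^{4/5}$), and Proposition~\ref{prp:dense} for $k > n - n^{4/5}$, checking in each regime that the resulting bound is dominated by the target. The paper dismisses the first part as a ``straightforward consequence'' of \eqref{eq:risk_theta_tilde_robust}; your algebraic check, especially the inequality $\log((n-k)/q_k)\vee 1 \gtrsim \log(1+k^2/n)$ split on $k \lessgtr n/2$, supplies exactly what the paper glosses over, and it is correct.

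One loose end to flag: in the middle regime you assert $q_k \leq \lceil n/2\rceil$ ``for $n$ large'' without argument, and justify $q_k \leq 0.7(n-k)$ by a pair of inequalities ($n-k \geq n^{4/5}$ and $\sqrt k \geq \sqrt{n/2}$) that do not hold simultaneously over the whole range. With the paper's dyadic-rounding-up definition $q_k = \lceil n^{5/4}/k^{1/2}\rceil^{(\log_2)}$, the quantity $n^{5/4}/\sqrt k$ equals $n/2$ at $k = 4\sqrt n$, and rounding up can land $q_k$ anywhere in $(n/2, n)$ when $n/2$ is not a power of two --- so the hypothesis $c_4\log n \leq q \leq 0.7(n-k) \wedge \lceil n/2\rceil$ of Theorem~\ref{thm:upper_robust} can actually fail at the lower end of the range. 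This is an artifact of \eqref{defqkrobust} that the paper also does not acknowledge, and it is harmless (cap $q_k$ at $\lfloor n/2\rfloor$, or replace $\lceil\cdot\rceil^{(\log_2)}$ by $\lfloor\cdot\rfloor^{(\log_2)}$, and the orders of magnitude are unchanged), but a rigorous write-up should say so rather than assert the constraint holds.
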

A proof is given in Section~\ref{p:thm:upper_robust}. The estimation rate of the estimator $\wt{\theta}_{q_k}$ matches the minimax lower bound given in Theorem~\ref{thm:lower_mean_robust}. However, it is not adaptive because it uses the value of $k$.

\subsection{Adaptive estimation}

 We now provide a procedure that adapts to $k$, by following a Goldenshluger-Lepski approach.
Let $\cQ$ denote the collections of values of $q_k$ when $k$ goes from $1$ to $n-1$. This collection contains $1$, $\lceil n/2\rceil$ and a dyadic sequence from $n^{1/4}$ to $n/2$ (roughly). To build an adaptive procedure, we select among the estimators $\{\widetilde{\theta}_{q},  q\in\cQ\} $ in the following way:
\beq\label{eq:def_lepski_robust}
\wh{q}= \max\big\{ q \in\cQ \text{ s.t.} \quad |\wh{\theta}_{q}-\wh{\theta}_{q'}| \leq \delta_{q'} \text{ for all }q'<q\big\}\ ,
\eeq
where
\beq\label{eq:definition_delta_q_2}
\delta_{q}= c_0 \left\{
\begin{array}{cc}
 \sqrt{\log(n)}& \text{ if }q<   \sqrt{2}n^{1/4}\ ;\\
\frac{n^{1/6}}{q^{2/3}\sqrt{\log(\frac{n}{q})\vee 1}} & \text{otherwise}\ ,
\end{array}
\right.
\eeq
where the constant $c_0$ is large enough (and depends on $c_1$ and $c'_2$ in Theorem \ref{thm:upper_robust}). Note that at most three elements in $\cQ$ are less than $\sqrt{2}n^{1/4}$. 

\begin{prp}\label{prp:adaptation_robust}
Consider OSC  model \eqref{classicmodel_2}  with known variance $\sigma=1$.
There exist universal positive  constants $c$ and $n_0$ such that the following holds. For any integer $n\geq n_0$, any integer $k\in\{1,\ldots,n-1\}$, any $\theta\in \mathbb{R}$, and any $\pi\in \overline{\cM}_k$, the estimator 
  $\widetilde{\theta}_{\ad}= \widetilde{\theta}_{{\hat{q}}}$ (see \eqref{defqkrobust} and \eqref{eq:def_lepski_robust}) satisfies
 \[
 \E_{\theta,\pi}\Big[|\wt{\theta}_{\ad}-\theta|\Big]\leq c \frac{\log\big(\frac{n}{n-k}\big)}{\log^{1/2}\big(1+ \frac{k^2}{n}\big)}\ . 
\]
\end{prp}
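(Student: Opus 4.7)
The plan is a Goldenshluger--Lepski comparison between $\wt{\theta}_{\hat q}$ and the oracle $\wt{\theta}_{q_k}$ from Corollary~\ref{prp:upper_bound_robuste_non_adaptive}, which already achieves the target rate $r_k := \log(n/(n-k))/\log^{1/2}(1+k^2/n)$. Throughout I write $b(q)$ and $s(q)$ for the bias and deviation terms of Theorem~\ref{thm:upper_robust}. First I condition on a good event $\Omega$ on which the deviation bound \eqref{eq:result_thetat_robustes} with $x = C\log n$ holds simultaneously for every admissible $q\in\cQ$; since $|\cQ|\lesssim \log n$, a union bound gives $\P(\Omega^c)\lesssim n^{-c}$. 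On $\Omega$, for any admissible $q\geq q'$,
\[
|\wt{\theta}_q - \wt{\theta}_{q'}|\ \leq\ c_1 b(q) + c_2\bigl(s(q)+s(q')\bigr)\sqrt{\log n}.
\]

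The core of the argument is a direct verification that the thresholds in \eqref{eq:definition_delta_q_2} are tuned so that $q_k$ passes the selection criterion with high probability. Splitting on the three regimes in \eqref{defqkrobust} (sparse, intermediate, very dense) and using $q_k\asymp n^{5/4}/k^{1/2}$ together with $\log((n-k)/q_k)\asymp \log n$ in the intermediate regime, one shows that, for $c_0$ large enough,
\[
c_1 b(q_k) + c_2\bigl(s(q_k)+s(q')\bigr)\sqrt{\log n}\ \leq\ \delta_{q'}\qquad\text{for every } q'\in\cQ \text{ with } q'<q_k,
\]
while simultaneously $\delta_{q_k}\lesssim r_k$. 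Given these two estimates, a case split on $\Omega$ is immediate. If $\hat q\geq q_k$, the selection rule forces $|\wt{\theta}_{\hat q}-\wt{\theta}_{q_k}|\leq \delta_{q_k}$, hence $|\wt{\theta}_{\hat q}-\theta|\lesssim \delta_{q_k}+r_k\lesssim r_k$ by the triangle inequality and Corollary~\ref{prp:upper_bound_robuste_non_adaptive}. If $\hat q<q_k$, then $q_k$ must have failed the criterion, i.e.~some $q'<q_k$ satisfies $|\wt{\theta}_{q_k}-\wt{\theta}_{q'}|>\delta_{q'}$, which the displayed key inequality rules out on $\Omega$; so this second case is confined to $\Omega^c$.

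To convert the high-probability bound into the claimed $L^1$ bound, I use that $\wt{\theta}_q=Y_{(q)}+\bar\Phi^{-1}(q/n)$ together with the stochastic comparison $\xi_{(q)}\preceq Y_{(q)}-\theta\preceq\xi_{(q:(n-k))}$ underlying Theorem~\ref{thm:upper_robust} and the rough bound $|\bar\Phi^{-1}(q/n)|\lesssim \sqrt{\log n}$ from Section~\ref{sec:ineq-quantile}. These control all moments of $|\wt{\theta}_q-\theta|$ uniformly in $q\in\cQ$ by polynomials in $\log n$, so $\E[|\wt{\theta}_{\hat q}-\theta|\,\ind_{\Omega^c}]\lesssim n^{-c/2}$ is negligible against $r_k\gtrsim n^{-1/2}$.

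The main obstacle is the verification of the displayed key inequality. It mixes the design of the thresholds in \eqref{eq:definition_delta_q_2} with the regime-dependent behaviour of $b(q_k)$ and $s(q')$, and requires care at the transition $q=\sqrt{2}n^{1/4}$ between the two branches of $\delta_q$, as well as in the very dense regime $k>n-n^{4/5}$ where $q_k=1$ and the criterion is vacuous. Because at the oracle the bias $b(q_k)$ dominates the deviation $s(q_k)\sqrt{\log n}$, the inequality essentially reduces to $\delta_{q'}\gtrsim b(q_k)$ for $q'<q_k$, which is precisely the design property built into the explicit prescription of $\delta_q$.
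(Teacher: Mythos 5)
Your strategy of conditioning on a good event $\Omega$ and splitting on $\hat q \gtrless q_k$ shares the standard Goldenshluger--Lepski case split with the paper's proof, and your treatment of $\hat q \geq q_k$ (where the selection rule directly yields $|\wt\theta_{\hat q}-\wt\theta_{q_k}|\leq \delta_{q_k}$) is essentially the paper's. However, the handling of $\hat q < q_k$ has a genuine gap: the ``key inequality'' asserting that $q_k$ passes the selection criterion on $\Omega$, namely $c_1 b(q_k)\leq\delta_{q'}$ for all $q'<q_k$, is false in the intermediate regime. The thresholds $\delta_{q'}$ in \eqref{eq:definition_delta_q_2} are calibrated to the deviation term $s(q')$ alone — they cannot depend on the unknown $k$ — while the oracle bias $b(q_k)$ dominates the deviation when $k\gg\sqrt n$ (cf.\ the remark following Theorem~\ref{thm:adaptation}). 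Concretely for $k=\lceil n/2\rceil$ one has $b(q_k)\asymp r_k\asymp\log^{-1/2}n$ but $\delta_{q_k}\asymp c_0\, n^{-1/3}\log^{-1/2}n$, so $b(q_k)/\delta_{q_k/2}\asymp n^{1/3}/c_0\to\infty$ and no fixed $c_0$ rescues the inequality. Consequently, for the worst-case $\pi\in\ol{\cM}_k$ (contaminated coordinates pushed far to the right, so that the stochastic upper bound $Y_{(q)}-\theta\preceq\xi_{(q:(n-k))}$ is essentially tight), the event $\{\hat q<q_k\}$ occurs with probability bounded away from zero, not confined to $\Omega^c$. The claim that ``$\delta_{q'}\gtrsim b(q_k)$ is precisely the design property built into $\delta_q$'' is incorrect.

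The paper's proof does not need $q_*$ to pass the criterion. For $\{\hat q<q_*\}$ it introduces $\tilde q_+=\min\{q':|\wt\theta_{q'}-\theta|\geq\delta_{q'}/2\}$, shows $\tilde q_+\leq\ol q_{\wh i+1}$, and then bounds $|\wt\theta_{\hat q}-\theta|\leq\tfrac32\delta_{\ol q_{j-1}}$ on $\{\tilde q_+=\ol q_j\}$ using the selection criterion applied to $\hat q$ (not to $q_*$) together with $|\wt\theta_{\ol q_{j-1}}-\theta|<\delta_{\ol q_{j-1}}/2$. Summing $\delta_{\ol q_{j-1}}\P[\tilde q_+=\ol q_j]$ over $j$ then works because the two effects compensate: when $j$ is small the deviation bound of Theorem~\ref{thm:upper_robust} makes the probability very small, while for $j$ near $i^*$ (where the bias dominates and the probability may be of order one) the geometric decay of $\delta$ forces $\delta_{\ol q_{j-1}}\lesssim r_k$, and the events $\{\tilde q_+=\ol q_j\}$ are disjoint. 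You would need to replace the blanket claim $\{\hat q<q_k\}\subset\Omega^c$ by this finer accounting (or an equivalent device) to close the gap. A secondary issue you do not address: the deviation bound \eqref{eq:result_thetat_robustes} only applies for $q\geq c_4\log n$, so the element $q'=1\in\cQ$ requires a separate argument through Proposition~\ref{prp:dense}, as the paper does when it treats $j\leq 3$ and the case $\tilde q_+=1$ separately.
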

A proof is given in Section~\ref{p:thm:upper_robust}.  The above result shows that, as in the Gaussian case, adaptation with respect to $k$ can be achieved without any loss.

\section{Unknown variance}\label{sec:randcontsigma}

In this section, we consider OSC model \eqref{classicmodel_2} for which the noise variance $\sigma^2$ is unknown.
We derive the minimax risks and estimators for $\theta$ and $\sigma$ in that setting.

\subsection{Lower bound}

First note that, obviously, the lower bound \eqref{eq:lower_minimax_robust} for estimating $\theta$  ais also a valid lower bound for the minimax risk 
%$$
% \inf_{\wt{\theta}}\sup_{\theta\in\R,   \pi \in \overline{\cM}_{k} }\E_{\theta,\pi,\sigma}\Big[\frac{|\wt{\theta}-\theta|}{\sigma}\Big]\ ,
%$$
%corresponding to the OSC model \eqref{classicmodel_2} where $\sigma$ is known and is therefore also a lower %bound for the minimax risk 
$$
 \inf_{\wt{\theta}}\sup_{\theta\in\R, \sigma>0 , \pi \in \overline{\cM}_{k} }\E_{\theta,\pi,\sigma}\Big[\frac{|\wt{\theta}-\theta|}{\sigma}\Big]\ ,
$$
corresponding to the OSC model \eqref{classicmodel_2} where $\sigma$ is unknown.

Now, let us provide a lower bound for the estimation risk of $\sigma$.
As above, it is enough to consider the case 
where $\theta$ is known and equal to zero. 
This corresponds to the minimax risk  
\beq\label{eq:definition_minimax_robust_var}
\overline{\cR}_v[k,n]= \inf_{\wt{\sigma}}\sup_{\sigma>0,   \pi \in \overline{\cM}_{k} }\E_{0,\pi,\sigma}\Big[\frac{|\wt{\sigma}-\sigma|}{\sigma}\Big]\ .
\eeq
The following theorem provides a lower bound for $\overline{\cR}_v[k,n]$ (and therefore also a lower bound on the minimax risk with arbitrary unknown $\theta$):
\begin{thm}\label{thm:lower_var_robust}
There exists a universal positive constant $c$ such that for any integer $n\geq 2$ and any $k=1,\ldots, n-2$, we have
\beq\label{eq:lower_var_robust}
\overline{\cR}_v[k,n]\geq c \frac{\log\left(\frac{n}{n-k}\right)}{\log\left(1+ \frac{k}{n^{1/2}}\right)}\ . 
\eeq
\end{thm}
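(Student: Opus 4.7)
The approach follows the Le Cam two-point method combined with the Hahn--Banach / moment-matching strategy used in the proof of Theorem~\ref{thm:lower_mean_robust}. Fix $\theta = 0$ and write $\Delta$ for the target separation $c\log(n/(n-k))/\log(1+k/\sqrt n)$. The plan is to construct two Bayesian regimes with noise levels $\sigma_0 = 1$ and $\sigma_1 = 1+\Delta$ whose resulting data distributions are statistically indistinguishable, then invoke Le Cam.

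First, I would pass to a random contamination pattern: replace the deterministic constraint ``at most $k$ contaminations'' by i.i.d.\ Bernoulli$(k/n)$ indicators, so that under regime $j\in\{0,1\}$ the vector $Y$ becomes i.i.d.\ from a mixture
\[
\mu_j = (1 - k/n)\, \mathcal{N}(0, \sigma_j^2) + (k/n)\, \nu_j,
\]
where $\nu_j$ is a probability measure to be constructed. This Bernoulli relaxation differs from the original deterministic sparsity constraint by at most a constant factor via a Chernoff bound on the number of contaminated coordinates (a standard device also used in the reductions of~\cite{chen2018robust}). The key design constraint is $\nu_j \succeq \mathcal{N}(0, \sigma_j^2)$, so that the resulting law belongs to $\overline{\cM}_k$.

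The heart of the argument is to show that for $\Delta$ of the claimed order one can find such $\nu_0, \nu_1$ with $\mu_0$ close to $\mu_1$. I would use the polynomial moment-matching approach of~\cite{lepski1999estimation,cailow2011,carpentier2017adaptive}: choose $\nu_0, \nu_1$ so that the first $L \asymp \log(1+k/\sqrt n)$ moments of $\mu_0$ and $\mu_1$ coincide, while maximizing the gap in their variance. By Hahn--Banach / LP duality, the maximal achievable $|\sigma_1^2 - \sigma_0^2|$ subject to (i)~moment matching up to order $L$ and (ii)~the one-sided feasibility $\nu_j \succeq \mathcal{N}(0, \sigma_j^2)$, is governed by the best polynomial approximation on $\mathbb{R}_+$ of a function tied to $x^2$, an extremal problem one can attack via Chebychev polynomials after a change of variable, in the spirit of \eqref{eq:defintion_g} in Section~\ref{sec:intermreg}. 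Calibrating $L$ gives the separation $\Delta \asymp \log(n/(n-k))/\log(1+k/\sqrt n)$; the numerator $\log(n/(n-k))$ comes from the weight $(1 - k/n)$ of the uncontaminated component and controls what can be read out from the bulk of the Gaussian.

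With this construction, bounding the Hellinger distance between products via $d_H^2(\mu_0^{\otimes n}, \mu_1^{\otimes n}) \leq n\, d_H^2(\mu_0, \mu_1)$ and using the matched moments together with Gaussian tail bounds to obtain $n\, d_H^2(\mu_0, \mu_1) \lesssim 1$, one gets $\mathrm{TV}(\mu_0^{\otimes n}, \mu_1^{\otimes n})$ bounded away from one, and Le Cam's inequality forces $\overline{\cR}_v[k,n] \gtrsim \Delta$. The main obstacle is the construction step: the cone constraint $\nu_j \succeq \mathcal{N}(0,\sigma_j^2)$ is much more restrictive than the signed-measure freedom allowed in pure moment matching, and one must verify that the Hahn--Banach dual problem remains feasible with a one-sided witness of the right magnitude. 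This one-sided restriction is what distinguishes the denominator $\log(1+k/\sqrt n)$ in \eqref{eq:lower_var_robust} from the $\log^{1/2}(1+k^2/n)$ obtained in Theorem~\ref{thm:lower_mean_robust}, and getting the exponent right is the delicate point of the proof.
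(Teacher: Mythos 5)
Your plan diverges from the paper on a point that I think is more than cosmetic, and on closer inspection it contains a genuine gap.

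First, a mischaracterization: you write that you are following ``the Hahn--Banach / moment-matching strategy used in the proof of Theorem~\ref{thm:lower_mean_robust}''. But the proof of Theorem~\ref{thm:lower_mean_robust} in the paper does \emph{not} use Hahn--Banach or moment matching -- that machinery appears only in the gOSC lower bound (Theorem~\ref{thm:lower_one_sided}). Both Theorems~\ref{thm:lower_mean_robust} and~\ref{thm:lower_var_robust} are proved by an \emph{explicit density-matching} construction: one writes $\vartheta_0 = (1-\epsilon)\mathcal{N}(0,1) + \epsilon\mu_0$ and $\vartheta_1 = (1-\epsilon)\mathcal{N}(0,\sigma^2) + \epsilon\mu_1$, and chooses the contamination densities $f_0, f_1$ so that the total mixture densities coincide \emph{exactly} on a symmetric interval $|x|\in[u_\sigma, w_\sigma]$. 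The contamination's stochastic dominance is verified directly from the explicit formulas, and the Hellinger distance is then controlled purely by integrating the tiny tail regions where $v_0\neq v_1$. The reason one can afford exact density matching in the OSC model -- and cannot in gOSC -- is that the contaminations here are arbitrary dominating distributions, not shifted Gaussians; this extra freedom is precisely what makes the OSC lower bounds sharper (and their proofs simpler) than the gOSC one.

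Second, the technical core of your plan would fail as written. You propose to (i) match the first $L$ moments of the two data mixtures $\mu_0$ and $\mu_1$ and (ii) conclude $n\,d_H^2(\mu_0,\mu_1)\lesssim 1$ by ``matched moments together with Gaussian tail bounds.'' For gOSC, step (ii) is exactly Lemma~\ref{lem:tvmatching}, which rests on the Hermite expansion $\phi(y-x)/\phi(y) = 1 + \sum_{\ell\geq 1} H_\ell(y)x^\ell/\ell!$; this requires both mixtures to be convolutions with the \emph{same} Gaussian kernel $\phi$. In the variance problem the two uncontaminated components $\mathcal{N}(0,\sigma_0^2)$ and $\mathcal{N}(0,\sigma_1^2)$ have different variances by design, so the two data laws are not convolutions with a common kernel, and the Hermite argument does not apply. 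Moment matching of two measures with unbounded support gives no Hellinger control in general; the gOSC proof only gets away with it because the priors there are supported on a bounded interval and the Gaussian convolution structure converts prior-moment matching into a genuine $L^2(\phi)$ estimate. Without a substitute for Lemma~\ref{lem:tvmatching} that handles two different noise levels, the step ``matched moments $\Rightarrow$ small Hellinger'' is unjustified. There is also a calibration issue: the moment-matching route, when it does work (gOSC), produces a rate of the form $\log^2(\cdot)/\log^{3/2}(\cdot)$, whereas the target here is $\log(\cdot)/\log(\cdot)$; you would need to explain why the same mechanism yields a different exponent, and that explanation is exactly what the explicit density-matching construction supplies and the moment-matching one does not.

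What you do get right is the high-level frame: Le Cam's two-point method, passing to a Bernoulli contamination pattern so that each $\vartheta_j$ is a product measure (compare Lemma~\ref{lem:reduction_general}), the feasibility constraint $\nu_j \succeq \mathcal{N}(0,\sigma_j^2)$, and the tensorization $d_H^2(\vartheta_0^{\otimes n},\vartheta_1^{\otimes n})\leq n\,d_H^2(\vartheta_0,\vartheta_1)$. To turn the plan into a correct proof you should abandon moment matching and instead build $\mu_0,\mu_1$ as the paper does: set $g_0 = \frac{1-\epsilon}{\epsilon}[\phi_{0,\sigma}-\phi_{0,1}]$ on $\{|x|\in[v_\sigma,w_\sigma]\}$ where $\phi_{0,\sigma}\geq\phi_{0,1}$, set $g_1 = \frac{1-\epsilon}{\epsilon}[\phi_{0,1}-\phi_{0,\sigma}]$ on a matching inner region $\{|x|\in[u_\sigma,v_\sigma]\}$, add a far-tail mass $h$ to normalize, verify stochastic dominance from $\sigma(1-\epsilon)<1$, and bound $H^2(\vartheta_0,\vartheta_1)$ by the tail integrals. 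The choice $\sigma = 1 + \log(1/(1-\epsilon))/(6\log(n\epsilon^2))$ is what produces the claimed separation.
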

A proof  is given in Section~\ref{p:thm:lower_var_robust}. 
For $k\leq \sqrt{n}$, the lower bound \eqref{eq:lower_var_robust} is of order $n^{-1/2}$. For $k\in [\sqrt{n}, \zeta n]$ (with $\zeta\in (0,1)$ fixed), the risk is of order $k/[n\log(k^2/n)]$ which is faster by a $\log^{1/2}(k^2/n)$ term than for mean estimation. When  $n-k= n^{\gamma}$ with $\gamma\in (0,1)$ (almost no uncontaminated data), the relative rate of convergence is at least constant. 

In the next section, we prove that these lower bounds on $\theta$ and $\sigma$ are all sharp (up to numerical constants).

\subsection{Upper bound}\label{sec:lbsigma} 

Since the model is translation invariant, estimating the variance can be done without knowing $\theta$. This is done by considering 
rescaled  differences of empirical quantiles. More precisely, for 
 two positive integers $1\leq q'\leq q\leq n$, let 
\beq  \label{eq:definition_sigma}
\wt{\sigma}_{q,q'}= \frac{Y_{(q)}-Y_{(q')}}{\overline{\Phi}^{-1}(q'/n)- \overline{\Phi}^{-1}(q/n)}\ , 
\eeq
with the convention $0/0=0$.  When $k= 0$ (no contamination), $Y_{(q)}$ (resp. $Y_{(q')}$) should be close to $\theta - \sigma \ol{\Phi}^{-1}(q/n)$ (resp. $\theta - \sigma\ol{\Phi}^{-1}(q'/n)$) so that, intuitively, $\wt{\sigma}_{q,q'}$ should be close to $\sigma$.  Then, to estimate $\theta$, we simply plug $\wt{\sigma}_{q,q'}$ into the quantile estimators  considered in Section~\ref{sec:estitheta}. More precisely, we consider 
\beq  \label{eq:definition_theta_unknown}
\wt{\theta}_{q,q'}  =  Y_{(q)} + \wt{\sigma}_{q,q'}\ol{\Phi}^{-1}\left(\frac{q}{n}\right) \ . 
\eeq
Given $k\in \{1,\ldots, n-1\}$,   $q_k$ is taken as in \eqref{defqkrobust} and 
\begin{equation}\label{defq'krobust}
q'_k =
\left\{\begin{array}{cc} \lceil n/3\rceil &\mbox{ if $k\in [1, 4\sqrt{n})$}\ ;\\ \lfloor \frac{n^{7/4}}{ \:k^{3/2}}\rfloor^{(\log_2)}&\text{ if }k\in [4\sqrt{n}, n-n^{4/5}]\ ; \\
1 & \text{ if } k \in ( n-n^{4/5},n-2]\ .
\end{array}
\right. 
\end{equation}
For sparse contaminations $(k\leq 4\sqrt{n})$, $\wt{\sigma}_{q_k,q'_k}$ is a rescaled difference of the empirical median and the empirical quantile of order $1/3$. For a larger  number of contaminations,  more extreme quantiles are considered. For $k\geq n-n^{4/5}$, we simply take $\wt{\sigma}_{q_k,q'_k}=0$. 

\begin{prp}\label{prp:upper_unknown}
Consider  OSC model \eqref{classicmodel_2} with unknown variance $\sigma^2$  and the quantity $q_k$ and $q_k'$ defined in \eqref{defqkrobust} and \eqref{defq'krobust}.
There exist universal positive constants $c$, $c'$, and $n_0$ such that the following holds.  For any $n\geq n_0$, 
for any positive integer $k\leq n-2$, any $\theta\in \R$, any $\sigma>0$ and any $\pi \in \ol{\cM}_{k}$, we have 
\begin{eqnarray}
 \mathbb{E}_{\theta,\pi,\sigma}\big[|\wt{\sigma}_{q_k,q'_k}-\sigma|/\sigma\big]&\leq &c\frac{\log\left(\frac{n}{n-k}\right)}{\log(1+ \frac{k}{n^{1/2}})}\  ; \label{eq:risk_upper_sigma_hat}\\
 \mathbb{E}_{\theta,\pi,\sigma}\big[|\wt{\theta}_{q_k,q'_k}-\theta|/\sigma\big]&\leq& c'\frac{\log\left(\frac{n}{n-k}\right)}{\log^{1/2}(1+ \frac{k^2}{n})}\ .
\end{eqnarray}
\end{prp}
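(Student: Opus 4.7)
The plan is to express both estimators as explicit functions of the \emph{shifted quantiles} $A(q):=Y_{(q)}+\sigma\overline{\Phi}^{-1}(q/n)$, and then invoke Theorem~\ref{thm:upper_robust} to control $|A(q)-\theta|/\sigma$. Substituting $Y_{(q)}=A(q)-\sigma\overline{\Phi}^{-1}(q/n)$ into the definitions \eqref{eq:definition_sigma} and \eqref{eq:definition_theta_unknown} yields the two algebraic identities
\[
\widetilde{\sigma}_{q,q'}-\sigma \;=\; \frac{A(q)-A(q')}{\overline{\Phi}^{-1}(q'/n)-\overline{\Phi}^{-1}(q/n)},\qquad \widetilde{\theta}_{q,q'}-\theta \;=\; \bigl(A(q)-\theta\bigr)+\bigl(\widetilde{\sigma}_{q,q'}-\sigma\bigr)\overline{\Phi}^{-1}(q/n),
\]
which cleanly decouple the two error analyses: the variance risk reduces to bounding $|A(q)-\theta|+|A(q')-\theta|$ while lower-bounding the deterministic denominator, and the mean risk then reduces to the variance risk plus $|A(q)-\theta|$.

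To apply Theorem~\ref{thm:upper_robust} to $A(q)$, I rescale: $Y/\sigma$ obeys \eqref{classicmodel_2} with parameters $(\theta/\sigma,\pi)$ and unit variance, and $|A(q)-\theta|/\sigma$ equals the error of the estimator $\widetilde{\theta}_q$ of that theorem applied to $Y/\sigma$. For $k\leq n-n^{4/5}$, a routine check confirms that both choices $q=q_k$ and $q=q'_k$ in \eqref{defqkrobust}--\eqref{defq'krobust} satisfy the admissibility hypothesis $c_4\log n\leq q\leq 0.7(n-k)\wedge\lceil n/2\rceil$, so that $\mathbb{E}[|A(q)-\theta|]/\sigma\lesssim \log(n/(n-k))/\sqrt{\log((n-k)/q)\vee 1}+ 1/\sqrt{q\,[\log((n-k)/q)\vee 1]}$.

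It remains to control the denominator and combine terms in each regime of $k$. For $k\leq 4\sqrt n$, $q_k$ and $q'_k$ are of order $n$, so $\overline{\Phi}^{-1}(q'_k/n)-\overline{\Phi}^{-1}(q_k/n)$ is a positive absolute constant and $|A(q)-\theta|/\sigma=O(n^{-1/2})$, giving the parametric rate for both $\widetilde\sigma$ and $\widetilde\theta$. For $4\sqrt n\leq k\leq n-n^{4/5}$, the expansion $\overline{\Phi}^{-1}(p)=\sqrt{2\log(1/p)}(1+o(1))$ combined with $\log(n/q_k)\sim \tfrac14\log(k^2/n)$ and $\log(n/q'_k)\sim \tfrac34\log(k^2/n)$ yields a denominator $\asymp \sqrt{\log(k^2/n)}$; dividing the bias $\log(n/(n-k))/\sqrt{\log(k^2/n)}$ by this gives $|\widetilde\sigma-\sigma|/\sigma\lesssim \log(n/(n-k))/\log(k^2/n)$, and multiplying by $|\overline{\Phi}^{-1}(q_k/n)|\asymp \sqrt{\log(k^2/n)}$ recovers the required $\log(n/(n-k))/\log^{1/2}(1+k^2/n)$ rate for $\widetilde\theta$. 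For $k> n-n^{4/5}$, $\widetilde\sigma_{1,1}=0$ by convention so $|\widetilde\sigma-\sigma|/\sigma=1$, which is bounded above by $\log(n/(n-k))/\log(1+k/\sqrt n)\gtrsim 1$ on this range; and $\widetilde\theta_{1,1}=Y_{(1)}$ is controlled by Proposition~\ref{prp:dense} applied to $Y/\sigma$, giving $|Y_{(1)}-\theta|/\sigma\lesssim \sqrt{\log n}\asymp \log(n/(n-k))/\log^{1/2}(1+k^2/n)$.

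The main technical obstacle is the intermediate regime, where the asymptotics for the deterministic denominator must be sharp enough that the leading $\sqrt{2\log(\cdot)}$ terms do \emph{not} cancel; this is precisely what the calibration $q_k\asymp n^{5/4}/k^{1/2}$, $q'_k\asymp n^{7/4}/k^{3/2}$ achieves by forcing a constant multiplicative gap between $\log(n/q_k)$ and $\log(n/q'_k)$. A secondary issue is that, on the rare event where the high-probability bound \eqref{eq:result_thetat_robustes} fails, $|A(q_k)-A(q'_k)|$ can exceed the denominator and $\widetilde\sigma_{q_k,q'_k}$ is uncontrolled; handling this at the level of the risk requires a truncation at scale $\sigma$ using the Gaussian tail, which is harmless after integrating.
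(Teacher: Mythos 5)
Your proposal is correct and follows essentially the same route as the paper: express the errors of $\widetilde\sigma_{q,q'}$ and $\widetilde\theta_{q,q'}$ through the debiased quantiles $A(q)=Y_{(q)}+\sigma\overline{\Phi}^{-1}(q/n)$, control $\E|A(q)-\theta|/\sigma$ by applying the moment bound \eqref{eq:risk_theta_tilde_robust} of Theorem~\ref{thm:upper_robust} to $Y/\sigma$, lower-bound the deterministic denominator $\overline{\Phi}^{-1}(q'_k/n)-\overline{\Phi}^{-1}(q_k/n)$ by $\asymp\sqrt{\log(k/\sqrt n)\vee 1}$, and then propagate the variance error into the mean estimator via $\widetilde\theta_{q,q'}-\theta=(A(q)-\theta)+(\widetilde\sigma_{q,q'}-\sigma)\overline{\Phi}^{-1}(q/n)$.

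Two small remarks. First, your ``secondary issue'' about a rare-event truncation is not needed here: since the denominator is a deterministic quantity, taking expectations in $|\widetilde\sigma_{q,q'}-\sigma|/\sigma\le\bigl(|A(q)-\theta|+|A(q')-\theta|\bigr)/\bigl(\sigma\,[\overline{\Phi}^{-1}(q'/n)-\overline{\Phi}^{-1}(q/n)]\bigr)$ and invoking the moment bound \eqref{eq:risk_theta_tilde_robust} directly already handles all tails (the truncation argument lives inside the proof of Theorem~\ref{thm:upper_robust}, not here). Second, the lower bound on the denominator in the intermediate regime cannot rest only on the informal $\overline{\Phi}^{-1}(p)=\sqrt{2\log(1/p)}(1+o(1))$, since near $k\asymp\sqrt n$ the quantities $\log(n/q_k)$ and $\log(n/q'_k)$ are of order $1$; the paper handles small $k/\sqrt n$ by a separate constant-gap argument and large $k/\sqrt n$ via the third inequality of Lemma~\ref{lem:difference_quantile} (together with \eqref{eq:encadrement_quantile_1plus}), which keeps track of the $\log\log$ corrections. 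Your key observation that the calibration $q_k\asymp n^{5/4}/k^{1/2}$, $q'_k\asymp n^{7/4}/k^{3/2}$ produces a constant multiplicative gap between $\log(n/q_k)$ and $\log(n/q'_k)$ — so that the leading $\sqrt{2\log(\cdot)}$ terms do not cancel — is exactly the right idea.
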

A proof is given in Section~\ref{p:thm:upper_robust}. 

The above proposition together with the lower bounds of Section~\ref{sec:lbsigma} implies that $\wt{\sigma}_{q_k,q'_k}$ and $\wt{\theta}_{q_k,q'_k}$ are minimax estimator of $\sigma$ and $\theta$, respectively. In particular, not knowing the variance does not increase the minimax rate when estimating $\theta$.

\section{Controlled selection of outliers}\label{sec:outliers}

In this section, we focus on the general OSC model~\eqref{classicmodel_2} (with unknown $\sigma$) and now turn to the identification of the outliers. As described in Section~\ref{descriptionMT},  this can be reformulated as a multiple testing problem (see also notation therein).

\subsection{Rescaled $p$-values}\label{settingMT2} 

As already discussed in Section~\ref{descriptionMT}, ensuring good multiple testing properties   in OSC  is challenging because the scaling parameters $\theta$ and $\sigma$ are unknown.
A natural approach  is then to use the rescaled observations $Y'_i=(Y_i-\wh{\theta})/\wh{\sigma}$, $1\leq i \leq n$, where $\wh{\theta}$, $\wh{\sigma}$ are some suitable estimators of $\theta$ and $\sigma$. To formalize further this idea, let us consider the corrected $p$-values
\begin{equation}\label{equ-pvalues}
p_i(u,s) = \ol{\Phi}\left(\frac{Y_i-u}{s}\right)  , \:\: u\in\R, \:\: s>0, \:\:1\leq i \leq n\ .
\end{equation}
The perfectly corrected $p$-values thus correspond to 
\begin{equation}\label{equ-pvaluesperfect}
p^{\star}_i = p_i(\theta,\sigma)  ,\:\:1\leq i \leq n\ .
\end{equation}
These oracle $p$-values cannot be used in practice, because they depend on the unknown  parameters $\theta$ and $\sigma$.
Our general aim is to  build estimators $\wh{\theta}$, $\wh{\sigma}$ 
 such that the theoretical performance of the corrected $p$-values $p_i(\hat{\theta},\hat{\sigma})$ mimic those of the oracle $p$-values $p^{\star}_i$, when plugged into standard multiple testing or post hoc procedures. If the use of modified $p$-values and plug-in estimators has often been advocated since the seminal work of Efron~\cite{Efron2004}, proving the convergence of the behavior of the corrected $p$-values towards the oracle one is, up to our knowledge, new. 
 The challenge is to precisely quantify how the estimation error affects the FDP/TDP metrics.
For this, a key point is the following relation between $p_i(u,s)$ and $p^{\star}_i$: 
\begin{equation}\label{fromperfecttoapprox}
\{p_i(u,s)\leq t\}= \{p_i^\star\leq U_{u,s}(t)\},\:\: i\in\{1,\dots,n\}, \:t\in[0,1]\ ,
\end{equation} 
where
\begin{equation}\label{equUu}
U_{u,s}(t)= \ol{\Phi}\left( \frac{s}{\sigma}\ol{\Phi}^{-1}\left(t\right) +\frac{u-\theta}{\sigma} \right)\ ;
\:\:\:
U^{-1}_{u,s}(v)= \ol{\Phi}\left( \frac{\sigma}{s}\ol{\Phi}^{-1}\left(v\right) +\frac{\theta-u}{s} \right)\ .
\end{equation}
Furthermore, a useful property is that the order of the $p$-values does not change after rescaling.
We will denote
\begin{equation}\label{orderpvalues}
0= p_{(0)}(u,s)\leq  p_{(1)}(u,s) \leq \dots \leq p_{(n)}(u,s)\ ,
\end{equation}
the ordered elements of $\{p_i(u,s), 1\leq i\leq n\}$. We also denote $0= p_{(0:\cH_0)}(u,s)\leq  p_{(1:\cH_0)}(u,s) \leq \dots \leq p_{(n_0:\cH_0)}(u,s)$ the ordered elements of the subset $\{p_i(u,s), i\in\cH_0\}$, that is, of the $p$-value set  corresponding to false outliers (or, equivalently, true null hypotheses).

\subsection{Upper-biased estimators}

This section provides estimators $\wt{\theta}_{+}$, $\wt{\sigma}_{+} $ that will be suitable to make the $p$-value rescaling. They are similar to the estimators introduced in Sections~\ref{sec:estitheta} and~\ref{sec:lbsigma}. However, since minimax estimation and false outliers control do not use the same risk metrics, we need to slightly modify these estimators, especially by making them upper-biased (which roughly means that the null hypotheses are favored).

For $q_n=\lfloor n^{3/4}\rfloor$ and $q'_n=\lfloor n^{1/4}\rfloor$, let us consider
\begin{equation}\label{def:estimator:forMT}
\left\{
\begin{array}{l}
\wt{\theta}_{+} =  Y_{(q_n)} + \wt{\sigma}_{+} \:\ol{\Phi}^{-1}\left(\frac{q_n}{n}\right) \ ;\\
\wt{\sigma}_{+} = \frac{Y_{(q_n)}-Y_{(q'_n)}}{\overline{\Phi}^{-1}(q'_n/(n-k_0))- \overline{\Phi}^{-1}(q_n/n)}\ ,
\end{array}
\right.
\end{equation}
for some parameter $k_0\leq \lfloor 0.9 n\rfloor$. The key difference with the estimators $\wt{\theta}_{q,q'},\wt{\sigma}_{q,q'}$ of Section~\ref{sec:randcontsigma} is the quantity $k_0$ in the denominator of $\wt{\sigma}_{+}$. 
The following result holds.
\begin{prp}\label{rem:estimator}
Consider  OSC model \eqref{classicmodel_2} with unknown variance $\sigma^2$.
Then there exists two universal positive  constants $c$, $c'$ such that the following holds for any positive integer $n$, for any $\theta\in\R$, $\sigma>0$, for any $\pi\in \overline{\cM}_{k}$ with  $k= \lfloor 0.9 n\rfloor$. Choosing $k_0$ such that $n_1(\pi)\leq k_0\leq  \lfloor 0.9 n\rfloor$ within the estimators $\wt{\theta}_{+}$, $\wt{\sigma}_{+}$ \eqref{def:estimator:forMT}, we have
\begin{align}
\P_{\theta,\pi,\sigma}(\wt{\theta}_{+}- \theta   \leq  -  \sigma n^{-1/16})  &\leq c  /n\  ;\label{inequthetachapsigmachap}\\
\P_{\theta,\pi,\sigma}(\wt{\sigma}_{+}- \sigma   \leq  -  \sigma n^{-1/16})  &\leq c  /n\ ; \label{inequthetachapsigmachapbis}\\
\P_{\theta,\pi,\sigma}\left(|\wt{\theta}_{+}- \theta|   \geq    \sigma \big( c'(k_0/n)\log^{-1/2} (n) + n^{-1/16} \big) \right)  &\leq  c /n \  ;\label{inequthetachapsigmachap2}\\ 
\P_{\theta,\pi,\sigma}\left(|\wt{\sigma}_{+}- \sigma|   \geq    \sigma \big( c'(k_0/n)\log^{-1}(n) + n^{-1/16} \big)\right)  &\leq c/n\ .
\label{inequthetachapsigmachap2bis}
\end{align}
\end{prp}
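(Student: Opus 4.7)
The plan is to reduce the four bounds to concentration of Gaussian order statistics via the quantile sandwich of Section~\ref{sec:robust}, then exploit an algebraic rewriting of the estimators that makes the leading terms cancel. Set $u_1=\overline{\Phi}^{-1}(q_n/n)$, $u_2=\overline{\Phi}^{-1}(q_n/(n-k_0))$, $v_1=\overline{\Phi}^{-1}(q'_n/n)$, $v_2=\overline{\Phi}^{-1}(q'_n/(n-k_0))$; by Mills' ratio $\phi(\overline{\Phi}^{-1}(p))\asymp p\sqrt{\log(1/p)}$, these are positive numbers with $u_i,v_i\asymp\sqrt{\log n}$ and (checking $q'_n/q_n=n^{-1/2}<1-k_0/n$) $v_2-u_1\asymp\sqrt{\log n}$. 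Moreover the mean value theorem applied to $\overline{\Phi}^{-1}$ with derivative $-1/\phi(\overline{\Phi}^{-1}(\cdot))$ yields $\delta_u:=u_1-u_2\lesssim k_0/(n\sqrt{\log n})$ and $\delta_v:=v_1-v_2\lesssim k_0/(n\sqrt{\log n})$ when $k_0\le 0.9n$. A short computation then transforms \eqref{def:estimator:forMT} into
\[
\wt{\theta}_{+}-\theta = \frac{v_2(Y_{(q_n)}-\theta)-u_1(Y_{(q'_n)}-\theta)}{v_2-u_1},\qquad \wt{\sigma}_{+}-\sigma = \frac{(Y_{(q_n)}-Y_{(q'_n)})-\sigma(v_2-u_1)}{v_2-u_1}.
\]

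Next I would invoke the quantile sandwich; since $n_1(\pi)\le k_0$, for $q\in\{q_n,q'_n\}$ we have (scaled by $\sigma$ compared to \eqref{equ-quantile-encadrement})
\[
\sigma\,\xi_{(q:n)}\;\preceq\;Y_{(q)}-\theta\;\preceq\;\sigma\,\xi_{(q:(n-k_0))},
\]
with $(\xi_i)$ i.i.d.\ standard Gaussian. Using the Beta representation $\Phi(\xi_{(q:m)})\sim\mathrm{Beta}(q,m-q+1)$ together with a Chernoff tail bound on $\mathrm{Bin}(m,\Phi(\overline{\Phi}^{-1}(q/m)\pm\eta))$, one obtains for each of the four $(q,m)\in\{q_n,q'_n\}\times\{n,n-k_0\}$ a deviation inequality
\[
\mathbb{P}\bigl(|\xi_{(q:m)}+\overline{\Phi}^{-1}(q/m)|\geq\eta_q\bigr)\leq c/n,
\]
with $\eta_{q_n}\lesssim n^{-3/8}$ and $\eta_{q'_n}\lesssim n^{-1/8}$ (these exponents come from the Bernstein variance $q/m$ divided by the squared slope $\phi(\overline{\Phi}^{-1}(q/m))^2$, each evaluated via Mills' ratio). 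The choice $q_n=\lfloor n^{3/4}\rfloor$, $q'_n=\lfloor n^{1/4}\rfloor$ in \eqref{def:estimator:forMT} is calibrated precisely so that $\eta_{q_n}+\eta_{q'_n}\ll n^{-1/16}$. A union bound gives a ``good event'' of probability at least $1-c/n$ on which
\[
-\sigma u_1-\sigma\eta_{q_n}\leq Y_{(q_n)}-\theta\leq -\sigma u_2+\sigma\eta_{q_n},\qquad -\sigma v_1-\sigma\eta_{q'_n}\leq Y_{(q'_n)}-\theta\leq -\sigma v_2+\sigma\eta_{q'_n}.
\]

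Plugging these into the two identities gives the four claims. For the upper-biased inequalities \eqref{inequthetachapsigmachap} and \eqref{inequthetachapsigmachapbis}, substitute $(Y_{(q_n)}-\theta,Y_{(q'_n)}-\theta)\mapsto(-\sigma u_1-\sigma\eta_{q_n},\,-\sigma v_2+\sigma\eta_{q'_n})$: the principal terms $u_1v_2$ telescope, leaving only the residuals $-\sigma(v_2\eta_{q_n}+u_1\eta_{q'_n})/(v_2-u_1)=O(\sigma n^{-1/8})$ for $\wt{\theta}_+$ and $-\sigma(\eta_{q_n}+\eta_{q'_n})/(v_2-u_1)=O(\sigma n^{-1/8}/\sqrt{\log n})$ for $\wt{\sigma}_+$, both $\geq -\sigma n^{-1/16}$ for $n$ large. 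For the two-sided bounds \eqref{inequthetachapsigmachap2} and \eqref{inequthetachapsigmachap2bis}, the opposite substitution $(-\sigma u_2+\sigma\eta_{q_n},\,-\sigma v_1-\sigma\eta_{q'_n})$ produces in the numerators the bias combinations $u_1v_1-u_2v_2=u_1\delta_v+v_2\delta_u\lesssim \sqrt{\log n}\cdot k_0/(n\sqrt{\log n})=k_0/n$ for $\wt{\theta}_+$ and $\delta_u+\delta_v\lesssim k_0/(n\sqrt{\log n})$ for $\wt{\sigma}_+$; dividing by $v_2-u_1\asymp\sqrt{\log n}$ and absorbing the $O(\sigma n^{-1/8})$ concentration residual into $\sigma n^{-1/16}$ yields exactly the rates $\sigma(c'(k_0/n)\log^{-1/2}n+n^{-1/16})$ and $\sigma(c'(k_0/n)\log^{-1}n+n^{-1/16})$.

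The principal obstacle is the very first step: tracking constants in the Bernstein/Mills-ratio chain with enough sharpness that the deviation exponents $n^{-3/8}$ and $n^{-1/8}$ survive the conversion from a tail bound on $\Phi(\xi_{(q:m)})$ to one on $\xi_{(q:m)}$, uniformly over $m\in[n/10,n]$ and at the two very different scales $q_n$ and $q'_n$. Once this uniform concentration is in hand, everything that follows is the purely algebraic telescoping described above.
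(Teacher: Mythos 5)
Your proposal is correct and takes essentially the same route as the paper's proof: both rely on the quantile sandwich \eqref{equ-quantile-encadrement}, the Gaussian order-statistic deviation bounds of Lemmas~\ref{lem:quantile_empirique} and~\ref{lem:quantile_empirique_2}, and the mean-value bound on differences of $\overline{\Phi}^{-1}$ (Lemma~\ref{lem:difference_quantile}) to extract the $k_0/n$ bias. The only organizational difference is that the paper writes $(\wt{\sigma}_{+}-\sigma)/\sigma = T_1 - T_2$ and invokes Theorem~\ref{thm:upper_robust} as a black box for $T_1$, whereas you rewrite the estimators so that the leading quantile terms cancel algebraically and apply the concentration to all four quantile statistics simultaneously on a single good event.
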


Proposition~\ref{rem:estimator} is proved in Section~\ref{p:thm:upper_robust}.  It is strongly related to Theorem~\ref{thm:upper_robust} above, although the statement is slightly different because of the introduced bias in the estimators.  
Inequalities (\ref{inequthetachapsigmachap}--\ref{inequthetachapsigmachapbis}) entail that the estimators are (with high probability) above the targeted quantity minus a polynomial term, which will be particularly suitable for obtaining a control on the false positives (FDR control and post hoc bounds). Inequalities (\ref{inequthetachapsigmachap2}--\ref{inequthetachapsigmachap2bis}) are two-sided, which is useful for studying the power of the rescaled procedures: there is an additional error term of order $(k_0/n) \log^{-a} (n)$, $a\in\{1/2,1\}$, where $k_0$ corresponds to a known upper bound of the  number of contaminated coordinates in $\pi$. %When $k_0/n \asymp n_1/n$,  this will be sufficient to control the power of the full testing procedure. 
%, as we will see in the next section.

The assumption $n_1(\pi)\leq k_0\leq  \lfloor 0.9 n\rfloor$ in Proposition~\ref{rem:estimator} is not very restrictive: it means that the number of outliers is bounded by above by some quantities $k_0$, which is used in the definition of the estimators \eqref{def:estimator:forMT}.  For instance, taking $k_0= \lfloor 0.7 n\rfloor$ means that we assume that there is no more than $70\%$ of outliers in the data, which is fair.

Finally note that our multiple testing analysis will only rely on the deviation bounds (\ref{inequthetachapsigmachap}--\ref{inequthetachapsigmachap2bis}). As a consequence, other estimators satisfying these properties can be used for scaling.  %In this work, we focus on those defined by \eqref{def:estimator:forMT} because they are both simple and enough.

\subsection{FDR control for selected outliers}
\label{sec:BH}

The Benjamini-Hochberg (BH) procedure is probably the most famous and widely used multiple testing procedure since its introduction in \cite{BH1995}. 
Here, the rescaled BH procedure (of nominal level $\alpha$), denoted $\BH_\alpha(u,s)$ is defined from the $p$-value family $p_i(u,s),1\leq i \leq n$, as follows:
\begin{itemize}
\item Order the $p$-values as in \eqref{orderpvalues};
\item Consider $\hat{\ell}_\alpha(u,s)=\max\{\ell\in\{0,1,\dots,n\}\::\: p_{(\ell)}(u,s)\leq \alpha \ell/n\}$;
\item Reject $H_{0,i}$ for any $i$ such that $p_i(u,s)\leq \wh{t}_{\alpha}(u,s)\}$, for $
\wh{t}_{\alpha}(u,s) = \alpha \wh{\ell}_\alpha(u,s)/n$.
 \end{itemize} 
The procedure, identified to set of the selected outliers, is then given by
\begin{equation}\label{equ:BHprocedure}
 \BH_\alpha(u,s)=\{ 1\leq i \leq n\::\: p_i(u,s)\leq \wh{t}_{\alpha}(u,s)\}\ .
 \end{equation}
The famous FDR-controlling result of \cite{BH1995,BY2001} can be re-interpreted as follows: the BH procedure using the perfectly corrected $p$-values \eqref{equ-pvaluesperfect}, that is, $\BH^\star_\alpha=\BH_\alpha(\theta,\sigma)$, satisfies
$$
\E_{\theta,\pi,\sigma}\left( \FDP(\pi, \BH^\star_\alpha) \right) = \frac{n_0}{n} \alpha \leq \alpha,\:\: \mbox{ for all $\theta,\pi,\sigma$.}
$$
This comes from the fact that the perfectly corrected $p$-values \eqref{equ-pvaluesperfect}
are independent, with uniform marginal distributions under the null hypothesis.

Recall the estimators $\wt{\theta}_{+}$ and $\wt{\sigma}_{+}$ defined in \eqref{def:estimator:forMT} with the tuning parameter $k_0$. 
The next result gives the behavior of the rescaled procedure $\BH_\alpha(\wt{\theta}_{+},\wt{\sigma}_{+})$ both in terms of FDP and TDP. 

\begin{thm} \label{th:FDPquantile}
Consider  OSC model \eqref{classicmodel_2} with unknown variance $\sigma^2$.
Then, there exists a universal positive constant $c$ such that the following holds. 
For any $\alpha\in(0,0.4)$,  $\theta\in \R$, $\sigma>0$, and any $\pi\in \overline{\cM}_{ \lfloor 0.9 n\rfloor}$ such that $n_1(\pi)\leq k_0\leq \lfloor 0.9 n\rfloor$, we have
\begin{equation}\label{FDRcontrol}
\left( \E_{\theta,\pi,\sigma}\left( \FDP(\pi, \BH_\alpha(\wt{\theta}_{+},\wt{\sigma}_{+})) \right) - \frac{n_0}{n} \alpha \right)_+ \leq c\: 
\log(n)/n^{1/16}\ .
\end{equation}
Additionally, for any sequences $\epsilon_n\in (0,1)$  tending to zero with $\epsilon_n \gg \log^{-1/2}(n)$, for any sequence $\pi=\pi_n$, $k_0=k_{0,n}$ with $n_1(\pi_n)\leq k_{0,n}\leq \lfloor 0.9 n\rfloor$ and $n_1(\pi_n)/n\asymp  k_{0,n}/n$, we have  for all $\theta,\sigma$,
\begin{equation}\label{Powercontrol}
\limsup_n\left\{ \E_{\theta,\pi,\sigma}\left( \TDP(\pi_n, \BH_\alpha^\star) \right) - \E_{\theta,\pi,\sigma}\left( \TDP(\pi_n, \BH_{\alpha(1+\epsilon_n)}(\wt{\theta}_{+},\wt{\sigma}_{+})) \right) \right\} \leq 0\ .
\end{equation}
\end{thm}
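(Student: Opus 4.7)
The plan is to exploit the rescaling identity \eqref{fromperfecttoapprox}, $\{p_i(u,s) \leq t\} = \{p_i^\star \leq U_{u,s}(t)\}$, combined with a sharp expansion of $U_{u,s}(t)/t$ when $(u,s)\approx(\theta,\sigma)$. Writing $\beta = s/\sigma - 1$ and $\vartheta = (u-\theta)/\sigma$, Mills' ratio asymptotics for $\bar\Phi$ give, for $x = \bar\Phi^{-1}(t) \in [0,\sqrt{2\log n}]$ and $t\in[1/n,\alpha]$,
\begin{equation*}
\log\bigl(U_{u,s}(t)/t\bigr) \;=\; -\beta x^2 - \vartheta x + O\bigl(\beta + \beta^2 x^2 + \vartheta^2 + x^{-2}\bigr).
\end{equation*}
I will apply this at $(u,s)=(\wt\theta_+,\wt\sigma_+)$ on the high-probability events provided by Proposition~\ref{rem:estimator}.

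For the FDR bound \eqref{FDRcontrol}, introduce $\mathcal{E} = \{\wt\theta_+ \geq \theta - \sigma n^{-1/16}\}\cap\{\wt\sigma_+\geq \sigma(1-n^{-1/16})\}$, which by \eqref{inequthetachapsigmachap}--\eqref{inequthetachapsigmachapbis} has probability at least $1-2c/n$. On $\mathcal{E}$, since $\beta,\vartheta\geq -n^{-1/16}$, the expansion gives $U_{\wt\theta_+,\wt\sigma_+}(t)\leq t(1+\delta_n)$ for all $t\in(0,\alpha]$, with $\delta_n = Cn^{-1/16}\log n$ (the $\beta x^2$ contribution with $x^2\leq 2\log n$ dominates). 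Hence $R := \BH_\alpha(\wt\theta_+,\wt\sigma_+)$ satisfies the self-consistency $i\in R \Rightarrow p_i^\star\leq \alpha(1+\delta_n)|R|/n$ with respect to the oracle $p$-values. Because the rescaling depends on the data, the classical BH leave-one-out is not directly applicable; I will instead condition on the lower order statistics $(Y_{(q_n)},Y_{(q'_n)})$ that determine $(\wt\theta_+,\wt\sigma_+)$, together with the set of indices lying below $Y_{(q_n)}$. Conditionally, $(\wt\theta_+,\wt\sigma_+)$ is deterministic and the top $n-q_n$ observations are independent truncated Gaussians, so the null oracle $p$-values among them are conditionally independent and stochastically smaller than uniform; the standard BH self-consistency FDR argument then applies conditionally at level $\alpha(1+\delta_n)$, in the spirit of the ``orthogonality'' trick of \cite{LB2016,IH2017}. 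Integrating gives $\E[\FDP(R)\mathbf{1}_{\mathcal{E}}]\leq (n_0/n)\alpha(1+\delta_n)$, and adding $\P(\mathcal{E}^c)\leq 2c/n$ yields \eqref{FDRcontrol}.

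For the TDP comparison \eqref{Powercontrol}, on an event $\mathcal{E}'$ of probability at least $1-2c/n$ given by \eqref{inequthetachapsigmachap2}--\eqref{inequthetachapsigmachap2bis}, one has $|\vartheta|\leq \eta^\theta_n$ and $|\beta|\leq \eta^\sigma_n$ with $\eta^\sigma_n\lesssim (k_0/n)/\log n + n^{-1/16}$ and $\eta^\theta_n\lesssim (k_0/n)/\sqrt{\log n} + n^{-1/16}$. The target is the set inclusion $\BH_\alpha^\star\subseteq \BH_{\alpha(1+\epsilon_n)}(\wt\theta_+,\wt\sigma_+)$. Rewriting BH through the empirical CDFs $F_n(t)=n^{-1}\sum\mathds{1}\{p_i^\star\leq t\}$ and $\wt F_n(t)=F_n(U_{\wt\theta_+,\wt\sigma_+}(t))$, and using monotonicity of the step-up fixed point in its rate, the inclusion reduces to $U_{\wt\theta_+,\wt\sigma_+}(t)/t\geq 1/(1+\epsilon_n)$ for $t$ in a neighborhood of the oracle BH threshold $t_n^\star = \alpha\hat\ell^\star_\alpha/n$. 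I separate two cases: if $\E[\TDP(\BH_\alpha^\star)]\to 0$ then \eqref{Powercontrol} is trivial; otherwise an empirical-process control of $F_n$ gives $\hat\ell^\star_\alpha\gtrsim n_1$ with high probability, so $\bar\Phi^{-1}(t_n^\star)\lesssim \sqrt{\log(n/n_1)\vee 1}$, and the expansion yields $|\log U_{\wt\theta_+,\wt\sigma_+}(t_n^\star)/t_n^\star|\lesssim (n_1/n) + n^{-1/16}\log n$. Under $\epsilon_n\gg \log^{-1/2}(n)$ and $k_0\asymp n_1$, this is $o(\epsilon_n)$ in both the dense regime $n_1/n\to\pi_1>0$ (where $\bar\Phi^{-1}(t_n^\star)=O(1)$ and the bound improves to $O(\log^{-1/2}n)$) and the sparse regime $n_1/n\to 0$ (using $\eta^\sigma_n\log(n/n_1)\lesssim n_1/n$). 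A uniform version of the bound on a neighborhood of $t_n^\star$ then delivers the inclusion, hence the TDP comparison after absorbing the $O(1/n)$ probability of $\mathcal{E}'^c$.

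The main obstacle is the TDP part. The Mills-ratio expansion produces a $\beta x^2$ term with $x$ potentially of order $\sqrt{\log n}$, so the sharp variance-estimation rate $\eta^\sigma_n\lesssim (k_0/n)/\log n$ from Proposition~\ref{rem:estimator} (with $\log^{-1}$, not $\log^{-1/2}$, as the leading factor) is precisely what allows this term to be $o(\epsilon_n)$; this tight rate is why the upper-biased variance estimator must be tuned so carefully. Equally delicate is converting a pointwise control of $U(t)/t$ into a genuine set inclusion of BH rejections, which requires localizing the random step count $\hat\ell^\star_\alpha$ around its stable value via uniform empirical-process arguments on $F_n$.
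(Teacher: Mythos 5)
Your Mills-ratio expansion of $U_{u,s}(t)/t$ and the use of the upper-biased deviations from Proposition~\ref{rem:estimator} match the paper's Lemma~\ref{lem:majorationnull} and its applications. However, the FDR argument has a genuine flaw. You propose to condition on $(Y_{(q_n)},Y_{(q'_n)})$ and the set of indices lying below $Y_{(q_n)}$, and assert that the null $p$-values among the top observations are ``conditionally independent and stochastically smaller than uniform,'' after which ``the standard BH self-consistency FDR argument then applies conditionally.'' This is backwards. Conditioning on $\{Y_i > Y_{(q_n)}\}$ makes the null oracle $p$-values uniform on $(0,a)$ with $a = \bar\Phi\bigl((Y_{(q_n)}-\theta)/\sigma\bigr) \approx q_n/n \asymp n^{-1/4}$, i.e.\ \emph{sub}-uniform; the rescaled $p$-values $p_i(\wt\theta_+,\wt\sigma_+)$ are likewise conditionally sub-uniform, since $\P(p_i(\wt\theta_+,\wt\sigma_+)\le t\mid\mbox{cond.}) = \min\bigl(U_{\wt\theta_+,\wt\sigma_+}(t)/a,1\bigr) \gg t$. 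Super-uniformity is exactly what the conditional BH argument requires, so it does not apply here, and no amount of inflating $\alpha$ by a factor $1+\delta_n$ repairs an $O(n^{1/4})$ distortion. The paper circumvents this precisely by \emph{not} conditioning on order statistics: it introduces leave-one-out estimators $\wh\theta^{(i)},\wh\sigma^{(i)}$, proves the ``Claim'' \eqref{keyrelation} that $\{p_i(\wh\theta,\wh\sigma)\le t\}=\{p_i(\wh\theta^{(i)},\wh\sigma^{(i)})\le t\}$ for small $t$ (because a rejected $Y_i$ is far above the low quantiles used in the estimators), and then conditions only on $Y_j, j\ne i$. On that $\sigma$-field the oracle $p_i^\star$ is exactly uniform, the monotone bound via $U_{\theta-\sigma\delta_n,\sigma-\sigma\delta_n}$ kicks in, and the leave-one-out BH identity (Lemma~\ref{lem:MT}) closes the FDR argument. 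This identity is the key technical step you are missing.

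The TDP part is in the right spirit---bound the oracle threshold away from $0$, expand $U(t)/t$ there, exploit the $\log^{-1}$ rate on $\wt\sigma_+$ to kill the $\beta\bar\Phi^{-1}(t)^2$ term, and conclude a set inclusion into $\BH_{\alpha(1+\epsilon_n)}(\wt\theta_+,\wt\sigma_+)$---and your observation that the $\log^{-1}$ (not $\log^{-1/2}$) rate on $\sigma$ is decisive is correct and matches the paper. But your localization of the oracle count relies on an unproven ``empirical-process control of $F_n$ gives $\wh\ell_\alpha^\star\gtrsim n_1$ with high probability,'' attempting a dichotomy on whether $\E[\TDP(\BH_\alpha^\star)]\to0$. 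The paper avoids any empirical-process machinery: for each fixed $\eta>0$ it works deterministically on the event $\{\TDP(\BH_\alpha^\star)\ge\eta\}$, which directly yields $\wh\ell_\alpha(\theta,\sigma)\ge\eta n_1$, applies Lemma~\ref{lem:MT3} to transfer the rejection count, and then integrates in $\eta$ at the very end of the proof. This layer-cake integration removes the need for the fluctuation control you would have to supply.
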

In a nutshell, inequalities \eqref{FDRcontrol} and \eqref{Powercontrol} show that the procedure $ \BH_\alpha(\wt{\theta}_{+},\wt{\sigma}_{+})$ behaves similarly to the oracle procedure  $\BH_\alpha^\star$, both in terms of false discovery rate control and power. These can be seen as a first validation of Efron's theory on empirical null distribution estimation for FDR control.

The proof of this theorem is given in Section~\ref{sec:proofth:FDPquantile}. Compared to the usual FDR proofs of the existing literature, 
there are two additional difficulties: 
first the independence assumption between the corrected $p$-values is not satisfied anymore, because the correction terms are random; second, the quantity $\FDP(\pi, \BH_\alpha(\wt{\theta}_{+},\wt{\sigma}_{+}))$ is not monotone in the estimators $\wt{\theta}_{+}$, $\wt{\sigma}_{+}$, because of the denominator in the FDP.
However, the specific properties of $\wt{\theta}_{+}$, $\wt{\sigma}_{+}$ given in Proposition~\ref{rem:estimator} will be enough to get our result: first, these estimators are biased upwards with an error term vanishing at a polynomial rate $n^{-1/16}$,  which is enough for false positive control. As for the power, we should consider the bias downwards, which is of order $(k_0/n) \log^{-a}(n)$, $a\in\{1/2,1\}$.
It turns out that the error term induced in the power is of order 
$(k_0/n) \log^{-a} (n) \log(n/n_1)$, which tends to $0$ when $ k_{0}/n\asymp n_1/n$ both in the sparse and non-sparse cases.

\subsection{Post hoc bound for selected outliers}

We now turn to the problem of finding a post hoc bound, that is, a confidence bound for $\FDP(\pi, S)$ which is valid uniformly over all $S\subset \{1,\dots,n\}$.
In \cite{GW2006,GS2010}, the authors showed that post hoc bound can be derived from a simple inequality, called the Simes inequality.
This inequality has a long history since the original work of Simes \cite{Sim1986} and is still a very active research area, see, e.g., \cite{Bodnar2017,Finner2017}.

Specifically, the following  property holds for the perfectly corrected $p$-values  $p_i^\star$:
\begin{align}\label{Simesperfect}
\P_{\theta,\pi,\sigma}\left[ \exists  \ell \in \{1,\dots,n_0\}\::\: p^\star_{(\ell:\cH_0)}\leq \alpha \ell/n\right] \leq \alpha, \:\:\alpha\in(0,1)\ ,
\end{align}
where $p^\star_{(1:\cH_0)}\leq \dots\leq p^\star_{(n_0:\cH_0)}$ are the ordered elements of $\{p_i^\star, i\in\cH_0\}$.

When replacing the perfect $p$-values by the estimated ones, the next result shows that Simes inequality is approximately valid.

\begin{thm}\label{th:simescorrected}
Consider  OSC model \eqref{classicmodel_2} with unknown variance $\sigma^2$. 
Then, there exists a universal positive constant $c$ such that the following holds. 
For any $\alpha\in(0,0.4)$,  $\theta\in \R$, $\sigma>0$, and any $\pi\in \overline{\cM}_{ \lfloor 0.9 n\rfloor}$ such that $n_1(\pi)\leq k_0\leq \lfloor 0.9 n\rfloor$, we have
\begin{align}\label{Simescorrected}
\left(\P_{\theta,\pi,\sigma}\left[ \exists  \ell \in \{1,\dots,n_0\}\::\: p_{(\ell:\cH_0)}(\wt{\theta}_{+},\wt{\sigma}_{+})\leq \alpha \ell/n\right] - \alpha \right)_+ \leq c\: \log(n)/n^{1/16}\ . 
\end{align}
\end{thm}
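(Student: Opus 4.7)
The plan is to reduce the statement to the classical Simes inequality applied to the oracle $p$-values $p_i^\star$, paying a polynomial price for estimating $(\theta,\sigma)$.

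First, I would introduce the favorable event
\[
\Omega = \{\wt\theta_+ \geq \theta - \sigma n^{-1/16}\} \cap \{\wt\sigma_+ \geq \sigma(1 - n^{-1/16})\},
\]
which by inequalities \eqref{inequthetachapsigmachap}--\eqref{inequthetachapsigmachapbis} of Proposition~\ref{rem:estimator} has probability at least $1 - c/n$. Since the ordering of the $p$-values $p_i(u,s)$ (and of the subfamily indexed by $\cH_0$) does not depend on $(u,s)$ as long as $s>0$, the identity \eqref{fromperfecttoapprox} extends to order statistics within $\cH_0$: for every $\ell$,
\[
\bigl\{ p_{(\ell:\cH_0)}(\wt\theta_+,\wt\sigma_+) \leq \alpha \ell/n \bigr\} = \bigl\{ p^\star_{(\ell:\cH_0)} \leq U_{\wt\theta_+,\wt\sigma_+}(\alpha \ell/n) \bigr\}.
\]
The goal is then to show that on $\Omega$, $U_{\wt\theta_+,\wt\sigma_+}(\alpha\ell/n) \leq (1+\delta_n)\alpha\ell/n$ uniformly in $\ell\in\{1,\dots,n\}$, for some deterministic $\delta_n = O(\log(n) n^{-1/16})$.

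To produce this uniform bound on $U_{u,s}$, recall $U_{u,s}(t)=\ol\Phi\bigl( (s/\sigma)\ol\Phi^{-1}(t) + (u-\theta)/\sigma\bigr)$. On $\Omega$, writing $\eps_n=n^{-1/16}$ and $z=\ol\Phi^{-1}(t)$ (nonnegative because $t\leq 0.4 < 1/2$), the argument is bounded below by $(1-\eps_n)z - \eps_n$, hence
\[
U_{\wt\theta_+,\wt\sigma_+}(t) \leq \ol\Phi\bigl((1-\eps_n)z - \eps_n\bigr).
\]
Using the standard Mills-ratio asymptotics $\ol\Phi(x)\sim \phi(x)/x$ and expanding the Gaussian density ratio, one gets
\[
\frac{\ol\Phi((1-\eps_n)z - \eps_n)}{\ol\Phi(z)} \leq \exp\bigl(\eps_n z^2 + O(\eps_n z)\bigr)\cdot(1+o(1)).
\]
For $t = \alpha\ell/n \geq \alpha/n$ one has $z \leq \ol\Phi^{-1}(\alpha/n)\lesssim \sqrt{\log n}$, so $\eps_n z^2 \lesssim n^{-1/16}\log n$ and the ratio is $\leq 1 + c\log(n) n^{-1/16}$. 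This yields the desired $\delta_n$.

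Finally, I would conclude by a union bound with the classical Simes inequality \eqref{Simesperfect} for the oracle $p$-values: on $\Omega$,
\[
\bigl\{ \exists\,\ell \in \{1,\dots,n_0\} \::\: p_{(\ell:\cH_0)}(\wt\theta_+,\wt\sigma_+) \leq \alpha\ell/n \bigr\} \subset \bigl\{ \exists\,\ell \::\: p^\star_{(\ell:\cH_0)} \leq (1+\delta_n)\alpha\ell/n_0 \bigr\},
\]
whose probability is at most $(1+\delta_n)\alpha$ by \eqref{Simesperfect}. Adding $\P(\Omega^c)\leq c/n$ produces the bound $\alpha + c'\log(n)/n^{1/16}$. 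The main technical obstacle is exactly the uniform control of the Mills-ratio distortion $U_{u,s}(t)/t$ down to the smallest scale $t\asymp 1/n$: I must keep $\delta_n$ deterministic, independent of $\ell$, and small enough that the factor $z^2\lesssim \log n$ is absorbed by $\eps_n=n^{-1/16}$.
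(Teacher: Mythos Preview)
Your approach is essentially the same as the paper's: restrict to the favorable event from Proposition~\ref{rem:estimator}, use \eqref{fromperfecttoapprox} and the monotonicity of $U_{u,s}$ in $(u,s)$ to replace the random threshold by $U_{\theta-\sigma\delta_n,\sigma(1-\delta_n)}(\alpha\ell/n)$, then apply Simes \eqref{Simesperfect}. The paper packages your Mills-ratio computation as Lemma~\ref{lem:majorationnull}, which gives directly $\max_{t_0\le t\le \alpha}(U_{\theta-x,\sigma-y}(t)-t)/t\lesssim (x/\sigma)\sqrt{\log(1/t_0)}+(y/\sigma)\log(1/t_0)$; with $x=y=\sigma n^{-1/16}$ and $t_0=\alpha/n$ this is exactly your $\delta_n\lesssim \log(n)\,n^{-1/16}$. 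Two minor remarks: (i) your Mills-ratio sketch is only asymptotic for large $z$, so you should handle the range $t$ close to $\alpha$ (bounded $z$) separately via a Lipschitz argument, as the paper's lemma does; (ii) the detour through $\ell/n_0$ in your final inclusion is unnecessary, since \eqref{Simesperfect} is already stated with $\ell/n$.
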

The proof  is given in Section~\ref{p:th:simescorrected}. It uses that $\wt{\theta}_{+}$, $\wt{\sigma}_{+}$ are bias upwards thanks to Proposition~\ref{rem:estimator}, together with the monotonicity of the criterion.
Let us now define the following data-driven quantity
\begin{align}\label{posthocbound}
\ol{\FDP}(S; u,s)= 1\wedge\left\{\min_{\ell\in\{1,\dots,n\}}\left(\sum_{i\in S} \mathds{1}\{p_i(u,s) >\alpha \l/n\}+\l-1\right)/(|S|\vee 1)\right\}, \:\: S\subset\{1,\dots,n\}\ .
\end{align}
The next result shows that \eqref{posthocbound} is an upper-bound for the FDP, uniformly valid over all the possible selection sets $S$.

\begin{cor}\label{cor:Simescorrected}
There exists a numerical constant $c$ such that the following holds. 
Under the conditions of Theorem~\ref{th:simescorrected}, the bound $\ol{\FDP}(\cdot; \wt{\theta}_{+}, \wt{\sigma}_{+})$ defined by \eqref{posthocbound} 
satisfies the following property: 
\begin{align}\label{corposthocbound}
\left(1-\alpha- \P_{\theta,\pi,\sigma}\left(\forall S\subset \{1,\dots,n\},\:\: \FDP(\pi,S)\leq \ol{\FDP}(S; \wt{\theta}_{+},\wt{\sigma}_{+})\right)\right)_+  \leq c\:
 \log(n)/n^{1/16}\ .
\end{align}
\end{cor}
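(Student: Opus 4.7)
The plan is to derive Corollary~\ref{cor:Simescorrected} as a direct consequence of Theorem~\ref{th:simescorrected} via the classical Goeman--Solari post hoc argument \cite{GS2010}. The idea is that a Simes-type inequality on the null $p$-values immediately produces a uniform $\FDP$ bound through a counting inequality.

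First, I would introduce the event
\[
\Om = \left\{ \forall \ell \in\{1,\dots,n_0\}\::\: p_{(\ell:\cH_0)}(\wt{\theta}_{+},\wt{\sigma}_{+}) > \alpha \ell/n \right\}\ ,
\]
which is the complement of the event considered in Theorem~\ref{th:simescorrected}. By that theorem, $\P_{\theta,\pi,\sigma}(\Om) \geq 1-\alpha - c\,\log(n)/n^{1/16}$.

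Second, I would work conditionally on $\Om$ and fix an arbitrary $S\subset\{1,\dots,n\}$ and $\ell\in\{1,\dots,n\}$. For $\ell\leq n_0$, the event $\Om$ says that strictly fewer than $\ell$ null $p$-values lie below $\alpha \ell/n$, so $|\{i\in\cH_0\::\: p_i(\wt{\theta}_{+},\wt{\sigma}_{+})\leq \alpha \ell/n\}| \leq \ell-1$, and this trivially still holds for $\ell>n_0$. Splitting $S\cap\cH_0$ according to whether the corresponding $p$-value exceeds $\alpha \ell/n$ gives
\[
|S\cap\cH_0| \leq \sum_{i\in S}\mathds{1}\{p_i(\wt{\theta}_{+},\wt{\sigma}_{+}) > \alpha\ell/n\} + \ell - 1\ .
\]
Dividing by $|S|\vee 1$, taking the minimum over $\ell$, and using the trivial bound $\FDP(\pi,S)\leq 1$, we obtain $\FDP(\pi,S)\leq \ol{\FDP}(S;\wt{\theta}_{+},\wt{\sigma}_{+})$. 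Crucially, this inequality is achieved uniformly in $S$ on the single event $\Om$ (the $\ell$ may depend on $S$, but the event $\Om$ does not).

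Finally, combining the two steps yields
\[
\P_{\theta,\pi,\sigma}\left(\forall S\subset\{1,\dots,n\},\:\: \FDP(\pi,S)\leq \ol{\FDP}(S;\wt{\theta}_{+},\wt{\sigma}_{+})\right) \geq \P_{\theta,\pi,\sigma}(\Om) \geq 1 - \alpha - c\,\log(n)/n^{1/16}\ ,
\]
which rearranges directly to \eqref{corposthocbound}. There is essentially no obstacle here: all the probabilistic difficulty (handling the plug-in estimators $\wt{\theta}_{+},\wt{\sigma}_{+}$, the dependence they induce, and the deviation bounds from Proposition~\ref{rem:estimator}) has already been absorbed in the proof of Theorem~\ref{th:simescorrected}. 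The remainder is a deterministic counting inequality on $\Om$, and the only care needed is to observe that $\Om$ is a single event that does not depend on $S$, so that the uniformity over $S$ is automatic.
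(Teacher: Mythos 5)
Your proof is correct and follows essentially the same route as the paper: both define the high-probability event from Theorem~\ref{th:simescorrected} on which at most $\ell-1$ null $p$-values fall below $\alpha\ell/n$, derive the deterministic counting bound $|S\cap\cH_0|\leq\ell-1+|\{i\in S:p_i(\wt{\theta}_{+},\wt{\sigma}_{+})>\alpha\ell/n\}|$ on that event, and then minimize over $\ell$ and divide by $|S|\vee 1$. Your explicit remark that the bound trivially extends to $\ell>n_0$ (since $|\cH_0|=n_0\leq\ell-1$) and that the event does not depend on $S$ (so uniformity is automatic) are small but welcome clarifications of points the paper leaves implicit.
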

The proof is standard from \cite{GW2006,GS2010} and is given in Section~\ref{p:cor:Simescorrected} for completeness.
From Corollary \ref{cor:Simescorrected}, we deduce, for any selection procedure $\wh{S}$ possibly depending on the data  in an arbitrary way, that the  quantity $\ol{\FDP}(\wh{S}; \wt{\theta}_{+},\wt{\sigma}_{+})$ is a valid confidence bound for  $\FDP(\pi,\wh{S})$. Hence, this provides a statistical guarantee on the proportion of false outliers in  any data-driven set.

\subsection{Application to decorrelation}\label{sec:equicor}

In this section, we consider a multiple testing issue in the Gaussian equi-correlated model, that corresponds to observe
\begin{equation}\label{model-equi}
Y \sim \mathcal{N}\left(m,\Gamma\right), \:\:\: m\in \R^n,\:\:\: \Gamma=\left(\begin{array}{cccc}1&\rho&\dots&\rho\\\rho&1&\ddots&\vdots\\\vdots&\ddots&\ddots&\rho\\\rho&\dots&\rho&1\end{array}\right)\ ,
\end{equation}
for some unknown $\rho\in[0,1)$ and some unknown $m_i$, $1\leq i \leq n$. 
The probability (resp. expectation) in this model is denoted $\P_{m,\rho}$ (resp. $\E_{m,\rho}$). 
We consider the problem of testing simultaneously: 
\begin{equation}\label{hypomodel}
\mbox{$H_{0,i}: ``m_i=0"$ against $H_{1,i}: ``m_i>0"$, for all $1\leq i \leq n$.}
\end{equation}
Classically, this model can be rewritten as follows
\begin{equation}\label{equirealization}
Y_i = m_i+\rho^{1/2} W+ (1-\rho)^{1/2}\zeta_i, \:\:\: 1\leq i\leq n,
\end{equation}
for $W$, $\zeta_i$, $1\leq i\leq n$,  i.i.d. $\mathcal{N}(0,1)$.
This model shares strong similarities with the gOSC model (and therefore also OSC model), because, conditionally on $W$, the $Y_i$'s  follows  model \eqref{model}-\eqref{decompgamma} with $\gamma_i=m_i+\rho^{1/2} W$, $\theta=\rho^{1/2} W$, $\mu_i=m_i$, $\xi_i=\zeta_i$ and $\sigma^2=1-\rho>0$.
In particular, the multiple testing problem \eqref{hypomodel} is the same as \eqref{hypomu} and we can define $\cH_0(m)$, $n_0(m)$, $\cH_1(m)$, $n_1(m)$, $\FDP(m,\cdot)$ and $\TDP(m,\cdot)$ accordingly, see Section~\ref{settingMT}.
Whereas the classical $p$-values  $p_i=\ol{\Phi}(Y_i)$, $1\leq i\leq n$ lead to some pessimistic behaviour, we can use the empirically re-scaled $p$-values  to get the following result.

\begin{cor}\label{prp:FDP}
Consider the model \eqref{model-equi}. There exists a universal positive constant $c$ such that the following holds.
For any $\alpha \in (0,0.4)$, any $\rho\in (0,1]$, any mean $m$ satisfying $n_1(m)\leq k_0\leq  0.9 n$ (that is, $m$ has at most $0.9n$ non-zero coordinates), we have
\begin{itemize}
\item[(i)] the procedure $\BH_\alpha(\wt{\theta}_{+},\wt{\sigma}_{+})$ defined in Section~\ref{sec:BH} satisfies  
$$
\left( \E_{m,\rho}\left( \FDP(m, \BH_\alpha(\wt{\theta}_{+}, \wt{\sigma}_{+}) )\right) - \frac{n_0}{n} \alpha \right)_+ \leq c \log (n)/n^{1/16} \  .
$$
Moreover, for any sequence $\epsilon_n\in (0,1)$  tending to zero with $\epsilon_n \gg \log^{-1/2} (n)$,  for any sequences $m_n$ and $k_0=k_{0,n}$ with $n_1(m_n)\leq k_{0,n}$ and $n_1(m_n)/n\asymp  k_{0,n}/n$, we have 
$$ 
\limsup_n\left\{ \E'\left( \TDP(m_n, \BH_\alpha)\right)-\E_{m_n,\rho}\left( \TDP(m_n, \BH_{\alpha(1+\epsilon_n)}(\wt{\theta}_{+}, \wt{\sigma}_{+}) \right)   \right\} \leq 0\ ,
$$
where $\E'$ denotes the expectation in the model \eqref{model-equi} in which $\rho'=0$ (independence) and $m'_n=m_n (1-\rho)^{-1/2}$.
\item[(ii)]  the bound $\ol{\FDP}(\cdot; \wt{\theta}_{+}, \wt{\sigma}_{+})$ defined by \eqref{posthocbound} satisfies  
\begin{align}\label{propposthocboundequi}
\left(1-\alpha- \P_{m,\rho}\left(\forall S\subset \{1,\dots,n\},\:\: \FDP(m,S)\leq \ol{\FDP}(S; (\wt{\theta}_{+}, \wt{\sigma}_{+})\right)\right)_+  \leq 
 c\log( n)/n^{1/16}\ .
\end{align}
\end{itemize}
\end{cor}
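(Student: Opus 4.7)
The plan is to exploit the factor representation \eqref{equirealization}: conditionally on $W$, the model \eqref{model-equi} is an instance of OSC~\eqref{classicmodel_2} with $\theta = \rho^{1/2}W$, $\sigma = (1-\rho)^{1/2}$, noise marginals $\pi_i = \mathcal{N}(m_i/\sigma, 1)$, and outlier set $\cH_1(\pi) = \{i : m_i > 0\} = \cH_1(m)$. Since $n_1(\pi)=n_1(m) \leq k_0 \leq \lfloor 0.9n \rfloor$ is deterministic, the hypotheses of Theorem~\ref{th:FDPquantile} and Corollary~\ref{cor:Simescorrected} are satisfied for the conditional law $\P_{m,\rho}(\cdot \mid W)$.

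For the FDR bound in (i), I apply Theorem~\ref{th:FDPquantile} conditionally on $W$. The right-hand side of \eqref{FDRcontrol} depends neither on $\theta$ nor $\sigma$, so the conditional bound holds almost surely, and Fubini (or Jensen for $(\cdot)_+$) gives
\[
\left(\E_{m,\rho}\bigl[\FDP(m, \BH_\alpha(\wt{\theta}_{+},\wt{\sigma}_{+}))\bigr] - \frac{n_0}{n}\alpha\right)_+ \leq c\log(n)/n^{1/16}.
\]
The same reasoning, applied to Corollary~\ref{cor:Simescorrected}, immediately yields the post hoc bound (ii).

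For the power statement in (i), the key observation is that the perfectly rescaled $p$-values in the conditional model are
\[
p_i^\star = \ol{\Phi}\!\left(\frac{Y_i - \rho^{1/2}W}{(1-\rho)^{1/2}}\right) = \ol{\Phi}\!\left(\frac{m_i}{(1-\rho)^{1/2}} + \zeta_i\right),
\]
whose joint law is free of $W$ and coincides with the joint law of the standard $p$-values in the independent model with rescaled means $m'_i = m_i/(1-\rho)^{1/2}$. Since $\BH_\alpha^\star$ and $\cH_1(m)$ are deterministic functionals of $(p_i^\star)_i$ and $m$, the TDP has identical law in both models, so $\E_{m,\rho}[\TDP(m, \BH_\alpha^\star)\mid W] = \E'[\TDP(m, \BH_\alpha)]$ almost surely. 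Applying the power statement of Theorem~\ref{th:FDPquantile} conditionally on $W$ gives, almost surely,
\[
\limsup_n \bigl\{\E'[\TDP(m_n,\BH_\alpha)] - \E_{m_n,\rho}[\TDP(m_n, \BH_{\alpha(1+\epsilon_n)}(\wt{\theta}_{+},\wt{\sigma}_{+}))\mid W]\bigr\} \leq 0.
\]
Writing the bracket as $A_n - B_n(W)$ with $A_n - B_n(W) \leq 1$, reverse Fatou's lemma allows the $\limsup$ to be pulled outside the $W$-expectation, and since $A_n$ is deterministic we obtain the desired unconditional bound.

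The only subtle point is the distributional identification of the oracle $p$-values that connects the conditional oracle procedure in the equi-correlated model to the BH procedure in the independent rescaled model; the rest follows from direct conditioning and the boundedness of TDP. No new technical ingredient beyond the results of Section~\ref{sec:outliers} is required.
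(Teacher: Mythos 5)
Your proof is correct and follows exactly the route the paper indicates but omits, namely conditioning on $W$ (under which the model becomes gOSC/OSC with $\theta=\rho^{1/2}W$, $\sigma=(1-\rho)^{1/2}$), invoking Theorem~\ref{th:FDPquantile} and Corollary~\ref{cor:Simescorrected}, and integrating over $W$. You are right that the key reason this works cleanly for the FDR and post hoc bounds is that the upper bounds in \eqref{FDRcontrol} and \eqref{corposthocbound} are uniform in $(\theta,\sigma)$, so the $(\cdot)_+$ passes through the $W$-expectation by $\E[(\cdot)_+]\geq(\E[\cdot])_+$; and your treatment of the power statement correctly identifies the two nontrivial ingredients (distributional invariance of the oracle $p$-values in $W$, giving $\E_{m_n,\rho}[\TDP(m_n,\BH^\star_\alpha)\mid W]=\E'[\TDP(m_n,\BH_\alpha)]$, and reverse Fatou to pull the $\limsup$ outside the $W$-expectation using the bound $\TDP\leq 1$). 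No gaps.
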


This result is a direct consequence of Theorems~\ref{th:FDPquantile} and~\ref{th:simescorrected} by integrating w.r.t.~$W$, so the proof is omitted.

In a nutshell, these results indicate that the analysis of the FDR under independence can be extendeds in the one-sided equi-correlated model, with an additional improvement due to variance reduction by a factor $(1-\rho)$, which can be significant under strong dependence.
The condition $n_1(\pi)\leq k_0\leq \lfloor 0.9 n\rfloor$ is not very restrictive as we can always choose $k_0=\lfloor 0.9 n\rfloor$. However, this choice leads to a conservative estimator $\wt{\sigma}_{+}$ and it is obviously better to choose $k_0$ as close as possible to $n_1$, the number of non zero coordinates of $m$. The power result indicates that choosing $k_{0}/n\asymp  n_1/n$ is enough from an asymptotic point of view.

Compared to the state of the art, and especially the work aiming at correcting the dependencies by estimating factors \cite{FKC2009,LS2008,Fan2012,Fan2017}, 
this result is to our knowledge the first one that shows that the corrected procedures is rigorously controlling the desired multiple testing criteria, with some power optimality. In particular, our result shows that the sparsity is not required to make a rigorous dependency correction: the correction is also theoretically valid when $n_1=\lfloor 3n/4\rfloor$ for instance. This is due to the one-sided structure of our model and would certainly be not true in the two-sided case.
Overall, while our setting is admittedly somewhat narrowed, we argue that this is an important "proof of concept" that supports previous studies and may pave the way for future developments in the area of dependence correction via principal factor approximation.

\subsection{Numerical experiments}

In this section, we illustrate Corollary~\ref{prp:FDP} with numerical experiments. We consider the equi-correlation model \eqref{model-equi}, with  a mean $m$ taken of the form
$
m_i = \Delta ,
$ 
$1\leq i \leq n_1$ and $m_i=0$ otherwise. 
The results of our experiments are qualitatively the same for other kinds of alternative means, see Section~\ref{supp:simu}.

In our simulations, we consider four types of rescaled $p$-values $p_i(u,s) $, $1\leq i \leq n$, see \eqref{equ-pvalues}:
\begin{itemize}
\item Uncorrected: $u=0$, $s=1$;
\item Oracle: $u=\theta=\rho^{1/2} W$ and $s=\sigma=(1-\rho)^{1/2}$;
\item Correlation $\rho$ known: 
\begin{equation}\label{def:estimator:forMTsemi}
u=\wt{\theta}_{+}(\sigma) =  Y_{(q_n)} + \sigma \:\ol{\Phi}^{-1}\left(\frac{q_n}{n}\right), \:\:q_n=\lfloor n^{3/4}\rfloor\ ,
\end{equation}
and $s=\sigma=(1-\rho)^{1/2}$;
\item Correlation $\rho$ unknown: $u=\wt{\theta}_{+}$ and $s=\wt{\sigma}_{+}$  for the estimators defined by \eqref{def:estimator:forMT} with $k_0=n_1(m)$ (a value of $k_0$ avoiding a too biased estimation of $\sigma$).
\end{itemize}
Each of these rescaled $p$-values are used either for FDR control via the Benjamini-Hochberg procedure  $\BH_\alpha(u,s)$ \eqref{equ:BHprocedure} (Figure~\ref{fig:boxplot}),  or to get post hoc bound via the Simes bound $\ol{\FDP}(\cdot; u,s)$ \eqref{posthocbound} (Figure~\ref{fig:equi:posthoc}). 

\paragraph{New corrected BH procedure}

Figure~\ref{fig:boxplot} displays the performances of the rescaled BH procedure. 
As we can see, the result of this experiment corroborates Proposition~\ref{prp:FDP}: while the FDR control is maintained, the power of the new procedure is greatly improved. More importantly, estimating the variable $W$ substantially stabilizes the picture and make the FDP/TDP much less variant. This figure also allows to feel the price to pay for estimating $\rho$, as the procedure using the true value of $\rho$ is closer to the oracle and less variant. 

\begin{figure}[h!]
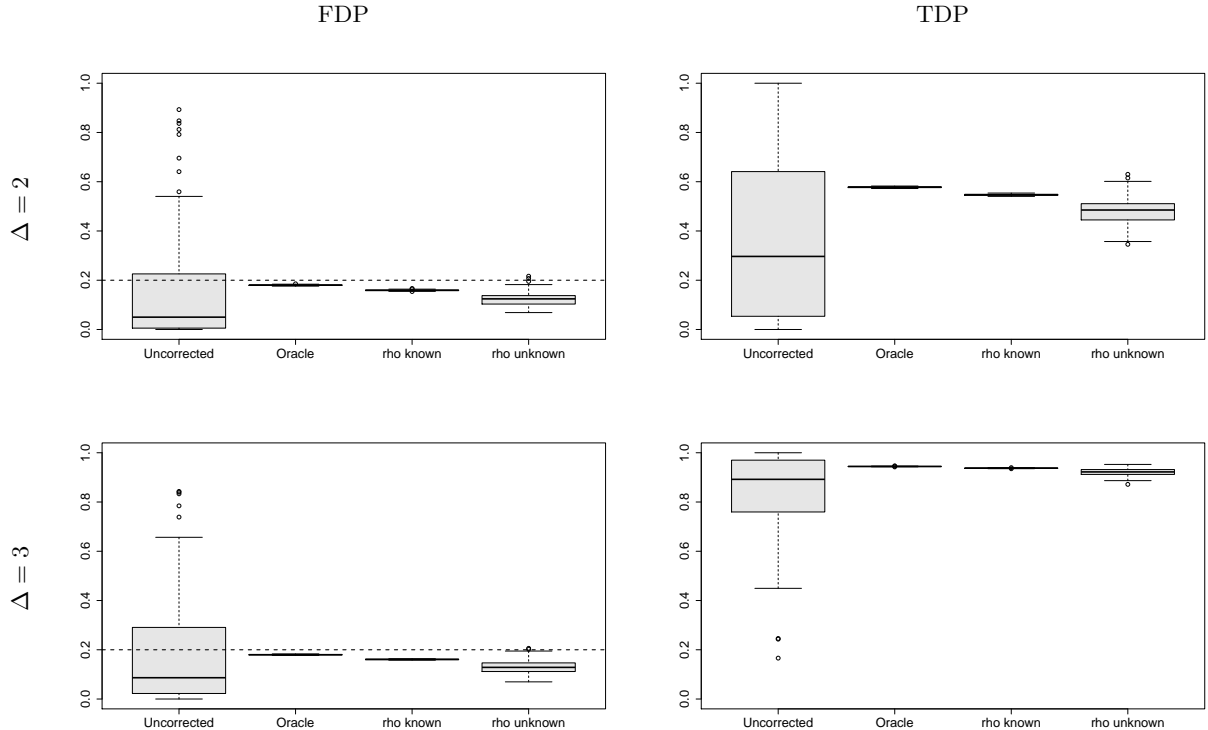

\begin{tabular}{ccc}
&FDP& TDP \\
\rotatebox{90}{\hspace{2cm}$\Delta=2$}&
\includegraphics[scale=0.27]{FDP_boxplot_rho0_3_ksurn0_1_moy2_nbsimu100_alpha0_2.pdf}&
\includegraphics[scale=0.27]{TDP_boxplot_rho0_3_ksurn0_1_moy2_nbsimu100_alpha0_2.pdf}\\
\rotatebox{90}{\hspace{2cm}$\Delta=3$}&
\includegraphics[scale=0.27]{FDP_boxplot_rho0_3_ksurn0_1_moy3_nbsimu100_alpha0_2.pdf}&
\includegraphics[scale=0.27]{TDP_boxplot_rho0_3_ksurn0_1_moy3_nbsimu100_alpha0_2.pdf}
\end{tabular}
\caption{Boxplot of the FDP and TDP of the BH procedure, with different type of $p$-value rescaling, see text. 
The values of the parameters are $n=10^6$, $\rho=0.3$, $k/n=0.1$, $\alpha=0.2$, $100$ replications are done to evaluate the FDP/TDP.} \label{fig:boxplot}
\end{figure}

\paragraph{Post selection bound}

Here, we evaluate the quality of the rescaled Simes post hoc bounds. Since these bounds are meant to be uniform over all the possible selection sets $S$, there is no obvious choice for the set $S$ on which these bounds should be computed. A possible choice, in line with the recent work \cite{GMKS2016}, is to choose a ``typical" family of sets $(S_t)_t$ and to look at the quality of the so-derived confidence envelope  $t\mapsto \ol{\FDP}(S_t)$ for $t\mapsto {\FDP}(S_t)$.  Here, the subset family $\{S_t, 1\leq t\leq 200 \}$ is built as follows: each $S_t$ is composed of the indices of the $t$ largest values of $\{m_i, 1\leq i \leq n\}$. Hence, we have simply here $S_t=\{ 1,\dots, t\}$ and ${\FDP}(S_t)=(t- n_1)_+/t$.

Figure~\ref{fig:equi:posthoc} reports the values of the obtained confidence envelopes, for the four types of $p$-value rescaling described above. Each time, the confidence envelope should be above the true value of the FDP (bold red line), with high probability, and the closer the bound to this quantity, the sharper the bound.
The conclusion is similar to the FDP/TDP case.  We see less variability for the corrected bounds, especially around the ``neck" of the curves ($s\approx n_1=50$), which is a point of interest.
 
\begin{figure}[h!]
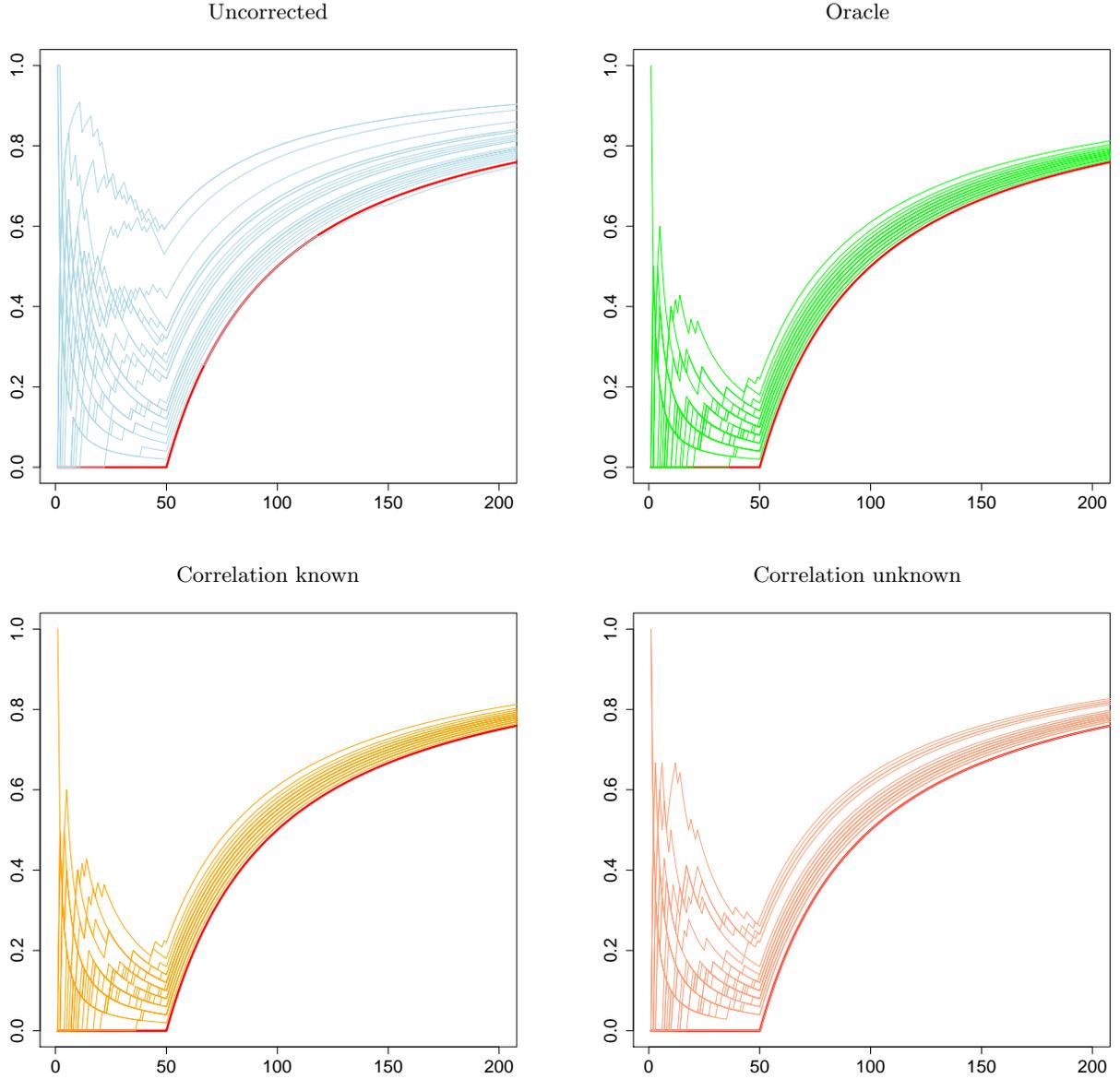

\begin{tabular}{cc}
\vspace{-5mm}
Uncorrected
& 
Oracle
\\
\includegraphics[scale=0.4]{SimesBoxplot.pdf}&\includegraphics[scale=0.4]{PerfectSimesBoxplot.pdf}\\
\vspace{-5mm}
Correlation known
&
Correlation unknown
\\
\includegraphics[scale=0.4]{RhoknownSimesBoxplot.pdf}&\includegraphics[scale=0.4]{RhounknownSimesBoxplot.pdf}
\end{tabular}
\caption{
Plot of $25$ trajectories of different post hoc bounds $t\mapsto \ol{\FDP}(S_t; \cdot,\cdot)$ for $S_t$ corresponding to the top-$t$ largest means (see \eqref{posthocbound} for the definition of $\ol{FDP}$ and the text for the exact definition of $S_t$). From top-left to bottom-right: uncorrected bound $t\mapsto\ol{\FDP}(S_t; 0,1)$ ; oracle bound $t\mapsto\ol{\FDP}(S_t; \theta,\sigma)$; bound with $\sigma=(1-\rho)^{1/2}$ known $t\mapsto\ol{\FDP}(S_t; \widetilde{\theta}_+(\sigma),\sigma)$; bound with $\sigma$ unknown $t\mapsto\ol{\FDP}(S_t; \widetilde{\theta}_+,\widetilde{\sigma}_+$).
The estimators $\widetilde{\theta}_+,\widetilde{\sigma}_+$ (resp. the estimator $\wt{\theta}_{+}(\sigma) $)  are given by \eqref{def:estimator:forMT} (resp. \ref{def:estimator:forMTsemi}).
In each of these pictures, the unknown value of  $t\mapsto {\FDP}(S_t)$ is displayed with the red bold line.
  The values of the parameters are $n=10^3$, $\rho=0.3$, $k/n=0.05$, $\alpha=0.2$ and $\Delta=4$.
  }\label{fig:equi:posthoc}
\end{figure}

\paragraph{Acknowledgements.} The work of A. Carpentier is partially supported by the Deutsche Forschungsgemeinschaft (DFG) Emmy Noether grant MuSyAD (CA 1488/1-1), by the DFG - 314838170, GRK 2297 MathCoRe, by the DFG GRK 2433 DAEDALUS, by the DFG CRC 1294 'Data Assimilation', Project A03, and by the UFA-DFH through the French-German Doktorandenkolleg CDFA 01-18.
This work has also been supported by ANR-16-CE40-0019 (SansSouci) and ANR-17-CE40-0001 (BASICS).

 \appendix
 
 \section{Proofs of minimax lower bounds}

\subsection{Proof of Theorem \ref{thm:lower_one_sided} (gOSC)}\label{p:thm:lower_one_sided}

\subsubsection{Extreme cases}

First,  for $k=0$, estimating $\theta$ amounts to estimating the mean of a Gaussian random variable based on $n$ observation. For this problem, the minimax risk is widely known to be of order $1/\sqrt{n}$ (see standard statistical textbooks such as \cite{MR2724359}). Since $\cR(k,n)$ is nondecreasing with respect to $k$ if follows that, for all integers $k$,  $\cR(k,n)\geq cn^{-1/2}$ for some universal constant $c>0$. 
For $k'= n-\lfloor n^{1/4}\rfloor$, we shall prove prove that $\cR[k',n]\geq c' \sqrt{\log(n)}$, which in turn implies $\cR[k,n]\geq c' \sqrt{\log(n)}$ for all $k\geq k'$.
\begin{lem}\label{lem:ultra_dense}
There exists a constant $c'>0$ such that for $k'= n-\lfloor n^{1/4}\rfloor$, we have  $\cR[k',n]\geq c' \sqrt{\log(n)}$. 
\end{lem}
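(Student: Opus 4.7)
The plan is to use Le~Cam's two-point method with a randomized mixture prior on one side. I set $n_0 = \lfloor n^{1/4}\rfloor = n - k'$ and $\Delta = \tfrac{1}{2}\sqrt{\log n}$. Under the hypothesis $H_1$ I take $\theta_1 = \Delta$ and $\mu = 0$, so that $Y \sim \mathcal{N}(\Delta, 1)^{\otimes n}$. Under $H_0$ I take $\theta_0 = 0$ and draw $\mu$ from the following mixture: sample a uniformly random subset $S_0 \subset [n]$ with $|S_0| = n_0$ and set $\mu_i = 0$ for $i \in S_0$, $\mu_i = \Delta$ for $i \notin S_0$; thus $\mu$ has exactly $n-n_0 = k'$ nonzero coordinates and lies in $\cM_{k'}$, while conditionally on $S_0$ the law of $Y$ is $\bigotimes_{i\in S_0}\mathcal{N}(0,1)\otimes\bigotimes_{i\notin S_0}\mathcal{N}(\Delta,1)$. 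I denote by $\bar P_0, \bar P_1$ the resulting marginal laws of $Y$.

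The heart of the argument is to control the $\chi^2$-divergence between $\bar P_0$ and $\bar P_1$. Writing the Radon--Nikodym derivative $d\bar P_0/d\bar P_1(y) = \mathbb{E}_{S_0}\prod_{i \in S_0} e^{\Delta^2/2 - \Delta y_i}$, squaring it, and integrating against $\bar P_1$ using two independent copies $S_0, S_0'$, the per-coordinate Gaussian moment calculation against $\mathcal{N}(\Delta,1)$ contributes $e^{\Delta^2}$ when $i \in S_0 \cap S_0'$ and exactly $1$ otherwise (because indices appearing in only one of $S_0, S_0'$ give a centered Gaussian MGF cancellation), so that
\begin{equation*}
1 + \chi^2(\bar P_0 \| \bar P_1) \;=\; \mathbb{E}_{S_0, S_0'}\!\bigl[\, e^{|S_0 \cap S_0'|\,\Delta^2}\,\bigr].
\end{equation*}
Since $|S_0 \cap S_0'|$ is hypergeometric with parameters $(n, n_0, n_0)$, Hoeffding's convex-order theorem dominates it by a $\mathrm{Bin}(n_0, n_0/n)$, and so
\begin{equation*}
\mathbb{E}\!\bigl[e^{|S_0 \cap S_0'|\,\Delta^2}\bigr] \;\leq\; \Bigl(1 + \tfrac{n_0}{n}\bigl(e^{\Delta^2}-1\bigr)\Bigr)^{n_0} \;\leq\; \exp\!\Bigl(\tfrac{n_0^2}{n}\,(e^{\Delta^2}-1)\Bigr) \;\leq\; \exp\bigl(n^{-1/4}\bigr),
\end{equation*}
using $n_0^2/n \leq n^{-1/2}$ and $e^{\Delta^2} = n^{1/4}$. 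Hence $\chi^2(\bar P_0 \| \bar P_1) \to 0$, and Cauchy--Schwarz gives $\mathrm{TV}(\bar P_0, \bar P_1) \leq \tfrac{1}{2}\sqrt{\chi^2} \to 0$.

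I then conclude by Le~Cam's inequality: since both priors are supported in $\mathbb{R}\times\cM_{k'}$ and the functional $\theta$ takes the two fixed values $\theta_0=0$ and $\theta_1=\Delta$,
\begin{equation*}
\cR[k', n] \;\geq\; \tfrac{\Delta}{2}\bigl(1 - \mathrm{TV}(\bar P_0, \bar P_1)\bigr) \;\geq\; c' \sqrt{\log n}
\end{equation*}
for all $n$ large enough (any $c' < 1/4$ works asymptotically). The main obstacle is the $\chi^2$ computation: a pure non-randomized two-point prior would fail badly since then the $n_0$ clean coordinates would be perfectly localized and the KL would scale as $n_0 \Delta^2$, forcing $\Delta \lesssim n^{-1/8}$; the randomization over $S_0$ is essential precisely because the typical intersection $|S_0\cap S_0'|$ has mean $n_0^2/n = n^{-1/2}$, which is small enough to absorb the exponential factor $e^{\Delta^2}\approx n^{1/4}$ and yields the matching $\sqrt{\log n}$ rate.
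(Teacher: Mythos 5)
Your proposal is correct and takes essentially the same route as the paper: a two-point Le Cam reduction with a randomized support prior on one side, the identity $1+\chi^2 = \E_{S,S'}[e^{\Delta^2|S\cap S'|}]$, and domination of the hypergeometric overlap by a $\mathrm{Bin}(n_0,n_0/n)$ to get the $\exp(n^{-1/4})$ bound. The only cosmetic differences are that you place the mixture on the null rather than the alternative (a translation of the paper's construction), and your Le Cam constant should be $\Delta/4$ rather than $\Delta/2$, which only affects the value of $c'$.
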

This lower bound straightforwardly follows from a reduction of the  estimation problem to a detection problem for which minimax separation distance have already been derived.
The proof will also serve as a warm-up for the more challenging case  $k\in [\sqrt{n}, n - n^{1/4}]$.

\begin{proof}[Proof of Lemma \ref{lem:ultra_dense}]

To prove this lemma we reduce the problem of estimating $\theta$ to a signal detection problem. Write $\underline{k}= n-k'=\lfloor n^{1/4}\rfloor$ for short. Let $a\in (0,1)$ be a constant that will be fixed later. Given any $k$, denote  $\cP[k,n]$, the collection of subset of $\{1,\ldots,n\}$ of size $k$. Define $\theta_0= -a \sqrt{\log(n)}$ and for any $S\in \cP[\underline{k},n]$ let $\mu_S$ denote the vector such that $(\mu_S)_i= |\theta_0|$ if $i\notin S$ and 0 otherwise. Note that 
$\mu_S$ is $k'$-sparse, that is, $\mu_S\in \mathcal{M}_{k'}$.
It follows from the definition of the minimax risk
\beqn 
 \cR[k',n]&\geq &\frac{1}{2} \inf_{\wh{\theta}} \left[\E_{0,0}[|\wh{\theta}|]+ \max_{S\in \cP[\underline{k},n]} \E_{\theta_0,\mu_S}[|\wh{\theta}-\theta_0|]  \right]\\
 &\geq&\frac{a}{4}\sqrt{\log(n)}  \Big[   \P_{0,0}[\wh{T}=0]+ \max_{S\in \cP[\underline{k},n]} \P_{\theta_0,\mu_S}[\wh{T}=1\big]\Big].
 \eeqn
by letting $\wh{T}=\ind_{\wh{\theta}\geq \theta_0/2}$ and by using $(|\theta_0|/2) \ind_{\wh{T}=0} \leq |\wh{\theta}|$ and 
$(|\theta_0|/2) \ind_{\wh{T}=1} \leq |\wh{\theta}-\theta_0|$. Thus
\beqn  
  \cR[k',n] &\geq & \frac{a}{4}\sqrt{\log(n)} \inf_{\wh{T}} \Big[   \P_{0,0}[\wh{T}=0]+ \max_{S\in \cP[\underline{k},n]} \P_{\theta_0,\mu_S}[\wh{T}=1\big]\Big].
 \eeqn
As a consequence, if $a$ is chosen small enough such that no test is able to decipher reliably between $\P_{0,0}$ and $\{\P_{\theta_0,\mu_S},\ S\in \cP[\underline{k},n]\}$, then the minimax risk is of the order of $a\sqrt{\log(n)}$. Note that this problem amounts to testing in a simple Gaussian white noise model $\mathcal{N}(\gamma,I_n)$ whether the mean vector $\gamma$ is zero or if $\gamma=\theta_0+\mu_S$ is $\underline{k}$-sparse with negative non-zero values that are all equal to $-a\sqrt{\log(n)}$. Quantifying the difficulty of this problem is classical in the statistical literature and has be done for instance in \cite{baraud02} (for non-necessarily positive values). For the sake of completeness, we shall provide exhaustive arguments. Denote $\overline{\P}_{\underline{k}}= |\cP[\underline{k},n]|^{-1}\sum_{S}\P_{\theta_0,\mu_S}$ the mixture measure when $S$ is sampled uniformly in $\cP[\underline{k},n]$. Since the supremum is larger than the mean, we obtain
 \begin{eqnarray}\nonumber 
  \cR[k',n]&\geq & \frac{a}{4}\sqrt{\log(n)} \inf_{\wh{T}} \big[   \P_{0,0}[\wh{T}=0]+  \overline{\P}_{\underline{k}}[\wh{T}=1]\big] \nonumber \\
 &\geq & \frac{a}{4}\sqrt{\log(n)}(1 -  \| \P_{0,0}-  \overline{\P}_{\underline{k}}\|_{TV})\geq \frac{a}{4}\sqrt{\log(n)} \left(1- \sqrt{\chi^2( \overline{\P}_{\underline{k}}; \P_{0,0}) }\right) \ ,\label{eq:lower_ultra_dense}
\end{eqnarray}
since the $\chi^2$ discrepancy between distributions dominates the square of the total variation distance $\|.\|_{TV}$, see Section~2.4 in \cite{MR2724359}. Writing $L$ the likelihood ratio of $\overline{\P}_{\underline{k}}$ over $\P_{0,0}$ and, for $S\in \cP[\underline{k},n]$, $L_S$ the likelihood ratio of $\P_{\theta_0,\mu_S}$ over $\P_{0,0}$, and $\pi$ the uniform measure over $\cP[\underline{k},n]$, we have
\beqn 
1+ {\chi^2( \overline{\P}_{\underline{k}}; \P_{0,0})} &=& \E_{0,0}[L^2]= \E_{0,0}[\pi^{\otimes 2}( L_S L_{S'})]\\
& =& \pi^{\otimes 2}\Big[\E_{0,0}( L_S L_{S'})\Big]\\
& =& \pi^{\otimes 2}\Big[e^{\theta_0^2 |S\cap S'|}\Big]\ , 
\eeqn 
where the last equation follows from simple computations for normal distribution. Note that $|S\cap S'|$ is distributed as  an hypergeometric random variable $Z$ with parameter $(\underline{k},\underline{k},n)$. It has been observed in~\cite{aldous85} (see the proof of Proposition~20.6 therein) that there exists $\sigma$-field $\cB$ and a random variable $W$ following a Binomial distribution with parameter $(\underline{k},\underline{k}/n)$ such that $Z= \E[W|\cB]$. Then, it follows from Jensen inequality, that
\beqn
 1+ {\chi^2( \overline{\P}_{\underline{k}}; \P_{0,0})}&\leq&\E[\exp[\theta_0^2 W]]= \left[1+\frac{\underline{k}}{n}\left(e^{\theta_0^2}-1\right)\right]^{\underline{k}}\\
 &\leq & \exp\left[\frac{\underline{k}^2}{n}e^{\theta_0^2}\right]\leq {\exp\left[n^{-1/2+a^2}\right]} \ , 
\eeqn
by definition of $\theta_0$ and $\underline{k}$. Fixing {$a=1/2$}, 
and coming back to \eqref{eq:lower_ultra_dense}, we obtain 
\[
  \cR[k',n]\geq   c'\sqrt{\log(n)}\left(1- \sqrt{\exp(n^{-1/4}) - 1}\right) \ , 
\]
which is larger than some $c\sqrt{\log(n)}$ for $n\geq 5$. The result is also valid for $n< 5$ by considering  a constant $c$ small enough.

\end{proof}

We now turn to the case $k\in [\sqrt{n}, n - n^{1/4}]$. As in the previous proof, we shall first reduce the problem to a two point testing problem and then compute an upper bound of the total variation distance. However, the reasoning here is much more involved. 

\subsubsection{Step $1$: Two point reduction}

Given any two distributions $\nu_0$ and $\nu_1$ on $\mathbb{R}_+$ and any $\theta_0<\theta_1$ in $\mathbb{R}$, we denote, for $i=0,1$ $\mathbf{P}_i= \int \P_{\theta_i,\mu}\nu_i^{\otimes n}(d\mu)$ the mixture distribution where the components of $\mu$ are i.i.d. sampled according to the distribution $\nu_i$. We start with the following general reduction lemma.

\begin{lem}[Reduction]\label{lem:reduction_general}
For any $\theta_0$, $\theta_1$, $\nu_0$ and $\nu_1$, we have 
\beq\label{eq:lower_reduction_general}
\cR[k,n]\geq \frac{|\theta_0-\theta_1|}{4} \left[ 1- \|\mathbf{P}_0- \mathbf{P}_1\|_{TV}  - \sum_{i=0,1} \nu_i^{\otimes n}(\mu\notin \cM_k) \right]\ . 
\eeq
\end{lem}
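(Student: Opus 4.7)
The argument is a standard two-point / prior-based reduction from estimation to testing, adapted to handle the fact that the priors $\nu_0^{\otimes n}$ and $\nu_1^{\otimes n}$ are not necessarily supported on $\cM_k$.

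\medskip

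\emph{Step 1: Reduction to a test.} Without loss of generality assume $\theta_0<\theta_1$ and, given any estimator $\wh\theta$ of $\theta$, define the test
$\wh T = \ind_{\{\wh\theta\geq (\theta_0+\theta_1)/2\}}$. A direct inspection gives, for $i\in\{0,1\}$ and every $\mu$,
\[
|\wh\theta-\theta_i|\geq \tfrac{|\theta_0-\theta_1|}{2}\ind_{\{\wh T\neq i\}},
\quad \text{hence}\quad
\E_{\theta_i,\mu}\big[|\wh\theta-\theta_i|\big]\geq \tfrac{|\theta_0-\theta_1|}{2}\,\P_{\theta_i,\mu}(\wh T\neq i).
\]

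\emph{Step 2: Restricted Bayes lower bound for the supremum.} For $i\in\{0,1\}$, I will use the elementary inequality
\[
\sup_{\mu\in\cM_k}\E_{\theta_i,\mu}\big[|\wh\theta-\theta_i|\big]
\;\geq\; \int_{\cM_k}\E_{\theta_i,\mu}\big[|\wh\theta-\theta_i|\big]\,\nu_i^{\otimes n}(d\mu),
\]
which holds because $\nu_i^{\otimes n}(\cM_k)\leq 1$ and the supremum dominates any average on $\cM_k$. Writing $\int_{\cM_k}=\int-\int_{\cM_k^c}$ and using the trivial bound $\P_{\theta_i,\mu}(\wh T\neq i)\leq 1$ on the removed part, together with Step~1, gives
\[
\sup_{\mu\in\cM_k}\E_{\theta_i,\mu}\big[|\wh\theta-\theta_i|\big]
\;\geq\; \tfrac{|\theta_0-\theta_1|}{2}\,\Big[\mathbf{P}_i(\wh T\neq i)-\nu_i^{\otimes n}(\mu\notin\cM_k)\Big].
\]

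\emph{Step 3: Combining the two hypotheses.} Since the sup over $(\theta,\mu)\in\mathbb{R}\times\cM_k$ is bounded below by the average of the two sups corresponding to $\theta=\theta_0$ and $\theta=\theta_1$, summing the previous bound over $i=0,1$ and dividing by $2$ yields
\[
\cR[k,n]\;\geq\;\tfrac{|\theta_0-\theta_1|}{4}\Big[\mathbf{P}_0(\wh T=1)+\mathbf{P}_1(\wh T=0)-\sum_{i=0,1}\nu_i^{\otimes n}(\mu\notin\cM_k)\Big].
\]
The classical identity $\inf_{\wh T}\{\mathbf{P}_0(\wh T=1)+\mathbf{P}_1(\wh T=0)\}=1-\|\mathbf{P}_0-\mathbf{P}_1\|_{TV}$ (applied here to our particular $\wh T$, for which the inequality $\geq$ is enough) then gives the announced bound \eqref{eq:lower_reduction_general}.

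\medskip

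There is no serious obstacle here: all steps are classical tools (two-point reduction, Bayes-risk bound on the sup, Le Cam's inequality). The only small subtlety is Step~2, where one must compensate for the fact that $\nu_i^{\otimes n}$ may put mass outside $\cM_k$; this is what produces the correction terms $\nu_i^{\otimes n}(\mu\notin\cM_k)$ in the final lower bound.
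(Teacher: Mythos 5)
Your proof is correct and follows essentially the same route as the paper's: a two-point testing reduction followed by replacing the supremum with the mixture average and compensating for the prior mass outside $\cM_k$, then Le Cam's total-variation bound. The only cosmetic difference is the order of operations (you bound each supremum separately before combining, whereas the paper keeps both suprema inside a single $\inf_{\wh\theta}$), but the argument and the correction terms are identical.
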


\begin{proof}[Proof of Lemma \ref{lem:reduction_general}]
 For short, we write $\cB_k=\{\|\mu\|_0> k\}$. Starting from the definition of the minimax risk, we have  
\beqn 
\cR[k,n]&\geq &\frac{1}{2} \inf_{\wh{\theta}}\left( \sup_{\mu\in \cM_k} \E_{\theta_0,\mu}[|\wh{\theta}-  \theta_0|]+ \sup_{\mu\in \cM_k} \E_{\theta_1,\mu}[|\wh{\theta}-  \theta_1|]\right)\\
&\geq & \frac{|\theta_0-\theta_1|}{4} \inf_{\wh{\theta}}\left( \sup_{\mu\in \cM_k} \P_{\theta_0,\mu}\left[\wh{\theta}> \frac{\theta_0 + \theta_1}{2}\right]+  \sup_{\mu\in \cM_k} \P_{\theta_1,\mu}\left[\wh{\theta}\leq \frac{\theta_0 + \theta_1}{2}\right]\right)\\
&\stackrel{(i)}{\geq} & \frac{|\theta_0-\theta_1|}{4} \inf_{\wh{\theta}}\left[\mathbf{P}_0\left(\wh{\theta}> \frac{\theta_0 + \theta_1}{2}\right) - \nu_0^{\otimes n}(\cB_k) + \mathbf{P}_1\left(\wh{\theta}\leq  \frac{\theta_0 + \theta_1}{2}\right)  - \nu_1^{\otimes n}(\cB_k) \right]\\
&\geq & \frac{|\theta_0-\theta_1|}{4} \left[ 1-  \sup_{\cA}|\mathbf{P}_0(\cA)- \mathbf{P}_1(\cA)|- \nu_0^{\otimes n}(\cB_k)  - \nu_1^{\otimes n}(\cB_k) \right]\\
&= & \frac{|\theta_0-\theta_1|}{4} \left[ 1- \|\mathbf{P}_0- \mathbf{P}_1\|_{TV}- \nu_0^{\otimes n}(\cB_k)  - \nu_1^{\otimes n}(\cB_k) \right]\ , 
\eeqn 
 where we use in ($i$) that both $\mathbf{P}_0\left[\wh{\theta}> \frac{\theta_0 + \theta_1}{2}\right] \leq  \nu_0^{\otimes n}(\cB_k) 
 + \sup_{\mu\in \cM_k} \P_{\theta_0,\mu}\left[\wh{\theta}> \frac{\theta_0 + \theta_1}{2}\right]$ and $\mathbf{P}_1\left[\wh{\theta}\leq \frac{\theta_0 + \theta_1}{2}\right] \leq \nu_0^{\otimes n}(\cB_k)  +\sup_{\mu\in \cM_k} \P_{\theta_1,\mu}\left[\wh{\theta}\leq \frac{\theta_0 + \theta_1}{2}\right] $. 

\end{proof}

As a consequence, we will  carefully choose the $\theta_i$'s and $\nu_i$'s in such a way that $|\theta_0-\theta_1|$ is the largest possible while the total variation distance $\|\mathbf{P}_0- \mathbf{P}_1\|_{TV}$ is bounded away from one and the measures $\nu_i^{\otimes n}$ are concentrated on $\cM_k$. To this end, we consider in the sequel some measures with an extra mass on $0$, but in a light fashion so that the convergence rate is preserved.

In the sequel, we define $k_0= \max(k/2, n- 3(n-k)/2)<k$ and we assume 
\beq\label{eq:assumption_nu_souligne}
\nu_i(\{0\})\geq  1 - \frac{k_0}{n}\ , \text{ for }i=0,1.
\eeq
As a consequence, under $\nu_i^{\otimes n}$, $\|\mu\|_0$ is stochastically dominated by a Binomial distribution with parameters $(n,k_0/n)$. By Chebychev inequality, we have 
\[
 \nu_i^{\otimes n}(\mu \notin \cM_k)\leq  \frac{k_0 (n-k_0)}{n\min^2[k/2, (n-k)/2]}\leq \frac{2\min(k ,n-k)}{\min^2[k/2, (n-k)/2]}\leq \frac{8}{k\wedge (n-k)}\ , 
\]
which is smaller than $8n^{-1/4}$ since $k\in [\sqrt{n}, n - n^{1/4}]$. 
For $n$ large enough, this is smaller than $0.45$ and together with
 Lemma \ref{lem:reduction_general}, we obtain 
\beq\label{eq:general_lower}
\cR[k,n]\geq \frac{|\theta_0-\theta_1|}{4} \left[ 0.55- \|\mathbf{P}_0- \mathbf{P}_1\|_{TV} \right]\ . 
\eeq

In view of \eqref{eq:general_lower}, the challenging part is to control the total variation distance between the mixture distributions $\mathbf{P}_0$ and $\mathbf{P}_1$. Contrary to the situation we dealt with in Lemma \ref{lem:ultra_dense}, one cannot easily derive a closed form formula for the $\chi^2$ distance between two mixtures. 
Instead, we shall rely on a general upper bound for mixture of normal distributions. Remember that $\phi$ denotes the standard normal measure and that, given a real probability measure $\pi$, we write $\pi* \phi$ for the corresponding convolution measure.

\begin{lem}\label{lem:tvmatching}
For two real probability measures $\pi_0$ and $\pi_1$, assume that $\pi_1(\{0\})>0$ and that the supports of $\pi_0$ and $\pi_1$ are bounded. Then we have 
\[
\|(\pi_0*\phi)^{\otimes n} - (\pi_1*\phi)^{\otimes n}\|_{TV}^2\leq  \frac{n}{\pi_1(\{0\})} \sum_{\ell\geq 1} \left( \int x^{\ell}(d\pi_0(x)- d\pi_1(x)) \right)^2  /\ell! \ . 
\]
\end{lem}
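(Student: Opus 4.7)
The plan is to chain three classical inequalities and then diagonalize the chi-squared integrand in the Hermite basis of $L^2(\phi)$.

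\textbf{Step 1 (TV to chi-squared, with tensorization).} Since, for any pair of probability measures, one has $\|P-Q\|_{TV}^2\leq H^2(P,Q)$, $H^2(P,Q)\leq \chi^2(P,Q)$, and the sub-additive tensorization $H^2(P^{\otimes n},Q^{\otimes n})\leq nH^2(P,Q)$, a standard chain yields
\[
\|(\pi_0*\phi)^{\otimes n} - (\pi_1*\phi)^{\otimes n}\|_{TV}^2 \leq n\, \chi^2(\pi_0*\phi, \pi_1*\phi)\ .
\]
This reduces the problem to a single-letter chi-squared bound.

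\textbf{Step 2 (Lower bound on the reference density).} Write $f_i$ for the density of $\pi_i*\phi$ with respect to Lebesgue measure. Because $\pi_1$ has an atom at $0$ of mass $\pi_1(\{0\})>0$, we have $f_1(x)\geq \pi_1(\{0\})\phi(x)$ for every $x\in\R$. Therefore
\[
\chi^2(\pi_0*\phi,\pi_1*\phi) = \int \frac{(f_0-f_1)^2}{f_1} \leq \frac{1}{\pi_1(\{0\})}\int \frac{(f_0-f_1)^2}{\phi}\ .
\]
Compactness of the supports of $\pi_0,\pi_1$ ensures that $f_0-f_1$ has Gaussian-type decay, so $(f_0-f_1)/\phi\in L^2(\phi)$ and the Hermite expansion below is legitimate.

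\textbf{Step 3 (Hermite diagonalization).} Let $(H_\ell)_{\ell\geq 0}$ denote the probabilists' Hermite polynomials, which form an orthogonal basis of $L^2(\phi)$ with $\int H_\ell H_m\,\phi = \ell!\,\delta_{\ell m}$. The key identity, obtainable from the generating function $\sum_\ell H_\ell(x)t^\ell/\ell! = e^{tx-t^2/2}$, is
\[
\int H_\ell(x)\phi(x-y)\,dx = y^\ell,\qquad y\in\R,\ \ell\geq 0\ .
\]
Consequently, Fubini gives $\int H_\ell(x)(f_0(x)-f_1(x))\,dx = \int y^\ell(d\pi_0(y)-d\pi_1(y))$. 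Parseval's identity applied to $g=(f_0-f_1)/\phi\in L^2(\phi)$ then yields
\[
\int \frac{(f_0-f_1)^2}{\phi} = \sum_{\ell\geq 0}\frac{1}{\ell!}\left(\int y^\ell (d\pi_0-d\pi_1)\right)^2\ .
\]
The $\ell=0$ term vanishes because both $\pi_0,\pi_1$ are probability measures, and combining this with the inequalities of Steps~1 and~2 yields the statement.

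The only non-trivial point is Step~3: one must justify that $(f_0-f_1)/\phi$ belongs to $L^2(\phi)$ so that Parseval applies, which is where the boundedness of the supports is used. Everything else is a routine chain of classical divergence inequalities.
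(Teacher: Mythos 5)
Your proposal is correct and follows essentially the same route as the paper: Le Cam/tensorization to reduce to a single-letter bound, then the pointwise lower bound $f_1\geq\pi_1(\{0\})\phi$ to pass to a $\phi$-weighted $L^2$ norm, and finally the Hermite expansion to diagonalize it. The only cosmetic difference is that you interpose the chi-squared divergence (via $H^2\le\chi^2$) before invoking $f_1\geq\pi_1(\{0\})\phi$, whereas the paper bounds $(f_0^{1/2}+f_1^{1/2})^{-2}\leq f_1^{-1}\leq(\pi_1(\{0\})\phi)^{-1}$ directly inside the Hellinger integrand; both yield the identical bound.
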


\begin{proof}[Proof of Lemma \ref{lem:tvmatching}]
  Let  $f_{0}$ (resp. $f_{1}$) the density associated to $\pi_0*\phi$ (resp. $\pi_1*\phi$) and let $Z_0$ (resp. $Z_1$) be a random variable distributed according to $f_0$ (resp. $f_1$). It follows from Le Cam's inequalities and tensorization identities for Hellinger distances~\cite[][Section 2.4]{MR2724359} that
   \begin{align}
\|(\pi_0*\phi)^{\otimes n} - (\pi_1*\phi)^{\otimes n}\|_{TV}^2
&\leq   \int \dots \int \left(\prod_{i=1}^n  f_0^{1/2}(y_i)  - \prod_{i=1}^n  f_1^{1/2}(y_i)\right)^2 dy_1 \dots dy_n\nonumber\\
&\leq     n \int \left( f_0^{1/2}(y)- f^{1/2}_1(y)\right)^2 dy\ . \label{equinterm0}
 \end{align}
Obviously, we have $f_{1}(y)\geq \pi_1(\{0\})\phi(y)$, which enforces 
 \begin{align}
\int \left( f_{0}^{1/2}(y)- f_{1}^{1/2}(y)\right)^2 dy &=  \int \frac{\left( f_{0}(y)- f_{1}(y)\right)^2}{\left( f_{0}^{1/2}(y)+ f_{1}^{1/2}(y)\right)^2}  dy  \nonumber \\ & \leq  \pi^{-1}_1(\{0\}) \int  \left( f_{0}(y)- f_{1}(y)\right)^2/\phi(y)  dy\ .  \label{equ_interm2}
\end{align}

Next, we use Hermite's polynomials $(H_k(\cdot)/(k!)^{1/2})_{k\geq 0}$ as an Hilbert basis of $L^2(\mathbb{R},\phi)$ (the space of square integrable function with respect to the normal measure) and the relation $\frac{\phi(y-x) }{\phi(y)} = 1+\sum_{\ell\geq 1} H_\ell(y) x^\ell/\ell!$ (see, e.g., (1.1) in \cite{Foa1981}) to obtain that the rhs of \eqref{equ_interm2} is upper-bounded by
 \begin{align}
&  \pi^{-1}_1(\{0\}) \int \left( \int \frac{\phi(y - x)}{\phi(y)} d\pi_0(x)-  \int \frac{\phi(y - x)}{\phi(y)}d\pi_1(x) \right)^2   \phi(y) dy \nonumber\\
 =&\: \pi^{-1}_1(\{0\})  \int \left( \sum_{\ell\geq 1} \frac{H_\ell(y)}{\ell!}\left( \int x^\ell d\pi_0(x) -\int x^\ell d\pi_1(x) \right) \right)^2   \phi(y) dy 
 \nonumber\\
 = &\: \pi^{-1}_1(\{0\}) \sum_{\ell\geq 1}   \frac{1}{\ell!}\left( \int x^{\ell} (d\pi_0(x)-d\pi_1(x)) \right)^2  ,
 \end{align}
 where we used in the last line the orthonormality of the Hermite polynomials. This concludes the proof. 
 \end{proof}

Now, if we take $\theta_1=0$ and we define  $\pi_i=\delta_{\theta_i}*\nu_i$ (where $\delta_{x}$ is the Dirac measure), we have $\mathbf{P}_i= (\pi_i*\phi)^{\otimes n}$ and we are in position to apply Lemma \ref{lem:tvmatching}.
If we further assume that, for some integer $m>2$ and some $M>0$ the support of $\pi_0$ and $\pi_1$ is included in $[-M,M]$ and that their $m$ first moments are matching
\beq\label{eq:com}
\int x^\ell d\pi_0(x)= \int x^\ell d\pi_1(x),\quad \forall \ell= 1,\ldots, m\ ,
\eeq
we can derive from Lemma \ref{lem:tvmatching} and \eqref{eq:assumption_nu_souligne} that 
\beqn 
\|\mathbf{P}_0 - \mathbf{P}_1\|_{TV}^2&\leq & \frac{n}{1- k_0/n} \sum_{\ell>m } \left( \int x^{\ell}(d\pi_0(x)- d\pi_1(x)) \right)^2  /\ell! \\ 
&\leq &  \frac{n^2}{n-k_0}\sum_{\ell>m }\frac{1}{\ell !}\left[ \frac{2k_0}{n }M^\ell + |\theta_0|^\ell\right]^2\\
&\leq & \frac{2n^2}{n-k_0}\sum_{\ell>m }\left[ \frac{4k^2_0}{ n^2 }\left(\frac{eM^2}{\ell}\right)^{\ell} + \left(\frac{e|\theta_0|^2}{\ell}\right)^{\ell} \right]\ .  
\eeqn 
Then, if $|\theta_0|$, $m$, and $M$ are such that $m\geq 2e(M^2\vee \theta^2_0)$, we obtain
\beqn 
\|\mathbf{P}_0 - \mathbf{P}_1\|_{TV}^2\leq \frac{8 k_0^2 }{n-k_0}2^{-m} + 2\frac{n^2}{n-k_0}\left(\frac{e\theta_0^2}{m}\right)^m.
\eeqn 
If $m$ is large enough this will imply that $\|\mathbf{P}_0 - \mathbf{P}_1\|_{TV}\leq 0.5$. Putting everything together and coming back to \eqref{eq:general_lower}, we conclude that 
\beq\label{eq:general_lower_2}
\cR[k,n]\geq \frac{|\theta_0|}{80}\ ,
\eeq
if there exists $\theta_0$, $\pi_0$ and $\pi_1$ such that for $m_0= \lceil \log(64 k_0^2/(n-k_0))  /\log(2)  \rceil $ and  $M_0= \sqrt{m_0/2e}$
\beq\label{eq:condition_pi_0_pi_1}
\left\{\begin{array}{l}
\min(\pi_0(\{\theta_0\}),\pi_1(\{0\}))\geq 1- \frac{k_0}{n}\ ;\\        
\pi_0 \text{ (resp. } \pi_1\text{)}\text{ is supported on }  [\theta_0,M_0] \text{ (resp. }[0,M_0] \text{)}\ ;\\
\int x^{\ell}d\pi_0(x)=\int x^{\ell}d\pi_1(x) ,\quad \forall \ell= 1,\ldots, m_0\ ;\\
\theta_0^2 \leq \frac{m_0}{e} \left[\left(\frac{n-k_0}{16n^2}\right)^{1/m_0}\wedge \frac{1}{2}\right]\ .
       \end{array}
\right.
\eeq
The remainder of the proof is devoted to demonstrate the existence of such $\pi_0$ and $\pi_1$, for $|\theta_0|$ taken as large as possible.

\subsubsection{Step $3$: Existence of $\pi_0$ and $\pi_1$}

  \begin{lem}\label{lemlb3}
  For any positive numbers $M>0$, $\eta>0$ and any positive integer $m$, there exist two probability measures $\pi_0$ and $\pi_1$ respectively supported on $[-\eta M ,M]$ and $[0,M]$, whose $m$ first moments are matching and such that 
 \beqn 
 \min(\pi_0(\{-\eta M\}), \pi_1(\{0\}))\geq  \left[1+ \eta m^2 e^{2\sqrt{\eta}m}\right]^{-1}.
 \eeqn 
  \end{lem}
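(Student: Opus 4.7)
The plan is to reduce the existence of $\pi_0$ and $\pi_1$ to a polynomial extremal problem via convex duality (Hahn-Banach), which is then controlled by classical Chebyshev polynomial inequalities. Specifically, set $w = [1 + \eta m^2 e^{2\sqrt{\eta}m}]^{-1}$, and let $U$ (resp.\ $V$) denote the convex set of probability measures on $[-\eta M, M]$ (resp.\ $[0, M]$) with mass at least $w$ at $-\eta M$ (resp.\ at $0$). The existence of $(\pi_0, \pi_1) \in U \times V$ with matching first $m$ moments is equivalent to $0 \in K$, where $K \subset \mathbb{R}^{m+1}$ is the convex image of $U \times V$ under the moment-difference map $(\pi_0, \pi_1) \mapsto (\int x^\ell\, d(\pi_0 - \pi_1))_{0 \leq \ell \leq m}$. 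By the Hahn-Banach separation theorem applied to $K$, and after computing the extreme values of $\int P\, d\pi_i$ over $U$ and $V$ (achieved by placing the free mass $1-w$ at the maximiser or minimiser of $P$), this condition is equivalent to
$$
w\,(P(-\eta M) - P(0)) \leq (1-w)\bigl(\max\nolimits_{[0,M]} P - \min\nolimits_{[-\eta M, M]} P\bigr)
$$
for every polynomial $P$ of degree at most $m$.

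Next I would verify this inequality for the chosen $w$ via Chebyshev polynomials. Let $A = \max_{[0,M]} P$ and $B = \min_{[0,M]} P$, so that the right-hand side is at least $(1-w)(A-B)$. After the affine change of variable $t = 2x/M - 1$ sending $[0, M]$ onto $[-1, 1]$, the point $-\eta M$ maps to $-(1+2\eta)$. Chebyshev's classical growth inequality ($|Q(y)| \leq T_m(|y|)$ for $|y| \geq 1$ and $Q$ of degree $\leq m$ with $\|Q\|_{[-1,1]} \leq 1$), combined with a refined analysis of the extremal polynomial (realised by $P(x) = T_m(2x/M - 1)$, whose oscillation pattern on $[-1, 1]$ forces the key coincidence $P(0) = A$), yields
$$
P(-\eta M) - P(0) \leq \tfrac{A-B}{2}\bigl(T_m(1+2\eta) - 1\bigr).
$$
Consequently the Hahn-Banach condition is implied by $w(T_m(1+2\eta) - 1) \leq 2(1-w)$, that is, $w \leq 2/(T_m(1+2\eta) + 1)$.

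Finally, I would estimate $T_m(1+2\eta)$. Using $T_m(\cosh\tau) = \cosh(m\tau)$ with $\tau = \arccosh(1+2\eta) = 2\,\mathrm{arcsinh}(\sqrt{\eta}) \leq 2\sqrt{\eta}$ gives $T_m(1+2\eta) \leq \cosh(2\sqrt{\eta}\,m)$. The identity $\cosh(x) - 1 = 2\sinh^2(x/2)$ combined with $\sinh(y) \leq y\cosh(y) \leq y\,e^y$ (for $y \geq 0$) yields $\cosh(x) - 1 \leq (x^2/2)\,e^x$, and with $x = 2\sqrt{\eta}\,m$ this gives $T_m(1+2\eta) - 1 \leq 2\eta m^2\,e^{2\sqrt{\eta}\,m}$. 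Hence $T_m(1+2\eta) + 1 \leq 2(1 + \eta m^2\,e^{2\sqrt{\eta}\,m}) = 2/w$, so $w \leq 2/(T_m(1+2\eta) + 1)$ as required.

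The main obstacle is the refined Chebyshev bound in the second paragraph: the purely universal combination of Chebyshev growth at $-\eta M$ with the trivial estimate $P(0) \geq B$ gives only $P(-\eta M) - P(0) \leq (A-B)(T_m(1+2\eta)+1)/2$, which produces an end-bound of the form $1/(2 + \eta m^2 e^{2\sqrt{\eta}m})$---too weak in the regime $\eta m^2 e^{2\sqrt{\eta}m} \lesssim 1$. Sharpening the prefactor from $(T_m+1)/2$ to $(T_m-1)/2$ requires invoking the alternation property of the extremal polynomial for the two-point Chebyshev problem on $[-1, 1]$, specifically that the extremiser must satisfy $P(0) = A$ (and not $B$). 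The Hahn-Banach reduction and the growth estimate are otherwise classical.
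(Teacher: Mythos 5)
Your duality reduction is a legitimate alternative route and is, in effect, the dual of the paper's construction. The paper extends $\Lambda(P)=P(-\eta)-P(0)$ from $\mathcal{P}_m\subset \mathcal{C}[0,1]$ by Hahn--Banach, represents the extension by a signed measure (Riesz), and builds $\pi_0,\pi_1$ directly from the Jordan decomposition of that measure; you instead certify feasibility $0\in K$ by a separating-hyperplane argument, which lands on an equivalent polynomial inequality. Both proofs then come down to controlling the operator norm of $\Lambda$ by Chebyshev/Markov estimates. Your reduction to the condition $w\,\big(P(-\eta M)-P(0)\big)\le (1-w)\,\big(\max_{[0,M]}P-\min_{[-\eta M,M]}P\big)$ for all $P\in\mathcal{P}_m$ is correct.

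The gap is exactly where you flag it. The sharpened Chebyshev bound $P(-\eta M)-P(0)\le \tfrac{A-B}{2}\,\big(T_m(1+2\eta)-1\big)$ is asserted, not proved. The remark that ``the alternation pattern forces $P(0)=A$'' describes the expected extremiser but does not establish that it is one; to make it rigorous you would need the outer-interval derivative estimate $|R'(y)|\le T_m'(|y|)\,\|R\|_{[-1,1]}$ for $|y|>1$, then integrate over $[-1-2\eta,-1]$, or else a genuine alternation/Kolmogorov-criterion argument --- neither is given. More importantly, the sharpening is unnecessary. Apply the paper's mean-value-plus-Markov estimate to the centred polynomial $P-\tfrac{A+B}{2}$, whose sup-norm on $[0,M]$ is $\tfrac{A-B}{2}$: mean value on $[-\eta M,0]$, Markov's inequality on $[-\eta M,M]$ (Lemma~\ref{lem:optimalTN}(ii)), and the Chebyshev extremal bound for $\sup_{[-\eta M,M]}|P|$ (Lemma~\ref{lem:optimalTN}(i)) together give $P(-\eta M)-P(0)\le (A-B)\,\eta m^2 e^{2\sqrt{\eta}\,m}$. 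Plugged into your duality inequality (using the lower bound $(1-w)(A-B)$ for the right side), this yields $w\le (1+\eta m^2 e^{2\sqrt{\eta}\,m})^{-1}$ exactly as required. So the architecture of your proof is sound, but the particular Chebyshev refinement you reached for is both unproven in your write-up and, as it happens, not needed to close the argument.
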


 \begin{proof}[Proof of Lemma \ref{lemlb3}]
Consider the space  $\cC_0$ of continuous functions from $[0,1]$ to $\R$, endowed with the supremum norm. Let  $\mathcal{P}_k$ be the subset $\cC_0$ made  of  polynomials of degree at most $m$. 
Consider the linear map $\Lambda : \mathcal{P}_N \rightarrow \R$ defined by $\Lambda(P)=P(-\eta)-P(0)$.
 Then, by the Hahn-Banach theorem, $\Lambda$ can be extended into a linear map $\wt{\Lambda}$ on the whole subspace $\cC_0$ without increasing its operator norm, that is 
 $$
\sup_{\substack{f \in \cC_0\\ \|f\|_\infty\leq 1}}|\wt{\Lambda}(f)| =  \sup_{\substack{f \in \mathcal{P}_m\\ \|f\|_\infty\leq 1}}|\Lambda(f)|\ .
 $$
Since, for $f\in \cP_m$, $|f(-\eta)-f(0)|\leq \eta \sup_{x\in [-\eta,0]}|f'(x)|$, we derive from Markov's theorem (Lemma~\ref{lem:optimalTN} (ii) and (i)) that 
\beq\label{equcetoilemaj}
\sup_{\substack{f \in \cC_0\\ \|f\|_\infty\leq 1}}|\wt{\Lambda}(f)|\leq  \frac{\eta m^2}{1+\eta}\cosh[ m\ \arccosh(1+2\eta)]\leq \eta m^2 e^{2\eta\sqrt{m}} =: a^{\star}(m,\eta)\ ,  
\eeq
where we used that $\cosh(x)\leq e^{x}$ and $\arccosh(1+x)\leq \sqrt{2x}$. 
Next, by Riesz representation theorem, there exists a signed measure $\underline{\pi}$ such that  $\wt{\Lambda}(f)= \int f d\pi$ for all $f\in \cC_0$. Decomposing $\underline{\pi}= \underline{\pi}^+ - \underline{\pi}^-$ as a difference of positive measure supported on $[0,1]$, if follows from the values of $\Lambda(x^{\ell})$ for $\ell=0,\ldots, m$, that $|\underline{\pi}^+|=|\underline{\pi}^-|$ and $\int x^{\ell}(d\underline{\pi}^+(x) - d\underline{\pi}^-(x)) = (-\eta)^\ell$. Besides, the total variation norm  $\|\underline{\pi}\|_{TV}= 2|\underline{\pi}^+|$ is upper bounded by $a^{\star}(m,\eta)$.  Let now define the two probability measures 
 \begin{align*}
  \pi_{1} &= \frac{1}{1+\|\pi\|_{TV}/2}\left( \delta_0 + \underline{\pi}^+(\cdot M) \right)\ ;\, \quad \quad  \pi_{0} = \frac{1}{1+\|\pi\|_{TV}/2}\left( \delta_{-\eta} + \underline{\pi}^-(\cdot M) \right)\ . 
 \end{align*}
 Obviously, the $m$ first moments of $\pi_0$ and $\pi_1$ are matching and $\min(\pi_0(\{-\eta M\}), \pi_1(\{0\}))\geq (1+a^\star (m,\eta))^{-1}$. 
 \end{proof}

Note that, for any $x,t>0$, if $x\leq 0.5\log(1+t)$, we have $xe^{x}\leq t$. Applying Lemma \ref{lemlb3} with $m_0$, $M_0$ and any $\eta$ such that
 \[
\eta \leq \eta_0=\frac{\log^2\left(1+ \sqrt{\frac{k_0}{n-k_0}}\right)}{4m^2_0}\ , 
 \]
we conclude that $\min(\pi_0(\{-\eta M\}), \pi_1(\{0\}))\geq 1-k_0/n$. As a consequence, if we choose $\theta_0$ negative with
\[
|\theta_0|\leq  \frac{M_0\log^2\left(1+ \sqrt{\frac{k_0}{n-k_0}}\right)}{4m^2_0}\bigwedge \left(\sqrt{\frac{m_0}{2e}} \left(\frac{n-k_0}{16n^2}\right)^{1/(2m_0)}\right)\ ,
\]
then, there exist $\pi_0$ and $\pi_1$ satisfying Conditions \eqref{eq:condition_pi_0_pi_1}. From \eqref{eq:general_lower_2}, we conclude that 
\[
\cR[k,n]\geq c \left[\frac{\log^2\left(1+ \sqrt{\frac{k_0}{n-k_0}}\right)}{m^{3/2}_0}\bigwedge \left(m_0^{1/2} \left(\frac{n-k_0}{16n^2}\right)^{1/(2m_0)}\right)\right]\ . 
\]
In fact, the second expression in the rhs is always larger (up to a numerical constant) than the first one. Indeed, for $m_0=1$ (which corresponds to $k_0\lesssim \sqrt{n}$), this expression is of order $1/\sqrt{n}$. When $k_0\leq n^{1/3}$ and $m_0\geq 2$, this expression is higher than $n^{-1/4}$ which is again higher than the first term. For $k_0\in (n^{1/3},n-1]$, the expression in the rhs is higher than $\sqrt{\log(n)}$ which is again no less than the left expression. In view of the definition of $k_0$, we have proved that 
\[
 \cR[k,n]\geq c \frac{\log^2\left(1+ \sqrt{\frac{k}{n-k}}\right)}{\log^{3/2}\left(1+ \frac{k}{\sqrt{n}}\right)}\ ,
\]
 which concludes the proof of Theorem \ref{thm:lower_one_sided}

 \subsection{Proof of Theorem \ref{thm:lower_mean_robust} (OSC)}\label{p:thm:lower_mean_robust}

Note that, for $k\leq 4\sqrt{n}$ and for $n-k\leq 2\sqrt{n}$, the minimax lower bound \eqref{eq:lower_minimax_robust} is a consequence of the lower bound of Theorem \ref{thm:lower_one_sided} for the specific gOSC model. As a consequence, we only have to show the result for $k\in (2\sqrt{n}, n-2\sqrt{n})$. Consider one such $k$. As in the proof of Theorem \ref{thm:lower_one_sided}, we define $k_0= \max(k/2, n-3(n-k)/2)<k$.

 \medskip 
 
 As for the  proof of Theorem \ref{thm:lower_one_sided}, we shall rely on a two point Le Cam's approach but the construction of the distribution is quite different. 
 Let us denote $\P_x$ for the distribution $\mathcal{N}(x,1)$ and $\phi_x$ for its usual density. 
 Let $\theta>0$ be a positive number whose value will be fixed later and define $\epsilon=k_0/n$. We shall introduce below two probability measures $\mu_0$ and $\mu_1$ that stochastically dominate $\P_{-\theta}$ and $\P_{\theta}$. Consider the mixture distribution $\vartheta_0= (1-\epsilon)\P_{-\theta}+ \epsilon \mu_0 $ and  $\vartheta_1=(1-\epsilon)\P_{\theta}+ \epsilon \mu_1$ and $\mathbf{P}_0= \vartheta_0^{\otimes n}$ and $\mathbf{P}_1= \vartheta_1^{\otimes n}$. 
 Under $\mathbf{P}_0$, all variables $Y_i$ are sampled independently and with probability $(1-\epsilon)$ follow the normal distribution $\P_{-\theta}$ and with probability $\epsilon$ follows the stochastically larger distribution $\mu_0$. Let $Z$ be a binomial variable with parameters $(n,\epsilon)$.  Under $\mathbf{P}_0$, $Z$ of the observations have been sampled according to $\mu_0$ and the $n-Z$ remaining observations have been sampled according to $\P_{-\theta}$. Thus, up to an event of probability $\P[\mathcal{B}(n,\epsilon)\leq k]$, $\mathbf{P}_0$ is a mixture of distributions in $\overline{\cM}_k$ whose corresponding functional is  $-\theta$. The measure $\mathbf{P}_1$ satisfies the same property with $-\theta$ replaced by $\theta$.

  Arguing as in Lemma \ref{lem:reduction_general}, we therefore obtain
\[
\overline{\cR}[k,n]\geq \frac{\theta}{2} \left[ 1- \|\mathbf{P}_0- \mathbf{P}_1\|_{TV}  - 2\P[Z>k]	 \right]\ .
\]
The probability $\epsilon=k_0/n$ has been chosen small enough  that $\P[Z>k]$ is vanishing for $n$ large enough (see the proof of Theorem \ref{thm:lower_one_sided}, Step $1$) so that for such $n$, we obtain $\overline{\cR}[k,n]\geq \frac{\theta}{2} [ 0.55- \|\mathbf{P}_0- \mathbf{P}_1\|_{TV}]$ (see \eqref{eq:general_lower}) and thus 
\beq\label{eq:lower_risk_robust_general}
\overline{\cR}[k,n]\geq \frac{\theta}{40}\quad \text{ if }\quad \|\mathbf{P}_0- \mathbf{P}_1\|_{TV}\leq 0.5\ .
\eeq
 In the sequel, we fix 
\beq\label{eq:choice_theta_proof_robust}
\theta= \frac{\log\left(\frac{1}{1-\epsilon}\right)}{8\sqrt{\log(n\epsilon^2)}} \ 
\eeq
and we will shall build two measure $\mu_0$ and $\mu_1$ that enforce $\|\mathbf{P}_0- \mathbf{P}_1\|_{TV}\leq 0.5$. In view of \eqref{eq:lower_minimax_robust} and  \eqref{eq:lower_risk_robust_general}, this will conclude the proof.

\bigskip 
Define
\beq\label{eq:definition_t_theta}
t_{\theta}= \frac{1}{2\theta}\log\left(\frac{1}{1-\epsilon}\right)\ . 
\eeq
We shall pick $\mu_0$ and $\mu_1$ in such a way that the densities of $\vartheta_0$ and $\vartheta_1$ are matching on the widest interval possible. Define $\mu_0$ and $\mu_1$ by their respective densities $f_0$ and $f_1$
\[
 f_0(x) = g_0 (x) +  h(x)\ , \quad \quad f_1(x) =  g_1 (x) +  h(x)\ ,
\]
where 
\beqn 
g_0(x)&=& \frac{1-\epsilon}{\epsilon} [\phi_{\theta}(x)- \phi_{-\theta}(x)]\text{ if }x\in (0,t_{\theta}]\quad \text{ and }g_0(x)=0 \text{ else}\\
g_1(x)&=& \frac{1-\epsilon}{\epsilon} [\phi_{-\theta}(x)- \phi_{\theta}(x)]\text{ if }x\in [-t_{\theta};0)\quad \text{ and }g_1(x)=0 \text{ else}\ ,
\eeqn 
and  $h(x)= \mathbf{1}_{x> u} a \phi_{2\theta}(x)$ where $u>\max(n+\theta, t_{\theta})$  and $a\geq 1$ are taken such that $\int (g_0(x)+ h(x))dx=1$. To ensure the existence of $h$ (that is, of such $a$ and $u$), we need to prove that $\int g_0(x)dx= \int g_1(x)dx<1$. By definition \eqref{eq:definition_t_theta} of $t_{\theta}$, we have $\phi_{\theta}(x)/\phi_{-\theta}(x)< (1-\epsilon)^{-1}$ for all $x\in (0,t_{\theta})$. This implies that  $g_0(x) < \phi_{-\theta}(x)$ for all $x\in (0,t_{\theta})$ and $g_1(x) < \phi_{\theta}(x)$ for all $x\in (-t_{\theta},0)$, which entails $\int g_0(x)dx= \int g_1(x)dx<1$.

Also, the two measures $\mu_0$ and $\mu_1$ respectively satisfy  $\mu_0 \succeq \P_{-\theta}$ and $\mu_1 \succeq  \P_{\theta}$. Let us only prove the second inequality, the first one being simpler. Consider any $t\in \mathbb{R}$. Then, it follows from the definition of $f_1$ that
\[
\int_{-\infty}^{t}f_1(x)dx = \left\{
\begin{array}{cc}
0 & \text{ if } t<-t_{\theta}\ ;\\
\int_{-t_\theta}^{t}g_1(x)dx& \text{ if }t\in [-t_{\theta},0]\ ; \\
\int_{-t_\theta}^{0}g_1(x)dx& \text{ if }t\in (0,u]\ ; \\
1- a \int_{t}^{\infty}\phi_{2\theta}(x)dx& \text{ if }t>u\ .\\
\end{array}
\right.
\]
Since $g_1(x)< \phi_{\theta}(x)$ for all $x\in (-t_{\theta},0)$, we readily obtain $\int_{-\infty}^{t}f_1(x)dx< \int_{-\infty}^{t}\phi_{\theta}(x)dx$ for all $t\leq u$. For $t> u$, we have $\int_{t}^{\infty}f_1(x)dx=  a\int_{t}^{\infty}\phi_{2\theta}(x)dx \geq \int_{t}^{\infty}\phi_{\theta}(x)dx$, which implies $\mu_1 \succeq  \P_{\theta}$.

\bigskip

It remains to prove that  $\|\mathbf{P}_0-\mathbf{P}_1\|_{TV}\leq 0.5$. Denote  $ H^2(\mathbf{P}_0; \mathbf{P}_1)$ the square Hellinger distance. As in the previous proof, it follows from Le Cam's inequalities and tensorization identities for Hellinger distances~\cite[][Section 2.4]{MR2724359} that
\beqn 
\|\mathbf{P}_0- \mathbf{P}_1\|^2_{TV}&\leq& H^2(\mathbf{P}_0; \mathbf{P}_1)= 2\left[1- \left(1- \frac{H^2(\vartheta_0;\vartheta_1)}{2}\right)^n\right]\\
&\leq &   nH^2(\vartheta_0;\vartheta_1)\ . 
\eeqn 
As a consequence, for $n$ large enough, one has  $\|\mathbf{P}_0-\mathbf{P}_1\|_{TV}\leq 0.5$ as long as  $H^2(\vartheta_0;\vartheta_1)\leq (2n)^{-1}$. It remains to prove this last inequality. 
Write $v_0= (1-\epsilon)\phi_{-\theta}+ \epsilon (g_0+ h)$ the density of $\vartheta_0$ and $v_1$ the density of $\vartheta_1$. 
$g_0$ and $g_1$ have been chosen in such a way that $v_0$ and $v_1$ are matching on the interval $[-t_{\theta}; t_{\theta}]$. 
\beqn 
\frac{1}{2}H^2(\vartheta_0; \vartheta_1)&= & 1- \int \sqrt{v_0(x)v_1(x)}dx 
 = \int (v_0(x) - \sqrt{v_0v_1}(x))dx \\
& =& (1-\epsilon)\left[ \int_{(-\infty;-t_{\theta}]\cup [t_{\theta},\infty)}\big[ \phi_{-\theta}(x) - \sqrt{ \phi_{-\theta}(x) \phi_{\theta}(x)}\big]dx\right]\\
&&+ \int_{(u,\infty)} \Big[v_0(x) - \sqrt{v_1v_0}(x)- (1-\epsilon) \big[ \phi_{-\theta}(x) - \sqrt{ \phi_{-\theta}(x) \phi_{\theta}(x)}\big]\Big]dx\\
&\leq& (1-\epsilon)\left[ \int_{(-\infty;-t_{\theta}]\cup [t_{\theta},\infty)}\big[ \phi_{-\theta}(x) - \sqrt{ \phi_{-\theta}(x) \phi_{\theta}(x)}\big]dx\right]+  e^{-n^2/2}\ ,
\eeqn 
where we used $v_0(x)\leq v_1(x)$ for $x>u$. This leads us to 
\beq\label{eq:upper_hellinger}
H^2(\vartheta_0; \vartheta_1) \leq   2e^{-n^2/2}+ 2(1-\epsilon)\left[\overline{\Phi}(t_{\theta}+ \theta) +  \overline{\Phi}(t_{\theta}- \theta) - e^{-\theta^2/2} 2\overline{\Phi}(t_{\theta})\right].
\eeq
Since $k\geq 4\sqrt{n}$, we have $16\log(n\epsilon^2) \geq \log(1/(1-\epsilon))$ which entails $\theta \leq t_\theta/2$. 
This leads us to
\beqn 
\overline{\Phi}(t_{\theta}+ \theta) +  \overline{\Phi}(t_{\theta}- \theta) - 2\overline{\Phi}(t_{\theta})
&\leq&  \theta^2  \sup_{x\in [t_{\theta}-\theta; t_{\theta}+ \theta]}|\phi'(x) |\\
&\leq &\frac{\theta^2}{\sqrt{2\pi}}\frac{3t_{\theta}}{2}e^{-t_{\theta}^2/8} \\
&\leq & \frac{3}{ 32 \sqrt{2\pi} \log(n\epsilon^2) (n\epsilon^2)^2} \log^2\left(\frac{1}{1-\epsilon}\right)\\
& \leq & \frac{1}{8 \sqrt{2\pi} n^2 \epsilon^2 (1-\epsilon)^2}\ , 
\eeqn 
where we used the definitions \eqref{eq:choice_theta_proof_robust} and \eqref{eq:definition_t_theta} of $\theta$ and $t_{\theta}$ and $\log(n\epsilon^2)\geq 1$. 
Similarly, one has 
\beqn 
\overline{\Phi}(t_{\theta})(1- e^{-\theta^2/2})&\leq&  e^{-t^{2}_{\theta}/2} \frac{\theta^2}{2} \leq  \frac{1}{128\log(n\epsilon^2)(n\epsilon^2)^2} \log^2\left(\frac{1}{1-\epsilon}\right) 
\leq  \frac{1}{128 n^2 \epsilon^2 (1-\epsilon)^2}\ . 
\eeqn 
Coming back to \eqref{eq:upper_hellinger}, we conclude that 
\[
H^2(\vartheta_0; \vartheta_1)\leq 2e^{-n^2/2}+ \frac{1}{8n^2\epsilon^2 (1-\epsilon)^2} \leq 2 e^{-n^2/2}+ \frac{1}{8n(1-cn^{-1/2})}\ , 
\]
since $\epsilon\geq 2n^{-1/2}$ and $1-\epsilon\geq 3n^{-1/2}$. For  $n$ large enough, we obtain $H^2(\vartheta_0; \vartheta_1)\leq 1/(2n)$, which  concludes the proof.

 \subsection{Proof of Theorem \ref{thm:lower_var_robust} (OSC)}\label{p:thm:lower_var_robust}

 This proof proceeds from the same approach as that of Theorem \ref{thm:lower_mean_robust} but the construction of the prior distributions  are quite different.  For any fixed numerical constant $c_0>0$ and any  $k\leq c_0 \sqrt{n}$, the lower bound in the  theorem is parametric and is easily proved in a model without contamination. We assume henceforth that $k> c_0\sqrt{n}$ and we will fix the value of $c_0$ at the end of the proof. 
 Also for $n-k\leq 2\sqrt{n}$, the lower bound in Theorem \ref{thm:lower_var_robust} is of the order of a constant, so that we only have to prove the result for $n-k> 2\sqrt{n}$, so we also assume henceforth that $n-k\geq 2\sqrt{n}$. 

As in the previous proof, we define $k_0= \max(k/2, n-3(n-k)/2)<k$.
Let us denote $\underline{\P}_{0,y}$ for the distribution $\mathcal{N}(0,y^2)$ and $\phi_{0,y}$ for its usual density.
Let $\sigma>1$ be a positive quantity that will be fixed later and let  $\epsilon=k_0/n$. We shall introduce below two probability measures $\mu_0$ and $\mu_1$ that stochastically dominate $\P_{0,1}$ and $\P_{0,\sigma}$. Consider the mixture distribution 
$\vartheta_0= (1-\epsilon)\underline{\P}_{0,1}+ \epsilon \mu_0 $ and  $\vartheta_1=(1-\epsilon)\underline{\P}_{0,\sigma}+ \epsilon \mu_1$ and $\mathbf{P}_0= \vartheta_0^{\otimes n}$ and $\mathbf{P}_1= \vartheta_1^{\otimes n}$. 
Arguing as in the proof of Theorem \ref{thm:lower_mean_robust} (and Lemma \ref{lem:reduction_general}), we obtain that 
\beq\label{eq:lower_risk_robust_sigma_general}
\overline{\cR}_v[k,n]\geq \frac{\sigma-1 }{80}\quad \text{ if }\quad \|\mathbf{P}_0- \mathbf{P}_1\|_{TV}\leq 0.5\ .
\eeq
 In the sequel, we fix 
\beq\label{eq:choice_sigma_proof_robust}
\sigma= 1+ \frac{\log\left(\frac{1}{1-\epsilon}\right)}{6\log(n\epsilon^2)}\wedge 1 \ . 
\eeq
and we will shall build two measure $\mu_0$ and $\mu_1$ that enforce $\|\mathbf{P}_0- \mathbf{P}_1\|_{TV}\leq 0.5$. In view of the two previous inequalities this will conclude the proof.

We shall pick $\mu_0$ and $\mu_1$ in such a way that the densities of $\vartheta_0$ and $\vartheta_1$ are matching on the widest interval possible. Denote $\mu_0$ and $\mu_1$ by their respective densities $f_0$ and $f_1$. In principle, we would like to take $f_0=(1-\epsilon)/\epsilon [\phi_{0,\sigma}- \phi_{0,1}]_+$ and $f_1
=(1-\epsilon)/\epsilon [\phi_{0,1}- \phi_{0,\sigma}]_+$ as this would enforce $\|\mathbf{P}_{0}-\mathbf{P}_1\|_{TV}=0$. Unfortunately, such a choice is not possible as the corresponding measure $\mu_0$ would not be a probability measure (and would not either dominate $\P_{0,1}$). The actual construction is a bit more involved. First, define
\beq\label{eq:definition_u_v_sigma}
v_{\sigma}= \sqrt{\frac{2\sigma^2}{\sigma^2 -1}\log(\sigma)},\ \quad w_{\sigma}=\sqrt{\frac{2\sigma^2}{\sigma^2 -1}\log\left(\frac{\sigma}{1-\epsilon}\right)}\ . 
\eeq
We have $\phi_{0,\sigma}(t)\geq \phi_{0,1}(t)$ if and only if $|t|\geq v_{\sigma}$ and $(1-\epsilon)\phi_{0,\sigma}(t)\geq \phi_{0,1}(t)$ for all $|t|\geq w_{\sigma}$. This implies 
\[
 \int_{0}^{v_{\sigma}}\phi_{0,1}(x)-\phi_{0,\sigma}(x)dx> \int_{v_{\sigma}}^{w_{\sigma}}\phi_{0,\sigma}(x)-\phi_{0,1}(x)dx\ .
\]
Thus, we can define $u_{\sigma}\in (0,v_{\sigma})$ in such a way that 
\beq\label{eq:definition_u_sigma}
\int_{u_{\sigma}}^{v_{\sigma}} \phi_{0,1}(x)-\phi_{0,\sigma}(x)dx=  \int_{v_{\sigma}}^{w_{\sigma}}\phi_{0,\sigma}(x)-\phi_{0,1}(x)dx\ .
\eeq
Then, we take
\[
 f_0(x) = g_0 (x) +  h(x)\ , \quad \quad f_1(x) =  g_1 (x) +  h(x)\ ,
\]
where 
\beqn 
g_0(x)&=& \frac{1-\epsilon}{\epsilon} [\phi_{0,\sigma}(x)- \phi_{0,1}(x)]\text{ if }|x|\in [v_{\sigma}, w_{\sigma}]\,\quad  \text{ and }
g_0(x)=0~~~\text{else.}\\
g_1(x)&=& 
\frac{1-\epsilon}{\epsilon} [\phi_{0,1}(x)- \phi_{0,\sigma}(x)] \text{ if }|x|\in [u_{\sigma}, v_{\sigma}]\, \quad \text{ and }
g_1(x)=0~~~\text{else.}\
\eeqn 
By definition of $v_{\sigma}$ and $w_{\sigma}$, $g_0$ is nonnegative and is smaller or equal to $\phi_{0,1}$. 
As a consequence, $\int g_0(x) < \int \phi_{0,1}(x)dx \leq 1$. Besides, $u_{\sigma}$ has been chosen in such a way that $\int g_0(x)= \int g_1(x)dx$. Finally, we  define   $h(x)= \mathbf{1}_{x> s} a \phi_{0,\sigma}(x)$ where $s>n\sigma+w_{\sigma}$  and $a\geq 1$ are taken such that $\int (g_0(x)+ h(x))dx=1$. 

Since we assume that $\sigma(1-\epsilon)<1$, observe that $(1-\epsilon)\phi_{0,1}(t) \leq \phi_{0,\sigma}(t)$ for all $t\in \mathbb{R}$, which in turn implies that  $g_1\leq \phi_{0,\sigma}$. Since $g_0\leq \phi_{0,1}$, it follows that the two measures $\mu_0$ and $\mu_1$ respectively satisfy  $\mu_0\succeq \P_{0,1}$ and $\mu_1\succeq  \P_{0,\sigma}$.

It remains to prove that  $\|\mathbf{P}_0-\mathbf{P}_1\|_{TV}\leq 0.5$. As in the previous proof, we have 
$\|\mathbf{P}_0- \mathbf{P}_1\|^2_{TV} \leq nH^2(\vartheta_0;\vartheta_1)$ and we only have to prove that   $H^2(\vartheta_0;\vartheta_1)\leq (2n)^{-1}$ for $n$ large enough. 

To compute this Hellinger distance, we first observe that the densities $v_0$ and $v_1$ associated to $\vartheta_0$  and  $\vartheta_1$ are matching in $[-w_{\sigma},u_{\sigma}]\cup [u_{\sigma},w_{\sigma}]$. Together with the definition of $f_0$ and $f_1$ this leads us to 
\beqn 
\frac{1}{2}H^2(\vartheta_0; \vartheta_1)&= & 1- \int \sqrt{v_0(x)v_1(x)}dx 
 = \int (v_0(x) - \sqrt{v_0v_1}(x))dx \\
& =& 2(1-\epsilon) \int_{[0,u_\sigma]\cup[w_{\sigma},\infty)} \left[ \phi_{0,1}(x) - \sqrt{\phi_{0,1}(x)\phi_{0,\sigma}(x)}\right]dx\\
&&+ \int_{(s,\infty)}\left[ v_0(x) - \sqrt{v_1v_0}(x)+ (1-\epsilon)\left(\sqrt{\phi_{0,1}(x)\phi_{0,\sigma}(x)}-\phi_{0,1}(x)\right)\right]dx\ .
\eeqn 
Since $v_{0}(x)\leq v_1(x)$ for $x> s$, the last term is less or equal to $\int_{s}^{\infty}\phi_{0,\sigma}(x)dx\leq e^{n^2/2}$. 
It follows from \eqref{eq:definition_u_sigma} that 
\[
\int_{0}^{u_{\sigma}} (\phi_{0,1}(x)-\phi_{0,\sigma})(x)dx=  \int_{w_{\sigma}}^{\infty} (\phi_{0,\sigma}- \phi_{0,1})(x)dx\ .
\]
This leads us to 
\beq\label{eq:upper_HH_sigma}
\frac{1}{2}H^2(\vartheta_0; \vartheta_1)\leq   e^{-n^2/2} + 
(1-\epsilon) \int_{[0,u_{\sigma}]\cup [w_{\sigma},\infty)}  \phi_{0,1}(x) + \phi_{0,\sigma}(x) - 2\sqrt{\phi_{0,1}(x)\phi_{0,\sigma}(x)}dx\ . 
\eeq
For $\delta\in [-1/2,1]$, Taylor's formula leads to $(2+\delta- 2\sqrt{1+\delta})\leq \delta^2/\sqrt{2}$. As a consequence, for any $x$ such that $\phi_{0,\sigma}(x)/\phi_{0,1}(x)\in [1/2,2]$, we have
\beq\label{eq:upper_diff_phi}
 \phi_{0,1}(x)+ \phi_{0,\sigma}(x) - 2\sqrt{\phi_{0,1}(x)\phi_{0,\sigma}(x)}\leq  \frac{(\phi_{0,1}(x)-\phi_{0,\sigma}(x))^2}{\sqrt{2}\phi_{0,1}(x)}
= \frac{\phi_{0,1}(x)}{\sqrt{2}}\left[ \frac{1}{\sigma}\exp\left(\frac{x^2(\sigma^2-1)}{2\sigma^2}\right)-1 \right]^2\ . 
\eeq
 Define $z_{\sigma}=\sqrt{\frac{2\sigma^2}{\sigma^2 -1}[\log(2\sigma)\wedge  1]}$.  For any $|x|\leq z_{\sigma}$, we have $\phi_{0,\sigma}(x)\leq 2 \phi_{0,1}(x)$. From the previous inequality, we derive that, for  $|x|\in (w_{\sigma};w_{\sigma}\vee z_{\sigma})$,
\beqn 
 \phi_{0,1}(x)+ \phi_{0,\sigma}(x) - 2\sqrt{\phi_{0,1}(x)\phi_{0,\sigma}(x)}&\leq&  \frac{\phi_{0,1}(x)}{\sqrt{2}}\left[ \frac{1}{\sigma}\exp\left(\frac{x^2(\sigma^2-1)}{2\sigma^2}\right)-1 \right]^2\\
 &\leq & \frac{\phi_{0,1}(x)}{\sqrt{2}} \left[2\frac{x^2(\sigma^2-1) }{2\sigma^2}+ \frac{1}{\sigma}-1 \right]_+^2\\
 &\leq & 3\phi_{0,1}(x) x^4 (\sigma-1)^2\ . 
\eeqn 
Since $\sigma\leq 2$, we have $\phi_{0,\sigma}(x)\geq  \phi_{0,1}(x)/2$ for all $x$. As a consequence of \eqref{eq:upper_diff_phi}, we obtain that, for $|x|\leq u_{\sigma}$,
\[
\phi_{0,1}(x)+ \phi_{0,\sigma}(x) - 2\sqrt{\phi_{0,1}(x)\phi_{0,\sigma}(x)}\leq  \frac{(\phi_{0,1}(x)-\phi_{0,\sigma}(x))^2}{\sqrt{2}\phi_{0,1}(x)}\leq  \big[\phi_{0,1}(x)-\phi_{0,\sigma}(x)\big] \frac{\sigma -1}{\sqrt{2}\sigma}\ . 
\]
Coming back to \eqref{eq:upper_HH_sigma}, we obtain 
\begin{eqnarray}
&&\frac{\frac{1}{2}H^2(\vartheta_0; \vartheta_1)-e^{-n^2/2}}{1-\epsilon} \nonumber\\
&\leq & \frac{\sigma -1 }{\sqrt{2}\sigma}\int_{0}^{u_{\sigma}}(\phi_{0,1}(x)-\phi_{0,\sigma}(x))dx+ 3(\sigma-1)^2\int_{w_{\sigma}}^{w_{\sigma}\vee z_{\sigma}}x^4\phi_{0,1}(x)dx+  \phi_{0,1}\big(\frac{w_{\sigma}\vee z_{\sigma}}{\sigma}\big)  \nonumber \\ 
&\leq &\frac{\sigma -1 }{\sqrt{2}\sigma}\int_{w_{\sigma}}^{\infty}(\phi_{0,\sigma}(x)-\phi_{0,1}(x))dx+ 3(\sigma-1)^2\int_{w_{\sigma}}^{\infty}x^4\phi_{0,1}(x)dx+  \phi_{0,1}\big(\frac{w_{\sigma}\vee z_{\sigma}}{\sigma}\big) \nonumber\\ 
&\leq & \frac{\sigma -1 }{\sqrt{2}\sigma}\int_{w_{\sigma}/\sigma}^{w_{\sigma}}\phi_{0,1}(x)dx+ 3(\sigma-1)^2[w_{\sigma}^3+6w_{\sigma}]\phi_{0,1}(w_{\sigma})+  \phi_{0,1}\big(\frac{ w_{\sigma}\vee z_{\sigma}}{\sigma}\big) \nonumber \\
&\leq &w_{\sigma}\frac{(\sigma -1)^2 }{\sqrt{2}\sigma^2}\phi_{0,1}(\frac{w_{\sigma}}{\sigma})+ 3(\sigma-1)^2[w_{\sigma}^3+6w_{\sigma}]\phi_{0,1}(w_{\sigma})+  \phi_{0,1}\big(\frac{ w_{\sigma}\vee z_{\sigma}}{\sigma}\big) \nonumber\\
&\leq & (\sigma-1)^2\left[3w_{\sigma}^3 + 7w_{\sigma} \right]+ \phi_{0,1}\big(\frac{ w_{\sigma}\vee z_{\sigma}}{\sigma}\big) \ , \label{eq:upper_HH_sigma_2}
 \end{eqnarray}
where we used the definition \eqref{eq:definition_u_sigma} of $u_{\sigma}$ in the third line. To conclude, we come back to the definitions of $w_{\sigma}$, $z_{\sigma}$ and $\sigma$
\beqn 
\frac{w^2_{\sigma}}{2\sigma^2}&= &\frac{1}{\sigma^2-1}\log\left(\frac{
\sigma}{1-\epsilon}\right)\geq 2\log(n\epsilon^2)\ ;\\
\frac{z^2_{\sigma}}{2\sigma^2}& = & \frac{\log(2\sigma)\wedge 1}{\sigma^2-1}\geq \frac{\log(2)}{3(\sigma-1)}= \frac{2\log(2)\log(n\epsilon^2)}{\log\big(\frac{1}{1-\epsilon}\big)} \ ;  \\
\sigma - 1 &\leq &  \frac{\log(\frac{1}{1-\epsilon})}{\log(n\epsilon^2)} \ ;\\
w^2_{\sigma}&\leq& \frac{2}{\sigma -1}\log\left(\frac{\sigma}{1-\epsilon}\right) \leq 2+ \frac{2}{\sigma -1}\log\left(\frac{1}{1-\epsilon}\right)\leq 2 + [12\log(n\epsilon^2)]\vee [\log(\frac{1}{1-\epsilon})] \ . 
\eeqn 
This implies that 
\beqn 
 \phi_{0,1}\left(\frac{z_{\sigma}\vee w_{\sigma}}{\sigma}\right)
\leq  \exp\left[- 2\log(n\epsilon^2)\left(1\vee \frac{\log(2)}{\log\big(\frac{1}{1-\epsilon}\big)}\right) \right]\ , 
\eeqn 
which is less than $16 n^{-2}$ since the maximum    over $\epsilon \in [e^2/\sqrt{n}, 1-1/\sqrt{n}]$  is achieved at $\epsilon=1/2$.

Coming back to \eqref{eq:upper_HH_sigma_2}, we conclude that 
\beqn 
\frac{1}{2}H^2(\vartheta_0; \vartheta_1)&\leq &e^{-n^2/2}+ \frac{c'}{n^2}+ c(1-\epsilon)  \frac{\log^2\left(\frac{1}{1-\epsilon}\right)\log(n\epsilon^2)+ \log^5\left(\frac{1}{1-\epsilon}\right) }{n^2 \epsilon^4}  
\\ &\leq & e^{-n^2/2}+\frac{c'}{n^2}+\left\{\begin{array}{cc}
\frac{c}{n}\frac{\log(n\epsilon^2)}{n\epsilon^2} &\text{ if }   \epsilon\leq 1/2\ ;\\
\frac{c}{n^2}&\text{ if }   \epsilon> 1/2\ .\\
                                           \end{array}\right.
\eeqn 
This last expression is less than $1/(2n)$ as soon as long as  $n$ is large enough and $n\epsilon^2$ is large enough, which is ensured if the constant $c_0$ introduced at the beginning of the proof is large enough. This concludes the proof.

\section{Proofs of upper bounds}

\subsection{Proofs for the preliminary estimators : Propositions~\ref{prop:thetamedian} and~\ref{prp:dense} (OSC)} \label{p:prelest}

\begin{proof}[Proof of Proposition~\ref{prop:thetamedian} ]
We prove this result in the OSC model. Consider any $\mu\in \overline{\cM_k}$. As argued in Section \ref{sec:robust}, we have the stochastic bounds
\begin{equation}\label{equ-med}
\xi_{(\lceil n/2\rceil)}\preceq  \thetachapmed- \theta\preceq \xi_{(\lceil n/2\rceil:n-k)}\ ,
 \end{equation}
 where $\xi=(\xi_1,\ldots, \xi_n)$ is a standard Gaussian vector. Hence, we only have to control the deviations of $\xi_{(\lceil n/2\rceil)}$ and of $\xi_{(\lceil n/2\rceil:n-k)}$. 
Then, we apply Lemma \ref{lem:quantile_empirique} with $q=\lceil n/2\rceil $ to obtain
\[
 \P_{\theta,\pi}\Big[ \thetachapmed -\theta +  \overline{\Phi}^{-1}(\tfrac{\lceil n/2\rceil}{n}) \leq  - 3\frac{\sqrt{(n+1) x}}{n}\Big]\leq e^{-x}\ , 
\]
for all $x\leq cn$ (where $c$ is some universal constant). As for the right deviations of $\widehat{\theta}_{med}$, we apply the deviation inequality \eqref{equ1Nico}  to  $\xi_{(\lceil n/2\rceil:n-k)}$ as $k\leq n/10$.  This leads us to
\[
 \P_{\theta,\pi}\Big[ \thetachapmed -\theta +  \overline{\Phi}^{-1}(\tfrac{\lceil n/2\rceil}{n-k}) \geq   3\frac{\sqrt{(n+1) x}}{n-k}\Big]\leq e^{-x}\ ,  
\]
for all $x\leq cn$. Then, Lemma \ref{lem:difference_quantile} ensures that 
\[
|\overline{\Phi}^{-1}(\tfrac{\lceil n/2\rceil}{n-k})|=  |\overline{\Phi}^{-1}(\tfrac{\lceil n/2\rceil}{n-k})- \overline{\Phi}^{-1}(1/2)|\leq \frac{3(k+1)}{2(n-k)}\ .
\]
Similarly, we have $|\overline{\Phi}^{-1}(\tfrac{\lceil n/2\rceil}{n})|\leq 3/(2n)$. We have proved the first result. 

Let us now turn to the moment bound. Starting from \eqref{equ-med}, we get
\beqn
\E_{\theta, \pi }\big[ |\thetachapmed-\theta  |\big]&\leq& \E_{\theta, \pi }\big[ (\thetachapmed-\theta)_+\big]+ \E_{\theta, \pi }\big[ (\theta-  \thetachapmed)_+\big]\\
&\leq & \E\big[ (\xi_{(\lceil n/2\rceil:n-k )})_+\big]+ \E\big[(\xi_{(\lceil n/2 \rceil)})_- \big]\ .
\eeqn 
We have proved above deviation inequalities for these two random variables for probabilities larger than $e^{-c'n}$ (where $c'$ is some universal constant). Write $Z_1= \xi_{(\lceil n/2\rceil:n-k )}+  \overline{\Phi}^{-1}(\frac{\lceil n/2\rceil }{n-k})$ and $Z_{2}= \xi_{(\lceil n/2 \rceil)}+ \overline{\Phi}^{-1}(\frac{\lceil n/2\rceil }{n})$. We deduce from the previous deviation inequalities that 
\beqn 
\E\big[ (Z_1)_{+}\ind_{Z_1\leq c'}]\leq \frac{3\sqrt{\pi (n+1)}}{\sqrt{2}(n-k)} \ ,\quad 
 \E\big[ (Z_2)_{-}\ind_{Z_2\geq -c'}\big]\leq  \frac{3\sqrt{\pi (n+1)}}{\sqrt{2}n}\ .
\eeqn 
It remains to control $\E\big[ (Z_1)_{+}\ind_{Z_1> c'}]$ and $\E\big[ (Z_1)_{+}\ind_{Z_1\leq -c'}]$. Since these two random variables are Lipschitz functions of $\xi$, they follow the Gaussian concentration theorem. In particular, their variance is less than $1$. Also, $Z_1$ and $Z_2$ concentrate well around their medians and around 0 (previous deviation inequality). Thus, their first moments is smaller than a constant. Cauchy-Schwarz inequality then  yields 
\[
 \E\big[ (Z_1)_{+}\ind_{Z_1> c'}]\leq \P^{1/2}[Z_1\geq c']\E^{1/2}[(Z_1)_+^2]\leq ce^{-c''n}\  .
\]
Similarly, we have $\E\big[ (Z_2)_{-}\ind_{Z_2\leq -c'}\big]\leq  ce^{-c''n}$. This concludes the proof. 

\end{proof}

\begin{proof}[Proof of Proposition \ref{prp:dense}]
For any $\theta$ and any $\pi\in \overline{\cM}_{n-1}$, we have 
\[
\xi_{(1)}\preceq Y_{(1)} - \theta \preceq   \xi_1\ ,
\]
where $\xi=(\xi_1,\ldots, \xi_n)$ is a standard normal vector. 
Using the Gaussian tail bound, we derive that 
\[
 \P_{\theta,\pi}\left[(\thetachapmin - \theta) \in [\overline{\Phi}^{-1}(n^{-1})- \overline{\Phi}^{-1}(n^{-2})  ,  2\overline{\Phi}^{-1}(n^{-1})]\right]\geq 1- \frac{2}{n}\ .
\]
By Lemma \ref{lem:quantile}, $2\overline{\Phi}^{-1}(n^{-1})\leq 2\sqrt{2\log(n)}$, whereas Lemma \ref{lem:difference_quantile} ensures that 
$\overline{\Phi}^{-1}(n^{-2}) - \overline{\Phi}^{-1}(n^{-1})\leq \sqrt{\log(n)/2}+O(1)$. Thus, the desired deviation bound holds for $n$ large enough. 
Turning to the moment bound, we have the following decomposition
\beqn 
 \E_{\theta,\pi}\left[|\thetachapmin- \theta| \right]&\leq& \E\left[(\thetachapmin- \theta)_{+}+ ( \theta- \thetachapmin)_{+} \right]\\
 &\leq & 2\sqrt{2\log(n)}+ \E\left[ (\xi_{1}+ \overline{\Phi}^{-1}(1/n))_+\ind_{\xi_{1}+ \overline{\Phi}^{-1}(1/n)\geq 2\sqrt{2\log(n)}}\right]\\&& + \E\left[ (\xi_{(1)}+ \overline{\Phi}^{-1}(1/n) )_-\ind_{ \xi_{(1)}+ \overline{\Phi}^{-1}(1/n) \leq -2\sqrt{2\log(n)}}\right]\ . 
\eeqn 
Let us focus on the first expectation, the second expectation being handled similarly. As a consequence of Cauchy-Schwarz inequality, we have 
\beqn
 \E\left[ (\xi_{1}+ \overline{\Phi}^{-1}(1/n))_+1_{\xi_{1}+ \overline{\Phi}^{-1}(1/n)\geq 2\sqrt{2\log(n)}}\right]&\leq& c\sqrt{\log(n)}\P^{1/2}[\xi_{1}+ \overline{\Phi}^{-1}(1/n)\geq 2\sqrt{2\log(n)}]\\ &\leq& c\sqrt{\frac{\log(n)}{n}}\ ,
\eeqn 
where we used the above deviation inequality. We obtain 
\[
  \E_{\theta,\pi}\left[|\thetachapmin- \theta| \right]\leq 2\sqrt{2\log(n)}+ c\sqrt{\frac{\log(n)}{n}}\ ,
\]
which concludes the proof. 

\end{proof}

\subsection{Range for $\theta$: analysis of  $\thetachapup$ and $\thetachaplow{q_k}$ (gOSC)}

As a preliminary step for the proof of Theorem \ref{th-upperbound-onesided}, we control the deviations of the rough estimators $\thetachapup$ and $\thetachaplow{q_k}$. Recall that the tuning parameter $q_k$ is defined in Theorem \ref{th-upperbound-onesided}.

\begin{lem}[Control of $\thetachaplow{q_k}$]\label{lem:theta_min_q}
There exist an universal constants  $n_0\geq 1$ and $c>0$ such that the following holds for all $n\geq n_0$. For all  $k\in [1,n]$ such that $q_k\leq \frac{3}{10\const}\log n$, $\mu\in \cM_{k}$ and all $\theta\in\R$,   the estimator $\thetachaplow{q_k}$ satisfies 
 \[
 \P_{\theta,\mu}\Big[\thetachaplow{q_k}\geq \theta \Big] \leq\: \frac{1}{n} \ ;\quad \quad 
\E_{\theta,\mu}\Big[(\theta -\thetachaplow{q_k})\ind_{\thetachaplow{q_k}\leq \theta - 2\overline{v}} \Big]\leq  \frac{c}{\sqrt{n}}\ .
 \]
\end{lem}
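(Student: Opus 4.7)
The plan is to derive both bounds by applying Proposition~\ref{prop:thetamedian} to the empirical median $\thetachapmed$ that enters the definition $\thetachaplow{q_k}=\thetachapmed-\overline{v}$, with $\overline{v}=\pi^2/(144 q_{\max}^{3/2})$. The first observation is a range reduction: the assumption $q_k\leq \frac{3}{10\const}\log n$ together with $q_k\leq \frac{1}{\const}\log(k/\sqrt{n})$ forces $k\leq n^{4/5}$, which is below $n/10$ for $n$ large enough, so Proposition~\ref{prop:thetamedian} (stated for the more general OSC model) indeed applies in our gOSC setting. Moreover $q_{\max}\asymp \log n$, so $\overline{v}\asymp \log^{-3/2}(n)$, a quantity that for $n$ large dominates both the bias order $k/n\lesssim n^{-1/5}$ and the Gaussian scale $\sqrt{\log(n)/n}$.

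For the probability statement, the key inclusion is
\[
\{\thetachaplow{q_k}\geq \theta\}=\{\thetachapmed\geq \theta+\overline{v}\}\subset \{|\thetachapmed-\theta|\geq \overline{v}\}.
\]
Applying the deviation bound of Proposition~\ref{prop:thetamedian} with $x=\log n$, the threshold $\tfrac{3(k+1)}{2(n-k)}+3\sqrt{(n+1)\log n}/(n-k)$ is of order $n^{-1/5}+\sqrt{\log(n)/n}=o(\log^{-3/2}n)=o(\overline{v})$ for $n\geq n_0$. This immediately yields $\P_{\theta,\mu}[\thetachaplow{q_k}\geq \theta]\leq 1/n$.

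For the moment statement, rewrite the event as $\{\thetachaplow{q_k}\leq \theta-2\overline{v}\}=\{\thetachapmed\leq \theta-\overline{v}\}$ and set $X=(\theta-\thetachapmed)_+$. Then
\[
\E[(\theta-\thetachaplow{q_k})\ind_{\thetachaplow{q_k}\leq \theta-2\overline{v}}]=\E[(X+\overline{v})\ind_{X\geq \overline{v}}]\leq 2\E[X\ind_{X\geq \overline{v}}],
\]
using $X+\overline{v}\leq 2X$ on $\{X\geq \overline{v}\}$. The layer-cake identity gives
\[
\E[X\ind_{X\geq \overline{v}}]=\overline{v}\P[X\geq \overline{v}]+\int_{\overline{v}}^{\infty}\P[X\geq u]\,du.
\]
The first term is at most $\overline{v}/n\lesssim 1/n$. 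For the integral, with $C_1=\tfrac{3(k+1)}{2(n-k)}$ and $C_2=3\sqrt{n+1}/(n-k)\asymp 1/\sqrt{n}$, Proposition~\ref{prop:thetamedian} gives $\P[X\geq u]\leq \exp(-((u-C_1)/C_2)^2)\leq \exp(-u^2/(4C_2^2))$ for all $\overline{v}\leq u\leq C_1+C_2\sqrt{c_1 n}$ (since $\overline{v}\geq 2C_1$ for $n$ large), which integrates to $\lesssim C_2\lesssim 1/\sqrt{n}$. The residual range $u>C_1+C_2\sqrt{c_1 n}$ is handled by the Gaussian concentration argument used at the end of the proof of Proposition~\ref{prop:thetamedian} and contributes an exponentially small term.

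The only real obstacle is a bookkeeping one: checking that the scale $\overline{v}\asymp \log^{-3/2}(n)$ is simultaneously (i)~large enough to swallow the median's bias $k/n$ and fluctuation $\sqrt{\log(n)/n}$ at deviation level $x=\log n$, and (ii)~still small enough that the tail integral $\int_{\overline{v}}^{\infty}\P[X\geq u]du$ recovers the target rate $1/\sqrt{n}$ rather than a slower rate; both are ensured by the standing constraint $q_k\leq \tfrac{3}{10\const}\log n$, i.e.~$k\leq n^{4/5}$.
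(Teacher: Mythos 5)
Your proof is correct and follows essentially the same route as the paper: both statements reduce to the deviation and moment bounds for the empirical median in Proposition~\ref{prop:thetamedian}, using the fact that $\overline{v}\asymp\log^{-3/2}(n)$ dominates the median's bias and fluctuation once $q_k\leq\tfrac{3}{10\const}\log n$ forces $k\lesssim n^{4/5}$. The only cosmetic difference is in the moment bound, where the paper short-circuits your layer-cake integration by writing $(\theta-\thetachaplow{q_k})\ind_{\thetachaplow{q_k}\leq\theta-2\overline{v}}\leq 2(-\xi_{(\lceil n/2\rceil)})_+$ via the stochastic domination $\thetachapmed-\theta\succeq\xi_{(\lceil n/2\rceil)}$ and then cites the $k=0$ case of Proposition~\ref{prop:thetamedian}.
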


\begin{proof}[Proof of Lemma \ref{lem:theta_min_q}]
Recall
$q_k= \lfloor \frac{1}{\const}\log\big(\frac{k}{\sqrt{n}}\big) \rfloor_{\mathrm{even}}\wedge q_{\max}$  so that $k \leq e^{2a} n^{4/5}$.
By definition, we have $\thetachaplow{q_k}= \wh{\theta}_{\mathrm{med}}- \overline{v}$, which, thanks to Proposition \ref{prop:thetamedian}, implies that 
\begin{align*}
\P_{\theta,\mu}\left(\thetachaplow{q_k}\geq \theta \right)&\leq \P_{\theta,\mu}(\thetachapmed- \theta \geq \overline{v}) \\
&\leq e^{- \left(\frac{n-k}{3}\overline{v}- \frac{k+1}{2} \right)^2/(n+1)}\leq e^{-c n \overline{v}^2 } \leq e^{-c' n /\log^3 (n)}\ ,
\end{align*}
for some constant $c'>0$ and $n$ large enough (above, we have used that  $k/(n-k)=O(n^{-1/5})$). The first bound follows.
Let us turn to proving the second bound. From \eqref{equ-med}, one has $\thetachaplow{q_k}-\theta \succeq  \varepsilon_{(\lceil n/2\rceil)} -\overline{v}$. As a consequence, 
\begin{align*}
\E_{\theta,\mu}\Big[(\theta -\thetachaplow{q})\ind_{\thetachaplow{q}\leq \theta - 2\overline{v}} \Big]
&\leq \E_{\theta,\mu}\Big[(- \varepsilon_{(\lceil n/2\rceil)}+ \overline{v} )\ind_{\varepsilon_{(\lceil n/2\rceil)}\leq - \overline{v} } \Big]\\
&\leq 2 \: \E_{\theta,\mu}\Big[(-\varepsilon_{(\lceil n/2\rceil)})_+\Big] \leq 2c_1/\sqrt{n}\ ,
\end{align*}
where the last bound is for instance a consequence of Proposition~\ref{prop:thetamedian} for $k=0$.
\end{proof}

\begin{lem}[Control of $\thetachapup$]\label{lem:theta_max}
There exists an universal integer $n_0\geq 1$ such that for any $n\geq n_0$, $\mu\in \cM_{n-1}$ and any $\theta\in\R$, the estimatorv  $\thetachapup$ satisfies 
 \begin{align*} 
\P_{\theta,\mu}\big[\thetachapup < \theta \big]&\leq  \frac{1}{n}\ ;\quad
 \E_{\theta,\mu}\big[(\theta- \thetachapup)_+\big]  \leq  \frac{1}{n}\ ; \quad 
\E_{\theta,\mu}\big[\big(\thetachapup-\theta\big)_+\ind_{\thetachapup-\theta\geq 4\sqrt{\log(n)}}\big]\leq \frac{1}{n^2}\ . 
 \end{align*}

 \end{lem}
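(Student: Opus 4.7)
The argument rests on two sandwich-type stochastic bounds for $Y_{(1)}-\theta$ that exploit the one-sided structure together with the assumption $\mu\in\cM_{n-1}$, i.e. at least one coordinate $i_0$ satisfies $\mu_{i_0}=0$. First, since all $\mu_i\geq 0$, we have the deterministic bound $Y_i-\theta = \mu_i+\xi_i \geq \xi_i$, hence $Y_{(1)}-\theta \geq \xi_{(1)}$. Second, using the coordinate $i_0$ with $\mu_{i_0}=0$, we get $Y_{(1)}-\theta \leq Y_{i_0}-\theta = \xi_{i_0}\sim\cN(0,1)$. So $Y_{(1)}-\theta$ is pinched between $\xi_{(1)}$ and a single standard Gaussian.

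For the first bound, I would write $\{\thetachapup<\theta\} = \{Y_{(1)}-\theta < -2\sqrt{\log n}\}\subset\{\xi_{(1)}<-2\sqrt{\log n}\}$ and apply a union bound together with the Gaussian tail inequality $\overline{\Phi}(2\sqrt{\log n})\leq \tfrac{1}{2}e^{-2\log n}$ (see Lemma \ref{lem:quantile} in the appendix), giving $\P[\xi_{(1)}<-2\sqrt{\log n}]\leq n\overline{\Phi}(2\sqrt{\log n})\leq \tfrac{1}{2n}\leq \tfrac{1}{n}$ for $n\geq n_0$.

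For the second bound, using $(\theta-\thetachapup)_+\leq (-\xi_{(1)}-2\sqrt{\log n})_+$, I would integrate the tail:
\[
\E[(\theta-\thetachapup)_+] \leq \int_0^{\infty} \P[\xi_{(1)} < -2\sqrt{\log n}-t]\,dt \leq n\int_0^{\infty} \overline{\Phi}(2\sqrt{\log n}+t)\,dt,
\]
and a standard one-line Mills-ratio estimate $\overline{\Phi}(a+t)\leq \tfrac{1}{a\sqrt{2\pi}}e^{-a^2/2}e^{-at}$ with $a=2\sqrt{\log n}$ yields a bound of order $1/(n\log n)$, which is smaller than $1/n$ for $n$ large.

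For the third bound, using the upper stochastic domination $Y_{(1)}-\theta \preceq \xi_{i_0}\sim\cN(0,1)$, I would bound
\[
\E\bigl[(\thetachapup-\theta)_+\ind_{\thetachapup-\theta\geq 4\sqrt{\log n}}\bigr] \leq \E\bigl[(\xi_{i_0}+2\sqrt{\log n})\ind_{\xi_{i_0}\geq 2\sqrt{\log n}}\bigr],
\]
and evaluate the right-hand side explicitly using $\int_a^{\infty} x\phi(x)\,dx = \phi(a)$, which gives $\phi(2\sqrt{\log n}) + 2\sqrt{\log n}\,\overline{\Phi}(2\sqrt{\log n})\leq c/n^{2}$. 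Adjusting $n_0$ handles the constant. No step looks delicate; the only mild care is in choosing a Mills-ratio bound sharp enough to beat the prefactor $n$ in the union bound, but this is completely standard.
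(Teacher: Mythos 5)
Your proposal is correct and follows essentially the same route as the paper's proof: the deterministic sandwich $\xi_{(1)}\leq Y_{(1)}-\theta\leq \xi_{i_0}$ for the un-contaminated coordinate $i_0$, a union bound with the Gaussian tail estimate for the first claim, tail integration with a Mills-ratio bound for the second, and explicit computation of the truncated Gaussian moment for the third. The only cosmetic differences are that the paper factors out $1/(2\sqrt{\log n})$ before integrating in the second part, and bounds $\xi_{i_0}+2\sqrt{\log n}\leq 2\xi_{i_0}$ on the indicator event in the third instead of computing $\phi(a)+a\overline{\Phi}(a)$ directly.
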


\begin{proof}[Proof of Lemma \ref{lem:theta_max}]
The first bound is a slight variation of Proposition~\ref{prp:dense}. As in the proof of that proposition, we start from 
$
 \thetachapup - \theta \succeq \epsilon_{(1)}+2\sqrt{\log(n)}
$
which implies 
\begin{equation*}
\P_{\theta,\mu}[\thetachapup < \theta ] \leq  \P\left[ \epsilon_{(1)}< -  2\sqrt{\log(n)}\right] \leq n \overline{\Phi}(2\sqrt{\log(n)})\leq 1/n\ . 
 \end{equation*}
where we used an union bound  and \eqref{eq:maj-fonctionrepgauss}.

Second, we start from  $\thetachapup-\theta \geq \varepsilon_{(1)}+ 2\sqrt{\log  n}$ and obtain
\begin{align*}
\E_{\theta,\mu}\big[(\theta- \thetachapup)_+\big] &= \int_0^\infty \P_{\theta,\mu}(\theta- \thetachapup\geq t)dt\\
&\leq \int_0^\infty \P(\varepsilon_{(1)} \leq -(t+2\sqrt{\log  n})) dt= n \int_{0}^{\infty}\overline{\Phi}(t+2\sqrt{\log(n)})dt\\
&\leq \frac{n}{2\sqrt{\log  n}} \int_0^\infty \phi(t+2\sqrt{\log  n}) dt = \frac{n}{2\sqrt{\log  n}} \ol{\Phi}(2\sqrt{\log  n})\leq 1/n\ ,
\end{align*}
where we used \eqref{eq:maj-fonctionrepgauss} and $n$ large enough in the last line. 

Finally, since at least one $\mu_i$ is zero, we may assume  without loss of generality that $\mu_1=0$, which implies  $\thetachapup-\theta\leq \eps_1+ 2\sqrt{\log n }$.
 This leads us to
\begin{align*}
\E_{\theta,\mu}\big[\big(\thetachapup-\theta\big)_+\ind_{\thetachapup-\theta\geq 4\sqrt{\log n }}\big] & \leq  \E_{\theta,\mu}\big[\big[\eps_{1}+ 2\sqrt{\log n}\big]\ind_{\epsilon_{1}\geq 2\sqrt{\log n}}\big]\\
&\leq 2\:\E_{\theta,\mu}\big[\eps_1\ind_{\epsilon_{1}\geq 2\sqrt{\log n}}\big] = 2 \phi(2\sqrt{\log n}) \leq \frac{1}{n^2}\ , 
\end{align*}
by integration.
\end{proof}

\subsection{Proof of Theorem \ref{th-upperbound-onesided} (gOSC)}\label{p:th-upperbound-onesided}

In order to ease the notation, we write $\lambda_k$ for $\lambda_{q_k}$. 
We first prove the probability bound for $\wh{\theta}_{q_k}$  and then turn to the moment bound. First, recall that the population function $\psi_{q,\lambda}(u)$ has been defined in such a way that $\psi_{q,\lambda}(u)\in [-1,1]$ 
 for all $u\leq \theta$ and is larger than $1$  for $u$ sufficiently large.
The following lemma quantifies this phenomenon by providing a lower bound for $\psi_{q,\lambda_q}(\theta+v_k^*)$ with 
some $v^*_k$ defined by 
\beq \label{eq:definition_v_k}
v^*_k: = \frac{5k }{(n-k)\lambda_kq_k^2} \text{ if }q_k< q_{\max}\ , \: \text{ and } v^*_k: = \frac{2}{q_k^2 \lambda_k} \log^2\left(\frac{8n}{n-k}\right) \text{ if }q_k= q_{\max} \ . 
\eeq
When $q_k< q_{\max}$, we have $k<e^{-2a} n$.
As a consequence, we easily check that, for any $k\in [e^{2\const} \sqrt{n}, n - 64n^{1-1/(4\const)})$, one has
\beq\label{eq:equiv_v_*}
 v^*_k \leq c   \frac{\log^2\left(1+ \sqrt{\frac{k}{n-k}}\right)}{\log^{3/2}\left(\frac{k}{\sqrt{n}}\right)}\ ,
\eeq
in both cases, where $c$ is a positive universal constant. 

\begin{lem}\label{lem:bias}
Let us consider the function $\psi_{q,\lambda}$ defined by \eqref{eq:definition_psi} and any integer $k\in [e^{2\const} \sqrt{n}, n - 64n^{1-1/(4\const)})$ and $v^*_k$ defined by \eqref{eq:definition_v_k}. Assume $\mu\in \cM_k$. Then, we have 
\beq\label{eq:control_bias}
\psi_{q_k,\lambda_k}(\theta+ v^*_k)>1 + \frac{k}{n}(1+ e^{q_k\lambda_kv^*_k})\ .
\eeq 
Besides, if $q_k< q_{\max}$, we have for any $\omega \geq 1$,
\[
\psi_{q_k,\lambda_k}(\theta+\omega v^*_k)> 1 + \omega\frac{k}{n}(1+ e^{q_k\lambda_kv^*_k})\ .
\]
\end{lem}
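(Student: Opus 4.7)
The proof reduces the two claimed inequalities to a single numerical estimate on the Chebyshev-based function $g_{q_k}$ evaluated at $\lambda_k v_k^{*}$. My plan is to exploit three successive ingredients: the one-sided nature of $\psi_{q,\lambda}$ already captured by \eqref{relationPsi}; the convexity of $g_q$ on $\mathbb{R}_{+}$ (so that scaling by $\omega\geq 1$ comes for free); and an explicit lower bound on $g_{q_k}(\lambda_k v_k^{*})$ via the representation $g_q(x)=\cosh\bigl(q\arccosh(2e^{x}-1)\bigr)$. The specific value of $v_k^{*}$ given in~\eqref{eq:definition_v_k} is engineered so that the cosh grows enough to overwhelm the right-hand sides.

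First, for $u\geq\theta$, I apply \eqref{relationPsi}: since $g_{q_k}(\lambda_k(u-\theta))\geq 1$ on the non-contaminated indices (because $T_{q_k}(y)\geq 1$ for $y\geq 1$ when $q_k$ is even) and $g_{q_k}\geq -1$ on the contaminated ones (because $g_{q_k}(x)\in[-1,1]$ for $x\leq 0$), one obtains
\[
\psi_{q_k,\lambda_k}(\theta+\omega v_k^{*})\geq \frac{n-k}{n}\,g_{q_k}(\omega\lambda_k v_k^{*})-\frac{k}{n}\ .
\]
Second, since $x\mapsto 2e^{x}-1$ is convex increasing and $T_{q_k}$ is convex increasing on $[1,\infty)$, $g_{q_k}$ is convex on $\mathbb{R}_{+}$; combined with $g_{q_k}(0)=1$ this yields $g_{q_k}(\omega x)\geq \omega\bigl(g_{q_k}(x)-1\bigr)+1$ for all $\omega\geq 1$, $x\geq 0$. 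Plugging this into the previous display and dividing by $\omega$, I check that both claims of the lemma follow from the single strict inequality
\begin{equation}\label{eq:reduction_bias}
(n-k)\bigl(g_{q_k}(\lambda_k v_k^{*})-1\bigr)\ >\ 3k+k\,e^{q_k\lambda_k v_k^{*}}\ ,
\end{equation}
the residual $2k/\omega\leq 2k$ being absorbed by the $3k$ on the right-hand side. Note that the second claim is only needed when $q_k<q_{\max}$, which is consistent with the lemma.

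Third, I verify \eqref{eq:reduction_bias} separately in the two regimes. Using $\arccosh(1+y)\geq\sqrt{2y}$ for $y\geq 0$ and $e^{x}-1\geq x$, one has $q_k\arccosh(2e^{\lambda_k v_k^{*}}-1)\geq 2q_k\sqrt{\lambda_k v_k^{*}}$, and $\cosh(t)-1\geq t^{2}/2$, so that
\[
g_{q_k}(\lambda_k v_k^{*})-1\ \geq\ 2\,q_k^{2}\,\lambda_k v_k^{*}\ .
\]
When $q_k<q_{\max}$, the choice $v_k^{*}=5k/\bigl((n-k)\lambda_k q_k^{2}\bigr)$ together with $\lambda_k=\sqrt{2/q_k}$ gives $2q_k^{2}\lambda_k v_k^{*}=10k/(n-k)$ and $q_k\lambda_k v_k^{*}=5k/((n-k)q_k)$. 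Since $k\leq e^{2\const}n^{4/5}$ in this regime and $q_k\geq 2$, the quantity $e^{q_k\lambda_k v_k^{*}}$ is bounded by an absolute constant close to $1$, so \eqref{eq:reduction_bias} reduces to $10k>3k+O(k)$ with room to spare. When $q_k=q_{\max}$, the choice $v_k^{*}=\frac{2}{q_k^{2}\lambda_k}\log^{2}\!\bigl(\tfrac{8n}{n-k}\bigr)$ gives $2q_k^{2}\lambda_k v_k^{*}=4\log^{2}(8n/(n-k))$ while $q_k\lambda_k v_k^{*}=\tfrac{2}{q_k}\log^{2}(8n/(n-k))$, which remains at most $c\log(n/(n-k))$ in the allowed range $k<n-64n^{1-1/(4\const)}$; this makes $e^{q_k\lambda_k v_k^{*}}$ at most polynomial in $n/(n-k)$ and is again dominated by the quadratic lower bound on the left-hand side.

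\paragraph{Main obstacle.}
The delicate step is the calibration of constants in~\eqref{eq:reduction_bias} in the regime $q_k=q_{\max}$, where $k$ is very close to $n$ and $e^{q_k\lambda_k v_k^{*}}$ is no longer $O(1)$. The square in $\log^{2}(8n/(n-k))$ appearing in $v_k^{*}$ is precisely what ensures that the $\cosh$ bound on the left of \eqref{eq:reduction_bias} dominates the polynomial growth of $ke^{q_k\lambda_k v_k^{*}}$ on the right, and tracking this requires the sharper bound $\cosh(t)\geq\tfrac{1}{2}e^{t}$ rather than the quadratic one used above. A small amount of case analysis along the boundary $q_k=q_{\max}$ will be needed, but no further conceptual ingredient enters.
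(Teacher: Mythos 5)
Your convexity reduction is a genuinely different and cleaner route than the paper's: the paper proves the $\omega$-version of Case~1 by plugging $t=\omega v_k^{*}$ directly into the linear lower bound $g_q(t)\geq 1+q^2 t$, whereas you factor out the $\omega$ dependence once and for all via convexity of $g_{q_k}$, leaving the single target \eqref{eq:reduction_bias}. That unification is legitimate and the algebra of the reduction (including the $2k/\omega\leq 2k$ absorption) checks out.

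However, there are two concrete problems. First, the inequality $\arccosh(1+y)\geq\sqrt{2y}$ is false: since $\cosh(t)\geq 1+t^2/2$, taking inverses gives $\arccosh(1+y)\leq\sqrt{2y}$, which is the direction the paper itself uses in the proof of Lemma~\ref{lemlb3}. So your derivation of $g_{q_k}(x)-1\geq 2q_k^2 x$ does not go through as written. The estimate itself happens to be true — $g_{q_k}$ is convex on $[0,\infty)$, $g_{q_k}(0)=1$, and $g_{q_k}'(0)=2T_{q_k}'(1)=2q_k^2$, so convexity gives $g_{q_k}(x)\geq 1+2q_k^2x$ — but you need to argue it this way, or fall back on the paper's weaker $g_{q_k}(x)-1\geq q_k^2 x$ (obtained from $\cosh(\sqrt{2t})\leq 2e^t-1$ and then $\cosh(u)\geq 1+u^2/2$), which still gives $(n-k)(g_{q_k}(\lambda_k v_k^*)-1)\geq 5k>3k+ke^{q_k\lambda_k v_k^*}$ once one checks $e^{q_k\lambda_k v_k^*}<2$.

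Second, Case~2 is only gestured at. Your own "main obstacle" paragraph concedes that the quadratic bound does not suffice there, and indeed for $n-k=n^{\alpha}$ with $\alpha$ moderately below $1$ the quantity $(n-k)\cdot 4\log^2(8n/(n-k))$ is dwarfed by $ke^{q_k\lambda_k v_k^*}\leq 8kn/(n-k)$. To close \eqref{eq:reduction_bias} you do need $g_{q_k}(\lambda_k v_k^*)\geq \tfrac12 e^{q_k\sqrt{2\lambda_k v_k^*}}=\tfrac12(8n/(n-k))^2$ and the companion bound $e^{q_k\lambda_k v_k^*}\leq 8n/(n-k)$, which requires using the hypothesis $k<n-64n^{1-1/(4\const)}$ to get $q_k\geq 2\log(8n/(n-k))$. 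This is exactly the computation the paper performs; until it is written out, your proof of Case~2 is incomplete.
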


The second lemma controls  the simultaneous deviations of the statistics $\wh{\psi}_{q_k,\lambda_k}(u)$, $u\in\R$, around their expectations.
 
\begin{lem}\label{lem:concentration_psi}
Let us consider the functions $\psi_{q,\lambda}$ and $\wh{\psi}_{q,\lambda}$ defined by \eqref{eq:definition_psi} and \eqref{eq:definition}, respectively, for some arbitrary $\mu\in \cM$. Fix any $t>0$, any $\lambda>0$ and any positive even integer $q$. Then, with probability higher than $1-1/t^2$, we have 
 \beqn 
|\wh{\psi}_{q,\lambda}(u) - \psi_{q,\lambda}(u)| \leq  \frac{t}{\sqrt{n}}q^{3/2}\exp\Big[\lambda^2 \frac{q^2}{2}+ q\log(3+2\sqrt{2}) - \lambda (\theta - u)_{+} + \lambda q (u-\theta)_+\Big]\ ,
\eeqn 
simultaneously over all $u\in \mathbb{R}$. 
\end{lem}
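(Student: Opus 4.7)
My plan is to prove the lemma by Chebyshev's inequality on the second moment of $\wh\psi_{q,\lambda}(u)-\psi_{q,\lambda}(u)$ at a fixed $u$, followed by a polynomial extremum argument to extend to a bound uniform in $u$.

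Writing $\xi_i:=Y_i-\gamma_i\sim\mathcal N(0,1)$ i.i.d. and using the identity
\[
\E\bigl[(e^{-j\lambda\xi-(j\lambda)^2/2}-1)(e^{-j'\lambda\xi-(j'\lambda)^2/2}-1)\bigr]=e^{jj'\lambda^2}-1=\sum_{m\geq 1}\frac{(jj')^m\lambda^{2m}}{m!},
\]
a direct computation gives the clean formula
\[
\mathrm{Var}\bigl(\wh\psi_{q,\lambda}(u)-\psi_{q,\lambda}(u)\bigr)=\frac{1}{n^2}\sum_{i=1}^n\sum_{m\geq 1}\frac{\lambda^{2m}}{m!}\bigl[g_q^{(m)}(\lambda(u-\gamma_i))\bigr]^2.
\]
To bound $g_q^{(m)}(x)=\sum_{j=1}^q a_{j,q}j^m e^{jx}$ I use $\gamma_i\geq\theta$ together with $e^{jx}\leq e^x$ for $x\leq 0$, $j\geq 1$ and $e^{jx}\leq e^{qx}$ for $x\geq 0$, which give $|g_q^{(m)}(\lambda(u-\gamma_i))|\leq q^m A_q\,e^{q\lambda(u-\theta)_+-\lambda(\theta-u)_+}$ with $A_q:=\sum_{j}|a_{j,q}|$. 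The coefficient bound $A_q\leq(3+2\sqrt 2)^q=e^{q\log(3+2\sqrt 2)}$ follows from the sign-alternation $a_{j,q}=(-1)^{q+j}|a_{j,q}|$ (a consequence of $T_q^{(j)}(1)\geq 0$, itself coming from the closed form $T_q(y)=\tfrac12[(y+\sqrt{y^2-1})^q+(y-\sqrt{y^2-1})^q]$), combined with the identity $\sum_j|a_{j,q}|=|g_q(i\pi)|=T_q(3)=\cosh(q\log(3+2\sqrt 2))$. Summing the series over $m$ yields $\mathrm{Var}\leq A_q^2 e^{\lambda^2q^2+2q\lambda(u-\theta)_+-2\lambda(\theta-u)_+}/n$, so Chebyshev's inequality delivers the target bound pointwise at $u$ with failure probability $t_0^{-2}$.

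For the uniform-in-$u$ step, the key structural observation is that $h(u):=\wh\psi_{q,\lambda}(u)-\psi_{q,\lambda}(u)=\sum_{j=0}^q a_{j,q}W_j e^{j\lambda u}$, where the random coefficients $W_j$ do not depend on $u$ and $W_0=0$ (the $j=0$ summand is deterministic and cancels in the difference). Setting $y:=e^{\lambda u}$ and $z:=e^{\lambda\theta}$, we thus have $h(u)=y\,K(y)$ for a random polynomial $K$ of degree $q-1$, and the target bound reads $|K(y)|\leq C\,\widetilde w(y)$ with $\widetilde w(y)=1/z$ on $[0,z]$ and $\widetilde w(y)=(y/z)^{q-1}$ on $[z,\infty)$. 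The first piece is a classical polynomial extremum problem on a bounded interval; for the second, the reciprocal change of variables $y\mapsto z^2/y$ converts $y^{q-1}K(z^2/y)$ into another polynomial of degree $q-1$ on $[0,z]$, reducing to the same problem. Bounding the supremum of each polynomial on $[0,z]$ by its maximum over $O(q^2)$ evenly spaced points via Markov's polynomial derivative inequality, and union-bounding the pointwise Chebyshev estimate with $t_0=q^{1/2}t$, produces the prefactor $q^{3/2}$ in the statement with total failure probability $t^{-2}$.

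The main obstacle is precisely this uniform extension: the weight $e^{q\lambda(u-\theta)_+-\lambda(\theta-u)_+}$ has a kink at $u=\theta$ with different slopes on either side ($\lambda$ versus $q\lambda$), so the polynomial-extremum argument must be applied piecewise. The factorization $h=yK$ (enabled by $W_0=0$) is essential to absorb the weight $y/z$ on the left side, and the reciprocal change of variables $y\mapsto z^2/y$ is essential to convert the decaying weight on the right into a standard polynomial bound.
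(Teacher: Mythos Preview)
Your approach is correct in spirit, but you take a substantially longer path than needed and make an arithmetic slip in the final accounting. The paper's proof is much shorter and hinges on the very structural observation you already made.

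You write that $h(u)=\wh\psi_{q,\lambda}(u)-\psi_{q,\lambda}(u)=\sum_{j=1}^q a_{j,q}W_j e^{j\lambda u}$ with random $W_j$ independent of $u$. The paper exploits this directly: since $\wh\eta_{j\lambda}(u)-\eta_{j\lambda}(u)=e^{j\lambda(u-\theta)}\bigl(\wh\eta_{j\lambda}(\theta)-\eta_{j\lambda}(\theta)\bigr)$, one applies Chebyshev to each of the $q$ scalars $\wh\eta_{j\lambda}(\theta)-\eta_{j\lambda}(\theta)$ (variance $\leq e^{(j\lambda)^2}/n$), takes a union bound over $j=1,\ldots,q$ with individual level $t\sqrt{q}$, and the uniformity in $u$ comes \emph{for free} because the $u$-dependence is purely deterministic. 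Summing $\sum_j|a_{j,q}|e^{(j\lambda)^2/2}e^{j\lambda(u-\theta)}$ with the termwise bound $|a_{j,q}|\leq(3+2\sqrt 2)^q$ and $e^{j\lambda(u-\theta)}\leq e^{-\lambda(\theta-u)_++q\lambda(u-\theta)_+}$ gives the factor $q\cdot\sqrt{q}=q^{3/2}$ immediately. No grid, no Markov inequality, no reciprocal change of variables.

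Your route---variance of the full combination via the derivative identity, then Chebyshev pointwise, then a polynomial-extremum argument over a grid---does work and even yields a slightly sharper coefficient bound ($A_q=\sum_j|a_{j,q}|=T_q(3)\leq(3+2\sqrt 2)^q$, a nice touch). But the last step has a bookkeeping error: with $O(q^2)$ grid points and total failure probability $\leq t^{-2}$, you need $t_0\asymp qt$, not $q^{1/2}t$; this gives a prefactor of order $q$ (still $\leq q^{3/2}$, so the lemma is recovered). The upshot is that your method is a valid alternative that in principle could shave the prefactor to $O(q)$, but at the cost of considerably more machinery than the paper's two-line argument.
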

Let us now define
\beq\label{eq:equation_tk}
t_k= \frac{e^{\const q_k}}{q_k^{3/2}}e^{-q_k^2\lambda_k^2/2- q_k\log(3+2\sqrt{2})}=\frac{e^{2\const q_k/3 }}{q_k^{3/2}} \geq \frac{e^{-8\const/3}}{q_k^{3/2}}\bigg(\frac{k}{\sqrt{n}}\bigg)^{2/3} \ , 
\eeq
by definition of $\const$, and because  
 $\lambda_k=(2/q_k)^{1/2}$ and $q_k\geq \const^{-1}\log(k/\sqrt{n}) - 4$. 
It readily follows from Lemma \ref{lem:concentration_psi} that, with probability higher than $1-t_k^{-2}$, we have 
\beq\label{eq:control_underestimation}
 \sup_{u\leq \theta} \wh{\psi}_{q,\lambda_k}(u)\leq \sup_{u\leq \theta} \psi_{q,\lambda_k}(u) + \frac{e^{\const q_k}}{n^{1/2}}\leq 1 + \frac{e^{\const q_k}}{n^{1/2}}\ .
\eeq
Together with Lemma \ref{lem:bias}, this also  leads to  (on the same event)
\beq\label{eq:control_overestimation}
 \wh{\psi}_{q_k,\lambda_k}(v^*_k+ \theta) \geq \psi_{q_k,\lambda_k}(v^*_k+ \theta) - \frac{e^{\const q_k}}{n^{1/2}}e^{q_k\lambda_k v^*_k}> 1 +  \frac{e^{\const q_k}}{n^{1/2}}\ , 
\eeq
since $\const q_k\leq \log(k/\sqrt{n})$. 
Thanks to  \eqref{eq:control_underestimation} and \eqref{eq:control_overestimation},  we have proved  that  
\beq \label{eq:control_deviation_theta_q_k}
\P_{\theta,\mu}\Big[\wh{\theta}_{q_k}\in  [\theta, \theta + v^*_k]\Big]\geq 
 1 - t_{k}^{-2} - \P_{\theta,\mu}\Big[\thetachaplow{q_k}\geq \theta + v_k^*\Big] -  \P_{\theta,\mu}\Big[\thetachapup<\theta \Big]\ . 
\eeq
Note that, for the probability bound, the preliminary estimators $\thetachaplow{q_k}$ and $\thetachapup$ do not help at all and we would have obtained a similar result  had we simply taken $\wh{\theta}_{\min}=-\infty$ and $\thetachapup=+\infty$ in which case the two last terms in the above bound would be equal to zero. With our choice of preliminary estimators, Lemmas \ref{lem:theta_min_q} and \ref{lem:theta_max} ensure that the two probabilities in the right hand side of \eqref{eq:control_deviation_theta_q_k} are small compared to $1/n$. We have proved that 
\beq \label{eq:deviation_importante}
\P_{\theta,\mu}\Big[\wh{\theta}_{q_k}\in  [\theta, \theta + v^*_k]\Big]\leq c_3 \left(\frac{k}{\sqrt{n}}\right)^{-4/3}\log^{3}\bigg(\frac{k}{\sqrt{n}}\bigg) \ , 
\eeq
for some constant $c_3$, which in view of the bound \eqref{eq:equiv_v_*} of $v^*_k$ leads to the desired  probability bound \eqref{eq:resultthetachap}.

\bigskip

 \noindent 
Let us turn to prove the moment bound \eqref{eq:risk_theta_tilde}. We consider separately $\E_{\theta,\mu}[(\wh{\theta}_{q_k}-\theta)_+]$ and $\E_{\theta,\mu}[(\theta- \wh{\theta}_{q_k})]_+$.\\

\noindent 
 {\bf Step 1: Control of $\E_{\theta,\mu}[(\wh{\theta}_{q_k}-\theta)_+]$}. The analysis is divided into two cases, depending on the value of $k$.

 \noindent 
{\it Case 1: $q_k\geq 0.3 \const^{-1}\log n$ which implies $k\geq  n^{4/5}$}.
Since $\wh{\theta}_{q_k}\leq \thetachapup$, we have   the following risk  decomposition. 
\beqn 
   \E_{\theta,\mu}[(\wh{\theta}_{q_k}-\theta)_+] & \leq& v^*_k +  \E_{\theta,\mu}\big[\big(\thetachapup-\theta\big)_+\ind_{\wh{\theta}_{q_k}-\theta \geq v^*_k}\big]\\
   & \leq & v^*_k + 4\sqrt{\log n}\:\P_{\theta,\mu}\big[\wh{\theta}_{q_k}-\theta \geq v^*_k\big]+  \E_{\theta,\mu}\big[\big(\thetachapup-\theta\big)_+\ind_{\thetachapup-\theta\geq 4\sqrt{\log(n)}}\big]\ .
 \eeqn  
 The second term is less than $\log^{7/2}(n)(k/\sqrt{n})^{-4/3}$ which is small in front of $v_k^*$ since $k\geq  n^{4/5}$. Finally, the last term is small compared to $1/n$ by Lemma \ref{lem:theta_max}. We have  proved that $\E_{\theta,\mu}[(\wh{\theta}_{q_k}-\theta)_+]  \lesssim v^*_k$ (for $n$ large enough).

 \medskip 
 
 \noindent 
{\it Case 2: $q_k< 0.3 \const^{-1}\log n$.} Define the event $\cA= \{\thetachaplow{q_k}\leq \theta\}$.  Since $\wh{\theta}_{q_k}\leq \thetachapup$, we have   the following decomposition. 
\beqn
  \E_{\theta,\mu}[(\wh{\theta}_{q_k}-\theta)_+]  &\leq &\E_{\theta,\mu}\big[\big(\wh{\theta}_{q_k}-\theta\big)_+\ind_{\cA}\big] + \E_{\theta,\mu}\big[\big(\thetachapup-\theta\big)_+\ind_{\cA^c}\big]  \nonumber \\
  &\leq & \E_{\theta,\mu}\big[\big(\wh{\theta}_{q_k}-\theta\big)_+\ind_{\cA}\big] + 4\sqrt{\log n}\:\P_{\theta,\mu}[\cA^c] +  \E_{\theta,\mu}\big[\big(\thetachapup-\theta\big)_+\ind_{\thetachapup-\theta\geq 4\sqrt{\log n}}\big]  \ . 
 \eeqn 
 By Lemma \ref{lem:theta_max}, the third term in the rhs has been proved to be small compared $1/n$. By Lemma \ref{lem:theta_min_q}, $\sqrt{\log n}\:\P_{\theta,\mu}[\cA^c]$ is small compared to $\sqrt{\log n}/n$ which in turn is smaller than $v^*_k$. Hence, we only need to prove that $\E_{\theta,\mu}\big[\big(\wh{\theta}_{q_k}-\theta\big)_+\ind_{\cA}\big]$ is of order at most $v^*_k$. 
 By integration, it suffices to prove that, for all $\omega\geq 1$,
\beq\label{eq:deviation_faible}
\P_{\theta,\mu}[(\wh{\theta}_{q_k} -\theta)\ind_{\cA} >  \omega v^*_k \big] \leq (\omega t_k)^{-2}\ ,
\eeq
Fix some $\omega\geq 1$. Since  $q_k < q_{\max}- 2$,   Lemma \ref{lem:bias} ensures that 
\[
 \psi_{q_k,\lambda_k}(\theta+ \omega v^*_k)  >  1+ \omega \frac{k}{n}(1+ e^{q_k\lambda_kv^*_k})\ .  
\]
From Lemma \ref{lem:concentration_psi} with $t= \omega   t_k$, we deduce as in \eqref{eq:control_overestimation} that, with probability higher than $1-(\omega t_k)^{-2}$
\[
\wh{\psi}_{q_k,\lambda_k}(\theta + \omega v^*_k) \geq \psi_{q_k,\lambda_k}(\theta+ \omega v^*_k) - \omega \frac{k}{n}e^{q_k\lambda_k v^*_k}> 1 +  \frac{k}{n}\geq 1 + \frac{e^{\const q_k}}{\sqrt{n}}\ ,  
\]
Together with $\cA$, this event enforces that $\wh{\theta}_{q_k}\leq \theta + \omega v^*_k$. We have proved \eqref{eq:deviation_faible}. This entails $
\E_{\theta,\mu}[(\wh{\theta}_{q_k}-\theta)_+]\leq c v^*_k$ for some universal constant $c>0$.
  
 \bigskip 
 
 \noindent 
 {\bf Step 2: Control of $\E_{\theta,\mu}[(\theta- \wh{\theta}_{q_k})_+]$}. Define the estimator 
\begin{equation*}
 \widetilde{\theta}_q= \inf\bigg\{u\in [\thetachaplow{q},+\infty)\, :\,  \wh{\psi}_{q,\lambda_q}(u)> 1 + \frac{e^{\const q}}{\sqrt{n}}\bigg\}\ .
\end{equation*}
It follows from this definition that  $\wh{\theta}_q\geq \widetilde{\theta}_q\wedge  \thetachapup$ and
\[
  \E_{\theta,\mu}[(\theta - \wh{\theta}_{q_k})_+]\leq  \E_{\theta,\mu}\big[(\theta - \thetachapup)_+\big] +   \E_{\theta,\mu}\big[\big(\theta- \widetilde{\theta}_{q_k}\big)_+\big]\ .
  \]
By Lemma \ref{lem:theta_max}, the first term in the rhs is small compared to $1/n$ and we focus on the second expectation.

 \medskip 
 
 \noindent 
 {\it Case 1: $q_k> 0.3 \const^{-1}\log(n)$ which implies $k\geq n^{4/5}$}. Fix any $\omega\geq 1$ and define $v_{\omega}= \log(\omega)/\lambda_k$. It follows from Lemma \ref{lem:concentration_psi}, that, with probability higher than $1-(t_k\omega)^{-2}$, we have simultaneously  over all $u\geq v_{\omega}$, 
 \[
  \wh{\psi}_{q_k,\lambda_k}(\theta-u) \leq \psi_{q_k,\lambda_k}(\theta-u) + \omega \frac{e^{\const q_k}}{\sqrt{n}} e^{-\lambda_k u}\leq 1 +  \omega \frac{e^{\const q_k}}{\sqrt{n}}e^{-\lambda_k v_{\omega}}= 1+ \frac{e^{\const q_k}}{\sqrt{n}} \ ,
 \]
where we used $|\psi_{q_k,\lambda_k}(\theta-u)|\leq 1$ for all $u>0$. With probability higher than $1-(t_k\omega)^{-2}$,  $\widetilde{\theta}_{q_k}$ is therefore higher than $\theta - \log(\omega)/\lambda_k$. 
Integrating this last bound leads to 
\beq \label{eq:upper_moment_lower_grand_k}
 \E_{\theta,\mu}[(\theta - \widetilde{\theta}_{q_k})_+]\leq  \frac{1 }{2 t_k^2 \lambda_k} \lesssim v_k^*\ ,
\eeq
since  $k \geq  n^{4/5}$ (for $n$ large enough).

 \medskip 

\noindent 
{\it Case 2:  $q_k< 0.3 \const^{-1}\log(n)$}.  Define the event  $\cB_k= \{\thetachaplow{q_k}\geq \theta - v_{\min,k} \}$ with $v_{\min,k}= \sqrt{2} \frac{\pi^2}{72\lambda_kq_k^2} \geq 2 \overline{v}$ (where $\ol{v}$ is defined along with $\thetachaplow{q}$). Since $\widetilde{\theta}_{q_k}\geq \thetachaplow{q_k}$, we have the following decomposition. 
\beq
  \E_{\theta,\mu}[(\theta - \widetilde{\theta}_{q_k})_+]\leq 
   \E_{\theta,\mu}\big[\big(\theta- \thetachaplow{q_k}\big)_+\ind_{\mathcal{B}_k^c}\big] +  \E_{\theta,\mu}\big[\big(\theta- \widetilde{\theta}_{q_k}\big)_+\ind_{\mathcal{B}_k}\big]
  \label{eq:decomposition_moment_lower}\ .
\eeq
We start by considering the first term in the right hand side.  Since $\overline{v}\leq v_{\min,k}/2$,  we rely on Lemma \ref{lem:theta_min_q} to derive that
\beq\label{eq:upper_moment_lower_petit_k2}
 \E_{\theta,\mu}\left[(\theta-\thetachaplow{q_k})_+\ind_{ \cB_k^{c}}\right]\leq\E_{\theta,\mu}\left[(\theta-\thetachaplow{q_k})_+\ind_{ \thetachaplow{q_k}< \theta - 2\overline{v} }\right]\leq c/\sqrt{n}\ . 
\eeq
 
 We now turn to $\E_{\theta,\mu}\big[\big(\theta- \widetilde{\theta}_{q_k}\big)_+\ind_{\mathcal{B}_k}\big]$ in 
\eqref{eq:decomposition_moment_lower}. In comparison to the previous case, this bound is slightly more involved and we rely on the explicit expression of Chebychev Polynomials. For any $v>0$, we have 
\beqn
\psi_{q_k,\lambda_k}(\theta- v) \leq \frac{k}{n}+ \bigg(1-\frac{k}{n}\bigg) \cos( q_k \arg\cos(2e^{-\lambda_k v}-1))\ . 
\eeqn 
 Observe that 
$1- e^{-t}\in [t/2,t]$ for $t\in [0,\log 2]$,  $\cos t\leq 1-t^2/4$ for all $t\in [0,\pi/2]$ and $\arg\cos(1-t)\in [\sqrt{2t},2\sqrt{t}]$ for $t\in [0,1]$.  As a consequence, for $v\leq v_{\min,k}$,  one has 
\[
\psi_{q_k,\lambda_k}(\theta- v) \leq 1 - \bigg(1-\frac{k}{n}\bigg)  \frac{q_k^2\lambda_k v}{2}.
\]
Fix any $\omega>1$. Thanks to deviation bound in Lemma \ref{lem:concentration_psi} we derive that, with probability higher than $1-(\omega t_k)^{-2}$, we have 
\[
\wh{\psi}_{q_k,\lambda_k}(\theta- v) \leq \psi_{q_k,\lambda_k}(\theta- v) +\omega \frac{e^{\const q_k}}{\sqrt{n}}e^{-\lambda_k v}\leq 1+ \frac{e^{\const q_k}}{\sqrt{n}} + (\omega-1)\frac{k}{n}
- \bigg(1-\frac{k}{n}\bigg)  \frac{q_k^2\lambda_k v}{2}\ ,  
\]
simultaneously for all $v\in [0, v_{\min,k}]$. In the second inequality, we used that $\const q_k\leq \log(k/\sqrt{n})$. As a consequence, $\wh{\psi}_{q_k,\lambda_k}(\theta- v)\leq 1+ e^{\const q_k}/\sqrt{n}$ for all $v$ in the (possibly empty) interval 
\[
v\in \left[ \frac{2(\omega-1)k}{(n-k)q^2_k\lambda_k},v_{\min,k}\right]\ . 
\]
Since we work under the event $\cB_k=\{\thetachaplow{q_k}\geq \theta - v_{\min,k} \}$, this implies that 
\[
 \P_{\theta,\mu}\Big[(\theta - \widetilde{\theta}_{q_k})_+\ind_{\cB_k} > \frac{2(\omega-1)k}{(n-k)q^2_k\lambda_k} \Big]\leq \frac{1}{\omega^2 t_k^2}\ , 
\]
for all $\omega\geq 1$. Integrating this deviation bound, we conclude that  
\[
\E_{\theta,\mu}\big[(\theta - \widetilde{\theta}_{q_k})_+\ind_{\cB_k}\big] \leq 2 
 \frac{k}{(n-k)t_k^2q^2_k\lambda_k}\lesssim v_k^*\ .  
\]
since $t_k\gtrsim 1$.

Together with \eqref{eq:upper_moment_lower_grand_k} and \eqref{eq:upper_moment_lower_petit_k2}, we have proved that  $\E_{\theta,\mu}[(\theta - \wh{\theta}_{q_k})_+]\lesssim v_k^*$, which  concludes the proof of the theorem.

\begin{proof}[Proof of Lemma \ref{lem:bias}]
Let us first prove the following inequality:
\beq \label{eq:lower-g_q}
g_q(t)= \cosh(q \arg \cosh( 2e^{t}-1)\geq \max\left[1+ q^2 t , \frac{1}{2}e^{q\sqrt{2t}} \right].
\eeq
Since $\cosh(t)\leq e^{t^2/2}$ for all $t>0$ (compare the power expansions), we have  $\cosh(\sqrt{2t})\leq e^{t}\leq 2e^{t}-1$, implying that 
$g_q(t)\geq \cosh(q \sqrt{2t})$. Then, we use that $\cosh(t)\geq 1+t^2/2$ and $\cosh(t)\geq e^{t}/2$ to conclude.

For a $k$-sparse vector $\mu$, we have already observed in \eqref{relationPsi} 
 that, for all $t>0$, 
\[
  \psi_{q,\lambda}(\theta+t) \geq - \frac{k}{n}+ \frac{n-k}{n}g_q(\lambda t)\ .
\]
The analysis is divided into two cases, depending on the value of $k$.

\medskip 

\noindent 
{\it Case 1:  $q_k< q_{\max}$}. 
For any $t>0$, it follows from \eqref{eq:lower-g_q} that 
$g_q(t)\geq 1+ q^2 t$
\[
  \psi_{q_k,\lambda_k}(\theta+ t)\geq -\frac{k}{n}+ \frac{n-k}{n}(1+ q_k^2\lambda_kt) = 1 - 2\frac{k}{n} + \frac{k}{n} \frac{(n-k)q_k^2\lambda_k t}{k} \ .
\]
If we choose $t= \omega v^*_k$ with $\omega \geq 1$, we have  
\[
  \psi_{q_k,\lambda_k}(\theta+ t)\geq 1 + (5\omega -2) \frac{k}{n}\geq 1 + 3\omega \frac{k}{n}\ . 
\]
Finally, we have  \[
\exp(\lambda_k v^*_k q_k)= \exp\bigg[\frac{5k}{(n-k)q_k}\bigg]\leq \exp\bigg[\frac{5k}{2(n-k)}\bigg] < 2\ ,
                  \]
where we used in the last inequality the fact that $q_k< q_{\max}$ which implies $k
<e^{-2a} n\leq n/6$. 
This concludes the first part of the proof. 

\medskip 

\noindent 
{\it Case 2: $q_k= q_{\max}$, that is  $k\in [\sqrt{n}e^{\const q_{\max}},n- 64n^{1-1/(4\const)})$}. Recall $v^*_k = \frac{2}{q_k^2 \lambda_k} \log^2(8n/(n-k))$. Together with \eqref{eq:lower-g_q}, this yields 
\beqn 
\psi_{q_k,\lambda_k}(\theta+v^*_k)&\geq &-\frac{k}{n} + \frac{n-k}{2n}\exp\bigg[q_k \sqrt{2\lambda_k v^*_k}\bigg] \geq -1+   \frac{n-k}{2n}\exp\bigg[q_k \sqrt{2\lambda_k v^*_k}\bigg]\\
&\geq & -1 + 4
\exp\bigg[\frac{1}{2}q_k \sqrt{2\lambda_k v^*_k}\bigg]\geq 2 +\exp\bigg[\frac{1}{2}q_k \sqrt{2\lambda_k v^*_k}\bigg] \ , 
\eeqn 
because $\exp\big[\frac{1}{2}q_k \sqrt{2\lambda_k v^*_k}\big] \geq 8n /(n-k)$ by definition of $v^*_k$.
Next, note that 
\[
  q_k= q_{\max}= \lfloor \frac{1}{2\const} \log(n)\rfloor_{even} -2 \geq \frac{1}{2\const} \log(n) -4\ .
\]
Furthermore, the condition $k\leq n- 64n^{1-1/(4\const)}$ has been chosen in such that a way that 
\[
 q_k \geq 2 \log\bigg(\frac{8n}{n-k}\bigg)\ , 
\]
 which implies that $\lambda_k v^*_k\leq 1/2$. This allows to conclude that 
 \[
\psi_{q_k,\lambda_k}(\theta+v^*_k)>
1 + \frac{k}{n}(1+ e^{q_k \lambda_k v^*_k})\ .
 \]
\end{proof}

\begin{proof}[Proof of Lemma~\ref{lem:concentration_psi}]
At $u=\theta$, simple computations lead to  $\var{\wh{\eta}_{\lambda}(\theta)}\leq  e^{\lambda^2}/n$. It then follows from Chebychev's inequality that 
\[
 \P_{\theta,\mu}\Big[|\wh{\eta}_{\lambda}(\theta) - \eta_{\lambda}(\theta)|\geq \frac{t}{\sqrt{n}}e^{\lambda^2/2} \Big]\leq \frac{1}{t^2}\ , 
\]
for all $t>0$. For a general $u\in\R$, observe that ${\eta}_\lambda(u)$ (resp. $\wh{\eta}_\lambda(u)$) is a simple transformation of $\eta_{\lambda}(\theta)$ (resp. $\wh{\eta}_{\lambda}(\theta)$):
$$\wh{\eta}_\lambda(u) = \wh{\eta}_\lambda(\theta)e^{\lambda(u-\theta)}~~\text{and}~~ \eta_\lambda(u) =  \eta_\lambda(\theta)e^{\lambda(u-\theta)}\ . $$
This entails
\[
 \P_{\theta,\mu}\Big[\exists u\in\R,\:|\wh{\eta}_{\lambda}(u) - \eta_{\lambda}(u)|\geq \frac{t}{\sqrt{n}}e^{\lambda^2/2}e^{\lambda(u-\theta)} \Big]\leq \frac{1}{t^2}\ .
\]

Then, taking an union bound over all $j=1,\ldots, q$, we obtain that, for any $t>0$, we have
\beq\label{eq:control_eta_hat}
 |\wh{\eta}_{\lambda j}(u) - \eta_{\lambda j}(u)|\leq t\sqrt{\frac{q}{n}}e^{(\lambda j)^2/2 + \lambda j(u - \theta)}  \ , \quad \text{ for all }u\in \mathbb{R}\text{ and }j=1,\ldots, q\ ,
\eeq
with probability higher than $1-1/t^2$. Then, we rely on the upper bound \eqref{eq:upper_a_j_q} of the coefficients $|a_{j,q}|$ in the definition of $\wh{\psi}_{q,\lambda}$ to obtain 
\beqn 
|\wh{\psi}_{q,\lambda}(u) - \psi_{q,\lambda}(u)|\leq \frac{t}{\sqrt{n}}q^{3/2}\exp\Big[\lambda^2 \frac{q^2}{2}+ q\log(3+2\sqrt{2}) - \lambda (\theta - u)_{+} + \lambda q (u-\theta)_+\Big]\ ,
\eeqn 
simultaneously over all $u\in \mathbb{R}$ with probability higher than $1-1/t^2$. 
\end{proof}

\subsection{Proof of Theorem \ref{thm:adaptation} (gOSC)}\label{p:thm:adaptation}
Consider any $(\theta,\mu)\in \R\times \cM$ and denote $k= \|\mu\|_0$, which is such that $k\leq n-1$ by assumption. Let us denote
$$
q_*=\left\{\begin{array}{ll}
0 & \mbox{ if $k< e^{2\const}\sqrt{n}$\ ;}\\
 q_k & \mbox{  if $k\in [e^{2\const}\sqrt{n}, n-64n^{1-1/4\const})$\ ;}\\
 q_{\max+2} & \mbox{ else}\ .
\end{array}\right.
$$
We call $\wh{\theta}_{q_*}$ the oracle estimator 
because $\wh{\theta}_{q_*}$ has been shown to achieve the desired risk bounds (see  Propositions~\ref{prop:thetamedian} and~\ref{prp:dense} and Theorem~\ref{th-upperbound-onesided}). Let us also underline that 
$q \in \{0,\dots,q_{\max}+2\} \mapsto\delta_q$ is increasing, because $x\in [1,\infty)\mapsto \const x - (3/2)\log x$ is also increasing.

We start by proving the probability bound \eqref{eq:result_adaptation1_deviation}. We first assume that $q_*< q_{\max}$ and consider afterwards the case $q_*\geq q_{\max}$. 
Consider the event $\cA=\cap_{q\geq  q_*}\{|\wh{\theta}_q-\theta|\leq \delta_{q}/2\}$. Under the event $\cA$, it follows from triangular inequality and the fact that the sequence $\delta_q$ is increasing that $\wh{q}\leq q_*$. Relying again on triangular inequality and the definition of $\wh{q}$, we obtain
\[
 |\wh{\theta}_{\wh{q}} - \theta|\leq |\wh{\theta}_{\wh{q}} - \wh{\theta}_{q_*}|+ |\wh{\theta}_{q_*} - \theta|\leq \frac{3}{2}\delta_{q_*}.
\]
We deduce 
\[
\P_{\theta,\mu}\bigg[|\wh{\theta}_{\wh{q}} - \theta|> \frac{3}{2}\delta_{q_*} \bigg]\leq \P_{\theta,\mu}[\cA^c] \leq   \sum_{q\geq  q_*} \P_{\theta,\mu}\Big[|\wh{\theta}_q-\theta|> \delta_{q}/2\Big]\ . 
\]
Since for any $k < k'$, a $k$-sparse vector is also a $k'$-sparse vector, we can apply the deviation bounds \eqref{eq:control_deviation_theta_q_k} and   \eqref{eq:deviation_importante} in the 
proof of Theorem \ref{th-upperbound-onesided}
(and the definition \eqref{eq:equation_tk} of $t_k$)  to all estimators $\wh{\theta}_q$ with $q=q_*,\ldots, q_{\max}$. For such $q$, we obtain 
 $ \P_{\theta,\mu}[|\wh{\theta}_q-\theta|> \delta_{q}/2]\lesssim e^{-4\const q/3} q^3$. 
 
  Proposition \ref{prp:dense} also  enforces that  $ \P_{\theta,\mu}[|\wh{\theta}_{q_{\max}+2}-\theta|> \delta_{q_{\max}+2}/2]\lesssim 1/n$. 
We conclude that 
\[
\P_{\theta,\mu}\left[|\wh{\theta}_{\ad} - \theta|\geq \frac{3}{2}\delta_{q_*}\right]\lesssim e^{-4\const q_*/3} q_*^3 + \frac{1}{n}\ , 
\]
which leads to the desired result. 
For $q_*= q_{\max}$, it follows again from \eqref{eq:deviation_importante} in the proof of Theorem \ref{thm:adaptation} that 
\[
 \P_{\theta,\mu}\Big[|\wh{\theta}_{q_{\max}}-\theta|\geq  v^*_k \Big] \lesssim \left(\frac{k}{\sqrt{n}}\right)^{4/3}\log^3(\frac{k}{\sqrt{n}})\ , \text{ where } v^*_k= \frac{4}{\sqrt{2}q_{\max}^{3/2}}\log^2\left(\frac{8n}{n-k}\right)\ .
\]
We consider two subcases: (i) $v^*_k\leq 2\sqrt{2\log(n)}= \delta_{q_{\max}+2}/2$ and (ii) $v^*_k> 2\sqrt{2\log(n)}$. Under (i), the event $\cA'= \{|\wh{\theta}_{q_{\max}}-\theta|\vee |\wh{\theta}_{q_{\max}+2}-\theta|\leq \delta_{q_{\max}+2}/2\}$ has large probability and ensures that $\wh{q}\leq q_{\max}$, which in turn implies that $|\wh{\theta}_{\ad}-\theta|$ is smaller than $v^*_k+ \delta_{q_{\max}}\lesssim v^*_k$. Under (ii), we simple use $|\wh{\theta}_{\ad}-\theta|\leq \delta_{q_{\max}+2}+ |\wh{\theta}_{q_{\max+2}}-\theta|$ which is less than  $3\delta_{q_{\max}+2}/2\lesssim v^*_k$ with probability higher than $1-c/n$ by Proposition \ref{prp:dense}. Finally, the case  $q_*= q_{\max}+2$ is handled similarly: we use $|\wh{\theta}_{\ad}-\theta|\leq \delta_{q_{\max}+2}+ |\wh{\theta}_{q_{\max+2}}-\theta|$, which is less than $3\delta_{q_{\max}+2}/2$ with probability higher than $1-c/n$ by Proposition \ref{prp:dense}.

\bigskip

Let us turn to the moment bound. We decompose the risk in a sum of two terms depending on the value of $\wh{q}$. 
\beq\label{eq:decomposition_risque}
\E_{\theta,\mu}\big[|\wh{\theta}_{\ad}-\theta|\big] = \E_{\theta,\mu}\big[|\wh{\theta}_q-\theta|\ind_{\wh{q}\leq q_*}\big]+\E_{\theta,\mu}\big[|\wh{\theta}_q-\theta|\ind_{\wh{q}>q_*}\big]\ .
\eeq
For $\wh{q}< q_*$, it follows from triangular inequality and the  definition of  $\wh{q}$ that  
\[
|\wh{\theta}_q-\theta|\ind_{\wh{q}=q}\leq |\wh{\theta}_{q_*}-\wh{\theta}_q|\ind_{\wh{q}=q} + |\wh{\theta}_{q_*}-\theta|\ind_{\wh{q}=q}\leq \delta_{q_*}\ind_{\wh{q}=q} +  |\wh{\theta}_{q_*}-\theta|\ind_{\wh{q}=q} \ . 
\]
Summing all these terms, we arrive at
\[
  \sum_{q=0}^{q_*}\E_{\theta,\mu}\Big[|\wh{\theta}_q-\theta|\ind_{\wh{q}=q}\Big]\leq \delta_{q_*}+ \E_{\theta,\mu}[|\wh{\theta}_{q_*}-\theta|]\ .
\]
This expectation has been studied in Proposition  \ref{prp:dense} and Theorem \ref{th-upperbound-onesided}, which leads us to 
\beq\label{eq:sous_estimation}
\E_{\theta,\mu}\Big[|\wh{\theta}_q-\theta|\ind_{\wh{q}\leq q_*}\Big]\lesssim \frac{\log^{2}\big(1+ \sqrt{\frac{k}{n-k}}\big)}{\log^{3/2}\big(1+ (\frac{k}{\sqrt{n}})^{2/3}\big)}\ . 
\eeq

Turning to the second sum in the decomposition \eqref{eq:decomposition_risque}, we first assume  that  $q_*<q_{\max}$
define $\widetilde{q}_+=\max\{q': |\wh{\theta}_{q'}-\theta|\geq \delta_{q'}/2\}$. By definition of $\delta_q$, one has $\widetilde{q}_+\geq \wh{q}-2$ under the event $\wh{q}>q_*$. Then, we deduce that
\beqn 
|\wh{\theta}_q-\theta|\ind_{\wh{q}=q}&\leq& \sum_{q'=q-2}^{q_{\max}+2}|\wh{\theta}_q-\theta|\ind_{\wh{q}=q}\ind_{\widetilde{q}_+=q'}\\
& \leq &  |\wh{\theta}_{q}-\theta| \ind_{\wh{q}=q}\ind_{\widetilde{q}_+=q-2} + \sum_{q'=q}^{q_{\max}}\Big[ |\wh{\theta}_q-\wh{\theta}_{q'+2}|+  |\wh{\theta}_{q'+2}-\theta| \Big]\ind_{\wh{q}=q}\ind_{\widetilde{q}_+=q'}  \\ && 
+ \Big[ |\wh{\theta}_q-\wh{\theta}_{q_{\max}+2}|+  |\wh{\theta}_{q_{\max}+2}-\theta| \Big]\ind_{\wh{q}=q}\ind_{\widetilde{q}_+=q_{\max}+2}  \\
&\leq & \frac{3}{2}\sum_{q'=q-2}^{q_{\max}}\delta_{q'+2}\ind_{\wh{q}=q}\ind_{\widetilde{q}_+=q'}  
+ \Big[ \delta_{q_{\max}+2} +  |\wh{\theta}_{q_{\max}+2}-\theta| \Big]\ind_{\wh{q}=q}\ind_{\widetilde{q}_+=q_{\max}+2}\ ,
\eeqn 
where we used again the definition of $\widetilde{q}_+$ and $\wh{q}$. Summing the above bound over all even $q>q_*$ leads to 
\beqn 
\sum_{q=q_*+2}^{q_{\max}}\E_{\theta,\mu}\Big[|\wh{\theta}_q-\theta|\ind_{\wh{q}=q}\Big]&\leq &   \frac{3}{2}\sum_{q=q_*+2}^{q_{\max}}\delta_{q+2}\P_{\theta,\mu}[\widetilde{q}_+=q]+     \E_{\theta,\mu}\Big[\big(\delta_{q_{\max+2}} +|\wh{\theta}_{q_{\max}+2}-\theta| \big)\ind_{\widetilde{q}_+=q_{\max}+2} \Big]\\
&\leq &\frac{3}{2}\sum_{q=q_*}^{q_{\max}}\delta_{q+2}\P_{\theta,\mu}[|\wh{\theta}_q-\theta|\geq \frac{\delta_{q}}{2}]+     3\E_{\theta,\mu}\Big[|\wh{\theta}_{q_{\max}+2}-\theta| \ind_{|\wh{\theta}_{q_{\max}+2}-\theta|\geq \delta_{q_{\max}+2}/2} \Big]\ .
\eeqn 
As explained earlier, we know that 
\[
\P_{\theta,\mu}[|\wh{\theta}_q-\theta|\geq \frac{\delta_q}{2}]\lesssim e^{-4\const q/3} q^3\quad \text{ for }q=q_*,\ldots, q_{\max}\ ,   
\]
and $\P_{\theta,\mu}[|\wh{\theta}_{q_{\max}+2}-\theta|\geq (\delta_{q_{\max}+2})/2]\lesssim 1/n$. Since $\delta_{q_{\max}+2}\lesssim \sqrt{\log(n)}$ 
\[
 \frac{3}{2}\sum_{q=q_*}^{q_{\max}}\delta_{q+2}\P_{\theta,\mu}\big[|\wh{\theta}_q-\theta|\geq \frac{\delta_q}{2}\big]\lesssim  \frac{1}{\sqrt{n}}\sum_{q=q_*}^{q_{\max}-2} q^{3/2}e^{-\const q/3} +  \frac{1}{\sqrt{n}}
 \lesssim \frac{1}{\sqrt{n}}\ . 
\]
Arguing as in the proof of Lemma \ref{lem:theta_max}, we obtain that the second term 
\[
\E_{\theta,\mu}\Big[|\wh{\theta}_{q_{\max}+2}-\theta| \ind_{|\wh{\theta}_{q_{\max}+2}-\theta|\geq \delta_{q_{\max}+2}/2} \Big]\lesssim \frac{1}{n} \ . 
\]
We have proved
\[
\E_{\theta,\mu}\Big[|\wh{\theta}_q-\theta|\ind_{\wh{q}> q_*}\big]\lesssim \frac{1}{\sqrt{n}}\ . 
\]
We now turn to the case $q_*=q_{\max}$. We only need to bound 
$\E_{\theta,\mu}[|\wh{\theta}_{q_{\max}+2}-\theta|\ind_{\wh{q}= q_{\max}+2}]$.
The event $\wh{q}=q_{\max}+2$  only occurs if either $|\wh{\theta}_{q_{\max}+2}-\theta|\geq \delta_{q_{\max}+2}/2$ or if 
$|\wh{\theta}_{q_{\max}}-\theta|\geq  \delta_{q_{\max}+2}/2$. This leads to 
\beqn 
\E_{\theta,\mu}\Big[|\wh{\theta}_{q_{\max}+2}-\theta|\ind_{\wh{q}= q_{\max}+2}\Big]&\leq& \E_{\theta,\mu}\Big[|\wh{\theta}_{q_{\max}+2}-\theta|\ind_{|\wh{\theta}_{q_{\max}+2}-\theta| \geq \delta_{q_{\max}+2}}\Big]\\ && + 2\sqrt{2\log(n)}\P_{\theta,\mu}\left[|\wh{\theta}_{q_{\max}}-\theta|\geq 2\sqrt{2\log(n)} \right]\\
&\lesssim & \frac{1}{\sqrt{n}}+ \sqrt{\log(n)}\P_{\theta,\mu}\left[|\wh{\theta}_{q_{*}}-\theta|\geq 2\sqrt{2\log(n)} \right]\ . 
\eeqn 
As previously, we consider two subcases: (i) $v^*_k< 2\sqrt{2\log(n)}$, in which case the deviation bound \eqref{eq:deviation_importante} implies that 
\beqn 
 \sqrt{\log(n)}\P_{\theta,\mu}\left[|\wh{\theta}_{q_{*}}-\theta|\geq 2\sqrt{2\log(n)} \right]&\lesssim& (\frac{k}{\sqrt{n}})^{-4/3} \log^{7/2}(n)\lesssim 
 n^{-2/3}\log^{7/2}(n)\\ &\lesssim &\frac{\log^{2}\big(1+ \sqrt{\frac{k}{n-k}}\big)}{\log^{3/2}\big(1+ (\frac{k}{\sqrt{n}})^{2/3}\big)}  \ , 
\eeqn 
since $q_*=q_{\max}$. If (ii) $v^*_k\geq 2\sqrt{2\log(n)}$, we straightforwardly derive the rough bound 
\[
\E_{\theta,\mu}\Big[|\wh{\theta}_{q_{\max}+2}-\theta|\ind_{\wh{q}= q_{\max}+2}\Big]\lesssim \sqrt{\log(n)}\ , 
\]
which is nevertheless optimal. 
Together with \eqref{eq:decomposition_risque} and \eqref{eq:sous_estimation}, we have proved the desired risk bound.

\subsection{Proofs for the quantile estimators (OSC)}
\label{p:thm:upper_robust}

\begin{proof}[Proof of Theorem \ref{thm:upper_robust}]

The proof is based on Lemmas~\ref{lem:biais_estimateur_quantile} and~\ref{lem:quantile_empirique_2}. 
First recall that the following holds: 
\begin{align*}
 \xi_{(q)}+ \ol{\Phi}^{-1}({q}/{n})  \preceq \wt{\theta}_q -\theta &\preceq \xi_{(q:n-k)}+ \ol{\Phi}^{-1}({q}/{n})\\
 &\preceq \left[\xi_{(q:n-k)}+ \ol{\Phi}^{-1}({q}/{(n-k)})\right]_++ \ol{\Phi}^{-1}({q}/{n})-\ol{\Phi}^{-1}({q}/{(n-k)})\ .
\end{align*}
Let us prove \eqref{eq:result_thetat_robustes}. 
 It follows from the above decomposition that, for any $x>0$, 
\[
 -x \leq  \wt{\theta}_q -\theta \leq \ol{\Phi}^{-1}({q}/{n})-\ol{\Phi}^{-1}({q}/{(n-k)}) + x \ , 
\]
with probability higher than $1- \P[\xi_{(q:n-k)}+ \ol{\Phi}^{-1}(\frac{q}{n-k})\geq x]- \P[\xi_{(q)}+ \ol{\Phi}^{-1}(\frac{q}{n})\leq -x ]$. Then, Lemmas~\ref{lem:biais_estimateur_quantile} and~\ref{lem:quantile_empirique_2} yield the desired bound \eqref{eq:result_thetat_robustes} for all $x\leq c_3 q$\footnote{Actually, $c_3$ corresponds to $c_1$ in the statement of Lemma \ref{lem:quantile_empirique_2}.}. 
 
 Let us now prove \eqref{eq:risk_theta_tilde_robust}.
Define the event
\[
 \cA= \left\{|\wt{\theta}_q -\theta|\leq c_1 \frac{\log\left(\frac{n}{n-k}\right)}{\sqrt{\log\big(\frac{n-k}{q}\big)_+}\vee 1} + c_2 \sqrt{\frac{c_3}{[\log(\frac{n-k}{q})\vee 1]}}\right\}\ . 
\]
From above, the random variable $|\wt{\theta}_q -\theta|\ind_{\cA}$ satisfies  for all $x>0$,
\begin{align}\label{defboundwithA}
 \P_{\theta,\pi}\left[ |\wt{\theta}_q -\theta|\ind_{\cA} \leq c_1 \frac{\log\left(\frac{n}{n-k}\right)}{\sqrt{\log\big(\frac{n-k}{q}\big)_+}\vee 1} + c_2 \sqrt{\frac{x}{q[\log(\frac{n-k}{q})\vee 1]}}\right]\geq 1- 2e^{-x}\ .
\end{align}
Integrating this deviation inequality yields 
\[
 \E_{\theta,\pi}\left[|\wt{\theta}_q -\theta|\ind_{\cA}\right] \leq c_1 \frac{\log\left(\frac{n}{n-k}\right)}{\sqrt{\log\big(\frac{n-k}{q}\big)_+}\vee 1} + c'_2 \sqrt{\frac{1}{q[\log(\frac{n-k}{q})\vee 1]}}\  . 
\]
Let us control the remaining term $\E_{\theta,\pi}[|\wt{\theta}_q -\theta|\ind_{\cA^c}]$. By Cauchy-Schwarz inequality, we have 
\[
 \E_{\theta,\pi}[|\wt{\theta}_q -\theta|\ind_{\cA^c}]\leq  \E_{\theta,\pi}^{1/2}[(\wt{\theta}_q -\theta)^2]\P_{\theta,\pi}^{1/2}[\cA^c]\leq
 \sqrt{2} e^{-c_3q/2}
 \left[\E_{\theta,\pi}^{1/2}[(\wt{\theta}_q -\theta)_-^2]+ \E_{\theta,\pi}^{1/2}[(\wt{\theta}_q -\theta)_+^2]\right]\ .
\]
 We can use a crude stochastic bound $\xi_{(1)}- \ol{\Phi}^{-1}(1/{n})  \preceq \wt{\theta}_q -\theta \preceq \xi_{(n)}+ \ol{\Phi}^{-1}(1/{n})$. By an union bound together with integration, we arrive at $\E_{\theta,\pi}[(\wt{\theta}_q -\theta)^2]\leq c\log(n)$.
Putting everything together, we obtain
\[
 \E_{\theta,\pi}\left[|\wt{\theta}_q -\theta|\right] \leq c_1 \frac{\log\left(\frac{n}{n-k}\right)}{\sqrt{\log\big(\frac{n-k}{q}\big)_+}\vee 1} + c'_2 \sqrt{\frac{1}{q[\log(\frac{n-k}{q})\vee 1]}}+ c'_4e^{-c_3q/2}\sqrt{\log(n)}\ .
\]
Taking $c_4= 4/c_3$ in the statement of the theorem, we have $e^{-c_3q/2}\leq n^{-2}$ and \eqref{eq:risk_theta_tilde_robust} follows.

\end{proof}
\begin{proof}[Proof of Corollary~\ref{prp:upper_bound_robuste_non_adaptive} ]
 For $k\leq n-n^{4/5}$, this bound is a straightforward consequence of \eqref{eq:risk_theta_tilde_robust}. 
 In the  proof of Proposition \ref{prp:dense} (see Section~\ref{p:prelest}), we have shown that, for any $\pi\in \ol{\cM}_{n-1}$, 
 $\E_{\theta,\pi}\left[|\wt{\theta}_{1} -\theta|\right]\lesssim \sqrt{\log(n)}$, which is (up to multiple constants) smaller than 
 $\frac{\log\left(\frac{n}{n-k}\right)}{\log^{1/2}(1+ \frac{k^2}{n})}$ for all $k\geq  n-n^{4/5}$. 
\end{proof}

\begin{proof}[Proof of Proposition~\ref{prp:adaptation_robust}]

Consider any $(\theta,\pi)\in \R\times \ol{\cM}$ and denote $k$ the number of contaminations in $\pi$. In the sequel, we write $(\ol{q}_1,\ldots, \ol{q}_{\max})$ for the ordered values in $\cQ$ so that $\ol{q}_1=1$, $\ol{q}_{\max}=\lceil n/2\rceil$ and in between, the $\ol{q}_i$ form a dyadic sequence. 
For short, we write $q_*=q_k$ so that $\wt{\theta}_{q_*}$ achieves minimax performances. Besides, we let $i^*$ be the indice such that $q_*=\overline{q}_{i^*}$.  We shall prove that $\wt{\theta}_{\ad}$ performs almost as well $\wt{\theta}_{q_*}$. The general strategy is the same as in Theorem \ref{thm:adaptation}.

As in the previous proof, we decompose the risk as a sum of two terms depending on the value of $\wh{q}$. 
\[
\E_{\theta,\pi}\big[|\widetilde{\theta}-\theta|\big] = \E_{\theta,\pi}\big[|\wt{\theta}_q-\theta|\ind_{\wh{q}\geq q_*}\big]+\E_{\theta,\pi}\big[|\wt{\theta}_q-\theta|\ind_{\wh{q}<q_*}\big]\ .
\]
For $q\geq  q_*$, it follows from triangular inequality and the  definition of  $\wh{q}$ that  
\[
|\wt{\theta}_q-\theta|\ind_{\wh{q}=q}\leq |\wt{\theta}_{q^{*}}-\wt{\theta}_{q}|\ind_{\wh{q}=q} + |\wt{\theta}_{q_*}-\theta|\ind_{\wh{q}=q}\leq \delta_{q_*}\ind_{\wh{q}=q} +  |\wt{\theta}_{q_*}-\theta|\ind_{\wh{q}=q} \ . 
\]
Summing all these terms over all $q\geq q_*$, we arrive at $\E_{\theta,\pi}\big[|\wt{\theta}_q-\theta|\ind_{\wh{q}\geq q_*}\big]\leq \delta_{q_*}+ \E_{\theta,\pi}[|\wt{\theta}_{q_*}-\theta|]$, which, by Corollary~\ref{prp:upper_bound_robuste_non_adaptive} together with the definition \eqref{eq:definition_delta_q_2} of $\delta_q$, leads to  
\beq\label{eq:upper_ad_osc_1}
\E_{\theta,\pi}\Big[|\wt{\theta}_q-\theta|\ind_{\wh{q}\geq q_*}\Big]\lesssim \frac{\log\big(\frac{n}{n-k}\big)}{\log^{1/2}\big(1+ \frac{k^2}{n}\big)}\ .  
\eeq
\medskip 

Turning to the second expression $\E_{\theta,\pi}\big[|\wt{\theta}_q-\theta|\ind_{\wh{q}<q_*}\big]$, we first assume either that $i^*= 1$ or $i^*> 3$, the case $i^*=2,3$ being deferred to the end of the proof. Define $\widetilde{q}_+=\min\{q': |\wt{\theta}_{q'}-\theta|\geq \delta_{q'}/2\}$. By definition of $\wh{q}$ and by monotonicity of $\delta_q$, one has $\widetilde{q}_+\leq \ol{q}_{\wh{i}+1}$ where $\wh{i}$ is such that $\ol{q}_{\wh{i}}= \wh{q}$. Then, we deduce that, for $q < q_*$, 
\beqn 
|\wt{\theta}_q-\theta|\ind_{\wh{q}=q}&\leq& \sum_{j=1}^{i^*}|\wt{\theta}_{q}-\theta|\ind_{\wh{q}=q}\ind_{\widetilde{q}_+=\ol{q}_j}\\
& \leq &   \sum_{j=2}^{i^*} \Big[ |\wt{\theta}_q-\wt{\theta}_{\ol{q}_{j-1}}|+  |\wt{\theta}_{\ol{q}_{j-1}}-\theta| \Big]\ind_{\wh{q}=q}\ind_{\widetilde{q}_+=\ol{q}_j}  + \Big[ |\wt{\theta}_q-\wt{\theta}_{1}|+  |\wt{\theta}_{1}-\theta| \Big]\ind_{\wh{q}=q}\ind_{\widetilde{q}_+=1}  \\
&\leq & \frac{3}{2}\sum_{j=2}^{i^*}\delta_{\ol{q}_{j-1}}\ind_{\wh{q}=q}\ind_{\widetilde{q}_+=\ol{q}_j}  
+ \Big[ \delta_{1} +  |\wt{\theta}_{1}-\theta| \Big]\ind_{\wh{q}=q}\ind_{\widetilde{q}_+=1}\ ,
\eeqn 
where we used again the definition of $\widetilde{q}_+$ and $\wh{q}$. Summing the above bound over all $q<q_*$ leads to 
\beqn 
\sum_{q<q_*}\E_{\theta,\pi}\Big[|\wt{\theta}_q-\theta|\ind_{\wh{q}=q}\Big]&\leq &   \frac{3}{2}\sum_{j=2}^{i^*}\delta_{\ol{q}_{j-1}}\P_{\theta,\pi}[\widetilde{q}_+=\ol{q}_j]+     \E_{\theta,\pi}\Big[\big(\delta_{1} +|\wt{\theta}_{1}-\theta| \big)\ind_{\widetilde{q}_+=1} \Big]\\
&\leq &\frac{3}{2}\sum_{j=2}^{i^*}\delta_{\ol{q}_{j-1}}\P_{\theta,\pi}\left[|\wt{\theta}_{\ol{q}_j}-\theta|\geq \frac{\delta_{\ol{q}_j}}{2}\right]+     3\E_{\theta,\pi}\Big[|\wt{\theta}_{1}-\theta| \ind_{|\wt{\theta}_{1}-\theta|\geq \delta_{1}/2} \Big]\ .
\eeqn 
For any $\ol{q}_4\leq q\leq q_*$, we apply Theorem \ref{thm:upper_robust} and it follows from  the choice \eqref{eq:definition_delta_q_2} of $\delta_q$ with $c_0$ large enough that $\P_{\theta,\pi}[|\wt{\theta}_q-\theta|\geq \frac{\delta_q}{2}]\leq \exp[- c(n/q)^{1/3}]$. For $q\leq \ol{q}_3\wedge q_*$, it follows from Theorem \ref{thm:upper_robust} and the proof of Proposition \ref{prp:dense} that
$\P_{\theta,\pi}[|\wt{\theta}_q-\theta|\geq \frac{\delta_q}{2}]\leq 1/n$. 
Since $\delta_{q_1}\lesssim \sqrt{\log(n)}$, we obtain 
\[
 \frac{3}{2}\sum_{j=2}^{i^*}\delta_{\ol{q}_j-1}\P_{\theta,\pi}\big[|\wt{\theta}_{\ol{q}_j}-\theta|\geq \frac{\delta_{\ol{q}_j}}{2}\big]\lesssim  \frac{\sqrt{\log(n)}}{n}+ \sum_{j=4}^{i^*} e^{-c (n/\ol{q}_j)^{1/3}} \frac{n^{1/6}}{\ol{q}_j^{2/3}\sqrt{\log(\frac{n}{\ol{q}_j})\vee 1}} \lesssim \frac{1}{\sqrt{n}}\ .
\]
Finally, arguing as in the proof of Theorem \ref{thm:adaptation}, we observe that $\E_{\theta,\pi}[|\wt{\theta}_{1}-\theta| \ind_{|\wt{\theta}_{1}-\theta|\geq \delta_{1}/2} ]\lesssim 1/\sqrt{n}$. Putting everything together we have proved
\beq\label{eq:upper_ad_osc_2}
\E_{\theta,\pi}\Big[|\wt{\theta}_q-\theta|\ind_{\wh{q}<q_*}\Big]\lesssim \frac{1}{\sqrt{n}}\ , 
\eeq
as long as $i^*=1$ or $i^*>3$. 

It remains to consider the case $i_*=2,3$. In that situation, observe that $\log(n/(n-k))\asymp \log(n)$. From Theorem \ref{thm:upper_robust},  we derive that, for $q=\ol{q}_1,\ol{q}_2$, 
\[
\E_{\theta,\pi}\big[|\wt{\theta}_q-\theta|\big]\lesssim \frac{\log\big(\frac{n}{n-k}\big)}{\log^{1/2}\big(1+ \frac{k^2}{n}\big)}\ .
\]
This leads us to 
\[
\E_{\theta,\pi}\Big[|\wt{\theta}_q-\theta|\ind_{\wh{q}< q_*}|\Big]\leq  \sum_{i=1}^2 \E_{\theta,\pi}\big[\wt{\theta}_{\ol{q}_i}-\theta|\big]\lesssim \frac{\log\big(\frac{n}{n-k}\big)}{\log^{1/2}\big(1+ \frac{k^2}{n}\big)}\ . 
\]
Together with \eqref{eq:upper_ad_osc_1} and \eqref{eq:upper_ad_osc_2}, this concludes the proof. 
\end{proof}

\begin{proof}[Proof of Proposition~\ref{prp:upper_unknown}]
First consider the variance estimator $\widetilde{\sigma}_{q_k,q'_k}$. We  only deal with the case where $k\leq n-n^{4/5}$, the other case being trivial.
We start from the decomposition
\beqn 
\frac{\big|\widetilde{\sigma}_{q,q'}-\sigma \big|}{\sigma}\leq \left|\frac{Y_{(q)}/\sigma -\theta/\sigma  + \ol{\Phi}^{-1}(\frac{q}{n}) }{\overline{\Phi}^{-1}(q'/n)- \overline{\Phi}^{-1}(q/n)}\right|+ \left|\frac{Y_{(q')}/\sigma -\theta/\sigma  +  \ol{\Phi}^{-1}(\frac{q'}{n}) }{\overline{\Phi}^{-1}(q'/n)- \overline{\Phi}^{-1}(q/n)}\right|\ . 
\eeqn 
Since the rescaled non contaminated observations $Y_i/\sigma$ have variance $1$, we can  apply Theorem \ref{thm:upper_robust} to control the expectations of the above rhs term. 
This leads us to 
\beq\label{eq:first_upper_risk_sigma}
\E_{\theta,\pi,\sigma}\left[\frac{\big|\widetilde{\sigma}_{q_k,q_k'}-\sigma \big|}{\sigma}\right] \lesssim \frac{\log\left(\frac{n}{n-k}\right)}{\log^{1/2}\left(1+ \frac{k^2}{n}\right)}\cdot \frac{1}{\overline{\Phi}^{-1}(q'_k/n)- \overline{\Phi}^{-1}(q_k/n)}\ .
\eeq
It remains to derive a lower bound of the difference in the denominator. 
We claim that 
\beq \label{eq:claim_diff}
\overline{\Phi}^{-1}(q'_k/n)- \overline{\Phi}^{-1}(q_k/n)\gtrsim \sqrt{\log\left(\frac{k}{\sqrt{n}}\right)\vee 1}\ , 
\eeq
which together with previous bound leads to \eqref{eq:risk_upper_sigma_hat}. Let us show this claim. When  $k/\sqrt{n}$ is smaller than some constant $\ol{c}$  that will be fixed later, the difference is lower bounded by an absolute constant (depending on $\ol{c}$) by the first inequality in \eqref{eq:lower_difference_quantile} in Lemma \ref{lem:difference_quantile}. If $\ol{c}$ is chosen large enough, we have, $q'_k\leq q_k\leq 0.004 \:n$ for $k\geq \ol{c} \sqrt{n}$.
 The ratio $q_k/q'_k$ is larger than $k/\sqrt{n}$. The third inequality in \eqref{eq:lower_difference_quantile} together with \eqref{eq:encadrement_quantile_1plus} then implies that 
\beqn 
\overline{\Phi}^{-1}(q'_k/n)- \overline{\Phi}^{-1}(q_k/n)&\geq &\frac{1}{\overline{\Phi}^{-1}(q_k'/n) }\left[\log\left(\frac{q_k}{eq'_k}\right)+  \frac{1}{2}\log\log\left(\frac{n}{q_k}\right)- \frac{1}{2}\log\log\left(\frac{n}{q'_k}\right)\right]\\
&\gtrsim  & \frac{1}{\sqrt{\log(\frac{k}{\sqrt{n}})_+} }\left[\log\left(\frac{k}{\sqrt{n}}\right)  - c' - \log\log\left(\frac{k}{\sqrt{n}}\right) \right]\ ,
\eeqn 
where $c'$ is some constant. Since $k/\sqrt{n}\geq \ol{c}$, the first logarithmic term is larger than the remaining expressions in the rhs, and we  obtain \eqref{eq:claim_diff}.

We now consider the estimator $\wt{\theta}_{q_k,q'_k}$. We start from the decomposition 
\[
 \E_{\theta,\pi,\sigma}\left[\frac{|\wt{\theta}_{q_k,q'_k} - \theta|}{\sigma}\right]\leq \E_{\theta,\pi,\sigma}\left[ \frac{|\wt{\theta}_{q_k}- \theta|}{\sigma}\right]+ \E_{\theta,\pi,\sigma}\left[\frac{|\wt{\sigma}_{q_k,q'_k}-\sigma|}{\sigma} \right]\left|\ol{\Phi}^{-1}\left(\frac{q_k}{n}\right)\right|\ .
\]
The first expectation in the rhs has been controlled in Corollary~\ref{prp:upper_bound_robuste_non_adaptive} whereas the second expectation has been handled in the first part of this proof. We deduce from Lemma \ref{lem:quantile} that $\ol{\Phi}^{-1}\left(\frac{q_k}{n}\right)\lesssim \sqrt{\log(n/q_k)_+\vee 1}\lesssim \sqrt{\log(\frac{k^2}{n})_+\vee 1}$. Putting everything together leads to the desired result.

\end{proof}

\subsection{Proof of Proposition~\ref{rem:estimator} (OSC)}

\begin{proof}
First note that $q_n/n \asymp n^{-1/4}$ while $q'_n/(n-k_0) \asymp n^{-3/4}$, because we assume $k_0\leq 0.9 n$. This implies  that $\overline{\Phi}^{-1}(q'_n/(n-k_0))\geq  \overline{\Phi}^{-1}(q_n/n)$ for $n$ large enough.
Also by Lemma~\ref{lem:difference_quantile}, we have 
\begin{equation*}
\overline{\Phi}^{-1}(q'_n/(n-k_0)) -  \overline{\Phi}^{-1}(q_n/n) \asymp \log^{1/2} (n)\ .
\end{equation*}
Now use the following decomposition:
\begin{align*}
\frac{\wt{\sigma}_{+}- \sigma }{\sigma} 
& = \frac{Y_{(q_n)}/\sigma-\theta/\sigma + \overline{\Phi}^{-1}(q_n/n)}{\overline{\Phi}^{-1}(q'_n/(n-k_0))- \overline{\Phi}^{-1}(q_n/n)} -
\frac{Y_{(q'_n)}/\sigma-\theta/\sigma + \overline{\Phi}^{-1}(q'_n/(n-k_0))}{\overline{\Phi}^{-1}(q'_n/(n-k_0))- \overline{\Phi}^{-1}(q_n/n)}\\
&= \quad\quad\quad\quad T_1 \quad\quad\quad\quad\quad\quad\quad\quad\quad- \quad\quad\quad\quad T_2\ ,
\end{align*}
We consider separately the deviations of $T_1$ and $T_2$. We apply Theorem~\ref{thm:upper_robust} to $T_1$. Hence, for some constant $c>0$ and for all $x \in (0,c_3 q_n)$, we have 
\begin{align*}
\P_{\theta,\pi,\sigma}\left( T_1 < - c\frac{\sqrt{x}}{n^{3/8}\log(n)} \right)
& \leq \P_{\theta,\pi,\sigma}\left( [\overline{\Phi}^{-1}(q'_n/(n-k_0))- \overline{\Phi}^{-1}(q_n/n)]T_1 < - c'\sqrt{\frac{x}{q_n\log(n)}} \right)\\ &\leq 2 e^{-x}\ .
\end{align*}
For $T_2$, we start from
\begin{align*}
Y_{(q'_n)}/\sigma-\theta/\sigma + \overline{\Phi}^{-1}(q'_n/(n-k_0))
&\leq Y_{(q'_n)}/\sigma-\theta/\sigma + \overline{\Phi}^{-1}(q'_n/n_0)\\
&\preceq \xi_{(q'_n:n_0)} + \overline{\Phi}^{-1}(q'_n/n_0)\ .
\end{align*}
Then, we use \eqref{equ3Nico} to derive that there exists  constants $c'$ and $\ol{c}'$ such that,  for all $x\in (0, n^{1/4})$,
\begin{align*}
&\P_{\theta,\pi,\sigma}\left( T_2 >  c'\frac{\sqrt{x}}{ n^{1/8} \log(n)} \right)
\leq \P\left(\xi_{(q'_n:n_0)} + \overline{\Phi}^{-1}(q'_n/n_0) > \ol{c}'\sqrt{\frac{x}{ q'_n\log(n)}} \right)\leq e^{-x}\ ,
\end{align*}
Combining the two bounds leads to the following deviation inequality,
\begin{equation}\label{equsigmatildeproof}
\P_{\theta,\pi,\sigma}\left(\frac{\wt{\sigma}_{+}- \sigma }{\sigma}  < - c'' \frac{\sqrt{x}}{n^{1/8}\log(n)} \right) \leq 3 e^{-x}\ , 
\end{equation}
holding for all $x\in (0, n^{1/4})$. 
We obtain  \eqref{inequthetachapsigmachapbis} by taking $x=(c'')^{-2} n^{1/8} \log^{2} (n)$ in \eqref{equsigmatildeproof}. 

Relation \eqref{inequthetachapsigmachap} is obtained similarly by using the decomposition:
\begin{align}
\frac{\wt{\theta}_{+}- \theta }{\sigma} 
& = Y_{(q_n)}/\sigma-\theta/\sigma + \overline{\Phi}^{-1}(q_n/n)+
\frac{ \wt{\sigma}_{+} -\sigma}{\sigma}\:\ol{\Phi}^{-1}\left(\frac{q_n}{n}\right)\ . \label{equ:fromsigmatotheta}
\end{align}
This gives that for some constant $c'''>0$, for all $x\in (0, n^{1/4})$, 
\begin{equation}\label{equsigmatildeproof2}
\P_{\theta,\pi,\sigma}\left(\frac{\wt{\theta}_{+}- \theta }{\sigma}  < - c''' \sqrt{x} n^{-1/8} \log^{-1/2} (n)\right) \leq e^{-x}\ ,
\end{equation}
which, for $x=(c''')^{-2} n^{1/8} \log(n)$ leads to \eqref{inequthetachapsigmachap}.

Let us now establish \eqref{inequthetachapsigmachap2bis}. By \eqref{equsigmatildeproof}, we only have to study the probabilities of overestimation. As in the first part of the proof, we consider separately $T_1$ and $T_2$. 
First, Theorem~\ref{thm:upper_robust} (used with $k=k_0$), gives that for some constant $c>0$, for all $x \in (0,c_3 q_n)$,
\begin{align*}
&\P_{\theta,\pi,\sigma}\left( T_1 > c  \frac{k_0}{n\log(n)} + c\frac{\sqrt{x}}{ n^{3/8} \log(n)} \right)\\
&\leq \P_{\theta,\pi,\sigma}\left(Y_{(q_n)}/\sigma-\theta/\sigma + \overline{\Phi}^{-1}(q_n/n) >  \frac{\ol{c}\:  k_0}{n\sqrt{\log(n)}}+\ol{c}\sqrt{\frac{x}{n^{3/4}\log(n)}}  \right)\leq 2 e^{-x}\ . 
\end{align*}
%Then, we take $x= n^{5/8}\log^2(n)$ leads to the desired rate.
 Turning to  $T_2$, we start by controlling the difference of quantiles with  \eqref{eq:upper_difference_quantile}:
$$
\overline{\Phi}^{-1}(q'_n/n) -\overline{\Phi}^{-1}(q'_n/(n-k_0))  \lesssim \frac{q'_n/(n-k_0)-q'_n/n}{\tfrac{q'_n}{n}\log^{1/2}(n)}  \lesssim \frac{k_0}{n \log^{1/2} (n)}\ .
$$
Then, by stochastic domination
, we have for some constant $c_0>0$,
\begin{align*}
Y_{(q'_n)}/\sigma-\theta/\sigma + \overline{\Phi}^{-1}(q'_n/(n-k_0))
&\geq Y_{(q'_n)}/\sigma-\theta/\sigma + \overline{\Phi}^{-1}(q'_n/n)  - c_0 \frac{k_0}{n \log^{1/2} (n)} \\
&\succeq \xi_{(q'_n)} + \overline{\Phi}^{-1}(q'_n/n) - c_0\frac{k_0}{n \log^{1/2} (n)}\ .
\end{align*}
Putting the above inequalities together and relying on  the deviation bound~\eqref{equ4Nico} leads to 
\begin{align*}
&\P_{\theta,\pi,\sigma}\left( T_2 <-c' \frac{k_0}{n\log(n)} -  c_0\frac{\sqrt{x}}{ n^{1/8} \log(n)} \right)\\
&\leq \P\left( \frac{\xi_{(q'_n)} + \overline{\Phi}^{-1}(q'_n/n)}{\overline{\Phi}^{-1}(q'_n/(n-k_0))- \overline{\Phi}^{-1}(q_n/n)} <- c'\frac{\sqrt{x}}{n^{1/8} \log(n)}\right)\leq e^{-x}\ ,
\end{align*}
 for all $x\in (0, q_n'/8)$. 
 %We take $x=(c')^{-2} n^{1/8} \log^2 n$ (with $c'$ as above) to get the desired rate. 
 Combining the two deviation inequalities  for $T_1$ and 
 $T_2$ gives that for some constants $c'',c_4>0$, for all $x\in (0, c_4 n^{1/4})$, 
 $$
\P_{\theta,\pi,\sigma}\left(\frac{\wt{\sigma}_{+}- \sigma }{\sigma}  > c'' \frac{k_0}{n\log(n)} + c'' \frac{\sqrt{x}}{n^{1/8}\log(n)} \right) \leq 3 e^{-x}\ .
$$
Choosing $x=(c'')^{-2} n^{1/8} \log^2(n)$ leads to \eqref{inequthetachapsigmachap2bis}.

Finally,  \eqref{inequthetachapsigmachap2} follows from the decomposition \eqref{equ:fromsigmatotheta}, the relation on $T_1$ and \eqref{inequthetachapsigmachap2bis}.
\end{proof}

\section{Proofs for multiple testing and post hoc bounds}

In these proofs, to lighten the notation, the subscript $\alpha$ will be sometimes dropped in $\hat{\ell}_\alpha(u,s),\wh{t}_{\alpha}(u,s)$ ; the parameters $\theta,\pi,\sigma$ are removed in $\P_{\theta,\pi,\sigma}$ and $\E_{\theta,\pi,\sigma}$ and $\wt{\theta}_{+}$ (resp. $\wt{\sigma}_{+}$) are denoted by $\hat{\theta}$ (resp. $\hat{\sigma}$). We also let $\delta_n=n^{-1/16}$, so that, by Proposition~\ref{rem:estimator}, we have
$\P(\hat{\theta}- \theta   \leq  -  \sigma \delta_n)  \leq c  / n$ and $\P(\hat{\sigma}- \sigma   \leq  -  \sigma \delta_n)  \leq c /n$, for some constant $c>0$.

\subsection{Proof of Theorem~\ref{th:FDPquantile}}\label{sec:proofth:FDPquantile}

We start with a key observation. For $i\in \{1,\ldots,n\}$, the quantity $Y^{(i)}_{(q)}$ denotes the $q$-smallest element of $\{Y_j,1\leq j \leq n, j\neq i\}$ and
\begin{equation*}
\left\{\begin{array}{l}
\mbox{$\hat{\theta}^{(i)}=Y^{(i)}_{(q_n)}+\hat{\sigma}^{(i)}\: \ol{\Phi}^{-1}(q_n/n)$\ ;}\\
\mbox{ $\hat{\sigma}^{(i)}=\frac{Y^{(i)}_{(q_n)}-Y^{(i)}_{(q_n')}}{\overline{\Phi}^{-1}(q_n'/(0.1 n))- \overline{\Phi}^{-1}(q_n/n)}$ ,}
\end{array}
\right.
\end{equation*}
so that the estimators $\hat{\theta}^{(i)}$ and $\hat{\sigma}^{(i)}$ are independent of $Y_i$. 

\medskip 

\noindent
{\bf Claim}: For any $n$ large enough and for any $t\in (0,\alpha]$, we have 
\begin{align}
\left\{ p_i(\hat{\theta},\hat{\sigma}) \leq t\right\} &= \left\{ p_i(\hat{\theta},\hat{\sigma} ) \leq t , \hat{\theta}=\hat{\theta}^{(i)},\hat{\sigma}=\hat{\sigma}^{(i)}\right\}= \left\{ p_i(\hat{\theta}^{(i)},\hat{\sigma}^{(i)}) \leq t\right\}
\label{keyrelation}
\ .
\end{align}

\medskip 

\begin{proof}[Proof of the claim]
Consider any $i$ such that $p_i(\hat{\theta},\hat{\sigma}) \leq t$. 
 By definition \eqref{equ-pvalues} of the $p$-values, we have $
Y_i - Y_{(q_n)} \geq \hat{\sigma}\big[ \ol{\Phi}^{-1}(q_n/n) + \ol{\Phi}^{-1}(\alpha)\big]
$ which is positive for $n$ large enough. This entails $Y_{(q_n)}=Y^{(i)}_{(q_n)}$, $Y_{(q'_n)}=Y^{(i)}_{(q'_n)}$ and therefore $\hat{\theta}=\hat{\theta}^{(i)}$, $\hat{\sigma}=\hat{\sigma}^{(i)}$.

 Conversely, if 
$p_i(\hat{\theta}^{(i)},\hat{\sigma}^{(i)}) \leq t$, we have 
$
Y_i - Y_{(q_n)}^{(i)} \geq \hat{\sigma}^{(i)}\big[\ol{\Phi}^{-1}(q_n/n) +\ol{\Phi}^{-1}(\alpha)\big]>0
$
for $n$ large enough. This also leads to  $Y_{(q_n)}=Y^{(i)}_{(q_n)}$, $Y_{(q'_n)}=Y^{(i)}_{(q'_n)}$. We have proved \eqref{keyrelation}.

\end{proof}

The latter property can be suitably combined with Lemma~\ref{lem:MT} (see the notation therein) to give the following equalities:
\begin{eqnarray}
\left\{ p_i(\hat{\theta},\hat{\sigma}) \leq \alpha  \wh{\ell}(\hat{\theta},\hat{\sigma})/n \right\}&=&\left\{ p_i(\hat{\theta},\hat{\sigma})\leq \alpha  \wh{\ell}^{(i)}(\hat{\theta},\hat{\sigma})/n \right\}  \nonumber \\ 
&=&\left\{ p_i(\hat{\theta}^{(i)},\hat{\sigma}^{(i)}) \leq \alpha  \wh{\ell}^{(i)}(\hat{\theta},\hat{\sigma})/n,\hat{\theta}=\hat{\theta}^{(i)},\hat{\sigma}=\hat{\sigma}^{(i)} \right\}\nonumber\\ 
&=&\left\{ p_i(\hat{\theta}^{(i)},\hat{\sigma}^{(i)}) \leq \alpha  \wh{\ell}^{(i)}(\hat{\theta}^{(i)},\hat{\sigma}^{(i)})/n,\hat{\theta}=\hat{\theta}^{(i)},\hat{\sigma}=\hat{\sigma}^{(i)}  \right\}\nonumber\\ 
&=&\left\{ p_i(\hat{\theta}^{(i)},\hat{\sigma}^{(i)}) \leq \alpha  \wh{\ell}^{(i)}(\hat{\theta}^{(i)},\hat{\sigma}^{(i)})/n \right\}\nonumber\\ 
& \subset &\left\{\wh{\ell}^{(i)}(\hat{\theta}^{(i)},\hat{\sigma}^{(i)})=\wh{\ell}(\hat{\theta},\hat{\sigma})\right\}\ .\label{keyrelation2}
\end{eqnarray}
The first equality is a direct application of Lemma~\ref{lem:MT}, used with $u=\hat{\theta}$ and $s=\hat{\sigma}$. The second equality comes from \eqref{keyrelation} used with $t=\alpha  \wh{\ell}^{(i)}(\hat{\theta},\hat{\sigma})/n$. The third equality is trivial. The fourth equality comes from \eqref{keyrelation} used with $t= \alpha  \wh{\ell}^{(i)}(\hat{\theta}^{(i)},\hat{\sigma}^{(i)})/n$. 
Now, since $\BH_\alpha(\hat{\theta},\hat{\sigma})=\{ 1\leq i \leq n\::\: p_i(\hat{\theta},\hat{\sigma})\leq \alpha \hat{\ell}(\hat{\theta},\hat{\sigma})/n\} $, we have
\begin{align*}
&\FDP(\pi, \BH_\alpha(\hat{\theta},\hat{\sigma}))\:\mathds{1}{\{\hat{\theta}- \theta   >  -  \sigma\delta_n, \hat{\sigma}- \sigma   >  -  \sigma\delta_n\}} \\
&= \sum_{i\in \cH_0}\frac{\mathds{1}{\{p_i(\hat{\theta},\hat{\sigma})\leq \alpha \hat{\ell}(\hat{\theta},\hat{\sigma})/n\}}}{\hat{\ell}(\hat{\theta},\hat{\sigma})\vee 1} \:\mathds{1}{\{\hat{\theta}^{(i)}- \theta   >  -  \sigma\delta_n, \hat{\sigma}^{(i)}- \sigma   >  -  \sigma\delta_n\}}\\
&= \sum_{i\in \cH_0}\frac{\mathds{1}{\{p_i(\hat{\theta}^{(i)},\hat{\sigma}^{(i)})\leq \alpha \hat{\ell}^{(i)}(\hat{\theta}^{(i)},\hat{\sigma}^{(i)})/n\}}}{\hat{\ell}^{(i)}(\hat{\theta}^{(i)},\hat{\sigma}^{(i)})} \:\mathds{1}{\{\hat{\theta}^{(i)}- \theta   >  -  \sigma\delta_n, \hat{\sigma}^{(i)}- \sigma   >  -  \sigma\delta_n\}}\ ,
\end{align*}
by applying \eqref{keyrelation} and \eqref{keyrelation2}. By integration,
\begin{align*}
&\E\left[ \FDP(\pi, \BH_\alpha(\hat{\theta},\hat{\sigma}))\:\mathds{1}{\{\hat{\theta}- \theta   >  -  \sigma\delta_n, \hat{\sigma}- \sigma   >  -  \sigma\delta_n\}}\right]\\ 
&= \sum_{i\in \cH_0}\E\left[\frac{\mathds{1}{\{p_i(\hat{\theta}^{(i)},\hat{\sigma}^{(i)})\leq \alpha \hat{\ell}^{(i)}(\hat{\theta}^{(i)},\hat{\sigma}^{(i)})/n\}}}{\hat{\ell}^{(i)}(\hat{\theta}^{(i)},\hat{\sigma}^{(i)})} \:\mathds{1}{\{\hat{\theta}^{(i)}- \theta   >  -  \sigma\delta_n, \hat{\sigma}^{(i)}- \sigma   >  -  \sigma\delta_n\}}\right]\\
&= \sum_{i\in \cH_0}\E\left[\frac{\mathds{1}{\{\hat{\theta}^{(i)}- \theta   >  -  \sigma\delta_n, \hat{\sigma}^{(i)}- \sigma   >  -  \sigma\delta_n\}}}{\hat{\ell}^{(i)}(\hat{\theta}^{(i)},\hat{\sigma}^{(i)})} \P\left[p_i(\hat{\theta}^{(i)},\hat{\sigma}^{(i)})\leq \alpha \hat{\ell}^{(i)}(\hat{\theta}^{(i)},\hat{\sigma}^{(i)})/n \:\mid \: Y_j,j\neq i\right]\right]\\
&= \sum_{i\in \cH_0}\E\left[\frac{\mathds{1}{\{\hat{\theta}^{(i)}- \theta   >  -  \sigma\delta_n, \hat{\sigma}^{(i)}- \sigma   >  -  \sigma\delta_n\}}}{\hat{\ell}^{(i)}(\hat{\theta}^{(i)},\hat{\sigma}^{(i)})} U_{\hat{\theta}^{(i)},\hat{\sigma}^{(i)}}(\alpha \hat{\ell}^{(i)}(\hat{\theta}^{(i)},\hat{\sigma}^{(i)})/n) \right]\ ,
\end{align*}
by independence between the $Y_i$'s and by \eqref{fromperfecttoapprox}, because the perfectly corrected $p$-values \eqref{equ-pvaluesperfect} are uniformly distributed on $(0,1)$.
Now,  using that $U_{u,s}(t)$ is nonincreasing both in $u$ and $s$ for $t<1/2$, the last display is smaller than
\begin{align*}
 \sum_{i\in \cH_0}\E\left[\frac{U_{\theta-\sigma\delta_n,\sigma-\sigma\delta_n}(\alpha \hat{\ell}^{(i)}(\hat{\theta}^{(i)},\hat{\sigma}^{(i)})/n)}{\hat{\ell}^{(i)}(\hat{\theta}^{(i)},\hat{\sigma}^{(i)})}  \right]
 &\leq \frac{\alpha}{n}\sum_{i\in \cH_0}\left[\max_{\alpha/n \leq t \leq \alpha}  \frac{U_{\theta-\sigma\delta_n,\sigma-\sigma\delta_n}(t)}{t}  \right] \\
 &= \alpha \frac{n_0}{n}\left(1+\max_{\alpha/n \leq t \leq \alpha} \left\{ \frac{U_{\theta-\sigma\delta_n,\sigma - \sigma\delta_n}(t)-t}{t}  \right\} \right)\ .
\end{align*}
The first inequality comes from $ \hat{\ell}^{(i)}(\hat{\theta}^{(i)},\hat{\sigma}^{(i)})\geq 1$, which holds because the BH procedure always rejects a null hypothesis corresponding to a zero $p$-value (see the notation of Lemma~\ref{lem:MT}). 
The result  \eqref{FDRcontrol} by is then a consequence of Lemma~\ref{lem:majorationnull} and Proposition~\ref{rem:estimator}.

Let us now turn to the second statement \eqref{Powercontrol} and assume $n_1\geq 1$ (otherwise the result is trivial). 
Remember that we have $n_1/n\asymp k_0/n$. Thus,  Proposition~\ref{rem:estimator} implies, for some constant $c>0$, $x_n=c\big( (n_1/n)\log^{-1/2} (n) + n^{-1/16} \big)$ and $y_n=c\big( (n_1/n)\log^{-1}(n) + n^{-1/16} \big)$, the deviation inequalities 
$\P\left(|\hat{\theta}- \theta|   \geq     \sigma x_n \right)  \leq  c /n $ and $\P\left(|\hat{\sigma}- \sigma|   \geq    \sigma y_n\right)  \leq c/n$. 
Denote the event 
$$
\mathcal{A}=\left\{\wh{\theta}- \theta   \leq   \sigma x_n ;\quad\wh{\sigma}- \sigma   \leq  \sigma y_n ;\quad  \wh{\sigma} \geq \sigma/2\right\}\ ,
$$
so that $\P(\mathcal{A}^c)\lesssim 1/n$.
For any  $\eta>0$, we have 
\begin{align}
\E\left( \TDP(\pi, \BH_\alpha^\star)  \right)&\leq \eta + \int_\eta^1 \P\left( \TDP(\pi, \BH_\alpha(\theta,\sigma))  \geq u\right) du\nonumber\\
&\leq \eta + \int_\eta^1 \P\left( \TDP(\pi, \BH_\alpha(\theta,\sigma))  \geq u, \mathcal{A}\right) du +\P(\mathcal{A}^c)\ .\label{equcomingback}
\end{align}
Consider the event  $\mathcal{A}\cap\{\TDP(\pi, \BH_\alpha(\theta,\sigma))  \geq \eta\}$.  By definition \eqref{equ-TDP} of the TDP, when this event holds, we have $\wh{\l}_{\alpha}(\theta,\sigma)\geq \eta n_1$. Write $t_0 = \alpha \eta n_1/n$. Invoking Lemma~\ref{lem:MT3}, we obtain 
$
\wh{\l}_{\alpha_0}(\hat{\theta},\hat{\sigma}) \geq\wh{\l}_{\alpha}(\theta,\sigma)\geq 1,
$
for $\alpha_0>0$ such that
\begin{align*}
\frac{\alpha_0}{\alpha}&=  \frac{U_{\hat{\theta},\hat{\sigma}}^{-1}(\wh{t}_{\alpha}(\theta,\sigma))}{\wh{t}_{\alpha}(\theta,\sigma)} 
=
\frac{ \ol{\Phi}\left( \ol{\Phi}^{-1}\left(\wh{t}_{\alpha}(\theta,\sigma)\right)  - \frac{\hat{\sigma}-\sigma}{\hat{\sigma}}\ol{\Phi}^{-1}\left(\wh{t}_{\alpha}(\theta,\sigma)\right) +\frac{\theta-\hat{\theta}}{\hat{\sigma}} \right)}{\wh{t}_{\alpha}(\theta,\sigma)}\\
&\leq \sup_{t\in[t_0,\alpha]} \left\{\frac{ \ol{\Phi}\left( \ol{\Phi}^{-1}\left(t\right) -\frac{2y_n\sigma}{\sigma}\ol{\Phi}^{-1}\left(t\right)  -\frac{2x_n\sigma}{\sigma} \right)}{t}\right\} = 1+\sup_{t\in[t_0,\alpha]} \left\{\frac{ U_{\theta-2x_n\sigma,\sigma-2y_n\sigma}(t)-t}{t}\right\}\ .
\end{align*}

Now using Lemma~\ref{lem:majorationnull}, we get
\begin{align*}
\frac{\alpha_0-\alpha}{\alpha}&\lesssim x_n \log^{1/2} (\tfrac{1}{t_0}) + y_n \log(\tfrac{1}{t_0}) %x_n \log^{1/2} (1/t_0)) + y_n \log(1/t_0)
\ ,
\end{align*}
as soon as this upper-bound is smaller than some constant small enough.
%Let us choose 
%$$
%x=c' \delta'_n \epsilon_n (\log n)^{1/2} = c' \epsilon_n(n^{-1/8}+ n_1/n) , \:\:\: y=c' \eta'_n \epsilon_n (\log n)^{1/2} = \epsilon_n \log^{-1/2} (n)(n^{-1/8}+ n_1/n),
%$$
%for some constant $c'>0$ small enough (to be chosen below). This entails 
%$$
%\P(\mathcal{A}^c)\leq c\delta'_n/x + c \eta_n'/y \lesssim \log^{-1/2} (n)/\epsilon_n ,
%$$
%which tends to zero by assumption and  n
But now
%\begin{align*}
%x_n \log (1/t_0))^{1/2} + y_n \log(1/t_0) \leq& c' \epsilon_n\left( n^{-1/8}\log^{1/2} (n/(\alpha \epsilon)) + n_1/n \log^{1/2} (n/(n_1\alpha \epsilon)) \right)\\
%+&\epsilon_n\left( n^{-1/8} (\log^{-1/2} n) \log (n/(\alpha \epsilon)) +  (\log^{-1/2} n) n_1/n \log (n/(n_1\alpha \epsilon)) \right)
%%\frac{\alpha_0-\alpha}{\alpha}&\lesssim 
%%(\epsilon \alpha)^{-1} \left(t_0 (\log (1/t_0))^{1/2} x n/n_1 + t_0 \log (1/t_0) y n/n_1\right)\\
%%&\lesssim (\epsilon \alpha)^{-1} \log^{-1/2} (n) / \epsilon'_n \ll \epsilon_n,
%\end{align*}
\begin{align*}
x_n \log^{1/2} (\tfrac{1}{t_0}) + y_n \log(\tfrac{1}{t_0}) \lesssim&  \:n^{-1/16}\log\big(\frac{n}{\alpha \eta n_1}\big)  + \frac{n_1}{n} \log^{-1/2} (n)  \log \big(\frac{n}{n_1\alpha \eta}\big)\ .
\end{align*}
Since $\sup_{x\in (0,1)}(x\log(1/x))=e^{-1}$ 
and since $\epsilon_n\gg \log^{-1/2} (n)$, 
we have for $n$ large enough (as a function of $\alpha$ and $\eta$),
$$
\frac{\alpha_0-\alpha}{\alpha} \leq \epsilon_n\ .
$$
As a consequence,    on the event $\mathcal{A}\cap\{\TDP(\pi, \BH_\alpha(\theta,\sigma))  \geq \eta\}$, we have $\wh{\ell}_{\alpha(1+\epsilon_n)}(\wh{\theta},\wh{\sigma}) \geq  \wh{\ell}_{\alpha_0}(\wh{\theta},\wh{\sigma}) \geq  \wh{\ell}_{\alpha}(\theta,\sigma),$ and thus
 $\TDP(\pi, \BH_{\alpha(1+\epsilon_n)}(\wh{\theta},\wh{\sigma}))\geq \TDP(\pi, \BH_{\alpha}(\theta,\sigma))$ for $n$ large enough.
 Coming back to \eqref{equcomingback}, we obtain for $n$ large enough,
\begin{align*}
\E\left( \TDP(\pi, \BH_\alpha(\theta))  \right) 
&\leq \eta + \int_\eta^1 \P\left(\TDP(\pi, \BH_{\alpha(1+\epsilon_n)}(\wh{\theta},\wh{\sigma}))\geq u\right) du +\P(\mathcal{A}^c)\\
&\leq \eta + \E\left(  \TDP(\pi, \BH_{\alpha(1+\epsilon_n)}(\wh{\theta},\wh{\sigma})) \right)  +\P(\mathcal{A}^c)\ .
\end{align*}
As a result, 
$$
\limsup_n \{\E\left( \TDP(\pi, \BH_\alpha(\theta))  \right)  - \E\left(  \TDP(\pi, \BH_{\alpha(1+\epsilon_n)}(\wh{\theta},\wh{\sigma})) \right)\}\leq \eta.
$$
This gives the result by making 
$\eta$ tends to $0$.

\subsection{Proof of Theorem~\ref{th:simescorrected}}\label{p:th:simescorrected}
By using \eqref{fromperfecttoapprox}, we obtain
\begin{align*}
&\P\left[ \exists  \ell \in \{1,\dots,n_0\}\::\: p_{(\ell:\cH_0)}(\hat{\theta},\hat{\sigma})\leq \alpha \ell/n,\:\hat{\theta}- \theta   >  -  \sigma\delta_n,\:\hat{\sigma}- \sigma   >  -  \sigma\delta_n\right]\\ 
&=\P\left[ \exists  \ell \in \{1,\dots,n_0\}\::\: p^\star_{(\ell:\cH_0)}\leq U_{\hat{\theta},\hat{\sigma}}(\alpha \ell/n),\:\hat{\theta}- \theta   >  -  \sigma\delta_n,\:\hat{\sigma}- \sigma   >  -  \sigma\delta_n\right]\\ 
&\leq \P\left[ \exists  \ell \in \{1,\dots,n_0\}\::\: p^\star_{(\ell:\cH_0)}\leq U_{\theta-\sigma\delta_n,\sigma - \sigma\delta_n}(\alpha \ell/n) \right]\ .
\end{align*}
because the quantity $U_{u,s}(\alpha \ell/n)$ is nonincreasing both in $u$ and $s$ (since $\ol{\Phi}^{-1}\left(\alpha\right)\geq 0$).
Now using the classical Simes inequality \eqref{Simesperfect}, the last display is upper-bounded by
\begin{align*}
&  \P\left[ \exists  \ell \in \{1,\dots,n_0\}\::\: p^\star_{(\ell:\cH_0)}\leq 
\max_{\alpha/n \leq t \leq \alpha} \left\{ \frac{U_{\theta-\sigma\delta_n,\sigma - \sigma\delta_n}(t)}{t}  \right\} 
\alpha \ell/n  \right]\\ 
&\leq \max_{\alpha/n \leq t \leq \alpha} \left\{ \frac{U_{\theta-\sigma\delta_n,\sigma - \sigma\delta_n}(t)}{t}  \right\} 
\alpha = \alpha + \alpha\max_{\alpha/n \leq t \leq \alpha} \left\{ \frac{U_{\theta-\sigma\delta_n,\sigma - \sigma\delta_n}(t)-t}{t}  \right\} \\
&\leq  \alpha+c  \delta_n \log (n) \  ,
\end{align*}
for some constant $c>0$, by applying Lemma~\ref{lem:majorationnull} and Proposition~\ref{rem:estimator}.

\subsection{Proof of Corollary~\ref{cor:Simescorrected}}\label{p:cor:Simescorrected}

Denote
$$
R_\ell = \{ i\in \{1,\dots,n\}\::\: p_i(\hat{\theta},\hat{\sigma})\leq \alpha \ell/n\}, \:\: 1\leq \ell \leq n\ .
$$
From \eqref{Simescorrected}, with probability at least $1-\alpha - c \log(n)/n^{1/16}$, the following event holds true:
\begin{align*}
\mathcal{E}&=\{\forall  \ell \in \{1,\dots,n_0\}\:,\: p_{(\ell:\cH_0)}(\hat{\theta},\hat{\sigma})> \alpha \ell/n\}\\
&=\{\forall  \ell \in \{1,\dots,n_0\}\:,\: |R_\ell\cap \cH_0| \leq \ell-1 \} \ ,
\end{align*}
Now, on the event $\mathcal{E}$, we have for any $S\subset\{1,\dots,n\}$, and for any $\ell\in \{1,\dots,n\}$,
\begin{align*}
|S\cap \cH_0| &= |S\cap \cH_0 \cap R_\ell| +|S\cap \cH_0 \cap R^c_\ell|\\
&\leq  | \cH_0 \cap R_\ell|+|S \cap R^c_\ell|\\
&\leq \ell-1 + |S \cap R^c_\ell|\ .
\end{align*}
By taking the minimum in $\ell$ in the latter relation, we have 
$$
|S\cap \cH_0| \leq \min_{\ell \in\{1,\dots,n\}} \{\ell-1 + |S \cap R^c_\ell|\},
$$
so that 
\begin{align*}
\FDP(\pi,S)  &= \frac{|S \cap \cH_0 |}{|S|\vee 1}
\leq \frac{ \min_{\ell \in\{1,\dots,n\}} \{\ell-1 + |S \cap R^c_\ell|\} }{|S|\vee 1},
\end{align*}
which yields \eqref{posthocbound}.

\subsection{Auxiliary lemmas}

The next lemma is a well known property of step-up procedure in the multiple testing theory, see, e.g., \cite{FZ2006} and Lemma~7.1 in \cite{RV2011}.

\begin{lem}\label{lem:MT}
Consider arbitrary $u\in\R$, $s>0$ and $\alpha\in (0,1)$.
The rejection number  $\wh{\ell}(u,s)$ of the Benjamini-Hochberg procedure $\BH_\alpha(u,s)$ (defined in Section~\ref{settingMT}) satisfies the following: for all $i\in\{1,\dots,n\}$,
\begin{align*}
\left\{ p_i(u,s) \leq \alpha  \wh{\ell}(u,s)/n \right\}=\left\{ p_i(u,s) \leq \alpha  \wh{\ell}^{(i)}(u,s)/n \right\} = \left\{\wh{\ell}^{(i)}(u,s)=\wh{\ell}(u,s)\right\}\ ,
\end{align*}
where $\wh{\ell}^{(i)}(u,s)$ is the rejection number of the Benjamini-Hochberg procedure applied to the $p$-value set $\{0,p_j(u,s),j\neq i\}$, that is, to the $p$-value set where $p_i(u,s)$ has been replaced by $0$.
\end{lem}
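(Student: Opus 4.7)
The statement is a self-consistency property of step-up procedures, so the plan is to unwind the definition of $\BH_\alpha(u,s)$ and carry out an elementary rank bookkeeping. Throughout, fix $i$ and $(u,s)$, write $p_j=p_j(u,s)$, $\wh{\ell}=\wh{\ell}(u,s)$, $\wh{\ell}^{(i)}=\wh{\ell}^{(i)}(u,s)$, and let $r$ denote the rank of $p_i$ among $(p_1,\dots,p_n)$, so that $p_i=p_{(r)}$. Observe at once that $p_i\leq \alpha\wh{\ell}/n$ is equivalent to $r\leq \wh{\ell}$, since the rejected set of BH is $\{j:p_j\leq \alpha\wh{\ell}/n\}$ and has cardinality $\wh{\ell}$.

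The first step is to describe the modified ordered $p$-values. Replacing $p_i$ by $0$ inserts a new smallest element and pushes everything below $p_{(r)}$ up by one position: if $(p^{(i)}_{(j)})_{1\leq j\leq n}$ denotes the new order statistics, then $p^{(i)}_{(j)}=p_{(j-1)}$ for $1\leq j\leq r$ (with the convention $p_{(0)}=0$) and $p^{(i)}_{(j)}=p_{(j)}$ for $j\geq r+1$. In particular $p^{(i)}_{(j)}\leq p_{(j)}$ for every $j$, which by the definition $\wh{\ell}=\max\{\ell\geq 0:p_{(\ell)}\leq \alpha\ell/n\}$ immediately yields the monotonicity $\wh{\ell}^{(i)}\geq \wh{\ell}$.

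The second step is a case analysis on whether $p_i$ is rejected. If $r\leq \wh{\ell}$, then $p^{(i)}_{(\ell)}=p_{(\ell)}$ for all $\ell>r$; in particular, for $\ell>\wh{\ell}$ one has $p^{(i)}_{(\ell)}=p_{(\ell)}>\alpha\ell/n$ by maximality of $\wh{\ell}$, so $\wh{\ell}^{(i)}\leq \wh{\ell}$ and combined with the previous step $\wh{\ell}^{(i)}=\wh{\ell}$. If instead $r>\wh{\ell}$, then $\wh{\ell}+1\leq r$ and $p^{(i)}_{(\wh{\ell}+1)}=p_{(\wh{\ell})}\leq \alpha\wh{\ell}/n\leq \alpha(\wh{\ell}+1)/n$, which forces $\wh{\ell}^{(i)}\geq \wh{\ell}+1>\wh{\ell}$. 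Moreover, in this case the same argument as before shows $\wh{\ell}^{(i)}\leq r$, hence $p_i=p_{(r)}>\alpha r/n\geq \alpha\wh{\ell}^{(i)}/n$, the strict inequality coming from $r>\wh{\ell}$.

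The final step is to assemble the three equivalences. The case $r\leq \wh{\ell}$ gives simultaneously $p_i\leq \alpha\wh{\ell}/n$, $\wh{\ell}^{(i)}=\wh{\ell}$, and (via $\wh{\ell}^{(i)}\geq \wh{\ell}$) $p_i\leq \alpha\wh{\ell}^{(i)}/n$. The case $r>\wh{\ell}$ gives simultaneously the three negations: $p_i>\alpha\wh{\ell}/n$, $\wh{\ell}^{(i)}\neq \wh{\ell}$, and $p_i>\alpha\wh{\ell}^{(i)}/n$. This proves the two claimed equalities. There is no real obstacle here; the only point requiring care is the second case, where one must verify the strict inequality $p_i>\alpha\wh{\ell}^{(i)}/n$ to close the loop, and this is precisely what the bound $\wh{\ell}^{(i)}\leq r$ delivers.
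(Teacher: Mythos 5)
Your proof is correct, and it supplies a genuine gap: the paper does not prove Lemma~\ref{lem:MT} at all, but instead cites it as "a well known property of step-up procedures" with references to \cite{FZ2006} and Lemma~7.1 of \cite{RV2011}. Your rank-bookkeeping argument is a clean, self-contained, elementary verification of what the paper delegates to the literature.

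A few small points worth being explicit about, since they carry the argument. First, your opening observation that $\{p_i\leq \alpha\wh{\ell}/n\}=\{r\leq\wh{\ell}\}$ relies on the fact that the rejected set has cardinality exactly $\wh{\ell}$; this in turn requires $p_{(\wh{\ell}+1)}>\alpha\wh{\ell}/n$, which follows from maximality of $\wh{\ell}$ (else $p_{(\wh{\ell}+1)}\leq\alpha\wh{\ell}/n<\alpha(\wh{\ell}+1)/n$ would extend the max). You state the conclusion but not this one-line justification; it is worth including since it also guarantees that $r$ is unambiguous across the decision boundary even though ranks are not unique in the presence of ties (in this paper's model the $p$-values are a.s.\ distinct, so this is only a formality). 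Second, the key mechanism in the case $r>\wh{\ell}$ is the two-sided squeeze $\wh{\ell}<\wh{\ell}^{(i)}\leq r$, and you correctly extract both bounds; the upper bound $\wh{\ell}^{(i)}\leq r$ is the one that is easy to forget and without it the implication $\{p_i\leq\alpha\wh{\ell}^{(i)}/n\}\subset\{r\leq\wh{\ell}\}$ would not close. You handled it. The monotonicity $\wh{\ell}^{(i)}\geq\wh{\ell}$ via the coordinatewise domination $p^{(i)}_{(j)}\leq p_{(j)}$ is exactly the standard step-up monotonicity and is stated correctly. Overall the argument is complete and correct; it would benefit only from making the cardinality-$\wh{\ell}$ claim explicit.
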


\begin{lem}\label{lem:MT2}
Fix $\theta\in \R$ and $\sigma>0$ and $U_{\cdot}(\cdot)$ as in \eqref{equUu}. Consider arbitrary $u\in\R$, $s>0$ and $\alpha\in (0,1)$. Then
$$
\wh{t}_{\alpha}(u,s) = \max\left\{t\in[0,1]\::\: \wh{G}_{u,s}(t) \geq t/\alpha\right\}\ ,
$$
where $\wh{G}_{u,s}(t) = n^{-1}\sum_{i=1}^n \mathds{1}_{\{p_i(u,s)\leq t\}} =n^{-1}\sum_{i=1}^n  \mathds{1}_{\{p_i(\theta,\sigma)\leq U_{u,s}(t)\}}=\wh{G}_{\theta,\sigma}(U_{u,s}(t))$
\end{lem}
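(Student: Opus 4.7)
The plan decomposes into two independent statements to verify.

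First, the chain of equalities defining $\wh{G}_{u,s}(t)$ is immediate: the definition $\wh{G}_{u,s}(t) = n^{-1}\sum_{i=1}^n \mathds{1}_{\{p_i(u,s)\leq t\}}$ is just the empirical cdf of $\{p_i(u,s)\}$, and the key event identity \eqref{fromperfecttoapprox}, namely $\{p_i(u,s)\leq t\}=\{p_i^\star\leq U_{u,s}(t)\}$, transforms each indicator into $\mathds{1}_{\{p_i(\theta,\sigma)\leq U_{u,s}(t)\}}$. Summing over $i$ and rewriting yields $\wh{G}_{\theta,\sigma}(U_{u,s}(t))$.

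Next, I would prove the characterization of $\wh{t}_{\alpha}(u,s)$ as a maximum. Write $\wh\ell=\wh\ell_\alpha(u,s)$ and $\wh t=\wh t_\alpha(u,s)=\alpha\wh\ell/n$, and denote by $E$ the set $\{t\in[0,1]\::\:\wh{G}_{u,s}(t)\geq t/\alpha\}$. For the direction $\wh t\in E$: by definition of $\wh\ell$, we have $p_{(\wh\ell)}(u,s)\leq \alpha\wh\ell/n=\wh t$, so at least $\wh\ell$ of the $p$-values fall in $[0,\wh t]$, which gives $\wh{G}_{u,s}(\wh t)\geq \wh\ell/n=\wh t/\alpha$. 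For the reverse, pick any $t\in E$ and let $\ell=n\wh{G}_{u,s}(t)$; the inequality $\ell\geq nt/\alpha$ yields $t\leq\alpha\ell/n$, while the definition of $\ell$ gives $p_{(\ell)}(u,s)\leq t\leq\alpha\ell/n$, so $\ell\in\{0,1,\dots,n\}$ satisfies the condition defining $\wh\ell$, whence $\ell\leq\wh\ell$ and therefore $t\leq\alpha\ell/n\leq\wh t$. This shows every $t\in E$ satisfies $t\leq\wh t$ and that $\wh t$ itself lies in $E$, so the supremum over $E$ is attained at $\wh t$, justifying the use of $\max$.

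The argument has no real obstacle: the first part is a one-line substitution, and the second is the standard equivalence between the step-up formulation of Benjamini–Hochberg (based on ordered $p$-values) and its threshold formulation (based on the empirical $p$-value cdf). The only mild care is to ensure that the set $E$ actually attains its supremum, which is immediate since $\wh t$ itself lies in $E$ by the first direction.
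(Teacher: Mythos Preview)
Your argument is correct. The paper does not actually supply a proof of this lemma: it is stated among the auxiliary results and used as a standard fact (the equivalence between the step-up and threshold formulations of the Benjamini--Hochberg procedure), with only Lemma~\ref{lem:MT3} being derived from it. Your write-up fills in exactly the expected details---the event identity \eqref{fromperfecttoapprox} for the $\wh G$ chain, and the two-way inequality for the $\max$ characterization---and handles the boundary case $\wh\ell=0$ implicitly (since then $\wh t=0\in E$ trivially).
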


\begin{lem}\label{lem:MT3}
Fix $\theta\in \R$ and $\sigma>0$ and $U_{\cdot}(\cdot)$ as in \eqref{equUu}.
Consider arbitrary $u\in\R$, $s>0$ and $\alpha\in (0,1)$. Then
$$
\wh{\l}_{\alpha_0}(u,s) \geq  \wh{\l}_{\alpha}(\theta,\sigma), \mbox{ \: where \: }\alpha_0= \alpha \frac{U_{u,s}^{-1}(\wh{t}_{\alpha}(\theta,\sigma))}{\wh{t}_{\alpha}(\theta,\sigma)}\ .
$$
\end{lem}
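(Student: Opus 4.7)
My plan is to reduce the claim to an application of the max-characterization of $\widehat{t}_{\alpha}$ given in Lemma~\ref{lem:MT2}, after mapping thresholds via $U_{u,s}$. Concretely, I will set $t_0 := U_{u,s}^{-1}\bigl(\widehat{t}_{\alpha}(\theta,\sigma)\bigr)$ and show that $t_0$ is a feasible point in the set defining $\widehat{t}_{\alpha_0}(u,s)$, that is, $\widehat{G}_{u,s}(t_0)\geq t_0/\alpha_0$. From this, Lemma~\ref{lem:MT2} will give $\widehat{t}_{\alpha_0}(u,s)\geq t_0$, and then translating back to rejection numbers via $\widehat{\ell}_{\alpha}(u,s) = n\widehat{t}_{\alpha}(u,s)/\alpha$ will yield the result.

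For the feasibility check, the key identity to exploit is the one stated in Lemma~\ref{lem:MT2}, namely $\widehat{G}_{u,s}(t) = \widehat{G}_{\theta,\sigma}\bigl(U_{u,s}(t)\bigr)$ for all $t$. Applying it at $t_0$ gives $\widehat{G}_{u,s}(t_0) = \widehat{G}_{\theta,\sigma}\bigl(\widehat{t}_{\alpha}(\theta,\sigma)\bigr)$, and the latter is $\geq \widehat{t}_{\alpha}(\theta,\sigma)/\alpha$ since $\widehat{t}_{\alpha}(\theta,\sigma)$ lies in the set that Lemma~\ref{lem:MT2} takes the maximum of. By the very definition of $\alpha_0$, we have $t_0/\alpha_0 = \widehat{t}_{\alpha}(\theta,\sigma)/\alpha$, so feasibility follows immediately.

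Concluding is then a one-line computation:
\[
\widehat{\ell}_{\alpha_0}(u,s) \;=\; \frac{n\,\widehat{t}_{\alpha_0}(u,s)}{\alpha_0} \;\geq\; \frac{n\,t_0}{\alpha_0} \;=\; \frac{n\,\widehat{t}_{\alpha}(\theta,\sigma)}{\alpha} \;=\; \widehat{\ell}_{\alpha}(\theta,\sigma)\ .
\]
I do not anticipate a real obstacle here; the only point that requires a small amount of care is to check that $t_0$ lies in $[0,1]$ so that Lemma~\ref{lem:MT2} can be applied, which follows because $U_{u,s}^{-1}$ takes values in $[0,1]$ by construction (it is defined via $\overline{\Phi}$). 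The rest is pure book-keeping between the multiplicative rescaling $\alpha\mapsto\alpha_0$ and the nonlinear rescaling $t\mapsto U_{u,s}^{-1}(t)$, which have been set up precisely so that the slope $t/\alpha$ is preserved at the point $\widehat{t}_\alpha(\theta,\sigma)$.
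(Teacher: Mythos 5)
Your proof is correct and follows essentially the same route as the paper's own proof: you invoke the max-characterization of $\wh{t}_{\alpha}$ from Lemma~\ref{lem:MT2}, use the identity $\wh{G}_{u,s}(t)=\wh{G}_{\theta,\sigma}(U_{u,s}(t))$ at the point $U_{u,s}^{-1}(\wh{t}_{\alpha}(\theta,\sigma))$ to verify feasibility at level $\alpha_0$, and translate back to rejection counts via $\wh{\ell}=n\wh{t}/\alpha$. The only cosmetic difference is your relabeling of $t_0$ as $U_{u,s}^{-1}(\wh{t}_{\alpha}(\theta,\sigma))$ rather than $\wh{t}_{\alpha}(\theta,\sigma)$, plus the (helpful but straightforward) remark that $t_0\in[0,1]$.
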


\begin{proof}
Denoting $t_0 = \wh{t}_{\alpha}(\theta,\sigma)$ and invoking Lemma \ref{lem:MT2}, we have
$$
\wh{G}_{u,s}\left( U_{u,s}^{-1}(t_0)  \right) = \wh{G}_{\theta,\sigma}\left( t_0  \right) \geq  t_0/\alpha =  \frac{t_0}{U_{u,s}^{-1}(t_0)} \frac{U_{u,s}^{-1}(t_0)}{\alpha} = \frac{U_{u,s}^{-1}(t_0)}{\alpha_0}\ .
$$
By using again Lemma \ref{lem:MT2}, this gives $\wh{t}_{\alpha_0}(u,s) \geq  U_{u,s}^{-1}(t_0) $. Hence, $\wh{t}_{\alpha_0}(u,s) \geq  \frac{\alpha_0}{\alpha} \:\wh{t}_{\alpha}(\theta,\sigma)$, which gives the result.
\end{proof}

\begin{lem}\label{lem:majorationnull}
There exists a universal constant $c>0$ 
 such that the following holds. For all $\alpha\in (0,0.4)$, for all $x,y\geq 0$ and $t_0\in(0,\alpha)$, we have
\begin{align}\label{equ:majorationnull}
\max_{t_0 \leq t \leq \alpha}\left\{ \frac{U_{\theta-x,\sigma-y}(t)-t}{t}  \right\}&\leq c \left(\frac{x}{\sigma}(2\log (1/t_0))^{1/2} +\frac{y}{\sigma} 2\log(1/t_0) \right)\ .
\end{align}
provided that $(x/\sigma) (2\log(1/t_0))^{1/2}+(y/\sigma) 2\log (1/t_0)  \leq 0.05$, and
where $U_{\theta-x,\sigma-y}(\cdot)$ is defined by \eqref{equUu}.
\end{lem}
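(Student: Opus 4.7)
The plan is to reduce the inequality to a Gaussian tail estimate via a direct computation and a Taylor expansion of $\ol\Phi$, together with standard Mills-ratio bounds. Writing $z=\ol\Phi^{-1}(t)$ and $\delta=\delta(t):=\tfrac{y}{\sigma}\ol\Phi^{-1}(t)+\tfrac{x}{\sigma}$, the definition \eqref{equUu} gives
$$U_{\theta-x,\sigma-y}(t)=\ol\Phi\!\left(\ol\Phi^{-1}(t)-\delta(t)\right),$$
so the target quantity equals $\bigl[\ol\Phi(z-\delta)-\ol\Phi(z)\bigr]/\ol\Phi(z)$. Since $\alpha<0.4$ implies $z=\ol\Phi^{-1}(t)\geq\ol\Phi^{-1}(\alpha)>0$ throughout the range $t\in[t_0,\alpha]$, I can work with positive arguments of $\ol\Phi^{-1}$ and use the classical bound $\ol\Phi^{-1}(t)\leq\sqrt{2\log(1/t)}$ (Lemma~\ref{lem:quantile}), which yields $z\leq\sqrt{2\log(1/t_0)}$ and therefore $\delta(t)\leq \tfrac{y}{\sigma}\sqrt{2\log(1/t_0)}+\tfrac{x}{\sigma}$.

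Next I would apply the mean-value theorem to $\ol\Phi$: for some $\xi\in(0,\delta)$,
$$\ol\Phi(z-\delta)-\ol\Phi(z)=\delta\,\phi(z-\xi)=\delta\,\phi(z)\,e^{\,z\xi-\xi^{2}/2}\leq\delta\,\phi(z)\,e^{\,z\delta}.$$
Dividing by $\ol\Phi(z)=t$ and invoking the Mills-ratio upper bound $\phi(z)/\ol\Phi(z)\leq z+1/z\leq 2z$ for $z\geq 1$ (with a trivial constant-order bound for $z\in[\ol\Phi^{-1}(\alpha),1]$), I get
$$\frac{\ol\Phi(z-\delta)-\ol\Phi(z)}{t}\;\leq\;C\,\delta\, z\, e^{\,z\delta}.$$
The key numerical observation is that $z\delta\leq \tfrac{y}{\sigma}\cdot 2\log(1/t_{0})+\tfrac{x}{\sigma}\sqrt{2\log(1/t_{0})}$, which is exactly the quantity assumed to be $\leq 0.05$. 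Hence $e^{z\delta}$ is bounded by a universal constant (e.g.\ $e^{0.05}$), and
$$\delta\,z \;\leq\; \tfrac{y}{\sigma}\bigl(2\log(1/t_{0})\bigr)+\tfrac{x}{\sigma}\sqrt{2\log(1/t_{0})},$$
which gives \eqref{equ:majorationnull}.

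The main (minor) technicality is ensuring that the Mills-ratio estimate is uniformly valid over the range $z\in[\ol\Phi^{-1}(\alpha),\sqrt{2\log(1/t_{0})}]$; since $\alpha<0.4$ guarantees $z$ is bounded away from $0$, one can either use $\phi(z)/\ol\Phi(z)\leq z+1/z$ directly or split the range at $z=1$ and absorb the boundary contribution into the universal constant $c$. The smallness condition on $(x/\sigma)\sqrt{2\log(1/t_0)}+(y/\sigma)\cdot 2\log(1/t_0)$ is used only to keep the exponential factor $e^{z\delta}$ bounded, so the constant $c$ in \eqref{equ:majorationnull} depends only on the threshold $0.05$ and on $\alpha<0.4$.
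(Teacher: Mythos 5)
Your proposal is correct and is essentially the paper's own argument: both rewrite $U_{\theta-x,\sigma-y}(t)=\ol\Phi(\ol\Phi^{-1}(t)-\delta(t))$ with $\delta(t)=\tfrac{y}{\sigma}\ol\Phi^{-1}(t)+\tfrac{x}{\sigma}$, bound the increment of $\ol\Phi$ by $\delta$ times a Gaussian density at a point in the relevant interval, use the exact Gaussian ratio $\phi(z-\xi)/\phi(z)=e^{z\xi-\xi^2/2}$ together with the smallness assumption to control the exponential by a universal constant, and then invoke the Mills-ratio bound $\phi(z)/\ol\Phi(z)\leq z+1/z$ (Lemma~\ref{lem:quantile}) with $z\geq\ol\Phi^{-1}(\alpha)\geq\sqrt{0.05}$ to get a $\lesssim z$ factor. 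The only cosmetic difference is that you invoke the mean-value theorem to land on an interior point $z-\xi$, whereas the paper evaluates directly at the endpoint $z-\delta$; the resulting exponent bound and the final estimate $\delta(t_0)\,\ol\Phi^{-1}(t_0)\leq\tfrac{y}{\sigma}\,2\log(1/t_0)+\tfrac{x}{\sigma}\sqrt{2\log(1/t_0)}$ coincide.
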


\begin{proof}
First note that by \eqref{equUu}, we have
\begin{align*}
U_{\theta-x,\sigma-y}(t)= \ol{\Phi}\left(\ol{\Phi}^{-1}\left(t\right)- z(t)\right)\ , \:\:z(t)=\frac{y}{\sigma}\ol{\Phi}^{-1}\left(t\right) +\frac{x}{\sigma}.
\end{align*}
By Lemma \ref{lem:quantile}, we have for all $t\in[t_0,\alpha]$, 
$$
z(t) \leq \frac{y}{\sigma} (2\log(1/t))^{1/2} +\frac{x}{\sigma} \leq 0.05 (2\log (1/t))^{-1/2} \leq 0.05/ \ol{\Phi}^{-1}\left(t\right)\ ,
$$
where we used the assumption of the lemma. Now using that $\ol{\Phi}(\sqrt{0.05})\geq 0.4 \geq t$, we deduce
$
z(t)\leq \ol{\Phi}^{-1}\left(t\right)
$
for all $t\in[t_0,\alpha]$. Also deduce that for such a value of $t$,
$$
\frac{\phi\left(\ol{\Phi}^{-1}(t)-z(t)\right) }{\phi\left(\ol{\Phi}^{-1}(t)\right) } = e^{-z^2(t)/2} e^{z(t) \ol{\Phi}^{-1}(t)} \leq e^{z(t) \ol{\Phi}^{-1}(t)}  \leq e^{0.05} \leq 2.
$$
Now, since $\ol{\Phi}$ is decreasing and its derivative is  $-\phi$, we have for all $t\in[t_0,\alpha]$
\begin{align*}
\ol{\Phi}(\ol{\Phi}^{-1}(t)-z(t)) - \ol{\Phi}(\ol{\Phi}^{-1}(t))
&\leq z(t)\: \phi\left(\ol{\Phi}^{-1}(t)-z(t)\right) \\
&\leq z(t)\: \frac{\phi\left(\ol{\Phi}^{-1}(t)-z(t)\right) }{\phi\left(\ol{\Phi}^{-1}(t)\right) }  \phi\left(\ol{\Phi}^{-1}(t)\right)\\
&\leq 2z(t)  \phi\left(\ol{\Phi}^{-1}(t)\right)\\
& \leq 2z(t)\left(1 + \left( \ol{\Phi}^{-1}(t)\right)^{-2}\right)\: t \:\ol{\Phi}^{-1}(t),
\end{align*}
by using Lemma \ref{lem:quantile}. Finally, the last display is smaller than
$$
z(t_0)\: t \:\ol{\Phi}^{-1}(t_0) 2\left(1 + \left( \ol{\Phi}^{-1}(0.4)\right)^{-2}\right),
$$
which gives \eqref{equ:majorationnull}. 
\end{proof}

\section{Auxiliary results}\label{sec:appendix}

\subsection{Chebychev polynomials}\label{tchebysection}

In this subsection, we first remind the reader of the definition and important properties of Chebychev polynomials.
For any $k\geq 1$, the $k$-th Chebychev polynomial is defined by 
$$
T_k(x) = (k/2) \sum_{j=0}^{\lfloor k/2\rfloor} (-1)^j  \frac{(k-j-1)!}{j!(k-2j)!} (2x)^{k-2j}\ . 
$$
It satisfies the following inequalities. 
\begin{prp}\label{prop:Chebychev}
For $k\geq 1$, 
the polynomial $T_k$
 satisfies the following properties:
\begin{itemize}
\item[(i)]  for all $y\in[0,1]$, $T_k(1-2y)=T_{2k}(\sqrt{1-y})$, and thus for all $y\in\R$,
 \begin{equation}\label{equ:chebexplicit}
 T_k(1-2y)= \sum_{j=0}^{k} (-4)^j\frac{k(k+j-1)!}{(k-j)!(2j)!}{y^j}\ .
 \end{equation} 
\item[(ii)] for $x\in \R$,  
$$
T_k(x)=\left\{\begin{array}{ll}\cos(k\:\arccos x) & \mbox{ if $x\in [-1,1]$ ;}\\
\cosh(k\:\arccosh x) & \mbox{ if $x\geq 1$ ;}\\
(-1)^k\cosh(k\:\arccosh (-x)) & \mbox{ if $x\leq -1$ .}
\end{array}\right.
$$

\end{itemize}
\end{prp}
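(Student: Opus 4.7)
The plan is to prove (ii) first using the three-term recurrence, then derive (i) from (ii) by a trigonometric substitution.

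For part (ii), the starting point is the standard Chebychev recurrence
\[
T_{k+1}(x) = 2x\,T_k(x) - T_{k-1}(x), \qquad T_0(x) = 1,\ T_1(x) = x,
\]
which can be verified directly from the closed-form expansion of $T_k$ given just above the statement via a short combinatorial manipulation of the coefficients. To treat the case $x \in [-1,1]$, I would set $x = \cos\theta$ and note that $f_k(\theta) := \cos(k\theta)$ satisfies $f_{k+1}(\theta) + f_{k-1}(\theta) = 2\cos\theta\, f_k(\theta)$ by the product-to-sum identity, with $f_0 = 1$, $f_1 = \cos\theta$, so $f_k(\theta) = T_k(\cos\theta)$ follows by induction. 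For $x \geq 1$, the same argument applies with $x = \cosh t$ and $g_k(t) := \cosh(kt)$, using the analogous hyperbolic identity $\cosh((k+1)t) + \cosh((k-1)t) = 2\cosh t\,\cosh(kt)$. For $x \leq -1$, the parity relation $T_k(-x) = (-1)^k T_k(x)$ (evident from the explicit formula, where every monomial has degree $k - 2j$, all of the same parity as $k$) reduces this to the $x \geq 1$ case.

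For the first half of (i), I would use (ii) to give a clean trigonometric derivation: for $y \in [0,1]$, write $\sqrt{1-y} = \cos\theta$ with $\theta \in [0,\pi/2]$, so that the double-angle formula gives $1 - 2y = 2\cos^2\theta - 1 = \cos(2\theta)$. Applying part (ii) to both sides yields
\[
T_k(1-2y) = \cos(2k\theta) = T_{2k}(\cos\theta) = T_{2k}(\sqrt{1-y}),
\]
and since both sides are polynomials in $y$, the identity is automatically valid on all of $\R$ (only the $[0,1]$ range is asserted in the statement).

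For the explicit expansion \eqref{equ:chebexplicit}, the cleanest route is induction via the auxiliary polynomial $P_k(y) := T_k(1-2y)$, which satisfies $P_{k+1}(y) = 2(1-2y)P_k(y) - P_{k-1}(y)$ with $P_0(y) = 1$ and $P_1(y) = 1 - 2y$. Writing the conjectured right-hand side of \eqref{equ:chebexplicit} as $Q_k(y) = \sum_{j=0}^k c_{k,j} y^j$ with $c_{k,j} = (-4)^j k(k+j-1)!/[(k-j)!(2j)!]$, the induction step reduces to the coefficient identity
\[
c_{k+1,j} = 2\,c_{k,j} - 4\,c_{k,j-1} - c_{k-1,j},
\]
which is a routine (if slightly tedious) verification using the factorial expressions. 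The main obstacle is precisely this combinatorial identity: alternatively one can obtain it by substituting $(2\sqrt{1-y})^{2k-2j} = 4^{k-j}(1-y)^{k-j}$ into the defining formula for $T_{2k}$, expanding via the binomial theorem, swapping the order of summation, and simplifying the inner sum with a Chu–Vandermonde-type identity; this is the step where one pays the combinatorial cost, but it is purely mechanical.
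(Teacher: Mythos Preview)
Your argument is correct and follows the standard textbook route for these classical identities. Note, however, that the paper does not actually supply a proof of this proposition: it is stated in the appendix as a reminder of well-known properties of Chebychev polynomials, so there is no ``paper's own proof'' to compare against. Your derivation of (ii) via the three-term recurrence and of the first identity in (i) via the substitution $\sqrt{1-y}=\cos\theta$ and the double-angle formula is exactly the expected one; the coefficient recursion $c_{k+1,j}=2c_{k,j}-4c_{k,j-1}-c_{k-1,j}$ you isolate for \eqref{equ:chebexplicit} is the right reduction and is indeed a routine factorial check.
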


Next, we remind the reader of extremal properties satisfied by the Chebychev polynomial. The first inequality is a consequence of Chebychev's Theorem whereas the second inequality is a consequence of Markov's theorem. Both may be found in \cite{Gam1990}, Page 119.

\begin{lem}\label{lem:optimalTN}
Denote by $\mathcal{P}_k$ the set of polynomials of degree smaller than or equal to $k$. Let $a,b,c \in\R$ with $a<b<c$.
 Then we have, 
 \beqn 
\sup_{\substack {P\in  \mathcal{P}_k \\ \|P\|_{\infty,[b,c]}\leq 1}} |P(a)| = \left|T_{k}\left( 1+2 \frac{b-a}{c-b}\right)\right|\ ,\\
\sup_{x \in[a,c]} |P'(x)| \leq \frac{2 k^2}{c-a}  \sup_{x \in[a,c]} |P(x)|\ \quad \quad \forall P\in  \mathcal{P}_k\ .
 \eeqn 
\end{lem}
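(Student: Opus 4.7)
The plan is to reduce both parts to the classical inequalities on $[-1,1]$ (Chebyshev's extremal theorem for (i) and Markov's inequality for (ii)) by an affine change of variable.

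For part (i), I would introduce the affine bijection $\varphi : [b,c] \to [-1,1]$ given by $\varphi(x) = \tfrac{2x-(b+c)}{c-b}$, and for $P \in \mathcal{P}_k$ set $Q(t) = P(\varphi^{-1}(t))$, which is still of degree $\leq k$. The condition $\|P\|_{\infty,[b,c]} \leq 1$ is equivalent to $\|Q\|_{\infty,[-1,1]} \leq 1$. The classical Chebyshev extremal theorem (see Gamelin \cite{Gam1990}, p.~119) states that for any $t_0 \notin [-1,1]$,
\[
\sup\{ |Q(t_0)| : Q\in\mathcal{P}_k,\ \|Q\|_{\infty,[-1,1]}\leq 1\} = |T_k(t_0)|,
\]
with the sup attained at $Q = T_k$. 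Applying this to $t_0 = \varphi(a)$, and noting that $a<b$ forces $\varphi(a)<-1$ so that $|\varphi(a)| = \tfrac{b+c-2a}{c-b} = 1 + \tfrac{2(b-a)}{c-b}$, together with the parity relation $|T_k(-x)| = T_k(x)$ for $x\geq 1$ coming from Proposition~\ref{prop:Chebychev}(ii), yields exactly the claimed identity.

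For part (ii), I would use the analogous affine map $\psi : [a,c] \to [-1,1]$ given by $\psi(x) = \tfrac{2x-(a+c)}{c-a}$ and set $R(t) = P(\psi^{-1}(t))$, still of degree $\leq k$. The chain rule gives $R'(t) = \tfrac{c-a}{2} P'(\psi^{-1}(t))$, so
\[
\sup_{x\in[a,c]}|P'(x)| = \frac{2}{c-a}\sup_{t\in[-1,1]}|R'(t)|.
\]
Applying the classical Markov inequality $\|R'\|_{\infty,[-1,1]} \leq k^2 \|R\|_{\infty,[-1,1]}$ (Gamelin \cite{Gam1990}, p.~119) and observing $\|R\|_{\infty,[-1,1]} = \|P\|_{\infty,[a,c]}$ gives the desired bound $\sup_{[a,c]}|P'| \leq \tfrac{2k^2}{c-a}\sup_{[a,c]}|P|$.

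Both steps are essentially bookkeeping once the classical inequalities are invoked, so there is no real obstacle; the only care needed is to verify that $\varphi(a) < -1$ in (i) (so the external-point Chebyshev statement applies) and to correctly track the Jacobian factor $\tfrac{c-a}{2}$ in (ii).
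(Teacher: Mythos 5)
Your proposal is correct and matches the paper's treatment: the paper simply invokes the classical Chebyshev extremal theorem and Markov's inequality (citing \cite{Gam1990}, p.~119), and your argument supplies only the routine affine rescaling of $[b,c]$ (resp. $[a,c]$) to $[-1,1]$, with the sign/parity check $\varphi(a)<-1$ and the Jacobian factor handled correctly. No gap to report.
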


The coefficient of the polynomial defined in \eqref{eq:defintion_g} are given explicitly using the explicit expression of Chebychev polynomials: 
\begin{equation}\label{eq:ajq}
a_{j,q}=(-4)^j\frac{q(q+j-1)!}{(q-j)!(2j)!}, \:\:\:\:0\leq  j \leq q\ ,
\end{equation}

\begin{lem}\label{lem:a_j_q}
 For any even integer $q$ and any integer $j\in [0,q]$, we have
 \beq\label{eq:upper_a_j_q}
  |a_{j,q}|\leq(3+2\sqrt{2})^{q}
 \eeq
\end{lem}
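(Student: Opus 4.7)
The plan is to get a bound on the full sum $\sum_j |a_{j,q}|$ (which is much more than needed since each term is bounded by the sum) by exploiting the specific sign pattern of the coefficients together with the explicit Chebychev identity already invoked in the paper.

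First, I would rewrite \eqref{equ:chebexplicit} as a polynomial identity in $y$: for all $y\in\R$,
\[
T_q(1-2y)=\sum_{j=0}^{q}(-4)^{j}\,\frac{q(q+j-1)!}{(q-j)!(2j)!}\,y^{j}=\sum_{j=0}^{q}a_{j,q}\,y^{j}.
\]
The key observation is that $a_{j,q}=(-1)^{j}|a_{j,q}|$ since the factor $\frac{q(q+j-1)!}{(q-j)!(2j)!}$ is nonnegative. Evaluating the identity at $y=-\rho$ for $\rho>0$ therefore gives
\[
\sum_{j=0}^{q}|a_{j,q}|\,\rho^{j}=\sum_{j=0}^{q}(-1)^{j}a_{j,q}\,\rho^{j}=T_{q}(1+2\rho).
\]

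Next, I would specialize to $\rho=1$. Since $1+2\rho=3\geq 1$, Proposition~\ref{prop:Chebychev}(ii) yields
\[
T_{q}(3)=\cosh\bigl(q\,\arccosh(3)\bigr)=\cosh\bigl(q\log(3+2\sqrt{2})\bigr),
\]
using $\arccosh(x)=\log(x+\sqrt{x^{2}-1})$ and $\sqrt{9-1}=2\sqrt{2}$. Since $\cosh(t)\leq e^{t}$ for $t\geq 0$, this provides
\[
\sum_{j=0}^{q}|a_{j,q}|=T_{q}(3)\leq (3+2\sqrt{2})^{q}.
\]
Each individual $|a_{j,q}|$ is bounded by the sum, which gives the claim \eqref{eq:upper_a_j_q}.

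There is no real obstacle here; the only subtle point is making sure the sign convention in \eqref{equ:chebexplicit} is applied correctly so that the alternating signs of $a_{j,q}$ line up with $(-\rho)^{j}$ to produce $|a_{j,q}|\,\rho^{j}$. Note also that the parity assumption on $q$ is not needed for this lemma — only for the consistency of $g_q(x)=T_q(2e^{x}-1)=\sum_j a_{j,q}e^{xj}$ in \eqref{eq:defintion_g}, which uses $T_q(-t)=T_q(t)$ for even $q$.
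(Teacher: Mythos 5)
Your proof is correct, and it takes a genuinely different and cleaner route than the paper's. The paper bounds each $|a_{j,q}|$ individually using Stirling's inequalities, reducing the claim to maximizing the function $h(x)=(1+x)\log(1+x)-(1-x)\log(1-x)-2x\log x$ on $(0,1)$, whose maximum value turns out to be $\log(3+2\sqrt2)$. You instead exploit the algebraic structure directly: since $a_{j,q}=(-1)^j|a_{j,q}|$, the substitution $y=-1$ in the identity $T_q(1-2y)=\sum_j a_{j,q}y^j$ converts the alternating sum into $\sum_j|a_{j,q}|=T_q(3)$, and the hyperbolic form of $T_q$ at $3$ immediately gives $\cosh\bigl(q\log(3+2\sqrt2)\bigr)\leq(3+2\sqrt2)^q$. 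Your argument is shorter, avoids Stirling approximations entirely, explains why the constant $3+2\sqrt2$ appears (it is $\arccosh(3)$, reflecting the Chebychev structure rather than an asymptotic coincidence), and in fact proves the strictly stronger bound $\sum_{j=0}^q|a_{j,q}|\leq(3+2\sqrt2)^q$, which would save a factor of $q$ in the bound of Lemma~\ref{lem:concentration_psi} if one chose to use it. You are also right that the parity of $q$ plays no role here; \eqref{equ:chebexplicit} holds for all $k$.
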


\begin{proof}
This upper bound is obviously true for $j=0$ and $j=q$. Henceforth, we restrict ourselves to the case $j\in [1,q-1]$ (and therefore $q\geq 2$). For any positive integer $n$, Stirling's inequalities ensure that $n!e^nn^{-n-1/2}\in [\sqrt{2\pi},e]$. This leads us to 
\[
 |a_{j,q}|\leq 4^j \frac{e}{2\pi}   \frac{(q+j)^{q+j+1/2}}{(q-j)^{q-j+1/2}(2j)^{2j+1/2}}\leq \frac{e}{2\pi} \sqrt{\frac{q+j}{2(q-j)j}} \frac{(q+j)^{(q+j)}}{j^{2j}(q-j)^{q-j} }\ . 
 \]
 Since we assume that $j\in [1,q-1]$ and since the function $x\mapsto x(1-x)$ is increasing on $(0,1/2)$ and decreasing on $(1/2,1)$, we obtain
 \beqn 
  \sqrt{\frac{q+j}{2(q-j)j}}\leq \sqrt{\frac{2q}{2q(1-1/q)}}\leq \sqrt{\frac{1}{1-1/q}}\leq \sqrt{2}\ , 
 \eeqn 
which leads us to 
 \beq\label{eq:first_upper_a_j_q}
|a_{j,q}|\leq \frac{e}{\sqrt{2}\pi}\exp\left[q\left((1+\frac{j}{q})\log(1+\frac{j}{q})- 2\frac{j}{q}\log(\frac{j}{q}) -(1-\frac{j}{q})\log(1-\frac{j}{q})\right)\right]\ .
\eeq
Consider the function $h:x\mapsto (1+x)\log(1+x) -(1-x)\log(1-x)-2x\log(x)$ defined on $(0,1)$. Relying on standard derivation arguments, we observe that $h$ achieves its maximum $x_0= 1/\sqrt{2}$ and that $h(x_0)=\log(3+2\sqrt{2})$. Coming back to \eqref{eq:first_upper_a_j_q}, we have proved that 
\[
 |a_{j,q}|\leq (3+2\sqrt{2})^{q}
\]

\end{proof}

\subsection{Inequalities for Gaussian quantile and upper-tail functions}\label{sec:ineq-quantile}

\begin{lem}[Quantile function of the normal distribution]\label{lem:quantile}
We have 
\beq 
\max\left(\frac{t\phi(t)}{1+t^2}, \frac{1}{2}- \frac{t}{\sqrt{2\pi}}\right) \leq \ol{\Phi}(t)\leq \phi(t) \min\left(\frac{1}{t}, \sqrt{\frac{\pi}{2}}\right) , \:\:\:\:\mbox{ for all $t>0$}\label{eq:maj-fonctionrepgauss}\ . 
\eeq
As a consequence, for any $x< 0.5$, we have
\begin{align}
\sqrt{2\pi}(1/2- x)&\leq \overline{\Phi}^{-1}(x)\leq \sqrt{2\log\left(\frac{1}{2x}\right)} \ , \label{eq:encadrement_quantile_1}\\
   \log\left( \frac{[\overline{\Phi}^{-1}(x)]^2}{[\overline{\Phi}^{-1}(x)]^2+1}\right) &\leq \frac{[\overline{\Phi}^{-1}(x)]^2}{2}  -  \log\left(\frac{1}{x}\right) + \log\left(\sqrt{2\pi} \overline{\Phi}^{-1}(x)\right)\leq 0\ ,\label{eq:encadrement_quantile}
\end{align}
and if additionally $x\leq 0.004$, we have
\begin{align}
\overline{\Phi}^{-1}(x)\geq \sqrt{\log\left(\frac{1}{x}\right)} \label{eq:encadrement_quantile_1plus}\ .
\end{align}

\end{lem}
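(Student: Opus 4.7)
\textbf{Plan of proof for Lemma~\ref{lem:quantile}.}

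First I would establish the four Mills-ratio bounds comprising \eqref{eq:maj-fonctionrepgauss}. The upper bound $\ol\Phi(t)\leq \phi(t)/t$ follows from the classical one-step integration by parts of $\ol\Phi(t)=\int_t^\infty \phi(s)\,ds$, using $s\phi(s)=-\phi'(s)$. The bound $\ol\Phi(t)\leq \sqrt{\pi/2}\,\phi(t)$ comes from writing $\ol\Phi(t)=\phi(t)\int_0^\infty e^{-tu-u^2/2}du\leq \phi(t)\int_0^\infty e^{-u^2/2}du=\phi(t)\sqrt{\pi/2}$. The affine lower bound $\ol\Phi(t)\geq \tfrac12-t/\sqrt{2\pi}$ is immediate from $\ol\Phi(t)=\tfrac12-\int_0^t\phi(s)\,ds$ and $\phi\leq \phi(0)=1/\sqrt{2\pi}$. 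The Komatsu-type bound $\ol\Phi(t)\geq t\phi(t)/(1+t^2)$ can be obtained by setting
\[
h(t)=\ol\Phi(t)-\frac{t\phi(t)}{1+t^2},
\]
computing $h'(t)=-2\phi(t)/(1+t^2)^2<0$ and noting that $h(\infty)=0$, so $h(t)>0$ for all $t\geq 0$.

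Next, for \eqref{eq:encadrement_quantile_1}, both sides follow by applying $\ol\Phi$ (which is decreasing) to the candidate quantile and using \eqref{eq:maj-fonctionrepgauss}. For the lower bound, with $t=\sqrt{2\pi}(1/2-x)$ the affine bound gives $\ol\Phi(t)\geq x$. For the upper bound, with $t=\sqrt{2\log(1/(2x))}$, the bound $\ol\Phi(t)\leq \phi(t)\sqrt{\pi/2}$ yields exactly $\ol\Phi(t)\leq x$ after computing $\phi(t)=(2x)/\sqrt{2\pi}$.

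For the sandwich \eqref{eq:encadrement_quantile}, set $q=\ol\Phi^{-1}(x)>0$. The upper inequality is equivalent to $x\leq \phi(q)/q$, which is exactly the Mills upper bound applied at $q$. The lower inequality is equivalent to $x\geq q\phi(q)/(1+q^2)$, which is the Komatsu lower bound at $q$. Taking logarithms of both gives the stated form.

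Finally, for \eqref{eq:encadrement_quantile_1plus}, the plan is to show $\ol\Phi(\sqrt{\log(1/x)})\geq x$ when $x\leq 0.004$. Writing $t=\sqrt{\log(1/x)}$, one has $\phi(t)=\sqrt{x/(2\pi)}$ and the Komatsu lower bound reduces the claim to
\[
\frac{t^2}{(1+t^2)^2}\geq 2\pi x \quad\Longleftrightarrow\quad \frac{ue^u}{(1+u)^2}\geq 2\pi \quad\text{with } u=t^2=\log(1/x).
\]
The function $u\mapsto ue^u/(1+u)^2$ is increasing for $u\geq 1$, so I only need to check the inequality at the threshold $u_0=\log(1/0.004)=\log(250)\approx 5.52$, where $u_0 e^{u_0}/(1+u_0)^2\approx 32>2\pi$; monotonicity then extends to all $u\geq u_0$, i.e.~all $x\leq 0.004$. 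The mildly delicate step will be confirming the precise numerical threshold $0.004$ works (i.e.~choosing it compatibly with $2\pi$); otherwise the argument is essentially mechanical once the Mills bounds of \eqref{eq:maj-fonctionrepgauss} are in hand.
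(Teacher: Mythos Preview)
Your proposal is correct and follows essentially the same approach as the paper: the paper also declares \eqref{eq:maj-fonctionrepgauss} standard, derives \eqref{eq:encadrement_quantile_1} from the pair $\tfrac12-t/\sqrt{2\pi}\leq \ol\Phi(t)\leq \phi(t)\sqrt{\pi/2}$, and derives \eqref{eq:encadrement_quantile} from the Mills pair $t\phi(t)/(1+t^2)\leq \ol\Phi(t)\leq \phi(t)/t$. The only organizational difference is for \eqref{eq:encadrement_quantile_1plus}: the paper routes the Komatsu bound through the already-stated \eqref{eq:encadrement_quantile}, then bounds $\ol\Phi^{-1}(x)$ via \eqref{eq:encadrement_quantile_1} to reduce to the numerical check $16\pi x\log(1/(2x))\leq 1$ at $x=0.004$, whereas you apply Komatsu directly at $t=\sqrt{\log(1/x)}$ and check $ue^u/(1+u)^2\geq 2\pi$ at $u=\log(250)$; these are two equivalent bookkeepings of the same inequality.
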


\begin{proof}[Proof of Lemma \ref{lem:quantile}]
Inequality \eqref{eq:maj-fonctionrepgauss} is standard.  
Relation \eqref{eq:encadrement_quantile_1} (resp. \eqref{eq:encadrement_quantile}) is a consequence of $ {1}/{2}- {t}/\sqrt{2\pi} \leq \ol{\Phi}(t)\leq \phi(t)\sqrt{\tfrac{\pi}{2}}$ (resp. $(t^2/(1+t^2)) \phi(t)/t\leq \ol{\Phi}(t)\leq \phi(t)/t$ ).
The last relation \eqref{eq:encadrement_quantile_1plus} comes from \eqref{eq:encadrement_quantile}, because for $x\leq 0.004$ (thus $\overline{\Phi}^{-1}(x)\geq 1$), we have
\begin{align*}
[\overline{\Phi}^{-1}(x)]^2 &\geq 2 \log\left(\frac{1}{x}\right) -2 \log\left(\sqrt{2\pi} \overline{\Phi}^{-1}(x)\frac{[\overline{\Phi}^{-1}(x)]^2+1}{[\overline{\Phi}^{-1}(x)]^2}\right)\\
&\geq 2 \log\left(\frac{1}{x}\right) -2 \log\left(2 \sqrt{2\pi} \overline{\Phi}^{-1}(x) \right)\ ,
\end{align*}
which is larger than $\log\left(\frac{1}{x}\right) $ provided that $16\pi x \log\left(\frac{1}{2x}\right)\leq 1$ by \eqref{eq:encadrement_quantile_1}. This last bound holds for $x\leq 0.004$ by monotonicity.
\end{proof}

\begin{lem}\label{lem:difference_quantile}
We have 
\beq\label{eq:upper_difference_quantile}
 \overline{\Phi}^{-1}(x)- \overline{\Phi}^{-1}(y)\leq\left\{
\begin{array}{cc}
3|y-x|   &\text{ if } 0.3\leq x \leq y \leq 0.7\ ; \\
\frac{|y-x|}{x \overline{\Phi}^{-1}(x)}& \text{ if } x< \min (y, 1-y)\ ;\\
\frac{1}{\overline{\Phi}^{-1}(y) }\left[\log\left(\frac{y}{x}\right)+  \frac{1}{[\overline{\Phi}^{-1}(y)]^2} \right] & \text{ if } x\leq  y < 0.5\ .
\end{array}
 \right.   
\eeq
Besides, we also have 
\beq\label{eq:lower_difference_quantile}
 \overline{\Phi}^{-1}(x)- \overline{\Phi}^{-1}(y)\geq\left\{
\begin{array}{cc}
2.5|y-x|   &\text{ if } x \leq y \  ;\\
\left(\frac{|\overline{\Phi}^{-1}(y)|^2}{1+|\overline{\Phi}^{-1}(y)|^2}\right)\frac{|y-x|}{y \overline{\Phi}^{-1}(y)}& \text{ if } x\leq y < 0.5\ ;\\
\frac{1}{\overline{\Phi}^{-1}(x) }\left[\log\left(\frac{y}{ex}\right)+  \frac{1}{2}\log\log\left(\frac{1}{y}\right)- \frac{1}{2}\log\log\left(\frac{1}{x}\right)\right]
 & \text{ if } x\leq  y \leq 0.004\ .
\end{array}
 \right. \   
\eeq

\end{lem}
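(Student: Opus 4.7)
The strategy is to derive each of the six inequalities directly from the tail estimates~\eqref{eq:maj-fonctionrepgauss}--\eqref{eq:encadrement_quantile_1plus} of Lemma~\ref{lem:quantile}. The main tool is the mean value theorem (MVT) applied to $\ol{\Phi}^{-1}$, using the identity $(\ol{\Phi}^{-1})'(u)=-1/\phi(\ol{\Phi}^{-1}(u))$, combined with an algebraic comparison of the squared quantiles $q_x^2:=[\ol{\Phi}^{-1}(x)]^2$ and $q_y^2$ in the logarithmic regimes.

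For the upper bounds the three cases run as follows. (i) On the interval $[\ol{\Phi}^{-1}(0.7),\ol{\Phi}^{-1}(0.3)]$ the density $\phi$ is bounded below by $\phi(\ol{\Phi}^{-1}(0.3))>1/3$, so MVT yields the constant $3$. (ii) When $x<\min(y,1-y)$, this condition forces $\ol{\Phi}^{-1}(x)\geq|\ol{\Phi}^{-1}(y)|$, so $\phi$ attains its minimum on $[\ol{\Phi}^{-1}(y),\ol{\Phi}^{-1}(x)]$ at the right endpoint; combining MVT with the lower bound $\phi(t)\geq t\,\ol{\Phi}(t)$ from~\eqref{eq:maj-fonctionrepgauss} gives the claim. (iii) For $x\leq y<0.5$, subtract the upper half of~\eqref{eq:encadrement_quantile} applied at $x$ from the lower half applied at $y$ to obtain
\[
q_x^2-q_y^2 \;\leq\; 2\log(y/x) \;-\; 2\log(q_x/q_y) \;+\; 2\log\bigl(1+1/q_y^2\bigr)\ ,
\]
and divide by $q_x+q_y\geq 2q_y$, using $\log(q_x/q_y)\geq 0$ and $\log(1+t)\leq t$.

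For the lower bounds: (i) by MVT and the universal upper bound $\phi\leq 1/\sqrt{2\pi}$, one has $1/\phi\geq\sqrt{2\pi}>2.5$. (ii) For $x\leq y<0.5$, both quantiles are nonnegative; since $\phi$ is decreasing on $(0,\infty)$, the maximum of $\phi$ on $[\ol{\Phi}^{-1}(y),\ol{\Phi}^{-1}(x)]$ equals $\phi(\ol{\Phi}^{-1}(y))$, and this is controlled by $y(1+q_y^2)/q_y$ via the upper bound $\phi(t)\leq\ol{\Phi}(t)(1+t^2)/t$ coming from~\eqref{eq:maj-fonctionrepgauss}. (iii) For the delicate case $x\leq y\leq 0.004$, perform the same subtraction as in the upper bound (iii) but in the reverse direction: using the elementary inequality $\log(q_x^2/(q_x^2+1))\geq -1/q_x^2$ one gets
\[
q_x^2-q_y^2 \;\geq\; 2\log(y/x) \;-\; 2\log(q_x/q_y) \;-\; 2/q_x^2\ .
\]
The ratio $\log(q_x/q_y)$ is then controlled by combining the upper bound $q_x^2\leq 2\log(1/x)-\log(2\pi q_x^2)$ from~\eqref{eq:encadrement_quantile} with the lower bound $q_y^2\geq\log(1/y)$ from~\eqref{eq:encadrement_quantile_1plus}, yielding $2\log(q_x/q_y)\leq\log 2+\log\log(1/x)-\log\log(1/y)$. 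Plugging this in and using $q_x-q_y\geq(q_x^2-q_y^2)/(2q_x)$ yields the claim, provided the residual error $(1/2)\log 2+1/q_x^2\leq 1$; this holds on the range $x\leq 0.004$ because $q_x^2\geq\log(1/0.004)>5$.

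The main obstacle is this last lower bound: the quantile asymptotics $q_u^2=2\log(1/u)-\log\log(1/u)+O(1)$ must be tracked with the correct constants so that the $\log\log$ correction in the final expression matches exactly, and it is crucial that the $(1/2)\log 2$ and $1/q_x^2$ slack terms arising from our inequalities are absorbed by the $-1$ appearing inside $\log(y/(ex))$, which is only possible on a range where $x$ is sufficiently small (here $x\leq 0.004$).
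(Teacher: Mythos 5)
Your proposal is correct and follows essentially the same route as the paper: the first two inequalities in each direction follow from the mean-value theorem applied to $\ol{\Phi}^{-1}$ (with $\phi\circ\ol{\Phi}^{-1}$ bounded on each side using the estimates of Lemma~\ref{lem:quantile}), and the third inequalities follow from comparing the squared quantiles via \eqref{eq:encadrement_quantile}--\eqref{eq:encadrement_quantile_1plus} and dividing by $q_x+q_y$. The only minor variant is in the last lower bound, where you absorb the term $\log\bigl(q_x^2/(q_x^2+1)\bigr)$ via $\log(1+t)\leq t$ to get $-1/q_x^2$, whereas the paper bounds it by its value at $\ol{\Phi}^{-1}(0.004)$ using monotonicity of $t\mapsto t^2/(1+t^2)$; both reduce to the same numerical check on the range $x\leq y\leq 0.004$.
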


\begin{proof}[Proof of Lemma \ref{lem:difference_quantile}]
We start by proving the two first inequalities of each bound   \eqref{eq:upper_difference_quantile} and \eqref{eq:lower_difference_quantile}. By the mean-value theorem, we have 
\beq\label{eq:lower_mean_value_theorem}
\frac{y-x}{\sup_{z\in [x,y]}\phi(\overline{\Phi}^{-1}(z))} \leq \overline{\Phi}^{-1}(x) - \overline{\Phi}^{-1}(y) \leq \frac{y-x}{\inf_{z\in [x,y]}\phi(\overline{\Phi}^{-1}(z))}\ . 
\eeq
The function $t\mapsto \phi(\overline{\Phi}^{-1}(t+1/2))$ defined on $[-1/2,1/2]$  is symmetric and increasing on $[-1/2,0]$. Thus if $0.3\leq x\leq y\leq 0.7$, the above  infimum equals 
$\phi(\overline{\Phi}^{-1}(0.3))$ which is larger than $1/3$. This proves the first inequality. Turning to the second inequality ($x\leq \min(y,1-y)$), the above infimum is achieved at $z=x$ which yields. 
\beqn
0\leq \overline{\Phi}^{-1}(x) - \overline{\Phi}^{-1}(y) \leq \frac{|y-x|}{\phi(\overline{\Phi}^{-1}(x))}\ . 
\eeqn
It follows  from \eqref{eq:encadrement_quantile}, that $\phi(\overline{\Phi}^{-1}(x))\geq x |\overline{\Phi}^{-1}(x)|$, which yields the second result. 
Turning to the lower bounds, we still apply the mean theorem \eqref{eq:lower_mean_value_theorem} and observe that $\phi(\overline{\Phi}^{-1}(z))\leq 1/\sqrt{2\pi}\leq (2.5)^{-1}$, which proves the first bound in \eqref{eq:lower_difference_quantile}. As for the second bound in \eqref{eq:lower_difference_quantile}, the maximum of $\phi(\overline{\Phi}^{-1}(z))$ is achieved at $z=y$ and the result follows from  \eqref{eq:encadrement_quantile} in Lemma \ref{lem:quantile}.

Finally, we consider the last bounds in \eqref{eq:upper_difference_quantile} and \eqref{eq:lower_difference_quantile}. We first apply the  mean value theorem to the  square root function. 
\[ 
\frac{[\overline{\Phi}^{-1}(x)]^2 - [\overline{\Phi}^{-1}(y)]^2}{2\overline{\Phi}^{-1}(x) } \leq \overline{\Phi}^{-1}(x)- \overline{\Phi}^{-1}(y) \leq \frac{[\overline{\Phi}^{-1}(x)]^2 - [\overline{\Phi}^{-1}(y)]^2}{2\overline{\Phi}^{-1}(y) }\ .
\]
For the upper bound, we use Lemma \ref{lem:quantile} for $x$ and $y$ to get 
\beqn 
 \overline{\Phi}^{-1}(x)- \overline{\Phi}^{-1}(y) &\leq &  \frac{1}{\overline{\Phi}^{-1}(y) }\left[\log\left(\frac{y}{x}\right)+ \log\left(\frac{\overline{\Phi}^{-1}(y)}{\overline{\Phi}^{-1}(x)}\right)- \log\left( \frac{[\overline{\Phi}^{-1}(y)]^2}{1+[\overline{\Phi}^{-1}(y)]^2} \right) \right]\\
&\leq &  \frac{1}{\overline{\Phi}^{-1}(y) }\left[\log\left(\frac{y}{x}\right)+  \frac{1}{[\overline{\Phi}^{-1}(y)]^2 } \right]\ .
\eeqn 
For the lower bound, we use again Lemma \ref{lem:quantile} together with $\overline{\Phi}^{-1}(x)\geq \overline{\Phi}^{-1}(0.004)$, to get
\beqn 
 \overline{\Phi}^{-1}(x)- \overline{\Phi}^{-1}(y) &\geq &  \frac{1}{\overline{\Phi}^{-1}(x) }\left[\log\left(\frac{y}{x}\right)+ \log\left(\frac{\overline{\Phi}^{-1}(y)}{\overline{\Phi}^{-1}(x)}\right)+ \log\left( \frac{[\overline{\Phi}^{-1}(x)]^2}{1+[\overline{\Phi}^{-1}(x)]^2} \right) \right]\\
 &\geq & \frac{1}{\overline{\Phi}^{-1}(x) }\left[\log\left(\frac{y}{x}\right)+ \frac{1}{2}\log\left(\frac{ \log(1/y)}{2\log(1/x)}\right)+ \log\left( \frac{[\overline{\Phi}^{-1}(0.004)]^2}{1+[\overline{\Phi}^{-1}(0.004)]^2} \right) \right]
 \\
&\geq &  \frac{1}{\overline{\Phi}^{-1}(x) }\left[\log\left(\frac{y}{x}\right)+  \frac{1}{2}\log\log\left(\frac{1}{y}\right)- \frac{1}{2}\log\log\left(\frac{1}{x}\right)-1\right]\ ,
\eeqn 
which concludes the proof.
\end{proof}

\begin{lem}\label{lem:biais_estimateur_quantile}
There exists some universal constant $c$ such that the following holds for all positive integers $n$, $ k\leq n-1$ and all positive  integers $q\leq 0.7(n-k)$:
\[
\overline{\Phi}^{-1}({q}/{n})- \overline{\Phi}^{-1}({q}/{(n-k)})\leq c \frac{\log\left(\frac{n}{n-k}\right)}{\sqrt{\log\big(\frac{n-k}{q}\big)_+}\vee 1}\ .
\]

\end{lem}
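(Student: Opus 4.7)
Write $x = q/n$ and $y = q/(n-k)$, so that $x \leq y \leq 0.7$ by the hypothesis on $q$. Since $\overline{\Phi}^{-1}$ is differentiable and decreasing with $(\overline{\Phi}^{-1})'(z) = -1/\phi(\overline{\Phi}^{-1}(z))$, I will base the whole argument on the integral representation
\[
\overline{\Phi}^{-1}(x) - \overline{\Phi}^{-1}(y) \;=\; \int_{x}^{y} \frac{dz}{\phi(\overline{\Phi}^{-1}(z))}\ ,
\]
and split the proof into the ``sparse'' regime $y \leq 1/4$ and the ``dense'' regime $y \in (1/4,0.7]$.

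\emph{Sparse regime} $y \leq 1/4$. Using the Mills-ratio bound $\phi(t) \geq t\,\overline{\Phi}(t)$ for $t>0$ from Lemma~\ref{lem:quantile}, applied with $t = \overline{\Phi}^{-1}(z) > 0$, I get $\phi(\overline{\Phi}^{-1}(z)) \geq z\,\overline{\Phi}^{-1}(z) \geq z\,\overline{\Phi}^{-1}(y)$ for all $z \in [x,y]$, so that
\[
\overline{\Phi}^{-1}(x) - \overline{\Phi}^{-1}(y) \;\leq\; \frac{1}{\overline{\Phi}^{-1}(y)}\int_x^y \frac{dz}{z} \;=\; \frac{\log(n/(n-k))}{\overline{\Phi}^{-1}(q/(n-k))}\ .
\]
To conclude, I bound $\overline{\Phi}^{-1}(q/(n-k))$ from below by $c'\sqrt{\log((n-k)/q)\vee 1}$: for $y \leq 0.004$ this follows directly from \eqref{eq:encadrement_quantile_1plus}, and for $y \in (0.004,1/4]$ both $\overline{\Phi}^{-1}(y)$ and $\log((n-k)/q)$ are bounded away from $0$ and $\infty$ by universal constants.

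\emph{Dense regime} $y \in (1/4,0.7]$. Here $\log((n-k)/q) \leq \log 4$, so the claim reduces to showing $\overline{\Phi}^{-1}(x)-\overline{\Phi}^{-1}(y) \lesssim \log(n/(n-k))$. I split $[x,y]$ into the two pieces $[x,1/4] \cap [x,y]$ and $[1/4,y]$ (the first being empty if $x \geq 1/4$). On $[1/4,y] \subset [1/4,0.7]$, $\phi(\overline{\Phi}^{-1}(z))$ is bounded below by the constant $\phi(\overline{\Phi}^{-1}(0.7))$, so the integral is at most $c(y - x) = c\,qk/(n(n-k)) \leq 0.7\,k/n \leq 0.7\log(n/(n-k))$, using $\log(1+t) \geq t/(1+t)$ to get $k/n \leq \log(n/(n-k))$. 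On $[x,1/4]$ (if nonempty), the Mills bound gives $\phi(\overline{\Phi}^{-1}(z)) \geq z\,\overline{\Phi}^{-1}(1/4)$, so the integral is $\lesssim \log(1/(4x)) = \log(n/(4q))$, which is at most $\log(n/(n-k))$ since the case assumption $y > 1/4$ reads $4q > n-k$.

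\emph{Main obstacle.} The substantive point is that a black-box application of the third bound in \eqref{eq:upper_difference_quantile} is not tight enough: it produces an additive remainder of order $1/[\overline{\Phi}^{-1}(y)]^{3}$ that is incompressible in $k$ and hence swamps the right-hand side when $\log(n/(n-k))$ is of order $1/n$ (very small $k$). Working instead with the integral representation and applying the Mills bound pointwise inside the integrand eliminates this spurious remainder, since the ``$+1/[\overline{\Phi}^{-1}(y)]^{2}$'' correction never enters. All other steps are routine manipulations of quantiles.
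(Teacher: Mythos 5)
Your proof is correct and takes a genuinely different route from the paper's. The paper handles this lemma through a four-way case split ($q\geq n/3$; $q\leq n/3$ with $k<n/2$; $q\leq n/3$ with $k\geq n/2$ and $q\leq 0.4(n-k)$; and a boundary strip $q/(n-k)\in(0.4,0.7]$), each time invoking a different pre-packaged inequality from Lemma~\ref{lem:difference_quantile}. You instead work directly from the integral representation $\overline{\Phi}^{-1}(x)-\overline{\Phi}^{-1}(y)=\int_x^y dz/\phi(\overline{\Phi}^{-1}(z))$ and apply the Mills-ratio bound $\phi(\overline{\Phi}^{-1}(z))\geq z\,\overline{\Phi}^{-1}(z)$ pointwise inside the integrand, which collapses the argument to a two-regime split and, as you correctly observe, avoids tracking the $1/[\overline{\Phi}^{-1}(y)]^{2}$ remainder term that a naive use of the third bound of Lemma~\ref{lem:difference_quantile} would leave behind. (To be fair to the paper, it never commits that error: in the small-$k$ regime it switches to the second bound of Lemma~\ref{lem:difference_quantile}, which has no such remainder.) Your approach is cleaner and more self-contained here; the paper's approach has the virtue of reusing a modular lemma that serves several other proofs (Propositions~\ref{prop:thetamedian}, \ref{prp:dense}, \ref{prp:upper_unknown}, \ref{rem:estimator}). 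One trivial slip: on $[1/4,0.7]$ the density $\phi(\overline{\Phi}^{-1}(z))$ attains its minimum at $z=1/4$, not $z=0.7$ (since $|\overline{\Phi}^{-1}(1/4)|\approx 0.674>|\overline{\Phi}^{-1}(0.7)|\approx 0.524$), so the correct constant is $\phi(\overline{\Phi}^{-1}(1/4))$; this changes only the numerical constant and not the logic.
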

 
\begin{proof}
We first consider the case $q\geq n/3$. Since $q\leq 0.7(n-k)$, it follows that $n\leq 2.1(n-k)$. We then deduce from 
\eqref{eq:upper_difference_quantile} that  
\[
\overline{\Phi}^{-1}\Big(\frac{q}{n}\Big)- \overline{\Phi}^{-1}\Big(\frac{q}{n-k}\Big)\leq 3\frac{qk}{n(n-k)}\leq 3 \frac{k}{n-k}\lesssim \log\left(\frac{n}{n-k}\right) \lesssim  \frac{\log\left(\frac{n}{n-k}\right)}{\sqrt{\log\big(\frac{n-k}{q}\big)_+}\vee 1}\ ,
\]
because $n/(n-k)\leq 2.1$ and $(n-k)/q\leq 3$.

Now assume  $q\leq n/3$ and $k<n/2$. It follows from the second inequality in \eqref{eq:upper_difference_quantile} (which can be used because $q/n<q/(n-k)$ and $q/n \leq 1-2q/n < 1-q/(n-k)$) that 
\[
 \overline{\Phi}^{-1}\Big(\frac{q}{n}\Big)- \overline{\Phi}^{-1}\Big(\frac{q}{n-k}\Big)\lesssim \frac{k}{(n-k)\ol{\Phi}^{-1}(q/n)}\lesssim  \frac{\log\left(\frac{n}{n-k}\right)}{\ol{\Phi}^{-1}(q/n)}\ ,
\]
because $k/(n-k)\leq 1$. Also, for $x\leq 0.004$, we have $\ol{\Phi}^{-1}(x) \geq \sqrt{\log(1/x)}$ by  \eqref{eq:encadrement_quantile_1plus}. This implies $\ol{\Phi}^{-1}(x) \gtrsim  \sqrt{\log(1/x)}$ for all $x\leq 1/3$, and we obtain 
\[
 \overline{\Phi}^{-1}\Big(\frac{q}{n}\Big)- \overline{\Phi}^{-1}\Big(\frac{q}{n-k}\Big)\lesssim \frac{\log\left(\frac{n}{n-k}\right)}{\sqrt{\log\big(\frac{n}{q}\big)_+}\vee 1}\lesssim \frac{\log\left(\frac{n}{n-k}\right)}{\sqrt{\log\big(\frac{n-k}{q}\big)_+}\vee 1}\ .
\]

 Then, we consider the case where $q\leq n/3$, $k\geq n/2$, and $q\leq 0.4(n-k)$. It follows from the last inequality  in \eqref{eq:upper_difference_quantile} that 
\[
 \overline{\Phi}^{-1}\Big(\frac{q}{n}\Big)- \overline{\Phi}^{-1}\Big(\frac{q}{n-k}\Big)\leq \frac{\log\left(\frac{n}{n-k}\right)+ \frac{1}{[\overline{\Phi}^{-1}(\frac{q}{n-k})]^2}}{\overline{\Phi}^{-1}(\frac{q}{n-k})}\lesssim\frac{\log\left(\frac{n}{n-k}\right)}{\sqrt{\log\big(\frac{n-k}{q}\big)_+}\vee 1}\ ,
\]
where we used in the last inequality that $\overline{\Phi}^{-1}(\frac{q}{n-k})\gtrsim \sqrt{\log\big(\frac{n-k}{q}\big)_+}$ for $q/(n-k)\leq 0.4$. 

Finally, we assume that $q\leq n/3$, $k\geq n/2$ and $q/(n-k)\in (0.4,0.7]$. Then, it follows from \eqref{eq:encadrement_quantile_1} that 
\beqn 
 \overline{\Phi}^{-1}\Big(\frac{q}{n}\Big)- \overline{\Phi}^{-1}\Big(\frac{q}{n-k}\Big)&\leq& \overline{\Phi}^{-1}\Big(\frac{q}{n}\Big)+\overline{\Phi}^{-1}(0.3)\leq \sqrt{2\log\big(n/q\big)}+ \overline{\Phi}^{-1}(0.3)\lesssim \sqrt{\log\big(n/q\big)}\\
 &\lesssim & \sqrt{\log\Big(\frac{n}{n-k}\Big)}\lesssim \frac{\log\left(\frac{n}{n-k}\right)}{\sqrt{\log\big(\frac{n-k}{q}\big)_+}\vee 1}\ .
\eeqn

\end{proof}

\subsection{Deviation inequalities for Gaussian empirical quantiles}

\begin{lem} \label{lem:quantile_empirique}
 Let $\xi=(\xi_{1},\ldots, \xi_{n})$ be a standard Gaussian vector of size $n$. 
 For any integer $q\in (0.3n, 0.7n)$, we have for all $0< x\leq \frac{8}{225}q \wedge\bigg( \frac{n^2}{18q}[\overline{\Phi}^{-1}(q/n)-\overline{\Phi}^{-1}(0.7)]^{2}\bigg)$,
\begin{align}
& \P\Big[\xi_{(q)}+ \overline{\Phi}^{-1}(q/n) \geq  3\frac{\sqrt{2qx}}{n}\Big]\leq  e^{-x} \ ,\label{equ1Nico}
 \end{align}
 and for all $0<x \leq \frac{n^2}{18q}[ \overline{\Phi}^{-1}(0.3)- \overline{\Phi}^{-1}(q/n)]^{2}$,
\begin{align} 
& \P\Big[\xi_{(q)}+ \overline{\Phi}^{-1}(q/n) \leq   -3\frac{\sqrt{2qx}}{n}\Big]\leq  e^{-x} \ . \label{equ2Nico}
 \end{align}
Now consider any integer $q\leq 0.4n$. For all $x\leq \tfrac{1}{8}q[(\overline{\Phi}^{-1}(q/n))^2\wedge (\overline{\Phi}^{-1}(q/n))^4]$, we have 
\begin{align}
 \P\left[\xi_{(q)}+ \overline{\Phi}^{-1}(q/n) \geq \frac{1}{\overline{\Phi}^{-1}(q/n)}\sqrt{\frac{2x}{q}} +  \frac{8}{3}\frac{x}{q\overline{\Phi}^{-1}(q/n)}\right]&\leq  e^{-x}  \ . \label{equ3Nico}
 \end{align}
 For  all $x\leq q/8$, we have  
\begin{align}\label{equ4Nico}
 \P\left[\xi_{(q)}+ \overline{\Phi}^{-1}(q/n) \leq  -\frac{2}{\overline{\Phi}^{-1}(q/n)}\sqrt{\frac{2x}{q}} \right]&\leq  e^{-x}  \ . 
 \end{align}
 \end{lem}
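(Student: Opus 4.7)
The plan is to reduce each of the four probabilities to a tail bound for a binomial random variable and then apply Bernstein's inequality. Since $\xi_{(q)} > t$ iff at most $q-1$ of the $\xi_i$'s are $\leq t$, one has the standard identities $\P[\xi_{(q)} > t] = \P[\mathrm{Bin}(n, \overline{\Phi}(t)) \geq n-q+1]$ and $\P[\xi_{(q)} \leq t] = \P[\mathrm{Bin}(n, \Phi(t)) \geq q]$. Each inequality \eqref{equ1Nico}--\eqref{equ4Nico} corresponds to choosing $t = s - \overline{\Phi}^{-1}(q/n)$ (for upper tails) or $t = -\overline{\Phi}^{-1}(q/n) - s$ (for lower tails) with a specific $s = s(x,q,n) > 0$, and the goal is to bound the resulting binomial tail by $e^{-x}$. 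I will use Bernstein's inequality in the form $\P[Z \geq \E Z + D] \leq \exp(-D^2/(2\sigma^2 + 2D/3))$, with $\sigma^2 = np(1-p)$.

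Writing $p = \overline{\Phi}(s - \overline{\Phi}^{-1}(q/n))$ for the upper-tail setting, the Bernstein deviation is $D = (n-q+1) - np \geq n\bigl[(1-q/n) - p\bigr]$, and the mean value theorem applied to $\overline{\Phi}$ gives $(1-q/n) - p = s\,\phi(u)$ for some $u$ in the segment joining $-\overline{\Phi}^{-1}(q/n)$ and $s - \overline{\Phi}^{-1}(q/n)$. Since $\phi$ is even and decreasing on $\R_+$, the relevant lower bound on $\phi(u)$ is obtained by maximising $|u|$ over this segment. In the middle-quantile regime \eqref{equ1Nico}--\eqref{equ2Nico}, the second constraint on $x$ is precisely tuned so that $s \leq |\overline{\Phi}^{-1}(q/n) - \overline{\Phi}^{-1}(0.7)|$, which keeps $u$ inside $[-\overline{\Phi}^{-1}(0.3), \overline{\Phi}^{-1}(0.3)]$; hence $\phi(u) \geq \phi(\overline{\Phi}^{-1}(0.3))$ is a universal constant. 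Plugging $D \gtrsim n s$ and $\sigma^2 \leq n/4$ into Bernstein, with $s = 3\sqrt{2qx}/n$, the first constraint $x \leq 8q/225$ is then used to handle the Bernstein denominator and obtain the $e^{-x}$ bound; the lower tail \eqref{equ2Nico} is treated identically via the identity for $\P[\xi_{(q)} \leq t]$.

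In the extreme-quantile regime \eqref{equ3Nico}--\eqref{equ4Nico} (where $\overline{\Phi}^{-1}(q/n)$ may be large), the crude bound $\phi(u) \geq \phi(\overline{\Phi}^{-1}(q/n))$ is combined with the Mills-type estimate $\phi(\overline{\Phi}^{-1}(q/n)) \geq \overline{\Phi}^{-1}(q/n)\cdot q/n$ from Lemma~\ref{lem:quantile} to get $D \geq s\, q\, \overline{\Phi}^{-1}(q/n)$. The variance term is handled via $np(1-p) \leq n(1-p)$, itself bounded using $\overline{\Phi}(\tau) \leq \phi(\tau)/\tau$ at $\tau = \overline{\Phi}^{-1}(q/n) - s$; the constraint $x \leq q[\overline{\Phi}^{-1}(q/n)]^2/8$ ensures $s\overline{\Phi}^{-1}(q/n) \leq 1$, so that the factor $e^{s\overline{\Phi}^{-1}(q/n)}$ stays bounded and $\sigma^2 \lesssim q$. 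Substituting $s$ from \eqref{equ3Nico} gives $sq\overline{\Phi}^{-1}(q/n) = \sqrt{2qx} + 8x/3$, which is exactly the value matching the inverted Bernstein bound $D \geq \sqrt{2\sigma^2 x} + 2x/3$. For the lower tail \eqref{equ4Nico}, the minimum of $\phi(u)$ along $[\overline{\Phi}^{-1}(q/n), \overline{\Phi}^{-1}(q/n)+s]$ is attained at the right endpoint and is $\phi(\overline{\Phi}^{-1}(q/n))\, e^{-s\overline{\Phi}^{-1}(q/n) - s^2/2}$, whose exponential factor under $x \leq q/8$ is absorbed into the coefficient, explaining the ``$2$'' in front of $1/\overline{\Phi}^{-1}(q/n)$ versus the ``$1$'' in \eqref{equ3Nico}.

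The main obstacle is the careful bookkeeping of numerical constants across the four regimes, specifically tracking the constant factor lost when replacing $\phi(u)$ by $(q/n)\overline{\Phi}^{-1}(q/n)$ (which carries an implicit factor $[\overline{\Phi}^{-1}(q/n)]^2/(1+[\overline{\Phi}^{-1}(q/n)]^2)$ coming from the two-sided Mills inequality), and ensuring that the $+1$ in $D = (n-q+1) - np$, the factor $1-p \neq 1$ in the variance, and the $e^{-s\overline{\Phi}^{-1}(q/n)}$ slack in the lower-tail case are each absorbed by the constraints on $x$ stated in the lemma rather than forcing any deterioration of the exponent $x$.
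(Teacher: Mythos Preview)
Your overall plan---reduce to a binomial tail via $\{\xi_{(q)}\ge t\}=\{\#\{i:\xi_i\le t\}\le q-1\}$, bound the change in the success probability with the mean value theorem, and finish with Bernstein---is exactly the paper's approach. The only difference in formulation is that you work with $\P[\mathrm{Bin}(n,\overline{\Phi}(t))\ge n-q+1]$ while the paper writes the equivalent $\P[\mathcal{B}(n,\Phi(t))\le q-1]$; the subsequent computations match. For \eqref{equ3Nico} the paper also avoids your Mills-based bound on the variance: after establishing $p\ge q/n+D_0/n$ it simply replaces $p$ by this lower bound (monotonicity of the binomial tail) and uses $n p'(1-p')\le q+D_0$, which yields the denominator $2q+\tfrac{8}{3}D_0$ directly.

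There is, however, a concrete gap in your treatment of \eqref{equ4Nico}. You propose to bound $\phi(\overline{\Phi}^{-1}(q/n)+s)=\phi(\overline{\Phi}^{-1}(q/n))\,e^{-s\overline{\Phi}^{-1}(q/n)-s^2/2}$ and absorb the exponential factor using $x\le q/8$. That constraint only gives $s\overline{\Phi}^{-1}(q/n)\le 1$; the term $s^2/2=\tfrac{4x}{q[\overline{\Phi}^{-1}(q/n)]^2}$ is \emph{not} controlled when $q/n$ is close to $0.4$ (where $\overline{\Phi}^{-1}(q/n)\approx 0.25$, so $s^2/2$ can be as large as $8$). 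Thus the factor of $2$ in front of $1/\overline{\Phi}^{-1}(q/n)$ is not enough. The paper sidesteps this by applying the Mills lower bound at the \emph{shifted} point $\overline{\Phi}^{-1}(p)=\overline{\Phi}^{-1}(q/n)+t$, giving $\phi(\overline{\Phi}^{-1}(p))\ge p\,\overline{\Phi}^{-1}(p)\ge p\,\overline{\Phi}^{-1}(q/n)$; this turns the mean-value estimate into the implicit inequality $q/n-p\ge t\,p\,\overline{\Phi}^{-1}(q/n)$, which solves to $np\le q/(1+t\,\overline{\Phi}^{-1}(q/n))$ and hence $q-np\ge\tfrac12 tq\,\overline{\Phi}^{-1}(q/n)$ under $t\,\overline{\Phi}^{-1}(q/n)\le 1$. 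No exponential loss appears, and the constraint $x\le q/8$ suffices as stated.
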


\begin{proof}[Proof of Lemma \ref{lem:quantile_empirique}]
Consider any  $t\geq 0$ and denote $p= \overline{\Phi}[\overline{\Phi}^{-1}(q/n)- t]$ which belongs to $(q/n,1)$. We have 
 \beq\label{eq:first_upper_proba}
 \P\Big[\xi_{(q)}\geq -\overline{\Phi}^{-1}(q/n)+t\Big]= \P\left[\mathcal{B}(n,p)\leq q-1\right]\leq\P\left[\mathcal{B}(n,p)\leq q\right]\ . 
 \eeq
 By the mean value theorem, we have $ p-q/n \geq t \inf_{x\in [0,t]}\phi[\overline{\Phi}^{-1}(q/n)-x]$. 
 Assume first that $q/n$ belongs to $(0.3,0.7)$ and that  $ \overline{\Phi}^{-1}(q/n)-t\geq \overline{\Phi}^{-1}(0.7)$. Then, it follows from the previous inequality that 
 $p-q/n \geq t \phi[\overline{\Phi}^{-1}(0.3)]\geq t/3$. Together with Bernstein's inequality, we obtain 
\beqn 
 \P\Big[\xi_{(q)}\geq -\overline{\Phi}^{-1}(q/n)+t\Big]&\leq & \P\left[\mathcal{B}(n,q/n+ t/3)\leq q\right]\\
 &\leq & \exp\left[- \frac{n^2t^2/9}{2(q+nt/3)(1-(q/n+t/3))+ 2nt/9}\right]\\
 &\leq & \exp\left[- \frac{n^2t^2/9}{1.4q + nt(1.4/3+ 2/9)}\right]\ ,
 \eeqn 
 where we used that $q/n\geq 0.3$ in the last line. If we further assume that $t\leq 0.8q/n$, we obtain 
 \[
  \P\Big[\xi_{(q)}\geq -\overline{\Phi}^{-1}(q/n)+t\Big]\leq \exp\left[-n^2t^2/(18q)\right]\ .
 \]
In view of the conditions $t\leq 0.8q/n$ and $t\leq \overline{\Phi}^{-1}(q/n)- \overline{\Phi}^{-1}(0.7)$, we have proved  \eqref{equ1Nico}. 

Let us now prove \eqref{equ3Nico}. Assume that $q/n\leq 0.4$ and $t\leq \overline{\Phi}^{-1}(q/n)$. This implies $p\leq 1/2$ and we have 
$ p-q/n \geq t \phi[\overline{\Phi}^{-1}(q/n)]\geq t(q/n)\overline{\Phi}^{-1}(q/n)$ by Inequality \eqref{eq:encadrement_quantile} in Lemma \ref{lem:quantile}. Then, \eqref{eq:first_upper_proba} together with Bernstein's inequality yields 
\beqn 
\P\Big[\xi_{(q)}\geq -\overline{\Phi}^{-1}(q/n)+t\Big]\leq \exp\left[- \frac{t^2 q^2 (\overline{\Phi}^{-1}(q/n))^2 }{2q + \frac{8}{3}tq\overline{\Phi}^{-1}(q/n)} \right]\ ,
\eeqn 
which implies \eqref{equ3Nico} by simple algebraic manipulations. 

\medskip 
 
Next, we consider the left deviations. For any $t>0$, we write $p= \overline{\Phi}[\overline{\Phi}^{-1}(q/n) + t]$. We have 
$$\P[\xi_{(q)}\leq -\overline{\Phi}^{-1}(q/n)-t]= \P[\mathcal{B}(n,p)\geq   q].$$ Then, Bernstein's inequality yields
\beq\label{eq:second_upper_proba} 
 \P\Big[\xi_{(q)}\leq -\overline{\Phi}^{-1}(q/n)-t\Big]\leq 
  \exp\left[- \frac{(q-np)^2}{2np(1-p)+ 2(q-np)/3}\right]\leq  \exp\left[- \frac{(q-np)^2}{2q}\right]\ ,
 \eeq
because $2np(1-p)+ 2(q-np)/3\leq (2-2/3)np+ 2q/3\leq 2q$ since $p\leq q/n$.
First, assume that $q/n\in (0.3,0.7)$ and that $\overline{\Phi}^{-1}(q/n)+  t \leq  \overline{\Phi}^{-1}(0.3)$.
Then, it follows from the mean value theorem that 
$
  q/n-p \geq t \inf_{x\in [0,t]}\phi\Big[\overline{\Phi}^{-1}(q/n)+x\Big]\geq t/3\ ,
$
which implies 
\[
 \P\Big[\xi_{(q)}\leq -\overline{\Phi}^{-1}(q/n)-t\Big]\leq \exp\left[- \frac{n^2 t^2 }{18q}\right]\ . 
\]
We have shown \eqref{equ2Nico}. 

Now assume that $q/n\leq 0.4$ and consider any $0<t<(\overline{\Phi}^{-1}(q/n))^{-1}$. It follows again from the mean value theorem and Lemma \ref{lem:quantile} that
\beqn 
q/n-p \geq t \phi\Big[\overline{\Phi}^{-1}(p)\Big]\geq t p\ \overline{\Phi}^{-1}(p)\geq t p\ \overline{\Phi}^{-1}(q/n)\  ,
\eeqn 
which implies $n p\leq q/(1+t\ \overline{\Phi}^{-1}(q/n))$ and thus
\[
 q - np \geq   q\left(1 - \frac{1}{1+t\ \overline{\Phi}^{-1}(q/n) }\right) =\frac{tq\ \overline{\Phi}^{-1}(q/n)}{1+t\ \overline{\Phi}^{-1}(q/n) }\geq  \tfrac{1}{2}tq\ \overline{\Phi}^{-1}(q/n)\ .
\] 
Coming back to \eqref{eq:second_upper_proba}, we get 
\[
 \P\Big[\xi_{(q)}\leq -\overline{\Phi}^{-1}(q/n)-t\Big]\leq 
    \exp\left[- \frac{t^2q (\overline{\Phi}^{-1}(q/n))^2}{8}\right]\ ,
 \] 
 which shows \eqref{equ4Nico}.
\end{proof}

\begin{lem} \label{lem:quantile_empirique_2}
 Let $\xi=(\xi_{1},\ldots, \xi_{n})$ be a standard Gaussian vector of size $n$. There exist two positive constants $c_1$ and $c_2$ such that the following holds: for any integer $1\leq q\leq n-1$ and any $x\leq c_1 q$, we have 
\begin{align}
 \P\left[\xi_{(q)}+ \overline{\Phi}^{-1}(q/n) \geq  c_2 \sqrt{\frac{x}{q[\log(\frac{n}{q})\vee 1]}}\right]&\leq  e^{-x}\label{cordevquantile1}\ ,\\
 \P\left[\xi_{(q)}+ \overline{\Phi}^{-1}(q/n) \leq  -c_2 \sqrt{\frac{x}{q[\log(\frac{n}{q})\vee 1]}}\right]&\leq  e^{-x} \label{cordevquantile2}\ .
 \end{align}
 \end{lem}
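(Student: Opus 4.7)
The plan is to derive Lemma~\ref{lem:quantile_empirique_2} from the regime-specific deviation bounds of Lemma~\ref{lem:quantile_empirique} by choosing, according to the size of $q$, which of \eqref{equ1Nico}--\eqref{equ4Nico} to invoke. I will focus on the range $q\leq \lceil n/2\rceil$ (which is the one actually used in the sequel, see Theorem~\ref{thm:upper_robust}) and split it in two cases: the ``moderate/sparse'' regime $q\leq 0.4\, n$, where I use the sharp bounds \eqref{equ3Nico}--\eqref{equ4Nico}, and the ``bulk'' regime $0.4\, n<q\leq \lceil n/2\rceil$, where I use \eqref{equ1Nico}--\eqref{equ2Nico}.

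For $q\leq 0.4 n$, the key preliminary observation will be the uniform two-sided comparison $\overline{\Phi}^{-1}(q/n)\asymp \sqrt{\log(n/q)\vee 1}$ with universal constants. I will establish it by combining \eqref{eq:encadrement_quantile_1plus}, which yields $\overline{\Phi}^{-1}(q/n)\geq \sqrt{\log(n/q)}$ for $q/n\leq 0.004$, the trivial lower bound $\overline{\Phi}^{-1}(q/n)\geq \overline{\Phi}^{-1}(0.4)>0$ on the bounded subrange $q/n\in [0.004,0.4]$ (where $\log(n/q)\vee 1$ is itself bounded), and the upper bound $\overline{\Phi}^{-1}(q/n)\leq \sqrt{2\log(n/(2q))}$ from \eqref{eq:encadrement_quantile_1}. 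Once this comparison is in hand, the constraint $x\leq (q/8)[(\overline{\Phi}^{-1}(q/n))^2\wedge (\overline{\Phi}^{-1}(q/n))^4]$ appearing in \eqref{equ3Nico} reduces to the uniform condition $x\leq c_0\, q$. Under $x\leq c_0\, q$, both terms on the right-hand side of \eqref{equ3Nico} are controlled by a constant multiple of $\sqrt{x/(q[\log(n/q)\vee 1])}$: the first directly, and the second via
\[
\frac{x}{q\,\overline{\Phi}^{-1}(q/n)}\;\lesssim\; \sqrt{\frac{x}{q[\log(n/q)\vee 1]}}\cdot \sqrt{\frac{x}{q}}\;\leq\; \sqrt{c_0}\cdot \sqrt{\frac{x}{q[\log(n/q)\vee 1]}}.
\]
This yields \eqref{cordevquantile1}, and \eqref{equ4Nico} gives \eqref{cordevquantile2} by the same manipulation.

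For $0.4\, n<q\leq \lceil n/2\rceil$, we have $n/q\leq 5/2<e$, so $\log(n/q)\vee 1=1$ and the target bound reduces to $c_2\sqrt{x/q}$. Here both $\overline{\Phi}^{-1}(q/n)-\overline{\Phi}^{-1}(0.7)$ and $\overline{\Phi}^{-1}(0.3)-\overline{\Phi}^{-1}(q/n)$ are bounded below by a universal positive constant, so the constraints on $x$ in \eqref{equ1Nico}--\eqref{equ2Nico} again reduce to $x\leq c_0' \,q$, and the right-hand side is controlled via
\[
\frac{3\sqrt{2qx}}{n}\;=\;3\sqrt{2}\,\frac{q}{n}\,\sqrt{\frac{x}{q}}\;\leq\; c_2\sqrt{\frac{x}{q}},
\]
using $q/n\leq 3/5$. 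Taking $c_1\leq \min(c_0,c_0')$ and $c_2$ large enough will merge the two regimes.

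The main technical point, and the only place where care is needed, will be the uniform comparison $\overline{\Phi}^{-1}(q/n)\asymp \sqrt{\log(n/q)\vee 1}$ on the whole range $q\in [1,0.4\, n]$, which requires matching the asymptotic expansions of Lemma~\ref{lem:quantile} in the very sparse and moderate subranges. Once this is in place, the rest consists of arithmetic manipulations of the bounds of Lemma~\ref{lem:quantile_empirique}. (The range $q>\lceil n/2\rceil$, not needed in the applications, can be partially reduced to the above via the identity $\xi_{(q)}\stackrel{d}{=} -\xi_{(n-q+1)}$, together with Lemma~\ref{lem:difference_quantile} to absorb the mismatch $\overline{\Phi}^{-1}((q'-1)/n)-\overline{\Phi}^{-1}(q'/n)$ between $\overline{\Phi}^{-1}(q/n)=-\overline{\Phi}^{-1}((q'-1)/n)$ and the natural centering $-\overline{\Phi}^{-1}(q'/n)$ used in the lower-tail bound for $q'$.)
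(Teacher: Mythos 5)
Your proof is correct and follows essentially the same route as the paper's: split on the size of $q$, invoke the four regime-specific deviation bounds of Lemma~\ref{lem:quantile_empirique}, and fold the constraints and constants into the single statement via the two-sided comparison $\overline{\Phi}^{-1}(q/n)\asymp\sqrt{\log(n/q)\vee 1}$. One factual slip is worth flagging: in Theorem~\ref{thm:upper_robust} the lemma is also applied to $\xi_{(q:n-k)}$, a Gaussian sample of size $m=n-k$, with $q$ allowed to reach $0.7m$, so the range $q\leq \lceil m/2\rceil$ does not by itself cover all the uses. This is easy to fix within your scheme: the bulk argument with \eqref{equ1Nico}--\eqref{equ2Nico} runs unchanged up to $q\leq 0.6m$ (where $\overline{\Phi}^{-1}(q/m)-\overline{\Phi}^{-1}(0.7)$ and $\overline{\Phi}^{-1}(0.3)-\overline{\Phi}^{-1}(q/m)$ stay bounded away from zero, so the $x$-ranges in \eqref{equ1Nico}--\eqref{equ2Nico} still reduce to $x\lesssim q$), and your symmetry reduction — correctly stated with index $n+1-q$, fixing the paper's $-\xi_{(n-q)}$ off-by-one — handles the remainder, since for $q\leq 0.7m$ the residual quantile mismatch $\overline{\Phi}^{-1}((q'-1)/m)-\overline{\Phi}^{-1}(q'/m)$ with $q'=n+1-q\geq 0.3m$ is $O(1/m)$, negligible against the target scale $\sqrt{x/q}$.
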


 \begin{proof}
First consider the case $q\leq 0.6n$.  It follows from Lemma \ref{lem:quantile} that $|\ol{\Phi}^{-1}(q/n)|\vee 1\lesssim \sqrt{\log(\frac{n}{q})}\vee 1$. Then, the result  is a straightforward consequence of Lemma \ref{lem:quantile_empirique} by gathering all the deviation bounds and taking  $c_1$ small enough and  $c_2$ large enough. For $q\geq 0.6 n$, we use the symmetry of the normal distribution and observe that $\xi_{(q)}$ is distributed as $-\xi_{(n-q)}$ while $\ol{\Phi}^{-1}(q/n)=-  \ol{\Phi}^{-1}(1-q/n)$.
 \end{proof}

\section{Additional numerical experiments}\label{supp:simu}

In this section, we provide numerical experiments for two scenarios for the alternatives:
\begin{itemize}
\item the alternatives $m_i,$ $1\leq i \leq n_1$, are linearly increasing from $0.01$ to $2\Delta$, that is, 
$$
m_i = 0.01 + (2\Delta-0.01)(i-1)/n_1  ,\:\:\: 1 \leq i \leq n_1 ;
$$
\item the alternatives $m_i,$ $1\leq i \leq n_1$, are generated as $n_1$ i.i.d. uniform variables in $(0.01,2\Delta)$ (previously and independently from the Monte-Carlo loop).
\end{itemize}

\begin{figure}[h!]
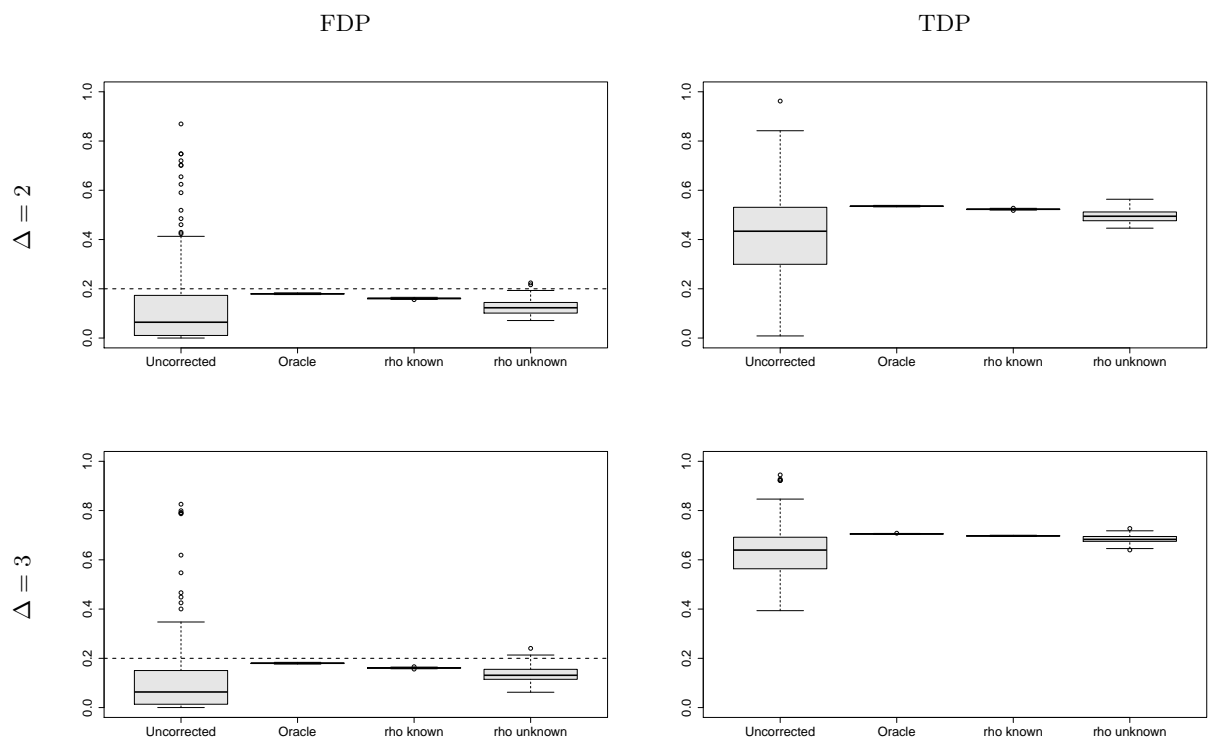

\begin{tabular}{ccc}
&FDP& TDP \\
\rotatebox{90}{\hspace{2cm}$\Delta=2$}&
\includegraphics[scale=0.27]{FDP_boxplot_rho0_3_ksurn0_1_moy2_nbsimu100_alpha0_2altlinear.pdf}&
\includegraphics[scale=0.27]{TDP_boxplot_rho0_3_ksurn0_1_moy2_nbsimu100_alpha0_2altlinear.pdf}\\
\rotatebox{90}{\hspace{2cm}$\Delta=3$}&
\includegraphics[scale=0.27]{FDP_boxplot_rho0_3_ksurn0_1_moy3_nbsimu100_alpha0_2altlinear.pdf}&
\includegraphics[scale=0.27]{TDP_boxplot_rho0_3_ksurn0_1_moy3_nbsimu100_alpha0_2altlinear.pdf}
\end{tabular}
\caption{Same as Figure~\ref{fig:boxplot} but for alternative $m_i$ linearly increasing from $0.01$ to $2\Delta$. } \label{fig:boxplot_lin}
\end{figure}

\begin{figure}[h!]
\begin{tabular}{ccc}
&FDP& TDP \\
\rotatebox{90}{\hspace{2cm}$\Delta=2$}&
\includegraphics[scale=0.27]{FDP_boxplot_rho0_3_ksurn0_1_moy2_nbsimu100_alpha0_2altrandom.pdf}&
\includegraphics[scale=0.27]{TDP_boxplot_rho0_3_ksurn0_1_moy2_nbsimu100_alpha0_2altrandom.pdf}\\
\rotatebox{90}{\hspace{2cm}$\Delta=3$}&
\includegraphics[scale=0.27]{FDP_boxplot_rho0_3_ksurn0_1_moy3_nbsimu100_alpha0_2altrandom.pdf}&
\includegraphics[scale=0.27]{TDP_boxplot_rho0_3_ksurn0_1_moy3_nbsimu100_alpha0_2altrandom.pdf}
\end{tabular}
\caption{Same as Figure~\ref{fig:boxplot} but for alternative $m_i$ i.i.d. uniform in $(0.01,2\Delta)$. } \label{fig:boxplot_rand}
\end{figure}

\begin{figure}[h!]
\begin{tabular}{cc}
\vspace{-5mm}
%$s\mapsto\ol{\FDP}(R_s; 0,1)$ 
Uncorrected
& 
%$s\mapsto\ol{\FDP}(R_s; \theta,\sigma)$
Oracle
\\
\includegraphics[scale=0.4]{SimesBoxplot_altlinear.pdf}&\includegraphics[scale=0.4]{PerfectSimesBoxplot_altlinear.pdf}\\
\vspace{-5mm}
%$s\mapsto\ol{\FDP}(R_s; \widetilde{\theta}_+(\sigma),\sigma)$
Correlation known
&
%$s\mapsto\ol{\FDP}(R_s; \widetilde{\theta}_+,\widetilde{\sigma}_+)$
Correlation unknown
\\
\includegraphics[scale=0.4]{RhoknownSimesBoxplot_altlinear.pdf}&\includegraphics[scale=0.4]{RhounknownSimesBoxplot_altlinear.pdf}
\end{tabular}
\caption{Same as Figure~\ref{fig:equi:posthoc} but for alternative $m_i$ linearly increasing from $0.01$ to $2\Delta$.}\label{fig:equi:posthoc_lin}
\end{figure}

\begin{figure}[h!]
\begin{tabular}{cc}
\vspace{-5mm}
%$s\mapsto\ol{\FDP}(R_s; 0,1)$ 
Uncorrected
& 
%$s\mapsto\ol{\FDP}(R_s; \theta,\sigma)$
Oracle
\\
\includegraphics[scale=0.4]{SimesBoxplot_altrandom.pdf}&\includegraphics[scale=0.4]{PerfectSimesBoxplot_altrandom.pdf}\\
\vspace{-5mm}
%$s\mapsto\ol{\FDP}(R_s; \widetilde{\theta}_+(\sigma),\sigma)$
Correlation known
&
%$s\mapsto\ol{\FDP}(R_s; \widetilde{\theta}_+,\widetilde{\sigma}_+)$
Correlation unknown
\\
\includegraphics[scale=0.4]{RhoknownSimesBoxplot_altrandom.pdf}&\includegraphics[scale=0.4]{RhounknownSimesBoxplot_altrandom.pdf}
\end{tabular}
\caption{Same as Figure~\ref{fig:equi:posthoc} but for alternative $m_i$ i.i.d. uniform in $(0.01,2\Delta)$.}\label{fig:equi:posthoc_rand}
\end{figure}

 \bibliographystyle{plain}
\bibliography{biblio}

\end{document}